\newtheorem{theoremABC}{Theorem}
\newtheorem{conjectureABC}[theoremABC]{Conjecture}
\numberwithin{equation}{section}
\newtheorem{theorem}{Theorem}[section]
\newtheorem*{theorem*}{Theorem}
\newtheorem{corollary}[theorem]{Corollary}
\newtheorem{lemma}[theorem]{Lemma}
\newtheorem{proposition}[theorem]{Proposition}
\newtheorem{conjecture}[theorem]{Conjecture}
\theoremstyle{definition}
\newtheorem{definition}[theorem]{Definition}
\newtheorem{example}[theorem]{Example}
\newtheorem*{remark*}{Remark}
\newcommand{\Symp}{\mathit{\mathcal{S}ymp}}
\newcommand{\ep}{\epsilon}
\newcommand{\bbi}{{{\bf i}}}
\newcommand{\bbj}{{{\bf j}}}
\newcommand{\bbk}{{{\bf k}}}
\newcommand{\FF}{{\mathbb F}}
\newcommand{\ZZ}{{\mathbb Z}}
\newcommand{\RR}{{\mathbb R}}
\newcommand{\HH} {{\mathbb H}}
\newcommand{\CC}{{\mathbb C}}
\newcommand{\cA}{{\mathcal A}}
\newcommand{\cB}{{\B}}
\newcommand{\cM}{{\mathcal M}}
\newcommand{\rin}{{\operatorname{1}}}
\newcommand{\rout}{{\operatorname{0}}}
\def\co{\colon\thinspace}
\newcommand{\RN}[1]{%
  \textup{\uppercase\expandafter{\romannumeral#1}}%
}
\newcommand{\nat}{\natural}
\DeclareMathOperator{\Real}{Re}
\DeclareMathOperator{\Ima}{Im}
\DeclareMathOperator{\Hom}{Hom}
 \renewcommand{\qed}{\hfill$\square$}
\newcommand{\tP}{\widetilde{P}}
\newcommand{\ba}{\bar{a}}
\newcommand{\bb}{\bar{b}}
\newcommand{\be}{\bar{e}}
\newcommand{\bff}{\bar{f}}
\newcommand{\bq}{\bar{q}}
\newcommand{\bp}{\bar{p}}
\newcommand{\bh}{\bar{h}}
\newcommand{\bg}{\bar{g}}
\newcommand{\pertSolidToriInPreliminaries}{{Q}} 
\newcommand{\pertCurveInPreliminaries}{{q}} 
\newcommand{\tracelessTwoSphere}{{C_\bbi}} 
\newcommand{\coreMap}{u} 
\newcommand{\modelMap}[1]{v_{#1}} 
\newcommand{\allLoopImmersions}[1]{\mathrm{LoopLag}_{#1}(P^*)} 
\newcommand{\allArcImmersions}[1]{\mathrm{ArcLag}_{#1}(P^*)} 
\newcommand{\allLoopImmersionsout}[1]{\mathrm{LoopLag}_{#1}((P^\rout)^*)} 
\newcommand{\allLoopImmersionsin}[1]{\mathrm{LoopLag}_{#1}((P^\rin)^*)}
\renewcommand{\r}{r} 
\newcommand{\rrout}{r^\rout} 
\newcommand{\rrin}{r^\rin} 
\newcommand{\iperpcirc}{S^1 _{\bbi^\perp}}
\newcommand{\Xupstairs}{\widetilde{X}}  
\newcommand{\Xdownstairs}{X}
\newcommand{\Vupstairs}{{\widehat{V}}} 
\newcommand{\V}{{V}} 
\newcommand{\mapVR}{{\Phi}} 
\newcommand{\mapVRmodInvolution}{{\phi}} 
\newcommand{\DehnTwistMap}{{\xi}} 
\newcommand{\genusFiveSurface}{{\widehat{\Sigma}}} 
\newcommand{\genusFiveSurfaceFamily}{{\widehat{\mathbf\Sigma}}} 
\newcommand{\genusThreeSurface}{{\Sigma}} 
\newcommand{\genusThreeSurfaceFamily}{{\mathbf\Sigma}} 
\newcommand{\FPS}{{(S^2, 4)}} 
\newcommand{\groupPrelims}{{\Gamma}} 
\newcommand{\link}{\mathcal{L}} 
\newcommand{\preScaledNatural}[2]{\raisebox{1pt}{\scalebox{2.3}[1]{$#1\natural$}}}
\newcommand{\scaledNatural}{{\mathpalette\preScaledNatural\relax}}
\DeclareMathOperator{\NAT}{{\scaledNatural}}
\newcommand{\tNAT}{\widetilde{\scaledNatural}}
\renewcommand{\l}{l}
\renewcommand{\emptyset}{\varnothing}
\newcommand{\genInt}{\cap}
\newcommand{\Fuk}{\mathcal{F}}
\newcommand{\wrFuk}{{\mathcal{W}}}
\newcommand{\HFhat}{\widehat{\mathit{HF}}}
\newcommand{\HFKhat}{\widehat{\mathit{HFK}}}
\newcommand{\CFDhat}{\widehat{\mathit{CFD}}}
\newcommand{\CFAhat}{\widehat{\mathit{CFA}}}
\newcommand{\HFT}{\mathit{HFT}}
\newcommand{\Kh}{\mathit{Kh}}
\newcommand{\Khr}{\widetilde{\mathit{Kh}}}
\newcommand{\BNr}{\widetilde{\mathit{BN}}}
\newcommand{\HF}{\mathit{HF}}
\newcommand{\CF}{\mathit{CF}}
\newcommand{\pt}{\text{pt}}
\DeclareMathOperator{\Tw}{Tw}
\newcommand{\X}{\mathcal{X}}
\newcommand{\arcHor}{\mathbf{a}^\bullet}
\newcommand{\arcVer}{\mathbf{a}^\circ}
\newcommand{\id}{\text{id}}
\newcommand{\B}{\mathcal{B}}
\newcommand{\mcH}{[\mathbf{I}\to\mathbf{I}]}
\newcommand{\N}{\mathcal N}
\DeclareMathOperator{\Lag}{\mathrm{Lag}}
\thanks{AK is supported by an AMS-Simons travel grant. CH and PK were supported by Simons Collaboration Grants for Mathematicians.}
\author{Guillem Cazassus}
\address{Department of Mathematics, Indiana University, Bloomington, IN 47405 }
\email{g.cazassus@gmail.com}
\author{Christopher Herald}
\address{Department of Mathematics and Statistics, University of Nevada,  Reno, NV 89557} 
\email{herald@unr.edu}
 \author{Paul Kirk}
\address{Department of Mathematics, Indiana University, Bloomington, IN 47405} 
\email{pkirk@indiana.edu}
 \author{Artem Kotelskiy}
\address{Department of Mathematics, Indiana University, Bloomington, IN 47405} 
\email{artofkot@gmail.com}
\subjclass[2010]{Primary 57K18, 57K31, 57R58; Secondary 81T13} 
\keywords{Pillowcase, holonomy perturbation, instanton homology, Floer homology, flat moduli space, traceless character variety, quilts, Lagrangian correspondence, figure eight bubble}
\begin{document}

\title{The correspondence induced on the pillowcase by the earring tangle}

\begin{abstract} 


The earring tangle consists of four strands $4\text{pt} \times I \subset S^2 \times I$ and one meridian around one of the strands. Equipping this tangle with a nontrivial $SO(3)$ bundle, we show that its traceless $SU(2)$ flat moduli space is topologically a smooth genus three surface. We also show that the restriction map from this surface to the traceless flat moduli space of the boundary of the earring tangle is a particular Lagrangian immersion into the product of two pillowcases. The latter computation suggests that  figure eight bubbling---a subtle degeneration phenomenon predicted by Bottman and Wehrheim---appears in the context of traceless character varieties. 
\end{abstract}

\maketitle



\section{Introduction}

 \nopagecolor

\begin{wrapfigure}{r}{0.3\textwidth}
\vspace{-0.2cm}
\labellist 
\pinlabel $\color{blue}w$ at 258 238
\endlabellist
\centering
\includegraphics[width=0.3\textwidth]{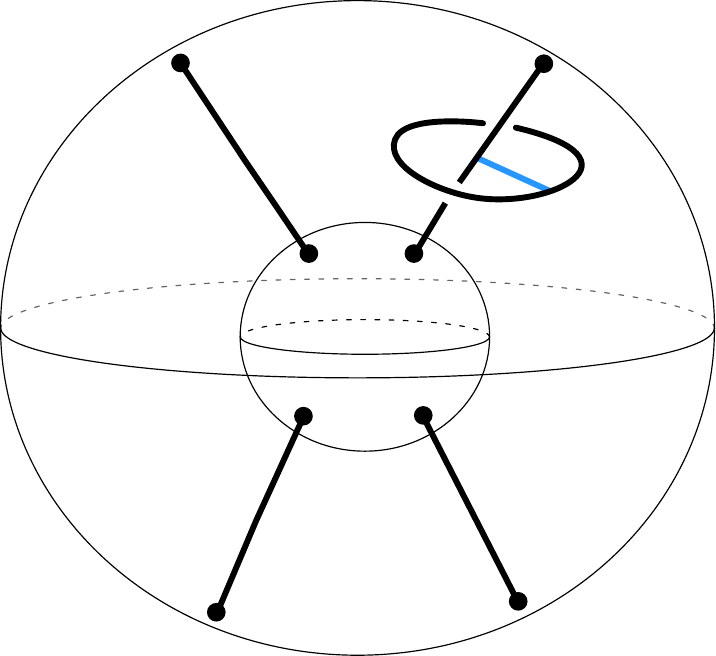}
\caption{The earring tangle}
\label{cob_new3fig}
\end{wrapfigure}

\subsection*{Overview} The main results of this article, Theorems~\ref{thmA} and~\ref{thmB} stated below,  identify the holonomy perturbed traceless $SU(2)$ flat moduli space 
of the {\em earring tangle}, depicted on the right in Figure~\ref{cob_new3fig}, as a smooth genus three surface. In addition, the restriction map from this genus three surface to the (symplectic) flat moduli space of the tangle's boundary is identified, up to a small homotopy, as a certain Lagrangian immersion of a genus three surface into $P\times P$, in the manner described in Figure~\ref{fig:description_of_the_map}.    

Here, $P$ denotes the {\em pillowcase,} a 2-dimensional symplectic orbifold, homeomorphic to $S^2$, obtained as the quotient of the torus by the elliptic involution $\iota$.  The pillowcase $P$ is a real semi-algebraic variety isomorphic to the  traceless flat $SU(2)$ moduli space of $(S^2,4)$, a 2-sphere equipped with four punctures (see Lemma~\ref{pillowcase}).

The theorem holds for certain  arbitrarily small holonomy perturbation data, supported in tubular neighborhoods of the (red and green) curves with meridians labeled $p$ and $q$ depicted on the right in  Figure~\ref{cobordism3fig}. 

In  Example~\ref{ex:Rzero},  we show that the  {\em unperturbed} flat moduli space  of the earring tangle  (denoted $\NAT_0$) is a singular 3-dimensional space obtained by collapsing to points each of the four singular fibers of the Seifert fibered 3-manifold $\Sigma(S^2,2,2,2,2)$.  The restriction map to the two pillowcases is a composition $\NAT_0 \to \triangle_P \to P\times P$ where the second map is the diagonal, and the first map is a circle fibration away from the corners and has single-point fibers over the corners. The neighborhoods near those four singularities are cones over a torus.  

  In order to apply Floer-theoretic constructions, one must therefore properly perturb the flatness equation  in order
for   the restriction map from the perturbed moduli space of the tangle  to the moduli space of its boundary be {\em a Lagrangian immersion into the smooth top stratum}.  Theorem \ref{thmA} accomplishes this, using an explicit  holonomy perturbation supported along the two curves.
 
Before the differential topological analysis that led to the proof of Theorem \ref{thmA}, the topology of the perturbed moduli space $\NAT_s$ and its restriction map to $P\times P$ were not known, but there were two clues.  First, considering straight corner-to-corner arcs $A_0,A_1,A_2$ from the bottom left corner of the pillowcase with slopes $0, +1, \infty$ respectively (see Figure~\ref{pillowgensfig}),  the effect of adding an earring to a trivial 2-component tangle in a ball analyzed in \cite{HHK1} suggested that the composition of the correspondence $\NAT_s$  with either of the arcs $A_k$ is a figure eight curve close to the arc $A_k$, slightly missing the corresponding corners. This can be interpreted as indicating how $\NAT_s \to P\times P$ should meet $ (A_0 \bigcup A_1 \bigcup A_2)\times P\subset P\times P$.  Second, since under perturbation  a Bott-Morse function with critical circle can be turned into a Morse function with two critical points, there is some reason to expect that the Seifert fibration  $\NAT _0$  in the previous paragraph would resolve into a surface that double covers $\triangle_P$, at least  away from the corners.  The genus three surface described in Figure~\ref{fig:description_of_the_map} may be viewed as (a smooth manifestation of) two copies of $\triangle_P$ with open neighborhoods of the corners removed, with annuli connecting the boundary circles in such a way to be consistent with the intersection information above.

\medskip

 The earring  tangle is obtained from the product tangle $(S^2,4)\times [0,1]$ by a process introduced by Kronheimer-Mrowka which we call {\em placing an earring  at a base point on a tangle}, in this instance on the top right component in Figure~\ref{cob_new3fig}.  The process  first adds an extra closed component  to   a tangle, namely a small meridian at the base point. Then, an $SO(3)$ bundle over the tangle complement is determined by requiring its second Stiefel-Whitney class to be Poincar\'e dual to an  arc joining the extra component to the strand of the original tangle (the arc labeled $w$ in Figure~\ref{cob_new3fig}). In the language of gauge theory, adding an earring defines an {\em admissible} $SO(3)$ bundle. 
 
 \medskip

  Considering the earring tangle as a cobordism from $(S^2,4)\times\{0\} $ to
 $(S^2,4)\times\{1\} $  leads to the flexible perspective of Floer field theory~\cite{WW}, which has its origins in Floer's work on instanton homology~\cite{Flo}, and falls under the wider framework of the Atiyah-Floer conjecture~\cite{MR974342}.  From this perspective (which we review in Section~\ref{sec:fft}), the moduli space of the earring tangle induces an endomorphism of the set of (immersed, unobstructed, suitably transverse, etc.) Lagrangian curves in $P$, that is, the set of objects of the wrapped Fukaya category of $P$.

  Theorem~\ref{thmB}, whose statement is geometrically
 described in Figure~\ref{example1fig}, essentially reformulates Theorem~\ref{thmA} in this language.  It provides a {\em geometric} description of  the action of this endomorphism on objects in the wrapped Fukaya category of $P$.   Admissibility in the sense of gauge theory  manifests itself via Theorem~\ref{thmB}  in an interesting way:   the earring endomorphism
 maps the wrapped Fukaya category of the pillowcase into the (sub-$A_\infty$) ordinary Fukaya category of   \emph{compact}  immersed curves in $P$  which miss the orbifold points.

\medskip

The second part of this article places our results in context,  viewing Theorem~\ref{thmB} as a partial calculation of this endomorphism.  We examine how the notions of   Floer field theories,   quilts and  composition for  immersed generalized Lagrangian correspondences, figure eight bubbling  and bounding cochains, and the relationship between the wrapped Fukaya category of the pillowcase  and instanton and Khovanov homology, can be explored in the concrete context of this interesting and nontrivial endomorphism.

\subsection*{Statement of results}

In preparation for the statements of Theorems~\ref{thmA} and~\ref{thmB},  note that the boundary of the earring tangle is the 
disjoint union of two copies of $(S^2,4)$, and hence its traceless flat moduli space is isomorphic to the   product  $P\times P$.  (Definitions of traceless and holonomy-perturbed flat moduli spaces are provided in Section~\ref{prelim}.)  Denote by  $P^*$ the smooth top (2-dimensional) stratum of the pillowcase $P$, obtained by removing the four orbifold points (called the {\em corners}) from $P=T^2/\iota$.  The smooth manifold $P^*$ is equipped with the symplectic form descended from the torus. 

 Throughout this article, if $M= (M,\omega)$ denotes a symplectic manifold,  $M^-= (M,-\omega)$ denotes the same smooth manifold with the negative symplectic form. In addition, the two boundary components of the earring tangle are distinguished by the superscripts $\rout$ or $\rin$.  Hence $P^\rin$ is the traceless flat moduli space of $(S^2,4)^\rin=(S^2,4)\times\{1\}$.  The notation $P\times P$ is used as shorthand for $P^\rout\times P^\rin$ and $P^*\times P^*$ for the top smooth symplectic stratum $((P^\rout)^*)^- \times (P^\rin)^*$, when clear from context.    Let $\Sigma$ denote a fixed closed surface of genus three.

\begin{theoremABC}
 \label{thmA}
 Given $s\in \RR$, let  $\NAT_s$ denote the holonomy-perturbed $SU(2)$  traceless flat moduli space of the earring tangle perturbed along the two perturbation curves indicated in Figure~\ref{cobordism3fig}.

\smallskip

The spaces $\NAT_s, ~0<|s|<\ep$ form a smooth bundle over   $(-\ep,0)\cup (0,\ep)$  for 
$\ep>0$ small enough with fiber a smooth orientable genus 3 surface $\Sigma$.  
This family extends to a smooth  bundle over $(-\ep, \ep)$, which we identify with  $\Sigma\times(-\ep,\ep)$, mapping to $P\times P$.  For $s\ne 0$ small enough, the restriction map   $r \co \NAT_s\to P\times P$ has image in $P^*\times P^*$ and is a Lagrangian immersion $r \co \NAT_s \rightarrow (P^*)^-\times P^*$.  

The Lagrangian immersion  $r \co \NAT_s \rightarrow (P^*)^-\times P^*$ is   $\ep$-homotopic to the Lagrangian immersion  $\modelMap{\delta}=\modelMap{\delta}^\rout \times \modelMap{\delta}^\rin \co \genusThreeSurface  \rightarrow (P^*)^-\times P^*$  with $\modelMap{\delta}^\rin=\modelMap{\delta}^\rout\circ D $, where $D $ is the composite of the four Dehn twists along disjoint circles which separate $\Sigma\cong\NAT_s$ into two four-punctured 2-spheres $\Sigma_+,~\Sigma_-$,   as illustrated in Figure~\ref{fig:description_of_the_map}. 
\end{theoremABC}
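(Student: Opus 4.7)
The plan is to describe $\NAT_s$ explicitly as a holonomy-perturbed representation variety, use transversality to establish smoothness and an Euler characteristic computation to fix the topology, and then obtain the Lagrangian and homotopy assertions from the Atiyah--Bott--Goldman symplectic principle combined with a direct deformation argument.

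First I would fix a Wirtinger-type presentation of $\pi_1$ of the earring tangle complement: the generators are meridians $m_1,\ldots,m_4$ of the four strands together with a meridian $\mu$ of the earring, subject to a single relation coming from how the arc $w$ encodes the nontrivial $SO(3)$-bundle. The traceless condition forces $\operatorname{tr}(m_i)=0$, and admissibility forces $\mu$ to lift to $-\operatorname{Id}\in SU(2)$. Combined with the explicit perturbation data supported along $p$ and $q$, this presents $\NAT_s$ as the vanishing locus of a parametrized system of matrix equations. Starting from the description of $\NAT_0$ as the collapse of $\Sigma(2,2,2,2)$ along its four singular fibers (Example~\ref{ex:Rzero}), one checks that for small $s\ne 0$ the perturbations resolve each collapsed fiber in a controlled manner; an Euler characteristic computation then yields $\chi(\NAT_s)=-4$, hence genus three. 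Smoothness of each $\NAT_s$, and of the total space $\bigcup_s \NAT_s$, follows from surjectivity of the differential of the perturbed flatness map at every solution, a direct cocycle computation. This gives the smooth genus three bundle over $(-\ep,0)\cup(0,\ep)$; extension across $s=0$ holds because the singularities of $\NAT_0$ correspond to smooth points of the total space at which only the $s$-projection degenerates, and a trivialization produces the identification with $\genusThreeSurface\times(-\ep,\ep)$.

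That the restriction map $r\co \NAT_s\to P\times P$ is isotropic follows from the standard Atiyah--Bott--Goldman principle for 3-manifolds with boundary, in its perturbed incarnation. A dimension count upgrades isotropy to a Lagrangian immersion, provided one checks the image lies in the smooth top stratum $P^*\times P^*$; this reduces to verifying that no perturbed boundary representation is central at any of the four punctures, which is visible from the explicit defining equations.

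Finally, to identify this Lagrangian immersion with $\modelMap{\delta}=\modelMap{\delta}^\rout\times(\modelMap{\delta}^\rout\circ D)$ up to $\ep$-homotopy, I would deform the perturbation data to a normal form and compute the two boundary restrictions directly. Each of the two perturbation curves links both boundary components of the earring tangle but interacts with opposite orientations on the two copies of $(S^2,4)$, producing a relative twist between $r^\rout$ and $r^\rin$. The expected identification is that this relative twist is realized by the composite of Dehn twists along four disjoint circles in $\genusThreeSurface\cong\NAT_s$ whose union separates $\genusThreeSurface$ into $\Sigma_+\cup\Sigma_-$. The hard part will be this last identification: translating the gauge-theoretic difference between $r^\rout$ and $r^\rin$ into a precise mapping class group element requires exhibiting the four circles concretely in the perturbed moduli space and matching the local effect of each perturbation to the standard Dehn twist model, likely by tracing small loops in representation space around each resolved singular fiber of $\NAT_0$ and matching the resulting monodromy.
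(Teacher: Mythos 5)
The central gap is your treatment of the extension of the family across $s=0$. You claim ``the singularities of $\NAT_0$ correspond to smooth points of the total space at which only the $s$-projection degenerates,'' but this is not the right picture: $\NAT_0$ is a singular \emph{three-dimensional} space (Example~\ref{ex:Rzero}), whereas each $\NAT_s$ with $s\ne 0$ is a \emph{surface}, so the naive union $\bigcup_s\{s\}\times\NAT_s$ cannot be a smooth 3-manifold submerging onto $(-\ep,\ep)$ -- the fiber dimension jumps at $s=0$, and the limiting genus-3 fiber of the extended bundle in the theorem is \emph{not} $\NAT_0$. The paper handles this (Section~\ref{sec:genus_three}) by first reducing, via the partial gauge fixing of Section~\ref{eliminating}, to a model $\Vupstairs_s\subset T\times\tracelessTwoSphere$ cut out by two real equations, and then observing that one of these equations vanishes to first order in $s$; dividing it by $s$ (Equation~\eqref{defH}) produces a new analytic family $H$ with the same zero set for $s\ne 0$, but whose zero set at $s=0$ is cut out \emph{transversally} as a genus-5 surface $\genusFiveSurface\subset \Vupstairs_0=T\times S^1_{\bbi^\perp}$. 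The zero set of $H$ near $T\times\tracelessTwoSphere\times\{0\}$ is then genuinely a smooth $3$-manifold fibering over $(-\ep,\ep)$, and the genus-3 bundle of Theorem~\ref{thmA} is its $\hat\iota$-quotient. Without this rescaling your ``Euler characteristic computation'' has no foothold: the paper does not compute $\chi$ at all, but identifies $\genusFiveSurface$ explicitly as two punctured tori glued along $C\times S^1_{\bbi^\perp}$ and then passes to the free $\hat\iota$-quotient (Lemma~\ref{xlem}).

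Your plan for the last assertion -- tracing small loops around the resolved singular fibers and matching the resulting monodromy to Dehn twists -- is a genuinely different route from the paper's, and under-specified as written. The paper (Section~\ref{homotopyclass}) first pins down the homology class $[\coreMap_s(\genusThreeSurface)]$ in $H_2(P^*\times P^*)$ as $\sum_i \l_i\times\l_i$ by evaluating the pairings $A_i\cdot_\genusThreeSurface A_j$ against explicit tangle decompositions of the unknot and Hopf link (with an earring), using regularity of those moduli spaces and their relative $\ZZ/2$-gradings as gauge-theoretic input (Lemmas~\ref{lem8.2},~\ref{lem8.3}); it then constructs the concrete model $\modelMap{\delta}$ and establishes the $\ep$-homotopy to $\coreMap_s$ by an obstruction argument over the four corner annuli, with the obstruction killed precisely by the agreement of homology classes (Theorem~\ref{thm:close}). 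If you want to run a monodromy argument instead, you must exhibit a rigid local model for the perturbed moduli space near each crushed fiber and prove the gluing map is a Dehn twist, which is not obviously easier. Finally, be careful not to over-claim: the theorem asserts only a continuous $\ep$-homotopy, and the paper explicitly leaves open whether it can be upgraded to a Lagrangian regular homotopy (see the remarks after Theorem~\ref{thm:close} and Conjecture~\ref{truefig8}).
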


\begin{figure}[ht] 
\labellist 
\pinlabel $c$ at 260 45
\pinlabel $\tfrac \delta 2$ at 89 236
\pinlabel {\tiny vertical fold} at 165 32
\pinlabel {\tiny (magnifying)} at 126 90
\pinlabel {\tiny Dehn twists along $c$,} at 335 32
\pinlabel {\tiny then vertical fold} at 335 22
\pinlabel $\modelMap{\delta}^\rout$ at 143 180
\pinlabel $\modelMap{\delta}^\rin$ at 363 180
\pinlabel $\genusThreeSurface_+$ at 212 175
\pinlabel $\genusThreeSurface_-$ at 300 175
\endlabellist
\centering
\includegraphics[scale=0.8]{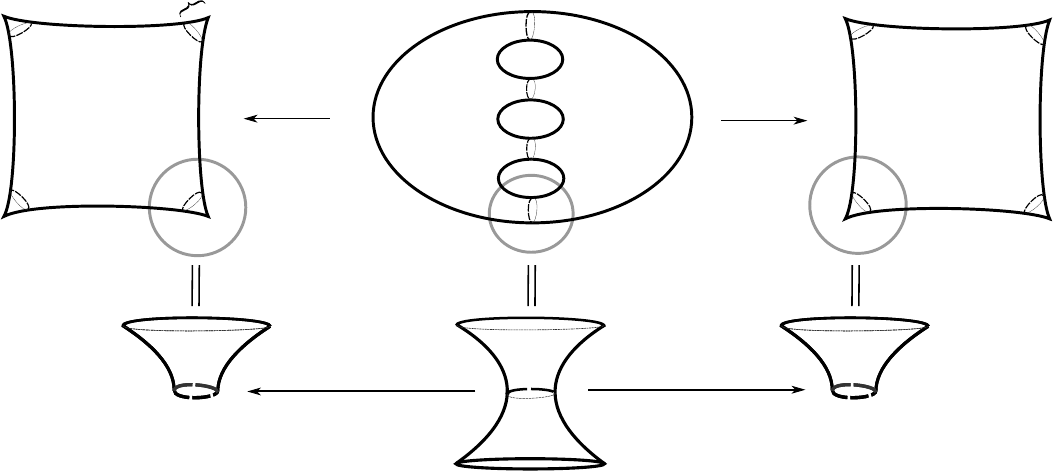}
\caption{
The map $\modelMap{\delta}^\rout$ sends both $\genusThreeSurface_-$ and $\genusThreeSurface_+$ bijectively to $P^*$ with corners cut, and glues these two maps along the dashed circles. The map $\modelMap{\delta}^\rin$ is the same but pre-composed with the Dehn twists along $c$, one for each corner, as is indicated in the bottom; see also Figure~\ref{correspondtwfig}.}\label{fig:description_of_the_map} 

\end{figure}

A Lagrangian immersion into a product $L\to M^-\times N$ induces a {\em Lagrangian correspondence} from the collection of (appropriately transverse) immersed Lagrangians in $M$ to immersed Lagrangians in $N$ (see
Definition~\ref{def:lagr_corr}).  In this manner, the   Lagrangian immersion $r\co\NAT_s\to (P^*)^-\times P^*$ induces correspondence on $P^*$, as follows. (Recall that in a symplectic 2-manifold, a curve is Lagrangian if and only if it is immersed).

\begin{theoremABC}\label{thmB} For any $\delta>0$ small enough, then for all small enough $s>0$, 
the correspondence induced by $r\co \NAT_s\to (P^*)^-\times P^*$ acts on immersed curves in $P^*$ in the following way (see Figure~\ref{example1fig}):
 \begin{itemize}
\item Immersed circles  which stay some $\delta>0$ away from the corners of the pillowcase    are  doubled. That is, an immersion  $L\co S^1\to P^*$ is sent to 
$$S^1\times \{0,1\}\xrightarrow{{\rm proj}_{S^1}}S^1\xrightarrow{L} P^*.$$

\item  Immersed arcs  which are linear in the $\delta>0$ neighborhood of the corners  are replaced by homology figure eight curves which lie in a small tubular neighborhood of the   immersed arc and stay a bounded distance from the corners.  
\end{itemize}

 \begin{figure}[ht]
\centering
\labellist 
\pinlabel $(\NAT_s)_*$ at 266 90
\endlabellist
\includegraphics[scale=0.7]{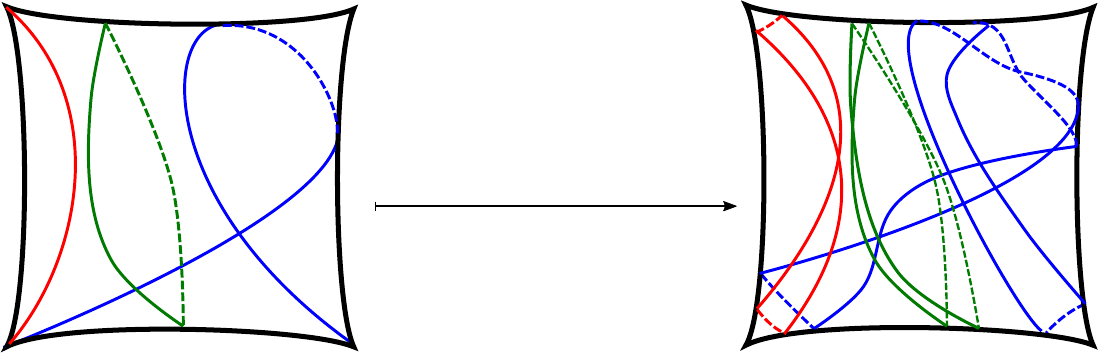}
 \caption{The action of $\NAT_s$, induced by the earring tangle, on immersed curves in the pillowcase, up to a small regular homotopy.}\label{example1fig}  
\end{figure}

\end{theoremABC}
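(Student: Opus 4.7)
The plan is to use Theorem~\ref{thmA} to replace the restriction map $r$ by the explicit model $\modelMap{\delta} = \modelMap{\delta}^\rout \times \modelMap{\delta}^\rin$ and then compute the correspondence action of this model directly. Since the $\epsilon$-homotopy from Theorem~\ref{thmA} is itself a Lagrangian isotopy in $(P^*)^-\times P^*$, the resulting correspondence output changes only by an $\epsilon$-small regular homotopy. Thus it suffices to understand the set
\[
\modelMap{\delta}^\rin\bigl((\modelMap{\delta}^\rout)^{-1}(\ell)\bigr)\subset P^*
\]
for a given immersed curve $\ell\subset P^*$, using the geometric description from Figure~\ref{fig:description_of_the_map}: $\modelMap{\delta}^\rout$ folds $\genusThreeSurface_+\cup_{c}\genusThreeSurface_-$ onto the pillowcase with corner disks removed, and $\modelMap{\delta}^\rin=\modelMap{\delta}^\rout\circ D$ with $D$ the product of Dehn twists along the four circles $c$.

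For an immersed circle $L\co S^1\to P^*$ whose image stays $\delta$-away from the four corners, the preimage $(\modelMap{\delta}^\rout)^{-1}(L(S^1))$ is disjoint from the Dehn twist curves $c$ (which lie in the preimage of the corner collars). Hence the preimage consists of two disjoint copies $L_+\subset\genusThreeSurface_+$ and $L_-\subset\genusThreeSurface_-$, and $D$ acts as the identity on a neighborhood of $L_+\sqcup L_-$. Applying $\modelMap{\delta}^\rout$ to both components then returns $L(S^1)$ twice, yielding the doubled curve $S^1\times\{0,1\}\to S^1\xrightarrow{L}P^*$.

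For an immersed arc $\alpha$ linear near the corners, the preimage $(\modelMap{\delta}^\rout)^{-1}(\alpha)$ is a single immersed closed curve $\widetilde{\alpha}\subset\genusThreeSurface$ whose two lifts in $\genusThreeSurface_+$ and $\genusThreeSurface_-$ meet transversally on the circles $c$, one intersection point for each corner that $\alpha$ enters. Applying $D$ inserts a full twist around $c$ at each such intersection point, replacing a straight crossing by an arc that wraps once around $c$ before crossing. Working in local coordinates where $\modelMap{\delta}^\rout$ is modeled on the fold $(x,y)\mapsto(x,|y|)$ with fold locus $\{y=0\}=c$, a loop wrapping once around $c$ on the $\genusThreeSurface_+$ sheet projects to a small loop encircling the corner disk $\partial N$, while the corresponding loop on the $\genusThreeSurface_-$ sheet projects to a loop around $\partial N$ with the \emph{opposite} orientation, because the fold reverses orientation between the two sheets. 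The two contributions therefore form a symmetric pair of loops that together make up a homology-trivial figure eight attached to $\alpha$ at the point where it enters $N$; in particular the output lies in an arbitrarily small tubular neighborhood of $\alpha$ if the Dehn twist support is chosen narrow enough, and misses the corners of $P$ because it lies in the image of $\modelMap{\delta}^\rin$.

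The main obstacle is the orientation bookkeeping in the last step: one must match the fold structure of $\modelMap{\delta}^\rout$ near each corner with the chosen orientation on the Dehn twist circle $c$ and verify that the two wrapped loops from $\genusThreeSurface_+$ and $\genusThreeSurface_-$ indeed project with opposite signs, producing a figure eight rather than a doubly wound circle. Secondary checks are that the Dehn twist region can be taken small relative to $\delta$ so that the entire output sits in the prescribed tubular neighborhood of $\alpha$, and that the $\epsilon$-homotopy of Theorem~\ref{thmA} only perturbs the output within this neighborhood; both reduce to choosing the parameters $\ep$ and $\delta$ in the correct order, as reflected in the opening ``for any $\delta>0$ small enough, then for all small enough $s>0$'' of the statement.
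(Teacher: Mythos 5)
Your plan — transport the computation from $r_s$ to $\modelMap{\delta}$ using the homotopy of Theorem~\ref{thmA} and then read off the correspondence from the explicit fold-plus-Dehn-twist model — matches the paper's strategy, but it relies on a step the paper explicitly cannot justify.  You write that ``the $\ep$-homotopy from Theorem~\ref{thmA} is itself a Lagrangian isotopy in $(P^*)^-\times P^*$, \emph{[so]} the resulting correspondence output changes only by an $\ep$-small regular homotopy.'' This is not what Theorem~\ref{thmA} (via Theorem~\ref{thm:close}) gives. That homotopy is only a continuous homotopy between the two maps; the paper flags this immediately after Theorem~\ref{thm:close}: ``Theorem~\ref{thm:close} does \emph{not} assert that $\coreMap_s$ and $\modelMap{\delta}$ are Lagrangian (or Hamiltonian) regular homotopic.'' Promoting it to a Hamiltonian isotopy is precisely Conjecture~\ref{truefig8}, which would upgrade the conclusion from ``homology figure eight'' to ``honest figure eight.'' A non-Lagrangian homotopy does not preserve the fibre product defining the composed Lagrangian, so the output of $\coreMap_s$ on an arc could a priori be disconnected, wrap differently, etc.; the paper therefore settles for a homotopy-transversality argument, showing only that $\coreMap_s$ sends an arc to a curve with the correct algebraic intersection numbers against $\alpha_\pm$ and $\beta$, i.e.\ a \emph{homology} figure eight.  (For the loop case your argument survives, but for the wrong reason: what actually saves it is Lemma~\ref{notat}(3), which establishes directly that $\coreMap_s^\rout$ is a trivial double cover over $P^\delta$, not the homotopy to $\modelMap{\delta}$.)

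There is also a secondary issue with your local picture at a corner.  You describe the Dehn twist producing \emph{two} loops, one on the $\genusThreeSurface_+$ sheet and one on the $\genusThreeSurface_-$ sheet, which project with opposite orientations and together form a figure eight attached ``at the point where $\alpha$ enters $N$.''  This is not what happens.  The lift $\widetilde\alpha=(\modelMap{\delta}^\rout)^{-1}(\alpha)$ crosses the twist circle $c$ exactly \emph{once} per corner; the twist $D$ winds that single crossing once around $c$, and under the fold $\modelMap{\delta}^\rout$ this maps to a \emph{single} loop wrapping once around that corner with both endpoints at $\alpha(0)$ (this is exactly what Figure~\ref{correspondtwfig} and Section~\ref{sec:v_d} show).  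The figure eight structure comes from the arc visiting \emph{two} corners — one loop per corner, and the orientation convention (chosen so that $A_i\cdot_\genusThreeSurface A_j$ matches Lemmas~\ref{lem8.2} and~\ref{lem8.3}) makes the two loops wind with opposite sign — not from two contributions at a single corner.
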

\noindent More careful statements can be found in Theorems~\ref{thm:close},~\ref{looptype}, and Section~\ref{arc}  below. Homology figure eight curves are defined in Definition~\ref{HF8}.  We were unable to establish   the stronger conclusion that arcs are taken to (honest) figure eight curves, but this seems likely and is borne out by several calculations; see Conjecture~\ref{truefig8}.

\medskip

This work is motivated by the Atiyah-Floer conjecture~\cite{MR974342,MR1078014}, which establishes a bridge between (various versions of) Floer's instanton homology  and Lagrangian Floer theory  of suitable flat moduli spaces. There are many versions of this conjecture; some of which  have been established~\cite{DS}, but the one relevant to this paper, corresponding to Kronheimer-Mrowka's singular instanton homology, is still open. Some progress on this conjecture has been made~\cite{salamon2008instanton,lipyanskiy2014gromovuhlenbeck,duncan2019compactness,DFLAtiyahFloer,DFLmixed}, and  an equivariant approach,  suggested in~\cite{ManWood},  is being worked out by Daemi and Fukaya~\cite{Daemi_Fukaya}.

Holonomy perturbations~\cite{Taubes,Flo} provide a general framework, consistent with both (in the sense of the Atiyah-Floer conjecture) gauge theoretic Floer homology and the Lagrangian Floer homology  of representation varieties, to alter the differential equations defining the flat moduli space to produce a {\em holonomy perturbed}  flat moduli space. In favorable circumstances this produces a Lagrangian immersion into the symplectic smooth stratum of the traceless flat moduli space of   the  boundary punctured surface.   We   accomplish this for the earring tangle using two fairly simple perturbation curves, in order to retain the strongest connection between the perturbed flat moduli space and the fundamental group of the original tangle complement, and to allow an explicit description of the perturbed flat moduli space.  In particular, we make use of the  identification of the perturbed traceless flat moduli space with the {\em perturbed traceless character variety} (see Section~\ref{subsubsec:perturbations_prelims}).

  \medskip

The proofs of Theorems~\ref{thmA} and~\ref{thmB} involve a number of reduction steps to   obtain a smooth moduli space and identify the restriction map to $P^*\times P^*$.  The tools used are   fundamental group calculations, the algebraic description of holonomy perturbations, and the geometry of unit quaternions.    
  
\subsection*{Some context/history} 
From the character variety perspective, our results are inspired by  Klassen's investigation of the (unperturbed) $SU(2)$ character variety of the Whitehead link complement, which he uses   to study the flat moduli spaces of Whitehead doubles of knots~\cite{MR1205249}. 
{\em Perturbed flat} character varieties are real semi-analytic varieties and few explicit calculations are known.    Calculations for 2-stranded tangles in the 3-ball, including one arising from a tangle decomposition of the $(4,5)$ torus knot which we explore in 
Section~\ref{sec:PillHom_n_quilts}, can be found in~\cite{HHK2}. A few other known examples are discussed in Sections~\ref{TCV} and~\ref{CLC}.

From an abstract perspective, Theorem~\ref{thmB}
partially computes the value of  a putative Floer field functor, expected to be the Atiyah-Floer counterpart to Kronheimer-Mrowka's singular instanton homology,  on the earring tangle, a simple but useful and nontrivial cobordism from $(S^2,4)$ to itself. 

Roughly speaking, a Floer field functor is a functor from a cobordism category to 
Weinstein's symplectic ``category''~\cite{Weinstein} whose objects are symplectic manifolds and whose
morphisms are Lagrangian correspondences. Two well-known difficulties prevent this from being a category:   lack of transversality and the failure of the composition of two embedded correspondences to be again embedded. The first issue is typically dealt with using small perturbations.   Wehrheim and Woodward introduced the notion of quilts to circumvent the second difficulty.   They defined a category 
whose morphisms are {\em generalized} Lagrangian correspondences, in which composition is always defined. They proved an embedded composition theorem (see Appendix~\ref{Appendix}) which asserts that in special cases, geometric composition of Lagrangian correspondences preserves the Lagrangian Floer homology.

The Wehrheim-Woodward approach,   combined with Cerf theory techniques, leads to the construction of 3-manifold invariants in terms of the Lagrangian Floer homology of character varieties, by studying the character varieties associated to decompositions of 3-manifolds along surfaces  (as well as tangle decompositions of links in 3-manifolds along punctured surfaces). This is a thriving research area, and some interesting invariants have been defined and some of their structural properties worked out~\cite{FF_tangles, FF_coprime, ManWood, Guillem, MR4033514, Henry,Rezazadegan}.

Our focus on the pillowcase is motivated foremost by the fact that it is 2-dimensional, and hence many aspects of its Lagrangian Floer theory,  in particular associated Fukaya categories,  can be studied with differential-topological rather than $J$-holomorphic techniques~\cite{Abouzaid_Fuk_surface, SRS}.  This approach has already borne fruit by revealing a useful Floer-theoretic description of Khovanov homology~\cite{HHHK,KWZ,comparison}.   However, the price we pay is that, even when transversality is achieved (for example, the genus 3 surface in our main theorem), the Lagrangians correspondences associated to 3-manifolds with boundary are immersed rather than embedded.   Immersed Lagrangian Floer theory is an active research area, with foundational work including~\cite{AJ,Fuk_Ymaps}. More recently, Bottman and Wehrheim~\cite{BW} have 
proposed using figure eight bubbling to produce bounding cochains  associated with immersed composition.  Our informal exploration of the $(4,5)$ torus knot in Section~\ref{sec:PillHom_n_quilts} explains how the  strip-shrinking technique used by Wehrheim and Woodward in the proof of embedded composition theorem,  and by Bottman and Wehrheim to produce   figure eight bubbles, manifests itself in the context of character varieties.

\subsection*{Organization} We  outline the contents of the article.  We begin with a preliminary section which provides background on traceless $SU(2)$ moduli spaces and holonomy perturbations in a stripped down form sufficient for the arguments in this article.  In particular we introduce these concepts in the language of $SU(2)$ character varieties.

In Section~\ref{earring_tangle}  the   fundamental group, peripheral subgroups,  and meridians of the strands are computed. These  are used to determine the analytic equations   which cut out $\NAT_s$. Technical arguments involving elimination of variables and partial gauge fixing are carried out in Section~\ref{eliminating}, leading to a tractable description of $\NAT_s$. In particular, Example~\ref{ex:Rzero}  identifies the singular 3-dimensional $\NAT_0$.

In Section~\ref{sec:misses_corners}, we prove a  key lemma, Theorem~\ref{nogotheorem}, which asserts that the image of the restriction $\r_s\co \NAT_{s}\to P\times P$ lies in $P^*\times P^*$ when $s>0$.  In other words, whatever the moduli space $\NAT_s$ is, its restriction to the boundary lands inside the smooth stratum of the singular symplectic moduli space of  the  boundary surface.  This is fortunate, since the space $P\times P$,  although homeomorphic to $S^2\times S^2$, is a singular  variety along a codimension two subset.
Theorem~\ref{nogotheorem} makes possible the use, in  Section~\ref{sec:genus_three}, of elementary differential topology arguments to establish that the perturbed flat traceless moduli space $\NAT_s$ is a smooth surface, which is then identified
with a genus three surface by analyzing its restriction to the flat moduli space of one of the two boundary components.

The proof of Theorem~\ref{thmA} is completed in Sections~\ref{sec:restriction} and~\ref{homotopyclass}    by calculating first the homology class, and  then the $\ep$-homotopy class of the Lagrangian immersion $\NAT_s\to (P^*)^-\times P^*$.
 Theorem~\ref{thmB} is deduced from Theorem~\ref{thmA} in Section~\ref{sec:corr}.

\medskip

The second, less formal part of the article begins with Section~\ref{sec:PillHom_n_quilts}. We outline some consequences of Theorems~\ref{thmA} and~\ref{thmB} in the context of immersed Lagrangian Floer homology,  Floer field theory, the wrapped Fukaya category of the 4 punctured 2-sphere, and Khovanov homology.

Section~\ref{sec:PillHom_n_quilts} focuses on the explicit example of a tangle decomposition of the $(4,5)$ torus knot, and explores how Theorem~\ref{thmA}, combined with recent advances in immersed Floer theory, shed light on the Lagrangian Floer counterpart to Kronheimer-Mrowka's singular instanton  homology $I^\nat(\link)$.  In particular we outline how composition with the earring tangle necessitates the introduction of bounding cochains, and we describe how the approach of  Bottman-Wehrheim~\cite{BW}  gives a recipe, in terms of {\em figure eight bubbles}, which   produces the appropriate bounding cochain associated to composition with the genus 3 surface $\NAT_s$ of Theorem~\ref{thmA}.

In Section~\ref{sec:future_directions} we extrapolate from this example and outline some conjectures and future directions to which our main result points. We address the problem of building a well-defined Lagrangian Floer theory for decompositions of tangles along 4-punctured spheres (pillowcase homology) by incorporating bounding cochains.   We discuss an alternative  {\em bordered}  perspective  which aims to align, in the instanton case,   with the recent geometric interpretations of bordered Heegaard Floer and Khovanov homology  by Hanselman-Rasmussen-Watson~\cite{HRW} and Kotelskiy-Watson-Zibrowius~\cite{KWZ}.

Finally Appendix~\ref{Appendix} reviews, for the convenience of the reader,  the basic notions of  generalized Lagrangian correspondences, quilts, composition of correspondences, curved $A_\infty$ categories and the emergence of bounding cochains in the immersed setting.


\subsection*{Acknowledgments} The authors thank N. Bottman, A. Daemi, K. Fukaya, M. Hedden, L. Jeffrey,  and C. Woodward  for helpful comments. The third author thanks E. Klassen for introducing him to these topics.  We also thank the referee for their careful reading and valuable suggestions.

\section{Preliminaries}\label{prelim}
\subsection{Holonomy perturbed traceless SU(2) character varieties}

\subsubsection{Tangles}
A \emph{tangle} is a pair $(Y,\link)$, consisting of a compact oriented 3-manifold with boundary $Y$ and $\link\subset Y$ a smoothly and properly embedded compact 1-manifold (which may have closed components).  A link $\link$ with $2n$ boundary  points is called an $n$-tangle.   A tangle in the 3-ball is called the {\em trivial}  tangle if it is homeomorphic the inclusion of $n$ vertical segments.  Three views of the trivial 2-tangle are given in the middle of Figure~\ref{fig:basic_lagrangians}.

\begin{wrapfigure}{r}{0.17\textwidth}
\vspace{-0.5cm}
\centering
\includegraphics[width=0.17\textwidth]{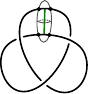}
\caption{}
\label{arc_svk}
\end{wrapfigure} 
A {\em 2-tangle decomposition of a link $(S^3,\link)$} refers to a  decomposition of the link along a 4-punctured 2-sphere $(S^2,4)$:
\begin{equation}\label{SVK} (S^3,\link)=(D^3,T_1) \cup_{(S^2,4)} (D^3,T_2). \end{equation}
Examples of such decompositions can be produced for any link $(S^3,\link)$   by taking $(D^3, T_2)$ to be a neighborhood of any embedded arc which meets $\link$ (transversely) only at its endpoints, and setting $(D^3, T_1)$ to be the complementary tangle. A 2-tangle decomposition of  the trefoil knot is depicted in Figure~\ref{arc_svk}.

More generally, one can view a tangle $(Y,T)$ as a morphism in a $(2,0)+1$ dimensional cobordism category whose objects are pairs $(F,k)$ consisting of an oriented surface $S$ and a choice of $k$ marked points on $S$, for example, $(S^2,4)$. A morphism  from $(F^\rout,k^\rout)$ to $(F^\rin,k^\rin)$ is a \emph{tangle cobordism}, i.e., a tangle $(Y,T)$ such that $\partial (Y,T)=(-F^\rout,k^\rout)\cup (F^\rin,k^\rin)$. For instance, a morphism from the empty pair $(\emptyset,\emptyset)$ to itself is a link in a closed 3-manifold. The cobordism category might be enhanced with auxiliary data, for example, a choice of base point on the link, or the addition of $w_2$ arcs (as explained  in Section~\ref{w2}).

\subsubsection{Traceless-SU(2)-character-variety functor}

The term {\em $SU(2)$ character variety} refers to  the contravariant functor which takes a discrete  group $\Gamma$ to the space of conjugacy classes of $SU(2)$ representations of $\Gamma$. 
Taking $\Gamma$ to be the fundamental group defines the character variety functor on the category of  based topological spaces.   
This notion extends to codimension two pairs of manifolds via traceless representations, as we next explain.

\medskip

Identify $SU(2)$ with the group of unit quaternions, and its Lie algebra $su(2)$ with the vector space of purely imaginary quaternions.  Equip $su(2)$ with the positive definite inner product $-\Real(vw)$.    Let $\Ima:SU(2)\to su(2)$ denote the projection to the imaginary part: $\Ima(g)=g-\Real(g)$. Denote the 2-sphere of purely imaginary unit quaternions by $\tracelessTwoSphere$.  Equivalently,
 \begin{equation} \label{Sigma}
 \begin{split}
 \tracelessTwoSphere&= SU(2)\cap su(2)\\
 &=  \{g\in SU(2)~|~ \Real(g)=0\}    \\
 &=  \{v\in su(2)~|~ \|v\|^2=1\}    \\
 &=   \{g\in SU(2)~|~ g^2=-1\}      \\
 &=  \{ g\in SU(2) ~|~ g \mbox{ is conjugate to  $\bbi$}\}
 \end{split}
 \end{equation}
 
 \begin{definition}\label{traceless} Given a discrete group $\groupPrelims$ and a subset  $M \subset \groupPrelims$, define the {\em set of traceless representations of $\groupPrelims$} to be
$$\widetilde{R}(\groupPrelims,M) \coloneqq \{ \rho: \groupPrelims\to SU(2) ~|~ \rho(m)\in \tracelessTwoSphere \text{ for all } m\in M \}.$$   
The group $SU(2)$ acts via pointwise conjugation on $\widetilde{R}(\groupPrelims,M) $.  Define the {\em traceless character variety of $(\groupPrelims,M)$} to be the orbit space, 
$$R(\groupPrelims,M) \coloneqq \widetilde{R}(\groupPrelims,M) / SU(2).$$
Given a codimension two pair of manifolds $(Y,\link)$, define its {\em traceless character variety} by 
$$R(Y,\link)\coloneqq R(\pi_1(Y\setminus \link), M),$$
where $M$ is the set  of {\em meridians} of $\link$  in $\pi_1(Y\setminus \link)$. 
 \end{definition}
A set of $n$ generators for a group $\groupPrelims$ defines an embedding 
$$\widetilde{R}(\groupPrelims,M) \subset \Hom(\groupPrelims,SU(2))\subset SU(2)^n,$$ 
by evaluating a homomorphism on the generators. This exhibits $\widetilde{R}(\groupPrelims,M)$ as a topological space as well as an affine real-algebraic set.    A different choice of generators determines an isomorphic algebraic set.   As the  image of an algebraic set under an algebraic map, the orbit space of conjugation action ${R}(\groupPrelims,M)$ is a real semi-algebraic set
\cite[Theorem 2.2.1]{MR1659509}.

Clearly $R$ is functorial\footnote{Note that when applying the fundamental group functor, the surfaces should carry a basepoint, and tangles should contain a distinguished arc connecting those basepoints. This well-known subtlety is suppressed in this article, as our main calculation uses the Wirtinger presentation, which implicitly includes choices of distinguished basepoints and arcs.} 
with respect to inclusion of boundary of tangles $$\partial (Y,\link) \hookrightarrow (Y,\link)  \implies R(\partial (Y,\link)) \leftarrow R(Y,\link)$$
  Roughly speaking, the functor $R$ takes 2-manifolds to  symplectic manifolds, and tangles to  Lagrangian submanifolds.  More specifically, the work of Atiyah-Bott and Goldman
\cite{Atiyah_Bott,Goldman} shows that for a {\em closed} surface $S$, the top smooth stratum of $R(S)=R(S,\emptyset)$ admits a symplectic structure which corresponds, via Weil's  identification of the (Zariski) tangent space $T_\rho R(S )$ with $H^1(S;su(2)_{\text{ad}   \rho})$, to the intersection pairing. Poincar\'e-Lefschetz duality shows that if a  compact 3-manifold $Y$ has boundary the closed surface $S$, then the differential of the restriction map $  R(Y ) \to R(S) $ takes each tangent space $T_\rho R(Y)$ to a Lagrangian subspace of $T_\rho R(S)$.   With care, these two facts extend to  punctured surfaces $(S,F)$ and tangles $(Y,\link)$, as one can see from many different points of view  (\cite{MR1078014, MR1460627}).
One approach is to use a symplectic reduction argument to pass from the Lagrangian immersion  ($U$ denotes a small tubular neighborhood)
  $$R(Y\setminus U(\link))\to R(\partial(Y\setminus U(\link)))$$ to its symplectic quotient 
  $$R(Y,\link)=R(Y\setminus U(\link)) ~{/\mkern-6mu/} ~T^n \to R(\partial (Y,\link))=R(\partial(Y\setminus U(\link)))  ~{/\mkern-6mu/} ~ T^n$$ 
  using a Hamiltonian torus action defined in terms of the meridians of the tangle components.  The details for $\partial Y=S^2$, can be found in Theorem 4.4 and Corollary 4.5 of~\cite{HK}.  An alternative approach, which applies to any tangle in a compact 3-manifold with boundary, can be found in~\cite{CHK}.

\subsubsection{Character varieties and flat moduli spaces} 
The assignment of  holonomy to a smooth connection in a principal $G$ bundle $P$ over a manifold $X$ descends to a homeomorphism from  the moduli space of flat connections in $P\to X$  to a union of path components of the $G$ character variety of $X$.   In particular, the holonomy determines  a homeomorphism (and even analytic isomorphism, properly interpreted) between Kronheimer-Mrowka's moduli space of singular flat connections on $(Y,\link)$ and the corresponding traceless character variety $R(Y,\link)$, as well as variants defined using nontrivial $SO(3)$ bundles, touched upon in Section \ref{w2}  (see~\cite{KM_sing_connections} and  \cite[Section~7]{HHK1}).

In brief \begin{equation*}
\text{hol} \colon \cM(Y,\link ) \cong  R(Y,\link).
\end{equation*}
We do not make use of the details of this identification in this article beyond the fact that smoothness  of the perturbed flat moduli space at a connection is equivalent to smoothness of the perturbed character variety at its corresponding holonomy,  a consequence of the Kuranishi method~\cite{MR0176496,MR1883043}. We henceforth work  entirely with character varieties rather than flat moduli spaces.

\subsubsection{Holonomy perturbed representations} \label{subsubsec:perturbations_prelims}

\begin{definition}\label{def:pert}
 Given a group $\groupPrelims$,  a pair  of    elements $\pertCurveInPreliminaries , \lambda_\pertCurveInPreliminaries\in \groupPrelims$, and a real number $s$,   a representation  $\rho:\groupPrelims\to SU(2)$ is said to {\em satisfy the perturbation condition for the pair $(\pertCurveInPreliminaries , \lambda_\pertCurveInPreliminaries)$} provided 
 $$\rho(\pertCurveInPreliminaries )=e^{s\Ima(\rho(\lambda_\pertCurveInPreliminaries))}.$$ 
 More generally, given a finite list of pairs $(\pertCurveInPreliminaries_1,\lambda_{\pertCurveInPreliminaries_1}),\ldots, (\pertCurveInPreliminaries_n,\lambda_{\pertCurveInPreliminaries_n})$,  we say $\rho$ satisfies the perturbation condition if $\rho$ satisfies it for each pair in the list. 
\end{definition}

Note that the perturbation condition is preserved by conjugation, and implies that  $\rho(\pertCurveInPreliminaries )$ and $\rho(\lambda_\pertCurveInPreliminaries)$ commute. Also, when $s=0$, a representation  satisfies the perturbation condition with respect to to pairs $(\pertCurveInPreliminaries_1,\lambda_{\pertCurveInPreliminaries_1}), \ldots, (\pertCurveInPreliminaries_n,\lambda_{\pertCurveInPreliminaries_n})$ if and only if it sends each $\pertCurveInPreliminaries_i$ to the identity. 

When $(Y,\link)$ is a 3-manifold containing a tangle $\link$, such pairs $(\pertCurveInPreliminaries_i ,\lambda_{\pertCurveInPreliminaries_i})$ are produced starting with an embedding of a disjoint union of solid tori $\pertSolidToriInPreliminaries_i:S^1\times D^2\to Y\setminus \link$.   Set $\pertCurveInPreliminaries_i=\pertSolidToriInPreliminaries_i(\{1\}\times\partial D^2)$  and $\lambda_{\pertCurveInPreliminaries_i}=\pertSolidToriInPreliminaries_i(S^1\times\{1\})$ in $\pi_1(Y\setminus(\link\cup_i   \pertSolidToriInPreliminaries_i(S^1\times D^2))$. 
 The choice of $\pertSolidToriInPreliminaries_i$ and $s\in \RR$ is called {\em perturbation data}, and denoted here by $\pi$. The   real semi-analytic set of conjugacy classes of holonomy perturbed traceless representations of $(Y,\link)$ is denoted by $R_\pi(Y,\link)$. When $Y=D^3$ or $S^3$, we abbreviate this to $R_\pi(\link)$ for convenience.

\subsubsection{The second Stiefel-Whitney class and $w_2$-arcs}\label{w2}

\begin{wrapfigure}{r}{0.17\textwidth}
\vspace{-0.5cm}
\centering
\includegraphics[width=0.17\textwidth]{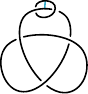}
\caption{}
\label{fig:earring}
\vspace{-0.3cm}
\end{wrapfigure}

Given a tangle $(Y,\link)$ and a point $p\in \link$, an \emph{earring} is a meridian $E$ around the point $p$, together with an arc $W$ connecting $p$ and $\link$. Figure~\ref{fig:earring} illustrates the trefoil knot with an earring placed on it. In~\cite{KM}, the earring determines a nontrivial $SO(3)$ bundle on which the gauge theory takes place. We choose to work in $SU(2)$ rather than $SO(3)$, as the notation and geometry are more familiar. 
To this end we introduce the following notion  (an alternative approach using projective representations is explained in~\cite{PraSav}).  

A {\em set of $w_2$-arcs for a tangle $(Y,\link)$} is a smoothly embedded  disjoint union of arcs  $W$ in the interior of $Y$,  whose interior misses $\link$ and whose boundary points map to $\link$, transversely.   
\begin{definition}
Given a tangle $(Y,\link)$ and a set of $w_2$-arcs W, define the \emph{traceless character variety of $(Y,\link)$ satisfying the $w_2$-condition} to be 
$$
R(Y,\link,W)= \left\{
\substack{  \displaystyle
\rho: \pi_1(Y\setminus (\link \cup W))\to SU(2) ~|~ \rho(m_i)\in \tracelessTwoSphere,~ \rho(w_j)=-1, \\\displaystyle
\text{ where } m_i \text{ are meridians of }\link \text{ and } w_j \text{ are meridians of }W  
}
\right\} \Big/ \text{conjugation}
$$
Given the necessary data, the perturbed version $R_\pi(Y,\link,W)$ is defined according to Definition~\ref{def:pert}.
\end{definition}

Suppose a 2-tangle $(Y,\link)$ is equipped with a basepoint.   Place an earring $(E,W)$, and denote the traceless character variety of $(Y,\link \cup E)$ satisfying the $w_2$-condition with respect to $W$ by
\begin{equation}\label{defnat} R^\nat(Y,\link) \coloneqq R(Y,\link \cup E, W).\end{equation}
For example, the space $R^\nat(S^3,U)$ for an unknot is a single regular point~\cite[Proposition~4.1]{KM}.

\subsection{The 4-punctured sphere  and the pillowcase}

\subsubsection{The pillowcase}  \label{pillsect}
\begin{definition}\label{defnofpillow}
The {\em pillowcase} $P$ is the quotient of the torus $T$ by the {\em elliptic involution}, which we denote by $\iota:T\to T$: 
$$P \coloneqq  T/\iota.$$
\end{definition}  

In the coordinates $T=(\RR/2\pi \ZZ)^2$, the elliptic involution $\iota$ is given by 
\begin{equation}\label{ei}\iota(\gamma,\theta) = (-\gamma,-\theta).\end{equation} 
There are four fixed points, which we denote by 
$$C=\{(0,0), (0,\pi), (\pi, 0), (\pi,\pi)\},$$ 
and the quotient map $T\to P$ is a 2-fold branched cover, with the branch set $C$. We write $[\gamma,\theta] \in P$ for the equivalence class of $(\gamma,\theta) \in T$.  The fundamental domain is given by $\{(\gamma,\theta)\in [0,\pi]\times [0, 2\pi]\}$ (the middle of Figure~\ref{fig:pillowcase_coordinates}), and $P$ is homeomorphic to the 2-sphere (on the right in Figure~\ref{fig:pillowcase_coordinates}).  
We abuse notation slightly and identify $C$ with its image in $P$. When a distinction is needed, we refer to them as the {\em fixed points} in $T$ and the {\em corners} in $P$. Let 
$$P^* \coloneqq P\setminus C.$$

The pillowcase arises as the traceless character variety $R\FPS$ of the 4-punctured 2-sphere. Take as meridians  the four loops $a,b,e,f$ enclosing the punctures, oriented so that $\pi_1(S^2\setminus 4)=\langle a,b,e,f~|~ ba=ef\rangle,$ as illustrated on the left of Figure~\ref{fig:pillowcase_coordinates}. 

\begin{lemma}[\text{\cite[Proposition~3.1]{HHK1}}]\label{pillowcase} The traceless character variety of the pair $(S^2,4)$ is isomorphic to the pillowcase.  In fact, the composite of the map 
\begin{align*}
T=(\RR/2\pi\ZZ)^2 &\to \Hom(\pi_1(S^2\setminus 4),SU(2)) \\
(\gamma,\theta)& \mapsto \big(a\mapsto \bbi, ~b\mapsto e^{\gamma\bbk}\bbi,~ f\mapsto e^{\theta\bbk}\bbi\big)
\end{align*}
and the projection to conjugacy classes induces an isomorphism from the pillowcase $P=T/\iota$ to the traceless character variety $R \FPS$. \qed
\end{lemma}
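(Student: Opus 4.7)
The plan is to build an explicit map $\Psi \co T \to \widetilde R\FPS$ via the stated formula on generators, verify it is invariant under the elliptic involution $\iota$ up to conjugation, and then show the induced map $\bar\Psi \co P = T/\iota \to R\FPS$ is a continuous bijection; compactness of $P$ and Hausdorffness of $R\FPS$ (as the orbit space of the compact group $SU(2)$) will then upgrade $\bar\Psi$ to a homeomorphism. Well-definedness amounts to checking that the determined element $e = baf^{-1}$ is traceless: substituting $a = \bbi$, $b = \cos\gamma\,\bbi + \sin\gamma\,\bbj$, $f = \cos\theta\,\bbi + \sin\theta\,\bbj$, a direct quaternion calculation yields $e = -\cos(\gamma+\theta)\,\bbi - \sin(\gamma+\theta)\,\bbj \in \tracelessTwoSphere$. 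For $\iota$-invariance, conjugation by the unit quaternion $\bbi$ fixes $\bbi$ and sends $\bbj \mapsto -\bbj$, so it carries $\Psi(\gamma,\theta)$ to $\Psi(-\gamma,-\theta) = \Psi(\iota(\gamma,\theta))$.

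Next I would prove surjectivity of $\bar\Psi$. Given a traceless representation $\rho$, first conjugate globally so that $\rho(a) = \bbi$. The residual freedom is the circle stabilizer $S^1 = \{e^{t\bbi}\} \subset SU(2)$, which can be used to rotate $\rho(b)$ into the $\bbi\bbj$-plane, producing a parameter $\gamma$. The key observation is that after this normalization $\rho(f)$ is automatically forced into the $\bbi\bbj$-plane as well: writing $\rho(f) = \alpha\,\bbi + \beta\,\bbj + \delta\,\bbk$ and computing the real part of $baf^{-1}$ gives $\Real(baf^{-1}) = -\sin\gamma \cdot \delta$, so the traceless condition on $e$ implies $\delta = 0$ whenever $\sin\gamma \neq 0$. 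On the degenerate slice $\rho(b) = \pm \bbi$ the stabilizer enlarges to all of $S^1$, which provides enough freedom to rotate $\rho(f)$ into the $\bbi\bbj$-plane directly. In either case $\rho$ is conjugate to $\Psi(\gamma,\theta)$ for some $(\gamma,\theta)$.

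For injectivity modulo $\iota$, suppose $\Psi(\gamma_1,\theta_1)$ and $\Psi(\gamma_2,\theta_2)$ are conjugate by $g \in SU(2)$. Since both send $a \mapsto \bbi$, the element $g$ centralizes $\bbi$, so $g = e^{t\bbi}$, whose conjugation action on $\bbj$ is rotation by $2t$ in the $\bbj\bbk$-plane. Matching the $\bbk$-coefficient of $g\rho(b) g^{-1}$ to zero yields $\sin\gamma_1 \sin(2t) = 0$; away from $\sin\gamma_1 = 0$ this forces $g \in \{\pm 1, \pm \bbi\}$, giving either $(\gamma_2,\theta_2) = (\gamma_1,\theta_1)$ or $(\gamma_2,\theta_2) = (-\gamma_1,-\theta_1)$. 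The degenerate slice is handled by the analogous argument applied to $\rho(f)$. I expect the main technical point to be the coordinated case analysis at the slices where the stabilizer jumps (the edges $\sin\gamma = 0$ and the corners $\sin\gamma = \sin\theta = 0$), which are precisely the $\iota$-fixed points in $T$ and correspond to the corners of the pillowcase; these are dealt with by exploiting the enlarged stabilizer to carry out the normalization directly in each case.
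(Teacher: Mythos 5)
Your proof is correct and reproduces the standard partial-gauge-fixing argument from \cite[Proposition~3.1]{HHK1}; the paper itself defers the proof to that citation, so there is no internal proof to compare against. The strategy of conjugating so that $\rho(a)=\bbi$, using the residual stabilizer $S^1=\Stab(\bbi)$ to rotate $\rho(b)$ into the $(\bbi,\bbj)$-plane, and then observing that the traceless condition on $e=baf^{-1}$ automatically forces $\rho(f)$ into the same plane when $\sin\gamma\neq 0$ is exactly what \cite{HHK1} does.

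Two small remarks. Your explicit expression for $e$ has the wrong overall sign: with $ba=-\cos\gamma-\sin\gamma\,\bbk$ and $f^{-1}=-\cos\theta\,\bbi-\sin\theta\,\bbj$ one gets $e=\cos(\gamma+\theta)\,\bbi+\sin(\gamma+\theta)\,\bbj$; this is harmless since only $\Real(e)=0$ is used. Second, in the injectivity step the corner case $\sin\gamma_1=\sin\theta_1=0$, where both constraints $\sin\gamma_1\sin 2t=0$ and its $\theta$-analogue are vacuous and the full circle $S^1$ acts, should be treated explicitly: there the representation is central, central representations are conjugate if and only if they are equal, and these four parameter values are precisely the $\iota$-fixed points, so injectivity modulo $\iota$ holds trivially. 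You gesture at this in your closing sentence, but it is worth spelling out. Neither point is a genuine gap.
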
 
\begin{figure}[ht]
\labellist 
\pinlabel $a$ at 200 180
\pinlabel $b$ at 85 200
\pinlabel $f$ at 205 100
\pinlabel $e$ at 85 70
\pinlabel $\gamma$ at 523 0
\pinlabel $\theta$ at 375 280
\pinlabel {\scriptsize$ 2\pi$} at 360 253
\pinlabel {\scriptsize$ \pi$} at 366 130
\pinlabel {\scriptsize$0$} at 370 0
\pinlabel {\scriptsize$\pi$} at 497 -7
\pinlabel $R(-)$ at 298 150
\pinlabel $(S^2,4)$ at 110 0
\pinlabel $\text{The pillowcase }P$ at 665 40
\endlabellist
\centering
\includegraphics[width=0.8\textwidth]{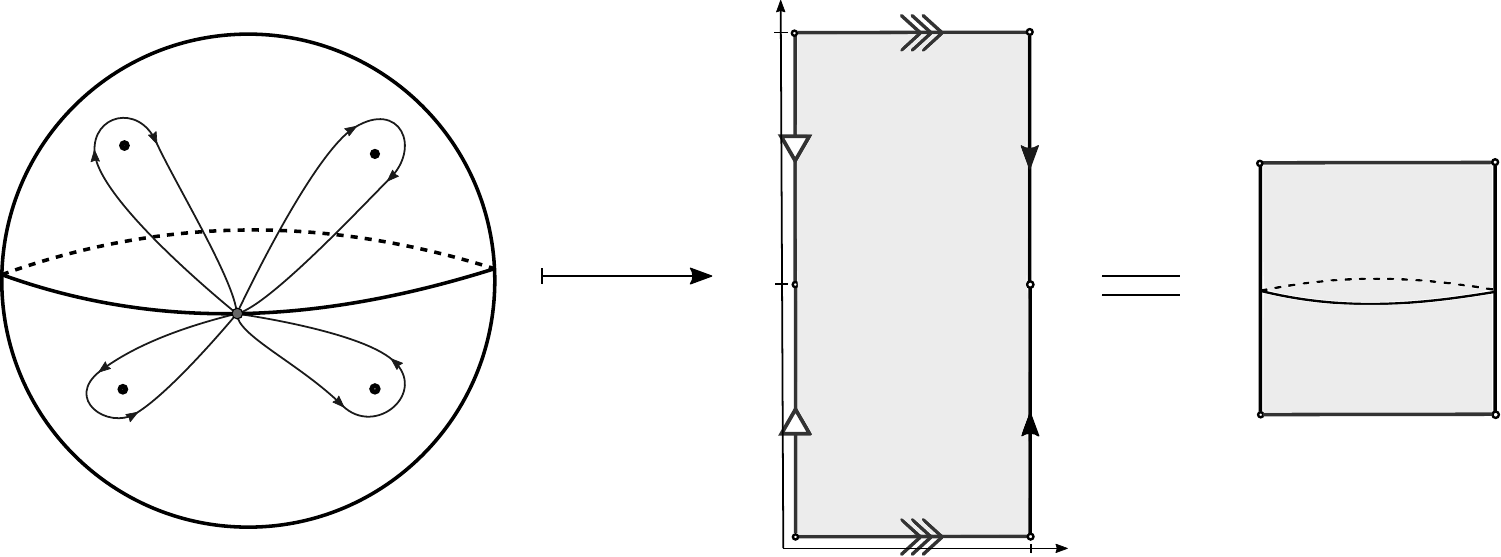}
\caption{Pillowcase coordinates used in this article: $(\gamma,\theta)\in \RR^2$ determines $\rho\in R(S^2,4)$ by $\rho(a)=\bbi, \rho(b)= e^{\gamma\bbk}\bbi, ~ \rho(f)= e^{\theta\bbk}\bbi $.}
\label{fig:pillowcase_coordinates}
\end{figure}
We call the pair $[\gamma,\theta]\in (\RR/2\pi\ZZ)^2/\iota$   {\em pillowcase coordinates} for a point in $P$. 
Endow the  top stratum $P^*$  with the Atiyah-Bott-Goldman symplectic form \cite{MR1460627, HK}. 
\begin{proposition}  The symplectic structure on $P^*$ is a constant multiple of the  one that descends from the standard symplectic structure $d\gamma \wedge d\theta$ on the torus. 
\end{proposition}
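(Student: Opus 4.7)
Since $P^*$ is 2-dimensional, both symplectic forms are nowhere-vanishing volume forms and differ by a smooth nowhere-zero function on $P^*$; the plan is to show this function is constant by computing the Atiyah-Bott-Goldman form explicitly at an arbitrary smooth point $[\rho_{\gamma,\theta}] \in P^*$.

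First, I would use Weil's identification $T_{[\rho]}R(S^2,4) \cong H^1(\pi_1(S^2\setminus 4);~su(2)_{\text{ad}\,\rho})$ to represent the coordinate tangent vectors $\partial_\gamma, \partial_\theta$ by explicit $1$-cocycles $u_\gamma, u_\theta$, obtained by differentiating the family of representations of Lemma~\ref{pillowcase}. With the convention $u(x)=\dot\rho(x)\rho(x)^{-1}$, one finds $u_\gamma(a)=u_\gamma(f)=0$ and $u_\gamma(b)=\bbk$; the cocycle identity applied to the relation $ba=ef$ then forces $u_\gamma(e)=\bbk$. Likewise, $u_\theta(a)=u_\theta(b)=0$, $u_\theta(f)=\bbk$, and $u_\theta(e)=-\mathrm{Ad}_{\rho(e)}(\bbk)$.

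Next, I would apply the cup-product description of the ABG form on punctured surfaces (following Goldman~\cite{MR1460627}, or equivalently the symplectic-quotient construction reviewed in \cite{HK}) to evaluate $\omega_{ABG}(\partial_\gamma, \partial_\theta)$ at $[\rho_{\gamma,\theta}]$. Under that formula the pairing reduces to a finite sum of Killing-form pairings $\langle u_\gamma(x), u_\theta(y)\rangle$ across the generators, and $\mathrm{Ad}$-invariance of the Killing form combined with the explicit cocycle values above shows every contribution is independent of $(\gamma,\theta)$. Comparing the resulting constant with $(d\gamma\wedge d\theta)(\partial_\gamma, \partial_\theta)=1$ then identifies the ratio of the two forms.

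The main obstacle is keeping the cup-product bookkeeping honest --- fixing orientations, signs of boundary terms, and checking that the answer is independent of the coboundary ambiguity in $u_\gamma, u_\theta$. A conceptually cleaner alternative is to use the double branched cover $\pi\co T\to S^2$ branched at the four punctures: a traceless representation of $\pi_1(S^2\setminus 4)$ pulls back to a representation of $\pi_1(T\setminus \pi^{-1}(4))$ which sends each puncture-meridian to $-1$, hence descends to a representation of $\pi_1(T)=\ZZ^2$; this gives an isomorphism of the top strata $R(S^2,4)^\ast \cong R(T)^\ast$ in pillowcase coordinates $(\gamma, \theta)$. On $R(T)=T/\iota$ the ABG form is manifestly, up to a universal constant, the intersection pairing on $H^1(T;\RR)$, which equals $d\gamma\wedge d\theta$ in these coordinates; pullback under $\pi$ scales symplectic volumes by the degree of the cover and so only changes the overall constant.
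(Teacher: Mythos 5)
The paper does not actually prove this proposition; it defers to the reference \cite{CHKK}, so there is no in-text proof to compare against. Judged on its own, your first approach --- computing the Atiyah-Bott-Goldman form via Weil's identification and a cup-product formula --- is the natural one and your cocycle computations $u_\gamma(a)=u_\gamma(f)=0$, $u_\gamma(b)=u_\gamma(e)=\bbk$, $u_\theta(a)=u_\theta(b)=0$, $u_\theta(f)=\bbk$, $u_\theta(e)=-\mathrm{Ad}_{\rho(e)}(\bbk)$ are correct. But the claim that ``$\mathrm{Ad}$-invariance of the Killing form combined with the explicit cocycle values shows every contribution is independent of $(\gamma,\theta)$'' is not justified as written: the group-cohomological cup product for the one-relator group $\langle a,b,e,f\mid ba\bff\be\rangle$ introduces factors of the form $\mathrm{Ad}_{\rho(w)}$ for partial subwords $w$ of the relator, and those operators genuinely depend on $(\gamma,\theta)$. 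Constancy of the total pairing relies on cancellations that are not automatic; you still have to carry out the bookkeeping you are deferring. (It does work out, but that is exactly the content of the proposition.)

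Your alternative via the branched double cover contains a genuine error in the step ``sends each puncture-meridian to $-1$, hence descends to a representation of $\pi_1(T)=\ZZ^2$.'' Since $-1\ne 1$ in $SU(2)$, a representation of $\pi_1(T\setminus\pi^{-1}(4))$ with meridians going to $-1$ does \emph{not} descend to $\pi_1(T)$. What is true is that the surface relation $[\alpha,\beta]\,\tilde m_1\cdots\tilde m_4=1$ forces $[\rho(\alpha),\rho(\beta)]=(-1)^{-4}=1$, so the restriction to $\langle\alpha,\beta\rangle$ factors through $\ZZ^2$; alternatively, pass to $SO(3)$, where $-1$ becomes trivial and the pullback really does descend. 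Either fix gives the desired identification of top strata, and the Goldman form on the $\ZZ^2$ character variety is indeed manifestly proportional to $d\gamma\wedge d\theta$. However, the assertion that ``pullback under $\pi$ scales symplectic volumes by the degree of the cover'' glosses over the fact that one is comparing a \emph{parabolic} cohomology cup product on the 4-punctured sphere with an ordinary cup product on the closed torus; making the constant-multiple statement rigorous requires tracking the transfer map between these cohomology groups rather than a naive degree count. The route is sound, but it is not a one-line shortcut.
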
 
See  \cite{CHK} for a proof.

\subsubsection{Traceless character varieties of 2-tangles as objects in the 
Fukaya category of the pillowcase}\label{TCV}
 
We single out two subfamilies of connected immersed curves in the pillowcase.  Fix a small $\delta>0$.
\begin{definition}\label{loops/arcs}
An immersed circle   $L:S^1\to P^*$  is called {\em $\delta$-loop-type} if the curve is an immersed circle  which misses the neighborhood $U_\delta(C)$.
The set of all $\delta$-loop-type curves is denoted by  $\allLoopImmersions{\delta}$.

A continuous map of an interval $L:[0,1]\to P$ is called {\em  $\delta$-arc-type} if   
\begin{enumerate}
\item $L(0), L(1)\in C$, and  $L(t)\not \in C$ for $0<t<1$,
\item there exist $0<a<b<1$ so that the restriction of $L$ to $[a,b]$ maps to $P\setminus U_\delta(C)$ and the restrictions of $L$ to $[0,a)$ and $(b,1]$ are embeddings into $U_\delta(C)$, and
\item $L$ admits a smooth immersed lift $\tilde L:[0,1]\to T$ (in particular $\tilde L'(0)\ne 0$ and $\tilde L'(1)\ne 0$).
\end{enumerate}
The set of all $\delta$-arc-type curves is denoted by
 $
 \allArcImmersions{\delta}.$

Call a curve {\em loop-type} (resp.~ {\em arc-type}) if there exists a $\delta>0$ so that the curve is $\delta$-loop-type (resp.~ $\delta$-arc-type). We remark that this terminology differs from the one in~\cite{HRW}, where loop-type curves are compact curves with the trivial local system.
 \end{definition}
The arcs $A_0,A_1,A_2$ in Figure~\ref{pillowgensfig}  lie in $\allArcImmersions{\delta}$ for any $\delta$.

\medskip

The importance of $\delta$-arc and $\delta$-loop-type curves for us stems from the following theorem, proved in~\cite{HK}.
\begin{theorem} \label{thm:immersed_curve_invariant}  Let $Y$ be a homology 3-ball, and $(Y,T)$ be a tangle with boundary $\FPS$. Then, for any sufficiently small $\delta>0$,   there exists a collection of disjoint embedded solid tori $N_i:D^2\times S^1 \to Y\setminus T$ and arbitrarily small holonomy perturbation data $\pi$ using these perturbation curves such that the perturbed traceless character variety $R_\pi(Y,T)$ and its restriction to the pillowcase is a finite union of $\delta$-arc-type and $\delta$-loop-type immersed Lagrangians.  Endpoints of arc components correspond bijectively to reducible traceless representations. 
\end{theorem}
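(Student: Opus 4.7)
The plan is to break the proof into three stages: (i) identify the reducible locus and its image in $P$, (ii) invoke a transversality argument to make the perturbed character variety a smooth $1$-manifold that meets $P^*$ in a Lagrangian immersion, and (iii) analyze the local Kuranishi model at each reducible to see the required $\delta$-arc/loop structure.

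First I would use that $Y$ is a $\ZZ$-homology $3$-ball to classify the reducibles in $R(Y,T)$: any reducible representation has image in a maximal torus $S^1\subset SU(2)$, and since $H_1(Y\setminus T)\cong\ZZ^{c(T)}$ (where $c(T)$ is the number of components of $T$), the traceless condition forces meridians of each component to land in $\{\pm\bbi\}$, giving a finite set of conjugacy classes. These are permuted/linked by the strand identifications imposed by $T$, but in any case form a finite set $R^{\mathrm{red}}(Y,T)$, and the restriction to $R\FPS=P$ sends each reducible to one of the four corners $C\subset P$. This is the only locus where the character variety can fail to be smooth or to restrict to $P^*$, since irreducible traceless representations of $(S^2,4)$ land in $P^*$.

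Next I would construct the perturbation data. Choose a collection of disjointly embedded solid tori $N_i\co D^2\times S^1\to Y\setminus T$ whose core curves, together with the meridians of $T$, generate $\pi_1(Y\setminus T)$ sufficiently to span the relevant obstruction spaces $H^1(Y\setminus T;su(2)_{\mathrm{ad}\,\rho})$ at every irreducible $\rho$. A Sard--Smale argument of the type used by Taubes \cite{Taubes} and Herald shows that for generic arbitrarily small perturbation data $\pi$ supported along these curves, the perturbed character variety $R_\pi(Y,T)$ is a smooth compact $1$-manifold away from reducibles, and the restriction $R_\pi(Y,T)\to P$ is an immersion into $P^*$ at every irreducible. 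The Atiyah--Bott--Goldman/Poincar\'e--Lefschetz duality argument (applied with boundary $(S^2,4)$ as in \cite{HK}) upgrades this immersion to a Lagrangian immersion with respect to the symplectic form on $P^*$.

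For the behavior at reducibles, I would work in the Kuranishi model. At a reducible $\rho_0$ with stabilizer $S^1$, the equivariant normal cone before perturbation is a union of $S^1$-orbits parametrized by a holomorphic quadratic obstruction map; a standard computation (as in \cite{HK}) shows that a generic small holonomy perturbation replaces this cone with two transverse smooth arcs meeting at $\rho_0$, whose images in $P$ emerge from the corresponding corner with non-zero linearly independent tangent directions. Shrinking the perturbation scale $s$ keeps these two arcs short and nearly linear near the corner, producing the $\delta$-arc-type behavior of Definition~\ref{loops/arcs}; the bijection between arc endpoints and reducibles is then automatic. Finally, compactness of $R_\pi(Y,T)$ yields finitely many connected components, each being a circle (necessarily $\delta$-loop-type, after further shrinking $\pi$ if needed to push interior points out of $U_\delta(C)$) or an arc joining two reducibles.

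The main obstacle I expect is the local Kuranishi analysis at reducibles: one has to verify both that the perturbation terms actually switch on at each reducible (a non-vanishing obstruction calculation in $H^2$), and that the resulting two local arcs project to $P$ with tangent vectors transverse to the corner. Choosing the specific core curves of the $N_i$ to ensure this, while keeping them disjoint from each other and from $T$, is the delicate step; everything else is a standard transversality and compactness argument.
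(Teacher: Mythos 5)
The paper does not supply a proof of this theorem at all: it is stated with the parenthetical ``proved in~\cite{HK},'' and the cited reference is where the argument lives. So there is no in-paper proof to compare against; I can only assess your sketch on its own merits.

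Your overall three-stage plan (reducibles are finite and land in $C$; Sard--Smale transversality produces a smooth Lagrangian immersion away from them; Kuranishi analysis at each reducible gives the $\delta$-arc shape) is the natural route and matches the spirit of the framework developed in~\cite{Herald,HHK1,HHK2}. However, your stage (iii) contains a concrete error that directly contradicts the statement you are proving. You claim that a generic small perturbation ``replaces this cone with \emph{two} transverse smooth arcs meeting at $\rho_0$.'' If two arc-ends emanated from each reducible, the reducibles would fail to correspond \emph{bijectively} to arc endpoints; they would each count twice (or, if the two branches join smoothly, $\rho_0$ would be an interior point of a circle and would not be an arc endpoint at all). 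The local model the theorem actually requires at a reducible with $U(1)$ stabilizer is a single half-arc: the slice for the reduced stabilizer is a $\CC/U(1)=[0,\infty)$-type quotient, and a generic perturbation deforms the (possibly singular) normal cone so that the one-dimensional perturbed locus, in the quotient, is a half-line terminating at $\rho_0$. Your ``two branches'' picture is imported from the closed-surface or $2$-dimensional character variety setting and does not apply here.

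A second point that deserves more care is the assertion that ``irreducible traceless representations of $(S^2,4)$ land in $P^*$'' and therefore ``reducibles are the only locus where the variety can fail to restrict to $P^*$.'' An irreducible $\rho:\pi_1(Y\setminus T)\to SU(2)$ can perfectly well restrict to an abelian representation of $\pi_1(S^2\setminus 4)$, sending the boundary to a corner. Part of the content of the theorem is that, for a generic choice of the small perturbation $\pi$, this does \emph{not} happen: the map $R_\pi(Y,T)\to P$ sends corners to corners only on the reducible locus. This is a transversality statement about the restriction map relative to the stratification of $P$, not an automatic consequence of irreducibility upstairs; the paper's Proposition~\ref{missTLC} explicitly leans on the theorem providing this, so you cannot assume it. Incorporating this into your Sard--Smale argument (by perturbing so that the restriction avoids the zero-dimensional singular stratum of $P$ except at reducibles) closes the gap; as written, your sketch asserts it without argument.
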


To put these in a broader context, the loop-type curves represent objects in the Fukaya $A_\infty$ category $\Fuk(P^*)$ of immersed compact Lagrangians in $P^*$, and the arc-type   curves define objects in the (larger) wrapped Fukaya $A_\infty$ category $\wrFuk(P^*)$   of immersed proper Lagrangians in $P^*$.  (These notions are reviewed in Section \ref{ssec:fukayacat}.)

\medskip

Descriptions of $R_\pi(D^3,T)$ and $R^\nat_\pi(D^3,T)$  are known for various 2-tangles.  
When $T$ is a trivial 2-tangle, $R(D^3,T)$ is a linear embedded arc joining 2 corners of the pillowcase; see Lemma~\ref{lem8.2} and \cite[Section~10]{HHK1}. For example, in our pillowcase coordinates, the character varieties for three trivial tangles are depicted on the left side of Figure~\ref{fig:basic_lagrangians}. The trivial tangles are homeomorphic, but as   \emph{bordered} tangles have different image in the character variety of the boundary.   
Theorem 7.1 of~\cite{HHK1} identifies $R^\nat_\pi(D^3,T)$ for the trivial 2-tangle as the corresponding figure eight curve in $P^*$; see the right part of Figure~\ref{fig:basic_lagrangians}. 
 The variety $R(D^3,T)$ is identified for tangles  coming from 2-tangle decompositions of torus knots, holonomy perturbed along one of the exceptional fibers in \cite{HHK2,FKP}. This includes one  particularly interesting example, explored in detail in Section~\ref{sec:fig8bubble}, associated to a 2-tangle decomposition of the $(4,5)$ torus knot.

\begin{example}\label{example3} 
Figure~\ref{A2fig} shows a trivial tangle $T$ in the 3-ball equipped with one $w_2$-arc, labeled $W$.  
The four meridians $a,b,e,f$ of $T$, together with the meridian $w$ of $W$, generate 
$\pi_1(D^3\setminus(T\cup W))$. 
The space $R(D^3,T,W)$ (conjugacy classes of traceless representations satisfying the $w_2$ conditions) is an arc. In fact, an explicit slice of the conjugation action is given by the path of representations
$$
[0,\pi]\ni \theta\mapsto  \big(a\mapsto \bbi, ~b\mapsto \bbi,~ f\mapsto e^{\theta\bbk}\bbi , w\mapsto-1  \big)
$$
which restricts to the linear arc $\gamma=0$ in the pillowcase, labeled as $A_2$ in Figure~\ref{pillowgensfig}.  
\end{example}

\begin{figure}[ht]
  \centering
  \begin{subfigure}{0.3\textwidth}
    \labellist 
    \pinlabel $\color{blue}W$ at 27 28
    \pinlabel $w$ at 31 22
    \pinlabel $a$ at 37 33
    \pinlabel $b$ at 13 33
    \pinlabel $e$ at 13 12
    \pinlabel $f$ at 38 13
    \endlabellist
    \centering
    \includegraphics[width=0.7\textwidth]{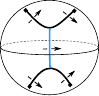} 
    \caption{}\label{A2fig}
  \end{subfigure}
  \begin{subfigure}{0.3\textwidth}
    \centering
    \includegraphics[width=0.7\textwidth]{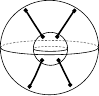} 
    \caption{}\label{fig:product_cobordism}
  \end{subfigure}
  \begin{subfigure}{0.3\textwidth}
    \centering
    \includegraphics[width=0.7\textwidth]{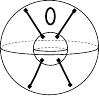} 
    \caption{}\label{fig:prod_circ}
  \end{subfigure}
  \caption{}
\end{figure}

\subsubsection{Cobordisms and the associated Lagrangian correspondences}\label{CLC}
 
The assignment of $SU(2)$ traceless character varieties $R(-)$ can be viewed as a first approximation of a \emph{Floer field theory}.    There are several ingredients which go into the setup of Floer field theory, and we review the main ideas in Section~\ref{sec:fft} and Appendix~\ref{Appendix}. 
The notions of Lagrangian correspondences and their composition 
(see Definitions~\ref{def:lagr_corr} and~\ref{def:compos})  play a key role in this article, starting from the following paragraph.

We give a few examples of Lagrangian correspondences induced by tangle cobordisms. 
The first example is provided by the product $(S^4, 4)\times [0,1]$, illustrated in Figure~\ref{fig:product_cobordism}. Since the inclusion of each boundary component induces a meridian preserving isomorphism, its traceless character variety serves as the identity (diagonal) correspondence $$P \xrightarrow{\triangle} P^- \times P.$$

A more complicated example, which is analyzed using explicit holonomy perturbations, is the correspondence obtained by adding a single split unknotted circle to the product cobordism, as depicted in Figure~\ref{fig:prod_circ}. The associated immersed Lagrangian correspondence is a trivial 2-fold cover of the diagonal~\cite[Corollary B.2]{HHHK}:
$$P\times S^0\xrightarrow{{\rm proj}_P}P\xrightarrow{\triangle} P^- \times P.$$
This induces the doubling correspondence on all Lagrangian immersions, which replaces any loop-type or  arc-type curve  by two copies of itself. From an algebraic point of view   (and taking gradings into consideration),  this correspondence takes the tensor product with the cohomology algebra $H^*(S^2)$ in the additive enlargement of the wrapped Fukaya category of the pillowcase. 

This calculation, combined with the fact (illustrated in Figures~\ref{fig:basic_lagrangians} and \ref{corrfig}) that the character varieties of the unoriented skein triple form an exact triple in the wrapped Fukaya category of the pillowcase~\cite[Lemma~5.4]{Abouzaid_Fuk_surface},  are the key ingredients behind the relationship between pillowcase homology and Khovanov homology described in~\cite{HHHK} and expanded and clarified in~\cite{KWZ}.

\begin{figure}[ht] 
\labellist
\pinlabel $R_\pi(-)$ at 550 220
\pinlabel $R^\nat_\pi(-)$ at 1210 220
\endlabellist
\centering
\includegraphics[width=0.9\textwidth]{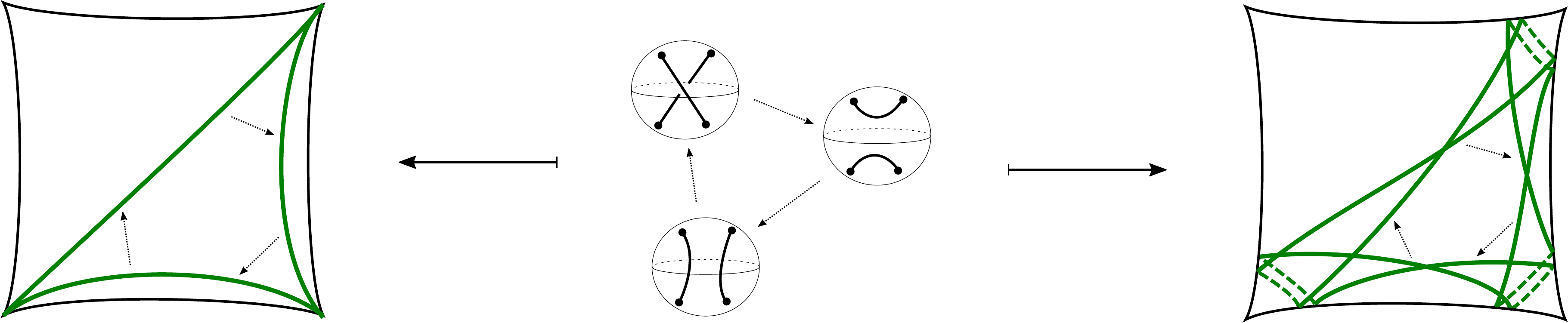}
\caption{Given a skein triple of tangles, both $R(-)$ and $R_\pi^\natural(-)$  produce an exact triple of Lagrangians\label{fig:basic_lagrangians}. (The pillowcase coordinates from Figure~\ref{fig:pillowcase_coordinates} are used.) }
\end{figure}

\medskip


 \section{The earring tangle}\label{earring_tangle}

 \begin{figure}[ht]
\begin{center}
\def\svgwidth{6in}
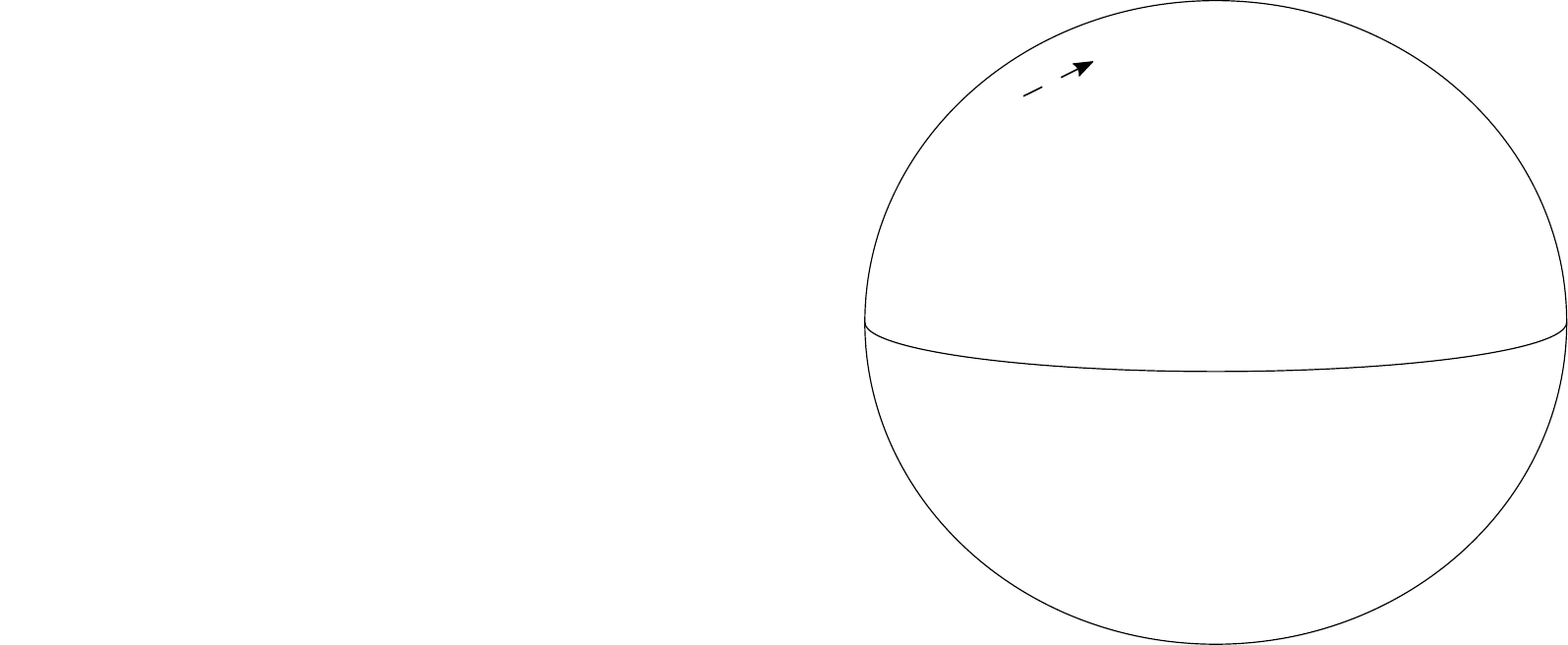
 \caption{The earring tangle cobordism on left. The right figure indicates two perturbation curves and labels the generators of the fundamental group.\label{cobordism3fig}  }
\end{center}
\end{figure}

Figure~\ref{cobordism3fig} on the left depicts the central character in this paper: the 3-manifold $Y=S^2\times I$ containing the four black arcs $\link$ and one circle $E$ which we call the {\em earring}, as described above. The pair $(Y,\link\cup E)$ has two boundary components $\partial(Y,\link\cup E)=\FPS^\rout \sqcup \FPS^\rin$. On the right side selected meridians of $\link$ are labeled by $a,b,c,d,e,f,g$. One of meridians of $E$ is denoted by $h$. A red and green pair of curves, with meridians labeled $p$ and $q$, are also shown. Denote   $Q_p,~Q_q:S^1\times D^2\to Y$  0-framed embeddings of the solid torus into a tubular neighborhood of these two curves. These are the {\em perturbation solid tori} along which we perform holonomy perturbations, as described in Section \ref{subsubsec:perturbations_prelims} above. We use the less precise terminology {\em perturbation curves}, with the implicit understanding that these are thickened to solid tori.  They give part of the data needed to deform flat moduli spaces, and hence play a  different role than the tangle $\link \cup E$.  
The  blue arc labeled $w$ is a $w_2$-arc, as defined in Section~\ref{w2}.

 Several considerations lead to this particular choice of perturbation curves.  To begin with,  they intersect any incompressible annulus, so the associated perturbed character varieties do not have   the same bending symmetry as the unperturbed character variety.  From a gauge theoretic point of view, the trace of the holonomy along each curve breaks the gluing parameter gauge symmetry of the Chern-Simons functional.  
A more subtle consideration is that  the {\em unperturbed} restriction to the boundary has the structure of a circle  bundle (away from the reducibles, see Example~\ref{ex:Rzero} below), and  the sum of the traces of the holonomy along the  two perturbation curves restricts to a Morse function with 2 critical points on each fiber of that bundle.    This is  a ``families'' analogue, for a 3-manifold with boundary,  of the process of choosing a perturbation function whose restriction to each Bott-Morse critical submanifold of the Chern-Simons functional to be a Morse function on that submanifold    (see, for example, \cite{bh}).  This choice of   perturbation curves was the simplest we found with the required properties.

\subsection{Fundamental group calculation}

The  Seifert-Van Kampen theorem and Wirtinger  presentation approach gives the following. Throughout this article, $\bar{g}$    indicates the inverse of a group element  $g$.

\begin{proposition}\label{wirtpato} With the notation $\lambda_p=bh$ and $\lambda_q=f\ba h$, a presentation of the fundamental group  of the complement 
$$\Pi\coloneqq  \pi_1\big((S^2\times I)\setminus (\link\cup W \cup E\cup Q_p\cup Q_q)\big)
$$ is given as follows. 
\begin{description}
  \item[Generators] The set 
$\{w, a,b, e, f,p,q, h\}$ generates the group, and, in terms of this generating set, we have  
$$c=\bq\bp b p q, ~ d=\bh a w h,~g= \bq e q.$$

\item[Relations] The following list of relations, divided into three types,  is complete. 
\begin{itemize}
\item[(I)]  $[p,\lambda_p]=1, ~ [q,\lambda_q]=1,$ 
\item[(II)] 
$ba=ef$ and $cd =gf,$
\item[(III)]
$[\bff \bq f \ba \bb \bp b,h]=w.$
 \end{itemize}
 Moreover, the relation $cd=gf$ is a consequence of the other relations.
\end{description}  
\qed
\end{proposition}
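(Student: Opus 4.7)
My plan is a Wirtinger/Seifert--Van Kampen computation starting from the regular projection of $\link \cup E \cup W \cup \partial Q_p \cup \partial Q_q$ in Figure~\ref{cobordism3fig}. Before any reduction, the initial presentation has one generator per arc of the diagram (the labeled meridians $a,b,c,d,e,f,g$ on the four tangle strands, $h$ on the earring $E$, $w$ on the $w_2$-arc $W$, and $p,q$ on the cores of the two perturbation solid tori), a Wirtinger relation at each crossing, and, for each perturbation torus $Q_p$ and $Q_q$, the peripheral relation that its meridian commutes with its longitude.

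The first step is to identify the longitudes and eliminate $c,d,g$. Pushing each perturbation loop onto the boundary $(S^2,4)^\rout$ and expressing the resulting curve as a word in the boundary meridians $a,b,f$ and the earring meridian $h$ yields $\lambda_p = bh$ and $\lambda_q = f\ba h$, which are the type~(I) relations. Following the strand carrying meridian $b$ through its crossings with $p$ and $q$ yields $c = \bq \bp b p q$; the strand carrying $a$ past the earring and the endpoint of $W$ yields $d = \bh a w h$; and the strand $e$ crossing $q$ yields $g = \bq e q$. After these eliminations, the Wirtinger relations at the two surviving tangle crossings become the type~(II) identities $ba = ef$ and $cd = gf$.

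The last relation contributed by the diagram comes from a small meridional loop around the endpoint of $W$ on $\link$. Chasing this loop through the crossings in a neighborhood of the earring and substituting the expressions for $c, d, g$ from the previous step yields the type~(III) commutator relation $[\bff \bq f \ba \bb \bp b,\, h] = w$. Finally, one verifies that $cd = gf$ is redundant: substituting the expressions for $c, d, g$ and then using (I) and (III) to move $h, p, q$ past the other generators reduces the identity to $ba = ef$. The main obstacle is the bookkeeping in identifying relation~(III): one must carefully read the over/under crossing data near the earring to obtain precisely the commutator $[\bff \bq f \ba \bb \bp b, h]$ rather than a conjugate of it, and to confirm that no further independent relation is produced by the Wirtinger crossings along $W$ or along the earring $E$.
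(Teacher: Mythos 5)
Your general strategy --- Wirtinger relations plus Seifert--Van Kampen, with the peripheral relations for the two perturbation tori producing (I) --- is exactly what the paper invokes; the proposition is stated with no written proof beyond the sentence preceding it, so the computational details are left implicit on both sides. A couple of attributions in your sketch are nonetheless off. The relations $ba = ef$ and $cd = gf$ do not come from ``surviving tangle crossings'' (the four strands of $4\text{pt}\times I$ never cross one another); they are the defining relations of the two boundary 4-punctured spheres, and must be imposed because the ambient manifold is $S^2\times I$ rather than $D^3$ --- this is why the redundancy of $cd=gf$ is the analogue of the redundant ``last Wirtinger relation'' rather than something deduced from a crossing. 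Likewise, the small meridional loop at the endpoint of $W$ on $\link$ is the source of the identity $d = \bh\, a\, w\, h$, not of relation (III); relation (III) comes from the circle component $E$ (the Wirtinger closure/peripheral data as $E$ passes the strand of $\link$, the $p$- and $q$-tubes, and the endpoint of $W$ on $E$). You correctly identify that pinning down the exact word $\bff\bq f\ba\bb\bp b$ (rather than a conjugate of it) and confirming that no further independent relation survives is the real content of the computation; that is precisely the diagram-chasing the paper leaves to the reader, so your acknowledged gap is not a defect relative to the paper's proof, but your sketch would be stronger if it tracked this relation to its actual source on $E$.
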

\noindent The last statement reflects the well-known fact that the last relation in a Wirtinger presentation is redundant.

 \subsection{The peripheral subgroups}
The inclusions of the boundary components $$ \FPS^i\subset (S^2\times I,(\link\cup E\cup W\cup Q_p\cup Q_q)),~ i=\rout,\rin$$ induce peripheral subgroups  which we denote by $\Pi^\rout, \Pi^\rin \subset \Pi$.  The  notation corresponds to Figure~\ref{cobordism3fig}, namely,  $\Pi^\rout$ denotes the fundamental group of the  outer  4-punctured 2-sphere    and $\Pi^\rin$ that of the  inner  punctured 2-sphere. Choose appropriate basepoints on the spheres, and a path between them, so that 
$\Pi^\rout$ and $\Pi^\rin$ have presentations
$$\Pi^\rout=\langle a,b,e,f~|~ ba=ef\rangle, \qquad \Pi^\rin=\langle c,d,g,f~|~ cd=gf\rangle,$$ 
and the inclusion homomorphisms $\Pi^\rout \to \Pi, ~ \Pi^\rin \to \Pi$ preserve the labeling of the generators.

Equip the    peripheral subgroup
$ \Pi^\rout$ with the four (geometric) meridians $M=\{a,b,e,f\}$ (see Definition~\ref{traceless}).  Its traceless character variety is then the pillowcase $P$, as explained in Lemma \ref{pillowcase}.  The   subgroup $ \Pi^\rin$ is equipped similarly, and its traceless character variety is likewise a pillowcase.

\subsection{\texorpdfstring{Holonomy perturbed traceless representations of the earring tangle satisfying the $w_2$ condition}{Holonomy perturbed traceless representations of the earring tangle satisfying the w-2 condition}}

The group $\Pi$ computed in Proposition~\ref{wirtpato}  is generated set $\{w,a,b,e,f,p,q,h\}$.   We take as meridians the subset 
$$M=\{a,b,c,d,e,f,h,g\}$$ of $\Pi$. The blue arc, with meridian $w$, is the only $w_2$-arc.

For perturbations we  use tubular neighborhoods (with the zero framing) of the green and red curves in Figure~\ref{cobordism3fig}. Formally, the perturbation data is given by the two pairs:
$$(p, \lambda_p)=(p, bh) \text { and } (q, \lambda_q)=(q, f\ba h).$$
 and a real parameter $s$.   (One could more generally use a pair of distinct real parameters $s_p$ and $s_q$, but this is unnecessary for the present article.)

\begin{definition}\label{ptrep}  A {\em perturbed traceless $SU(2)$ representation of $\Pi$  and perturbation parameter $s\in \RR$  satisfying the $w_2$ condition}  
 is a homomorphism $\rho:\Pi\to SU(2)$ which satisfies
\begin{align*}
&\text{(1)} &&\rho(w)=-1&& \text{($w_2$ condition)}\\
&\text{(2)} &&0=\Real(\rho(a))=\Real(\rho(b))=\Real(\rho(e))=\Real(\rho(f))=\Real(\rho(h))&& \text{(traceless meridians)}\\
&\text{(3)} &&\rho(p)=e^{s \Ima(\rho(\lambda_p))},~\rho(q)=e^{s \Ima(\rho(\lambda_q))}&& \\
&&&\text{\scriptsize(where $\lambda_p=bh,~ \lambda_q=f\ba h$)} && \text{(perturbation condition)}
\end{align*}
\end{definition}

The group $\Pi$ is generated by $\{ b,f,a,h,w, p, q, e\}$, and therefore evaluation on this list determines an embedding 
$$\Hom(\Pi,SU(2))\subset SU(2)^8$$
 as an algebraic set.  The subset of perturbed traceless representations is a real {\em analytic} subset, because the perturbation condition is not an algebraic but rather an analytic restriction.

\begin{definition}\label{defofR}
Denote the real-analytic space of  perturbed  traceless representations of $\Pi$ satisfying $\rho(w)=-1$ by 
$$\tNAT_s \coloneqq \widetilde{R}_s(S^2 \times I, \link \cup E, W).$$ 
We have $\tNAT_s\subset \Hom(\Pi,SU(2))\subset SU(2)^8.$ Denote its quotient 
by the $SU(2)$ conjugation action $\NAT_s$, i.e.
$$\NAT_s \coloneqq R_s(S^2 \times I, \link \cup E, W).$$
\end{definition}

\section{Elimination of variables and partial gauge fixing}\label{eliminating}
The analytic space $\NAT_s$ is defined above as  the orbit space of an $SU(2)$ action on an analytic subspace $\tNAT_s$ of the 24-dimensional manifold $SU(2)^8$.  In this section we carry out some  preliminary steps to give a    more tractable description  of $\NAT_s$.

\subsection{Elimination of variables}
Every representation  $\rho\in \tNAT_s$  sends the meridians $b,f,h, $ and $a$ into the 2-sphere $\tracelessTwoSphere$. Hence the statement of the following lemma makes sense.

\begin{lemma}\label{4vars} Evaluation on the generators $\{ b,f,a,h\}$ determines an analytic embedding $$\tNAT_s\hookrightarrow  \tracelessTwoSphere ^4\subset SU(2)^4.$$ 
The image can be described as follows:
$$
\{(b,f,h,a)\in \tracelessTwoSphere^4~|~ \Real(b a \bff)=0,~\Real(\bp a \bff q f)=0 \text { and } \Real(h\bp a \bff q f)=0\},
$$ where $p=e^{s\Ima(bh)}, ~ q=e^{s\Ima(f\ba h)}.$
\end{lemma}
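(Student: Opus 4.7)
The plan is to use the presentation of $\Pi$ from Proposition~\ref{wirtpato} to show that a representation $\rho\in\tNAT_s$ is determined by the quadruple $(\rho(b),\rho(f),\rho(h),\rho(a))$, and then to translate the remaining defining conditions into the three real-part equations in the statement.

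First I would establish injectivity. The generating set of $\Pi$ is $\{w,a,b,e,f,p,q,h\}$, and the defining conditions of Definition~\ref{ptrep} pin down the four ``auxiliary'' generators in terms of $(b,f,h,a)$: the $w_2$ condition forces $\rho(w)=-1$; the perturbation conditions express $\rho(p)=e^{s\Ima(\rho(bh))}$ and $\rho(q)=e^{s\Ima(\rho(f\ba h))}$ analytically in $(\rho(b),\rho(f),\rho(h),\rho(a))$; and the Wirtinger relation $ba=ef$ in (II) forces $\rho(e)=\rho(ba\bff)$. Combined with the traceless meridian condition, this exhibits evaluation as an injective real-analytic map $\tNAT_s\hookrightarrow\tracelessTwoSphere^4$.

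Next I would determine which $(b,f,h,a)\in\tracelessTwoSphere^4$ extend to a genuine representation. Relations (I) hold automatically because $p$ and $q$ are, by construction, exponentials of the imaginary parts of $\lambda_p=bh$ and $\lambda_q=f\ba h$; and the relation $cd=gf$ in (II) is redundant by Proposition~\ref{wirtpato}. The requirement that $\rho(e)=ba\bff$ actually lie in $\tracelessTwoSphere$ is exactly $\Real(ba\bff)=0$, the first listed condition. The only remaining relation is (III), which with $w=-1$ reads $[X,h]=-1$ where $X:=\bff\bq f\ba\bb\bp b$. Since $h\in\tracelessTwoSphere$, a short direct calculation in $SU(2)$ shows that $[X,h]=-1$ is equivalent to $X\in\tracelessTwoSphere$ together with $X\perp h$, i.e.\ to the pair of scalar equations $\Real(X)=0$ and $\Real(Xh)=0$.

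The main computational step, and the expected obstacle, is rewriting these two conditions in the clean form $\Real(\bp a\bff q f)=0$ and $\Real(h\bp a\bff q f)=0$ of the lemma. Pushing $\bp$ past $b$ via $\bb\bp b=h\bp\bh$ (from $p\leftrightarrow bh$) and then pushing $\bq$ past $\bff f\ba$ via $\bq f\ba h=f\ba h\bq$ (from $q\leftrightarrow f\ba h$) simplifies $X$ to $\ba h\bq\bp\bh$, and hence $X^{-1}=hpq\bh a$. The crucial additional identity is $hph^{-1}=\bp$, which holds because $\Ima(bh)=b\times h$ is automatically orthogonal to $h$, so conjugation by $h$ (a $\pi$-rotation about $h$ on $su(2)$) flips the axis of the exponential $p$. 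This gives $hp=\bp h$ and therefore $X^{-1}=\bp\cdot(hq\bh a)$. One further application of the $q$-commutation, in the form $a\bff qf=hq\bh a$ (derived from $qf\ba h=f\ba h q$), yields the clean identity $X^{-1}=\bp a\bff qf$. Since $\Real(g)=\Real(g^{-1})$ for $g\in SU(2)$ and $\bh=-h$, one reads off $\Real(X)=\Real(\bp a\bff qf)$ and $\Real(Xh)=-\Real(h\bp a\bff qf)$, so the two remaining conditions in the lemma are precisely $[X,h]=-1$. Analyticity of the embedding is automatic, since every step is a polynomial or matrix-exponential operation in $SU(2)^4$.
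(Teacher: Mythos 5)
Your proof is correct and follows essentially the same approach as the paper's: establish injectivity by eliminating $e,p,q,w$ via the $w_2$, perturbation, and Wirtinger conditions, then characterize the image by reducing relation (III), with $w=-1$, to $[X,h]=-1\iff\Real(X)=\Real(Xh)=0$. Where the paper uses $\bb\bp b=p$ to simplify $X$ to $\bff\bq f\ba p$ and then remarks that the resulting equations are ``easily seen'' to match those in the lemma, you spell out an equivalent chain of commutation identities (conjugating $p$ by $h$ via $\Ima(bh)\perp h$, and applying the $q$-commutation twice) to land directly on $X^{-1}=\bp a\bff qf$, which is a helpful way to fill in that elided step.
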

 
\begin{proof}

By definition, every $\rho\in \tNAT_s$  satisfies 
$$\rho(w)=-1, \quad \rho(e)=\rho(ba\bff), \quad \rho(p)=e^{s\Ima(\rho(bh))}, \quad \rho(q)=e^{s\Ima(\rho(f\ba h))}.$$ 
This implies that evaluation on $\{b,f,a,h\}$ embeds $\tNAT_s$.

It remains to identify the image of the embedding $\tNAT_s\hookrightarrow \tracelessTwoSphere^4$, which  consists of all 4-tuples $(b,f,a,h)$ which satisfy the list of relations in Proposition~\ref{wirtpato}.  We consider each of these relations   below.  When convenient, we identify $\rho\in \tNAT_s$ with its image in $\tracelessTwoSphere^4$.

  The first two relations, $[p, \lambda_p]=1$ and $[q,\lambda_q]=1$ hold for all $\rho\in \tNAT_s$, as a consequence of the perturbation condition. 
The relation $cd=gf$ is a consequence of the others. 
The relations $c=\bq\bp b p q, ~ d=\bh a w h,$ and $g= \bq e q$ show that the traceless requirement on $c,d,$ and $g$ follow from those for $b,f,$ and $a$.
Finally, the relation $ba=ef$ can be used to eliminate $e$, replacing the traceless restriction on $e$ by the equivalent restriction
$$\Real(ba\bff)=0.$$

 The remaining relation
 $ [\bff \bq f \ba \bb \bp b,h]=w$ is treated as follows. Observe that 
 $$\Real (\Ima(bh)b)=\Real (bhb-\Real(bh)b )=0.
 $$ This implies that $\Ima(bh)b=-b\Ima(bh)$, and so
\begin{equation}\label{bppb}pb=e^{x\Ima(bh)}b=b e^{x\bb \Ima(bh) b}=be^{-x\Ima(bh)}=b\bp.
\end{equation}
 Since $\rho(w)=-1$ the remaining relation is now equivalent to 
 $$ [\bff \bq f \ba  p  ,h]=-1$$
 
We now digress to prove a useful statement.  Two unit quaternions $h_1, h_2$ satisfy $[h_1,h_2]=-1$ if and only if $\Real(h_1)=\Real(h_2)=\Real(h_1h_2)=0$. This is because $[h_1,h_2]=-1$ implies $h_1 h_2=- h_2 h_1$, and so $\Real(h_1 h_2)=-\Real(h_2h_1)=-\Real(h_1h_2)$, which proves $\Real(h_1h_2)=0$. Also $\Real(h_1)=-\Real(h_2 h_1 h_2^{-1})=-\Real(h_1)$, and so $\Real(h_1)=0$.

Armed with these facts, we see that
$  [\bff \bq f \ba  p  ,h]=-1$ is equivalent to 
$$\Real(h)=\Real(\bff \bq f \ba  p)= \Real(h\bff \bq f \ba  p) =0.
$$
The equation $\Real(h)=0$ is a consequence of the traceless requirement.  The remaining two equations are easily seen to be the same as the two in the statement of the lemma, completing the proof.
\end{proof}

\subsection{Partial gauge fixing}\label{pgf} 
Lemma~\ref{4vars} expresses $\tNAT_s$ as the preimage of 0 under the  map
\begin{equation}\label{june}
\begin{split}
 F&=(F_1,F_2,F_3): \tracelessTwoSphere^4 \to \RR^3, \\
(b,f,h,a)&\mapsto \big(\Real(b a \bff),\Real(\bp a \bff q f),\Real(h\bp a \bff q f)\big). 
\end{split} 
\end{equation} 
This map is invariant under the diagonal $SU(2)$ conjugation action on $SU(2)^4$, restricted to the invariant subspace $\tracelessTwoSphere^4$.   Set $\Xupstairs   =F_1 ^{-1} (0)$ and $\Xdownstairs    = \Xupstairs   /SU(2)$.      

We  first study $\Xupstairs $,  which is simpler because $F_1$ is independent of $h$, $p$ and $q$, and hence of the parameter $s$.  We'll postpone the examination of $\tNAT_s =  \Xupstairs  \cap F_2^{-1} (0) \cap F_3 ^{-1}(0)$ until we have developed a simpler model for $\Xdownstairs$.

To begin, consider the diagonal conjugation invariant subset 
 \begin{equation}
\label{teepee}\tP=\{(b,f,a)~|~\Real(ba\bff)=0\}\subset \tracelessTwoSphere^3. 
\end{equation}
It is well known that this equation does not cut $\tP$  out transversely~\cite{Lin, Heu, HeuKro, JR}.    We can describe the quotient $\tP/SU(2)$ more easily by considering the inclusion of the torus
$$
T\hookrightarrow \tP,  (\gamma, \theta) \mapsto (e^{\gamma\bbk}\bbi,e^{\theta\bbk}\bbi,\bbi).$$

Each $(b,f,a)\in \tP$ can be conjugated to a triple with third component $\bbi$, so the torus hits every $SU(2)$ orbit and  the composite map $T\hookrightarrow \tP \to \tP/SU(2)$ is surjective.  Furthermore, the composition is constant on the orbits of the elliptic involution $\iota:T\to T$, so it induces a map on $P=T/\iota$.  Finally, note that $(\gamma, \theta)$ and $(\gamma', \theta')$ in $T$ map to the same point in $\tP$  if and only if they lie in the same $\iota $ orbit, so the induced map is a homeomorphism  (see~\cite[Proposition~3.1]{HHK1} for details).

This is summarized in the following diagram:
$$
\begin{tikzcd}
T 
\arrow[r, hook]
\arrow[d, "/ \iota"]
& 
\tP 
\arrow[d, "/ SU(2)"] 
\\
P 
\arrow[r, "\cong"]
& 
\tP/SU(2) 
&&
\end{tikzcd}
$$
 
Next, note that $$\Xupstairs =F_1 ^{-1}(0) = \tP \times \tracelessTwoSphere \subset \tracelessTwoSphere^4.$$   It follows that 
\begin{equation}\label{defofPhi}
\begin{split}
\mapVR:T\times \tracelessTwoSphere &\to \tracelessTwoSphere^4, \\
(\gamma,\theta, h)&\mapsto (b,f,a,h)= (e^{\gamma\bbk}\bbi, e^{\theta\bbk}\bbi, \bbi, h),
\end{split}
\end{equation}
hits every $SU(2)$ orbit in $\Xupstairs $.    Furthermore, the composition $T\times \tracelessTwoSphere \hookrightarrow \Xupstairs  \to \Xdownstairs $ is constant on equivalence classes under the {\em extended elliptic involution}
\begin{equation}\label{eei}
\begin{split}
\hat\iota:T\times \tracelessTwoSphere&\to T\times \tracelessTwoSphere, \\
(\gamma,\theta,h) &\mapsto (-\gamma,-\theta,-\bbi h \bbi). 
\end{split}
\end{equation}

It is straightforward to check that the induced map $\mapVRmodInvolution: ( T\times \tracelessTwoSphere ) /\hat \iota \to \Xdownstairs $ restricted to $( (T\setminus C)\times \tracelessTwoSphere ) /\hat \iota$ is injective,  but the map on the entire quotient $ ( T\times \tracelessTwoSphere ) /\hat \iota$
 is not a homeomorphism.  Again, we summarize with a diagram: 

\begin{equation}\label{commie diag}
\begin{tikzcd}
T \times \tracelessTwoSphere \arrow[r, hook, "\mapVR"] 
\arrow[d, "/ \hat\iota" 
]
& 
\Xupstairs 
\arrow[d, "/ SU(2)" 
]  
\\
( T \times  \tracelessTwoSphere )/\hat\iota
\arrow[r, "\mapVRmodInvolution", two heads]
& 
\Xdownstairs  \end{tikzcd}
\end{equation}

Next, we begin our examination of the second and third equations cutting out $\tNAT_s$.    
Set 
\begin{equation}
\label{defofVs}
\Vupstairs_s=\mapVR^{-1}(\tNAT_s)= \{ (\gamma, \theta, h) \in  T\times \tracelessTwoSphere \mid F\circ \mapVR (\gamma, \theta, h) = (0,0,0) \}.
\end{equation}
Since the involution $\hat \iota$ on $T\times \tracelessTwoSphere$ corresponds to conjugation by $\bbi$ in $\tNAT_s$, $\Vupstairs_s$ is $\hat \iota$ invariant, and we define \begin{equation*} \V_s = \Vupstairs_s / \hat \iota. \end{equation*}.

\begin{lemma} \label{red1} The involution $\hat\iota$  acts freely on $\Vupstairs_s$ if $|s|<\tfrac\pi 4$.   Moreover,  the composition of $\mapVR |_{\Vupstairs_s} $ with the quotient by $SU(2)$ induces   a surjection $\mapVRmodInvolution:\V_s\to \NAT_s$ making the diagram  below commute.
 $$\begin{tikzcd}
\Vupstairs_s \arrow[r, "\mapVR|_{\Vupstairs_s}" ] \arrow[d, "/\hat \iota "] &  \tNAT_s \arrow[d,"/SU(2)"]\\
\V_s\arrow[r,"\mapVRmodInvolution |_{\V_s} ", two heads]                                 & \NAT_s \end{tikzcd}$$
\end{lemma}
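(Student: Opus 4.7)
The plan is to reduce the freeness claim to a direct quaternionic computation at the fixed points of $\hat\iota$ on $T \times \tracelessTwoSphere$, and to deduce the ``moreover'' clause from structural properties of $\mapVR$ that were already established. The hard part will be the explicit fixed-point evaluation of $F_2$ and $F_3$, since this is where the parameter $s$ enters nontrivially; everything else is structural bookkeeping.

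First I would identify the fixed locus of $\hat\iota$ on all of $T \times \tracelessTwoSphere$. From $\hat\iota(\gamma, \theta, h) = (-\gamma, -\theta, -\bbi h \bbi)$, a fixed point requires $(\gamma, \theta) \in C$ together with $h = -\bbi h \bbi$. Writing $h = h_1\bbi + h_2\bbj + h_3\bbk$ and computing $\bbi h \bbi = -h_1\bbi + h_2\bbj + h_3\bbk$, the second condition forces $h_2 = h_3 = 0$, so $h = \pm\bbi$. The fixed locus is therefore the $8$-point set $C \times \{\pm\bbi\}$.

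At such a fixed point I would write $b = \epsilon_b\bbi$, $f = \epsilon_f\bbi$, $a = \bbi$, $h = \epsilon_h\bbi$ with signs $\epsilon_b, \epsilon_f, \epsilon_h \in \{\pm 1\}$. Then $\Ima(bh) = 0$ (since $bh$ is real), so $p = 1$, while $\Ima(f\ba h) = \epsilon_f\epsilon_h\bbi$, so $q = \cos s + \epsilon_f\epsilon_h\bbi\sin s$. Multiplying out produces
\[
F_2\circ\mapVR \;=\; -\epsilon_f\epsilon_h\sin s \quad\text{and}\quad F_3\circ\mapVR \;=\; -\epsilon_h\cos s,
\]
which cannot vanish simultaneously. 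In particular $F_3 \neq 0$ whenever $\cos s \neq 0$, which holds for $|s| < \pi/4$, so no fixed point of $\hat\iota$ lies in $\Vupstairs_s$ and $\hat\iota|_{\Vupstairs_s}$ is fixed-point-free. Note that the computation actually yields freeness for any $s$ with $\cos s \neq 0$, so the stated bound $|s| < \pi/4$ is stronger than strictly needed, but is consistent with the small-$s$ regime used throughout the paper.

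For the induced surjection $\mapVRmodInvolution$, the key observation is that $\hat\iota$ corresponds, under $\mapVR$, to componentwise conjugation by $\bbi$: indeed $\bbi\, e^{\gamma\bbk}\bbi\,\bbi^{-1} = e^{-\gamma\bbk}\bbi$ (since conjugation by $\bbi$ sends $\bbk$ to $-\bbk$), and $\bbi h \bbi^{-1} = -\bbi h \bbi$. Since the equations cutting out $\tNAT_s$ in $\tracelessTwoSphere^4$ are $SU(2)$-conjugation invariant, $\hat\iota$ preserves $\Vupstairs_s$, and $\mapVR|_{\Vupstairs_s}$ descends to a well-defined map $\mapVRmodInvolution\colon \V_s \to \NAT_s$ making the diagram commute. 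Surjectivity reduces to the fact, already established in the analysis of $\Xdownstairs$, that every $SU(2)$-orbit in $\Xupstairs$ meets $\mapVR(T \times \tracelessTwoSphere)$: given $\rho \in \tNAT_s$, first conjugate to arrange $a = \bbi$, then use the residual $e^{t\bbi}$-conjugation together with the constraint $\Real(ba\bff) = 0$ (which forces the $\bbj\bbk$-projections of $b$ and $f$ to be parallel) to place both $b$ and $f$ in the $\bbi\bbj$-plane, giving the required preimage in $\Vupstairs_s$.
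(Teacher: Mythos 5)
Your proof is correct. For the freeness claim, you take a genuinely different and arguably cleaner route than the paper. The paper shows the stronger claim that $h\ne\pm\bbi$ holds \emph{everywhere} on $\Vupstairs_s$ (by arguing that when $|s|<\tfrac{\pi}{4}$ the $\bbi$-component of $\bp\,\bbi\,\bff q f$ is nonzero, and then invoking $F_3=0$), and deduces freeness as a corollary since the fixed locus of $\hat\iota$ requires $h=-\bbi h\bbi$, i.e., $h=\pm\bbi$. You instead identify the full fixed locus of $\hat\iota$ on $T\times\tracelessTwoSphere$ as the finite set $C\times\{\pm\bbi\}$, and evaluate $F_2\circ\mapVR$ and $F_3\circ\mapVR$ there directly; the calculation ($\Ima(bh)=0$, hence $p=1$, and $q=\cos s+\epsilon_f\epsilon_h\bbi\sin s$, giving $F_3=-\epsilon_h\cos s$) is correct and immediately shows $\Vupstairs_s$ misses the fixed locus whenever $\cos s\ne 0$. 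Your approach proves exactly what the lemma asserts and requires a weaker hypothesis, whereas the paper's approach proves the stronger auxiliary fact $h\ne\pm\bbi$ (recorded as Equation~\eqref{eq:h_not_i}) under the hypothesis $|s|<\tfrac{\pi}{4}$. For the ``moreover'' clause, your argument is in substance the same as the paper's: you verify $\mapVR\circ\hat\iota=\mathrm{conj}_\bbi\circ\mapVR$ (the identity $\bbi e^{\gamma\bbk}\bbi\,\bbi^{-1}=e^{-\gamma\bbk}\bbi$ and $\bbi h\bbi^{-1}=-\bbi h\bbi$), whence well-definedness of the descended map, and surjectivity via the already-established fact that $\mapVR$ hits every $SU(2)$-orbit in $\Xupstairs$; your sketch of why (residual $e^{t\bbi}$-conjugation plus the constraint $\Real(ba\bff)=b_2f_3-b_3f_2=0$) is also accurate.
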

\begin{proof}
Given   $( \gamma,\theta ,h)\in \Vupstairs_s\subset T\times \tracelessTwoSphere$, consider 
$$(b,f,a, h)=(e^{\gamma\bbk}\bbi,e^{\theta\bbk}\bbi,\bbi, h) =\mapVR(\gamma,\theta,h)\in \tNAT_s.$$
Set $p=e^{s\Ima(bh)}, ~ q=e^{s\Ima(f\ba h)}$.
 Using the Euler formula  descriptions for $p$ and $q$, one can check that the assumption  $|s|<\tfrac\pi 4$  and the inequalities $\| \Ima(bh)\|\leq 1, \| \Ima(f\ba h)\|\leq 1$ imply that  
the $\bbi$ component of $\bp \bbi \bff q f$ is non-zero.  Combined with the condition  that  
 $F_3\circ \mapVR(\gamma, \theta, h)=\Real(h\bp \bbi \bff q f)=0$, this implies that 
\begin{equation}\label{eq:h_not_i}
h\ne \pm \bbi.
\end{equation} 
Therefore $-\bbi h \bbi\ne h$. It follows that $\hat\iota$ acts freely on $\Vupstairs_s$.

The composition $\Vupstairs_s \to \tNAT_s \to \NAT_s$   is constant on $\hat \iota$ orbits, so it factors through the quotient $\Vupstairs_s\to \V_s=\Vupstairs_s/\hat\iota$.  
The fact that $\mapVRmodInvolution|_{\V_s} :\V_s \to \NAT_s$ is surjective is immediate from the fact that $V_s$ is the preimage of $\NAT_s$ under $\mapVRmodInvolution$, which is surjective.   
\end{proof}

\emph{Partial gauge fixing} refers to the fact that any representation  in $\tNAT_s$  may be conjugated so that  $a$ is sent to $\bbi$ and $b,f$ are sent to the $ (\bbi,\bbj)\text{-circle}$ in $\tracelessTwoSphere$. These are precisely the restrictions in the definition of space $\Vupstairs_s$. 

We prove in Sections~\ref{sec:misses_corners} and~\ref{sec:genus_three} that when $0<|s|<\frac \pi 4$,  the map $\mapVRmodInvolution|_{\V_s} $ is, in fact, an analytic isomorphism, and $\V_s \cong \NAT_s$ is a genus three surface.  By contrast, Example~\ref{ex:Rzero} shows that $\V_0$ is a smooth 3-manifold
and the map $ \mapVRmodInvolution:\V_0\to \NAT_0 $ is a quotient map  collapsing four circles to points.   
 
\subsection{Restriction to peripheral subgroups}
Since every $\rho\in \tNAT_{s}$ satisfies $\rho(ba\bff\be)=1=\rho(cd\bff\bg)$ (with
$c=\bq\bp b p q, ~ d=\bh a w h$), the inclusion of peripheral subgroups $\Pi^\rout, \Pi^\rin\subset \Pi $ induce restriction maps 
\begin{align*}
\rrout_s:\tNAT_{s}&\to \tP  & &\text{and} &\rrin_s:\tNAT_{s}&\to \tP\\
\rho & \mapsto ( \rho(b),\rho(f), \rho(a)) && & \rho & \mapsto(\rho(c),\rho(f),\rho(d))
\end{align*}
Passing to the quotients by the $SU(2)$ action defines the {\em restriction to the boundary correspondence} 
   \begin{equation}
\label{correspondence}
\r_s=\rrout_s\times \rrin_s:\NAT_{s}\to P^\rout\times P^\rin.
\end{equation}

\begin{example}[$s=0$]\label{ex:Rzero}  The {\em unperturbed} traceless character variety corresponds to taking $s=0$.  This implies that $p=1=q$ and hence $\bp\bbi\bff qf= \bbi$. It follows that the equation $\Real(\bp\bbi\bff qf)=0$   is redundant.    Therefore
$$\tNAT_{0}=\{(b, f, a,h)\in (\tracelessTwoSphere)^4~|~ 
 \Real(ba \bff)=0 =\Real(h  a)\}.$$
The set $\{h\in \tracelessTwoSphere\mid \Real(h\bbi)=0\}$
 is the circle $\tracelessTwoSphere \cap  \bbi^\perp$, which we denote by 
 $\iperpcirc$, or simply just $S^1$ when the context is clear.   Then, 
 $$\Vupstairs_0=T\times S^1, ~  \V_0 =( T\times S^1 )/ \hat \iota  \text{ and } \tNAT_0=\tP\times S^1 .$$
Explicitly, $V_0$ is the torus bundle over $S^1$ with monodromy the elliptic involution.  It is Seifert-fibered over $P$ with four exceptional fibers  of order 2 at the corners, corresponding to the four fixed points of $\iota$.   Then $\NAT_0$ is obtained from $V_0$ by crushing each of these four exceptional fibers to a point. It is therefore a singular 3-dimensional space, with four singularities, each of whose neighborhood is a cone on a torus.  The restriction map $\NAT_0\to P\times P$ is induced by the composite of the Seifert fibration $V_0\to P$ (crushing every fiber to a point), and the diagonal map $\triangle_P \to P\times P$ (since $(b,f,a,h)=(c,f,d,h)$ when $s=0$). 
In particular, $\NAT_0$ is not a smooth 2-dimensional manifold, and the image of the restriction $r_0:\NAT_0\to P\times P$ intersects the singular stratum.
\end{example}   

\section{\texorpdfstring{The space $\NAT_s$ misses the corners when $s\ne 0$}{The space misses the corners when s is not zero}}\label{sec:misses_corners}

Recall that $C\subset P$ denotes the set of four corners of the pillowcase, and $P^*=P\setminus C$.
The main result of this section is the following.

\begin{theorem}\label{nogotheorem} If $0<|s|<\frac \pi 4$, the image of the restriction $\r_s: \NAT_{s}\to P\times P$ lies in $P^*\times P^*$. 
\end{theorem}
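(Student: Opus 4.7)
The plan is to argue by contradiction using the characterization that a point $[\gamma',\theta']\in P$ lies in $C$ precisely when the corresponding representation of $\pi_1(S^2\setminus 4)$ is reducible (i.e.\ abelian). It therefore suffices to show that for every $\rho\in\tNAT_s$ with $0<|s|<\pi/4$, neither $\rho|_{\Pi^\rout}$ nor $\rho|_{\Pi^\rin}$ is abelian. Throughout I work in the partial gauge fixing of Section~\ref{pgf}: $\rho(a)=\bbi$, $\rho(b)=e^{\gamma\bbk}\bbi$, $\rho(f)=e^{\theta\bbk}\bbi$, $\rho(h)=h=h_1\bbi+h_2\bbj+h_3\bbk\in\tracelessTwoSphere$, subject to the relations $F_2=F_3=0$ of Lemma~\ref{4vars}. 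The key preliminary input is equation~\eqref{eq:h_not_i} of Lemma~\ref{red1}, which already rules out $h=\pm\bbi$.

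For the outer case, reducibility of $\rho|_{\Pi^\rout}$ forces $\rho(b),\rho(f)\in\{\pm\bbi\}$, so write $\rho(b)=\epsilon_1\bbi$, $\rho(f)=\epsilon_2\bbi$ and set $r=\sqrt{h_2^2+h_3^2}$. Direct computation yields $\Ima(bh)=\epsilon_1(-h_3\bbj+h_2\bbk)$ and $\Ima(f\bar a h)=\epsilon_2 h$, so $p=\cos(sr)+\sin(sr)\hat u$ and $q=\cos s+\sin s\,\hat v$ for orthogonal unit imaginary quaternions $\hat u$ and $\hat v=\epsilon_2 h$. One checks $\bar p a\bar f qf=\bar p q\bbi$, so the defining equations reduce to $\Real(\bar p q\bbi)=0=\Real(h\bar p q\bbi)$. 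Taking $\bbi$-components (using $\hat u\cdot\bbi=0$, $\hat v\cdot\bbi=\epsilon_2h_1$, and $(\hat u\hat v)\cdot\bbi=-\epsilon_1\epsilon_2 r$ from $\hat u\hat v=\hat u\times\hat v$) produces the linear system
$$h_1\cos(sr)+\epsilon_1 r\sin(sr)=0,\qquad h_1\cos(sr)-\epsilon_1 r\sin(sr)=0.$$
Since $|sr|<\pi/4$ gives $\cos(sr)>0$ and $\sin(sr)=0\Leftrightarrow sr=0$, adding and subtracting forces $h_1=0$ and $r=0$, so $h=0$—contradicting $h\in\tracelessTwoSphere$.

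For the inner case, reducibility of $\rho|_{\Pi^\rin}$ forces $\rho(c),\rho(d),\rho(f)$ to lie in a common maximal torus of $SU(2)$. Using $\rho(w)=-1$ and $\bar h=-h$, one computes
$$\rho(d)=h\bbi h=(1-2h_1^2)\bbi-2h_1h_2\bbj-2h_1h_3\bbk,$$
and $\rho(c)=\bar q\bar p\rho(b)pq$. Demanding $\rho(d)$ proportional to $\rho(f)=\cos\theta\bbi+\sin\theta\bbj$ immediately forces $h_1h_3=0$, splitting into two subcases. A parallel analysis, combining the remaining proportionality $\rho(c)\in\{\pm\rho(f)\}$ with $F_2=F_3=0$ and again exploiting $0<|s|<\pi/4$, yields an analogous overdetermined system whose only solution is $h=0$, again contradicting $h\in\tracelessTwoSphere$.

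The main obstacle is the inner case: unlike the outer, where reducibility pins the primitive generators $b,f$ down to $\pm\bbi$, the inner meridians $c$ and $d$ are honest words in the gauge-fixed generators $a,b,f,h$ and the perturbation holonomies $p,q$, so the reducibility hypothesis does not translate into simple coordinate values but rather into quadratic relations among $h_1,h_2,h_3,\gamma,\theta$. Extracting the analog of the outer collapse $h_1=0,~r=0$ requires patient bookkeeping, but relies on the same underlying mechanism: the perturbation pairing between $\Ima(bh)$ and $\Ima(f\bar a h)$, which are never parallel once $h\ne\pm\bbi$, breaks the abelian symmetry that would otherwise permit the restriction to land at a corner.
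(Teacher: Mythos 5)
Your outer case is correct and, modulo coordinates, coincides with the paper's Lemma~\ref{no-golemma}: the paper conjugates further by a suitable $e^{\alpha\bbi}$ to put $h = e^{\tau\bbk}\bbi$ in the $\bbi\bbj$-plane, and its two conditions $\tfrac{\pi}{2} = \tau \mp (-1)^{n_b}s\sin\tau$ unfold (by the cosine addition formula) into exactly your linear system $h_1\cos(sr)\pm \epsilon_1 r\sin(sr)=0$ after substituting $h_1=\cos\tau$, $r=\sin\tau$, $\epsilon_1=(-1)^{n_b}$. One small point to tighten: when $r=0$ your $\hat u$ is undefined, so you should handle $\Ima(bh)=0$ (hence $p=1$) separately; it collapses immediately via $\Real(q\bbi)=-\sin(s)\epsilon_2 h_1$, but the argument as written glosses this.

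The real problem is the inner case, and there you have not given a proof. You observe correctly that $\rho(d)=h\bbi h$ must be proportional to $\rho(f)$, forcing $h_1h_3=0$, but from there you assert only that a ``parallel analysis'' produces an ``analogous overdetermined system whose only solution is $h=0$.'' This is not a parallel analysis, and the claimed endpoint is suspect. The paper's Lemma~\ref{no-golemma2} does not derive $h=0$: from $c = (-1)^{n_c+n_g} g$ it extracts $\bar p b p = \pm e$, rewrites this as $p^2 = \pm\bar b e = \pm e^{-\theta\bbk}$ using Equation~\eqref{bppb}, uses $0<|s|<\tfrac\pi 4$ together with $\|\Ima(bh)\|\leq 1$ to conclude that $h$ is orthogonal to $\bbk$ (so $h=e^{\beta\bbk}\bbi$), and then argues geometrically that the rotation $g=\bar q e q$ by angle $-2s$ about $\Ima(f\bar a h)$ cannot carry $e$ to $\pm e$ unless $\gamma,\theta,\beta$ are all $0$ modulo $\pi$ --- at which point the contradiction comes from the \emph{outer} Lemma~\ref{no-golemma}, not from $h=0$. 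So the structure of the contradiction in the inner case is genuinely different: it reduces to the outer case rather than producing a standalone numerical impossibility. Your sketch neither reproduces this structure nor exhibits enough computation for an independent check, so the inner half of the theorem is a gap.
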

\noindent This is an immediate consequence of the following two lemm\ae.  
\begin{lemma} \label{no-golemma} If $0<|s|<\frac \pi 4$, then $\rrout_s (\NAT_s )\subset P^*$.  \end{lemma}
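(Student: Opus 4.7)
My plan is a direct contradiction argument using the partial gauge fixing of Section~\ref{pgf}. Any point of $\NAT_s$ has a representative $(b, f, a, h)\in\Vupstairs_s$ with $a = \bbi$ and $(b, f) = (e^{\gamma\bbk}\bbi, e^{\theta\bbk}\bbi)$ on the $(\bbi,\bbj)$-circle; via Lemma~\ref{pillowcase} the restriction $\rrout_s$ sends it to $[\gamma,\theta]\in P$. Hence the image of $\rrout_s$ meets a corner of $P^\rout$ exactly when some such representative has $b = \epsilon_1\bbi$ and $f = \epsilon_2\bbi$ for some $\epsilon_1, \epsilon_2\in\{\pm 1\}$. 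I assume such a representative exists and aim for a contradiction.

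In this corner case the perturbation longitudes simplify to $\lambda_p = bh = \epsilon_1\bbi h$ and $\lambda_q = f\ba h = \epsilon_2 h$. Writing $h = h_1\bbi + h_2\bbj + h_3\bbk\in\tracelessTwoSphere$ and setting $r = \sqrt{h_2^2 + h_3^2} = \sqrt{1-h_1^2}$, the exponential map gives closed forms for $p$ and $q$. A short simplification also shows $a\bff = \epsilon_2$, so the two equations cutting out $\tNAT_s$ inside $\Xupstairs$ in Lemma~\ref{4vars} collapse to $\Real(\bp q\bbi) = 0$ and $\Real(h\bp q\bbi) = 0$.

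The main step is to expand $\bp q$ in the quaternion basis $\{1,\bbi,\bbj,\bbk\}$, read off the two scalar equations, and factor out the common nonzero factors $\sin s$ and $\cos s$. I expect the result will be the pair
$$
h_1\cos(sr) + \epsilon_1 r\sin(sr) = 0 \quad\text{and}\quad h_1\cos(sr) - \epsilon_1 r\sin(sr) = 0.
$$
Their sum and difference give $h_1\cos(sr) = 0$ and $r\sin(sr) = 0$. Since $|sr|\le|s| < \tfrac{\pi}{4}$, we have $\cos(sr) > 0$, forcing $h_1 = 0$; and for $s\ne 0$, $\sin(sr) = 0$ forces $r = 0$. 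Together these give $h = 0$, contradicting $\|h\| = 1$.

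The main obstacle is the quaternion bookkeeping: because the perturbation data is asymmetric ($p$ lies on the 1-parameter subgroup generated by $\Ima(\bbi h)$ while $q$ lies on the one generated by $h$ itself), the expansion of $\bp q$ requires an explicit component computation rather than a slick symmetry reduction; the cross terms are organized by the identity $(h_3\bbj - h_2\bbk)(h_1\bbi + h_2\bbj + h_3\bbk) = r^2\bbi - h_1 h_2\bbj - h_1 h_3\bbk$. The bound $|s| < \tfrac\pi 4$ enters in three places: to keep $\sin s$ and $\cos s$ nonzero, and to rule out solutions $\sin(sr) = 0$ with $sr\ne 0$. The signs $\epsilon_1, \epsilon_2$ appear uniformly, so all four corners are dispatched simultaneously.
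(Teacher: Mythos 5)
Your argument is correct and follows essentially the same route as the paper's: gauge-fix so that $a=\bbi$, $b=\epsilon_1\bbi$, $f=\epsilon_2\bbi$, extract from $F_2=F_3=0$ the two scalar constraints $h_1\cos(sr)\pm\epsilon_1 r\sin(sr)=0$ (after dividing out the nonzero factors $\sin s$ and $\cos s$), and observe that they jointly force $h=0$. The paper makes one further conjugation by $e^{\alpha\bbi}$ to put $h=e^{\tau\bbk}\bbi$, which recasts your two equations as $\cos\bigl(\tau\mp(-1)^{n_b}s\sin\tau\bigr)=0$ with $h_1=\cos\tau$, $r=\sin\tau$; the computations are equivalent.
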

\begin{proof}  Suppose to the contrary that the conjugacy class $[(b,f,a,h)]\in \NAT_{s}$ is sent by $\rrout_s$ into $C$.   Up to conjugation, we can assume $a=\bbi$.  Then there exist   $n_b,n_f\in\{0,1\}$ so that $b=(-1)^{n_b}\bbi$ and $f=(-1)^{n_f}\bbi$.
We may then conjugate by the appropriate $e^{\alpha\bbi}$ so that, in addition,  $h=e^{\tau\bbk}\bbi$ for some $0\leq \tau\leq \pi$.   

Then
 $bh= (-1)^{n_b}\bbi e^{\tau\bbk}\bbi= (-1)^{n_b}  (-e^{-\tau\bbk}),$ 
so that $\Ima(bh)=(-1)^{n_b}\sin\tau\bbk$, and hence
$$p=e^{(-1)^{n_b} s \sin\tau\bbk}.$$
Similarly,  $\Ima(f\ba h)=(-1)^{n_f}h=(-1)^{n_f}e^{\tau\bbk}\bbi,$  so that  
$$q=e^{(-1)^{n_f} s e^{\tau\bbk}\bbi}.$$
Therefore
$$\bp \bbi\bff q f=\bp q \bbi=e^{-(-1)^{n_b} s \sin\tau\bbk}e^{(-1)^{n_f} s e^{\tau\bbk}\bbi}\bbi. $$
  We compute
  \begin{align*}0=\Real(\bp \bbi\bff q f)&=\Real( e^{-(-1)^{n_b} s \sin\tau\bbk}e^{(-1)^{n_f} s e^{\tau\bbk}\bbi}\bbi)\\
&= -(-1)^{n_f}\sin s\cos(\tau- (-1)^{n_b} s\sin\tau ). \end{align*}
Since $\sin s \ne 0$ and $0<|s|<\tfrac\pi 4$,  $\tau\in [0,\pi]$ must satisfy 
\begin{equation}
\label{eqo}\tfrac\pi 2= \tau-(-1)^{n_b}s \sin \tau.\end{equation}
Next we compute
  \begin{align*}0=\Real(h\bp \bbi\bff q f)&=\Real(e^{\tau\bbk}\bbi e^{-(-1)^{n_b} s \sin\tau\bbk}e^{(-1)^{n_f} s e^{\tau\bbk}\bbi}\bbi)\\
&=-\cos s \cos(\tau+(-1)^{n_b}s \sin\tau).
\end{align*}
Since $\cos s \ne 0$ and $0<|s|<\tfrac\pi 4$,  $\tau\in [0,\pi]$ must also satisfy 
\begin{equation}
\label{eqt}\tfrac\pi 2= \tau+(-1)^{n_b}s \sin \tau.\end{equation} 
But since $0<|s|<\tfrac \pi 4$,  Equations~\eqref{eqo} and~\eqref{eqt} have no common solution $\tau$.   This contradiction finishes the proof.\end{proof}
 
\begin{lemma}
 \label{no-golemma2}
  If $0<|s|<\frac \pi 4$, then $\rrin_s( \NAT_{s})\subset P^*$.
 \end{lemma}
\begin{proof} Recall that the $SU(2)$ elements corresponding to the generators $c,d,g$ of the fundamental group of the inner 4-punctured 2-sphere are expressible in terms of $a,b,e,h,p,q$ by the equations 
$$c=\bq \bp b pq,~d=-\bh a h\text{, and }g=\bq e q.$$  

We'll argue by contradiction, so assume  that there exists a $(b,f,a,h)\in \tNAT_{s}$  and $n_c, n_d, n_g\in\{0,1\}$  satisfying $n_c+n_d+n_g\equiv 0 $ mod $2$,
 such that  
 $$f=(-1)^{n_c} c=(-1)^{n_d} d=(-1)^{n_g}g.$$ 
 After conjugation as in Section (\ref{pgf}), we may further assume that 
 $$ a=\bbi, b=e^{\gamma\bbk}\bbi, f=e^{\theta\bbk}\bbi,~\text{ and } e=e^{(\theta+\gamma)\bbk}\bbi.$$
 (We caution the reader that throughout this argument, $e$ with no exponent corresponds to the outer sphere generator $e=ba\bff$, where as all instances of  $e$ with an exponent refers to Euler notation for a quaternion.)       Thus 
\begin{equation}\label{eqn5.4}  d= (-1)^{n_d}  e^{\theta\bbk}\bbi, ~ c=  (-1)^{n_c} e^{\theta\bbk}\bbi,~g= (-1)^{n_g}e^{\theta\bbk}\bbi. \end{equation}
 
 Using the equations above for $c$ and $g$, and the consequence $c=(-1)^{n_c + n_g} g$ of Equation (\ref{eqn5.4}), we obtain $\bp b p=\pm e$.    
 Rewriting the left side as $bp^2$ using  Equation \eqref{bppb}, 
 we can solve for $p^2$ to obtain
 
 \begin{equation} \label{expo} 
 p^2 =e^{2s \Ima (bh)} =\pm \bb e = \pm e^{ -\theta \bbk} .
 \end{equation} 
 
 We now distinguish three cases $\theta =0$ (i.e., $e^{2s\Ima(bh)}=1$),  $\theta =\pi$ (i.e., $e^{2s\Ima(bh)}=-1$), and  $0<\theta <\pi$ (i.e., $e^{2s\Ima(bh)}$ is a non-central unit quaternion).   
If $e^{2s\Ima(bh)}=1$, then since $0<|s|<\frac \pi 4$ and $\| \Ima(bh) \| \leq 1$, $\Ima(bh)=0$.  Since $b,h\in \tracelessTwoSphere$, this implies that $h=\pm b$ is orthogonal to $\bbk$.  By those same inequalities, $e^{2s\Ima(bh)}$ cannot be $-1$.  If $e^{2s\Ima(bh)}$ is a non-central unit quaternion, then $\Ima(bh)$ is a non-zero multiple of $\bbk$, which again implies $h $ is orthogonal to $\bbk$.  
It follows that, for  some $\beta$, $h$ has the form
 $$h=e^{\beta\bbk}\bbi,$$
 from which we conclude that $f\ba h=e^{(\theta+\beta)\bbk} \bbi$ is also orthogonal to $\bbk$.

 The equation $g=\bq e q$ now can be expressed geometrically by stating that rotating $e=e^{(\theta + \beta)\bbk} \bbi \in \bbk^\perp$ by angle $-2s \| \Ima (f\ba h)\| = -2s$ about the axis $\Ima(f\ba h)=e^{(\theta + \beta) \bbk} \bbi \in \bbk^\perp$ yields $g=(-1)^{n_g} e\in \bbk^\perp$  This is impossible since $0<|s|<\tfrac \pi 4$, unless $g=e$ and  the axis $\Ima(f\ba h)$ is a scalar multiple of $g$.  In this case,  writing $d,g,e,$ and $\Ima(f\ba h) $ out in terms of $\theta, \gamma, \beta$ shows that $\gamma, \theta, \beta$ are all zero modulo $\pi$, a contradiction by Lemma \ref{no-golemma}.  \end{proof}

 Recall from Lemma~\ref{red1} that the map $\mapVR:\Vupstairs_s\to \tNAT_s$ induces a surjection $\mapVRmodInvolution:\V_s\to \NAT_s$.
\begin{corollary}
 \label{RequalsS}
 When $0<|s|<\tfrac\pi 4$, the surjection $\mapVRmodInvolution:\V_s\to \NAT_s$ is a homeomorphism.
\end{corollary}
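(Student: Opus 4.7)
The plan is to assemble three ingredients already in hand: (i) Lemma~\ref{red1}, which gives that $\mapVRmodInvolution|_{\V_s}$ is a well-defined, continuous surjection onto $\NAT_s$; (ii) the observation recorded above diagram~\eqref{commie diag} that $\mapVRmodInvolution$ is injective on the open subset $((T\setminus C)\times \tracelessTwoSphere)/\hat\iota$; and (iii) Theorem~\ref{nogotheorem}. A short compactness argument then upgrades the continuous bijection to a homeomorphism.

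First, I would show that $\Vupstairs_s \subset (T\setminus C)\times \tracelessTwoSphere$ whenever $0<|s|<\tfrac{\pi}{4}$. An element $(\gamma,\theta,h)\in \Vupstairs_s$ corresponds to $\mapVR(\gamma,\theta,h)=(e^{\gamma\bbk}\bbi,e^{\theta\bbk}\bbi,\bbi,h)\in\tNAT_s$, whose outer-boundary restriction is exactly the pillowcase class $[\gamma,\theta]\in P^\rout$. By Lemma~\ref{no-golemma}, this class cannot lie in $C$, so $(\gamma,\theta)\in T\setminus C$. Passing to the $\hat\iota$ quotient gives $\V_s \subset ((T\setminus C)\times \tracelessTwoSphere)/\hat\iota$. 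Combining this containment with ingredient (ii) shows that $\mapVRmodInvolution|_{\V_s}$ is injective, hence a continuous bijection $\V_s \to \NAT_s$.

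To finish, I would invoke compact-Hausdorff topology. The space $\tNAT_s$ is cut out of $SU(2)^8$ by the closed conditions of Definition~\ref{ptrep} and Lemma~\ref{4vars}, hence is closed, so compact; since $SU(2)$ is a compact group acting continuously, the quotient $\NAT_s$ is compact Hausdorff. Similarly $\Vupstairs_s$ is closed in the compact space $T\times \tracelessTwoSphere$, and by Lemma~\ref{red1} the involution $\hat\iota$ acts freely (in particular continuously with Hausdorff quotient), so $\V_s$ is compact Hausdorff. A continuous bijection between compact Hausdorff spaces is a homeomorphism, which yields the corollary.

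The only genuine obstacle is the failure of $\mapVRmodInvolution$ to be injective over the corners $C\subset P^\rout$: at a corner the values $a,b,f\in\{\pm\bbi\}$ commute with every $e^{\alpha\bbi}$, leaving a residual one-parameter gauge freedom that acts nontrivially on $h$ and collapses entire circles to points in the quotient (compare Example~\ref{ex:Rzero}). This is precisely the phenomenon defeated by Theorem~\ref{nogotheorem} for $0<|s|<\tfrac\pi 4$, so once that theorem is in place the present corollary is essentially formal.
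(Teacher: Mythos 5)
Your proof is correct and follows essentially the same route as the paper's: the miss‐the‐corners lemma gives injectivity of $\mapVRmodInvolution|_{\V_s}$, surjectivity is from Lemma~\ref{red1}, and a compact‐Hausdorff argument upgrades the bijection to a homeomorphism. The only stylistic difference is that you outsource the injectivity-away-from-corners fact to the ``straightforward to check'' observation stated above diagram~\eqref{commie diag}, whereas the paper re-derives it inline by the quaternionic conjugation computation.
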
 
\begin{proof}
Suppose that $(\gamma,\theta,h),(\gamma',\theta',h')\in \Vupstairs_s$ satisfy $\mapVRmodInvolution([\gamma,\theta,h])=\mapVRmodInvolution([\gamma',\theta',h'])$. According to the commutative diagram (\ref{commie diag}), this means that
$$
\mapVR(\gamma,\theta,h)=(e^{\gamma\bbk}\bbi, e^{\theta\bbk}\bbi,  \bbi, h) \quad \text{ and }\quad
\mapVR(\gamma',\theta',h')=(e^{\gamma'\bbk}\bbi, e^{\theta'\bbk}\bbi,  \bbi, h')
$$
are conjugate by some $v\in SU(2)$.
From consideration of the third factor, $v=e^{\nu\bbi}$ for some $\nu$.
On the other hand, from the first two components, we see 
$$e^{\nu\bbi} e^{\gamma'\bbk}\bbi e^{-\nu \bbi}=e^{\gamma\bbk}\bbi \mbox{ and } e^{\nu \bbi}   e^{\theta'\bbk}\bbi e^{-\nu \bbi} =  e^{\theta\bbk}\bbi, $$ which simplify to 
$$ 
e^{\gamma' e^{2\nu \bbi}\bbk}  = e^{\gamma\bbk} \mbox{ and }  e^{\theta' e^{2\nu \bbi}\bbk}  = e^{\theta\bbk} .$$

Theorem~\ref{nogotheorem} states that 
$\sin^2\gamma+\sin^2\theta\ne 0$.  This implies that $e^{\nu\bbi}\in \{\pm 1,\pm \bbi\}$ and 
$$\text{either }(\gamma',\theta',h')=(\gamma,\theta,h),\quad \text{ or }(\gamma',\theta',h')=(-\gamma,-\theta,-\bbi h\bbi)=\hat \iota(\gamma,\theta,h).$$
This completes the proof of injectivity of the map $\mapVRmodInvolution$.

Both $\Vupstairs_s$ and $\tNAT_s$ are compact and Hausdorff and the quotient map $\tNAT_s\to \NAT_s$ is a closed map, so $\V_s$ is compact and $\NAT_s$ is Hausdorff and $\mapVRmodInvolution$ is a homeomorphism. 
\end{proof}

\section{\texorpdfstring{For small non-zero $s$, $\NAT_{s}$ is  a closed orientable surface of genus 3}{For small non-zero s, the space is  a closed orientable surface of genus 3}}\label{sec:genus_three}

The purpose of this section is to identify $\NAT_s$ for $s$ small and non-zero and to establish some properties of the restriction map $r_s: \NAT_s \to P^* \times P^*$.  The results are summarized in Theorem~\ref{thm1}.     In this paper, we only use the smoothness of $\NAT_s$, not the analyticity.  The following theorem is equivalent to the collection of statements in the second paragraph of Theorem \ref{thmA}.

\begin{theorem}
 \label{thm1} 
 For $s$ small enough and non-zero, $\NAT_s$ is a real-analytic, smooth, closed orientable surface of genus 3, and the restriction $\r_s:\NAT_s\to (P^*)^-\times P^*$ is a Lagrangian immersion. \end{theorem}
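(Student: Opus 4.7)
The plan is to use the identification $\NAT_s\cong\V_s$ from Corollary~\ref{RequalsS}, together with Theorem~\ref{nogotheorem}, and to analyze $\Vupstairs_s\subset T\times\tracelessTwoSphere$ as a real-analytic family of zero loci in a fixed $4$-manifold. The argument proceeds in three stages: establish smoothness of $\NAT_s$, compute its genus, then verify the Lagrangian immersion property of $\r_s$.

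First, $\Vupstairs_s$ is cut out of the $4$-manifold $T\times\tracelessTwoSphere$ by $F_2\circ\mapVR=0$ and $F_3\circ\mapVR=0$. Expanding $p=e^{s\Ima(bh)}=1+s\Ima(bh)+O(s^2)$ and similarly for $q$, one obtains a factorization $F_2\circ\mapVR=s\cdot G_s(\gamma,\theta,h)$ with $G_s$ real-analytic in $s$, so for $s\ne 0$ one has $\Vupstairs_s=\{F_3\circ\mapVR=0\}\cap\{G_s=0\}$. The first equation alone cuts out the $3$-manifold $T\times\iperpcirc$. I would verify that $d(F_3\circ\mapVR)$ and $dG_s$ remain linearly independent along their common zero locus for all sufficiently small $s$, reducing at $s=0$ to an explicit quaternionic computation of $G_0$ on $T\times\iperpcirc$; Theorem~\ref{nogotheorem} confines us away from the corners of $T$, avoiding the worst degenerations of Example~\ref{ex:Rzero}, and continuity in $s$ propagates transversality to small $s\ne 0$. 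Since $\hat\iota$ acts freely on $\Vupstairs_s$ by Lemma~\ref{red1}, the quotient $\V_s\cong\NAT_s$ is a smooth, closed, orientable real-analytic surface.

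To identify the genus as three, I would examine the boundary restriction $\rrout_s\co\NAT_s\to P^\rout$. Projecting to the $(\gamma,\theta)$ coordinates and using Theorem~\ref{nogotheorem}, one shows the image of $\rrout_s$ is $P$ with small disc neighborhoods of the four corners removed, and $\rrout_s$ is generically $2$-to-$1$ onto this four-holed sphere (the two preimages differing essentially by the choice of $h$-sheet). This realizes $\NAT_s$ as the double of a four-holed sphere along its boundary, so $\chi(\NAT_s)=2\cdot(-2)=-4$, yielding genus $3$.

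Finally, to see that $\r_s$ is a Lagrangian immersion into $(P^*)^-\times P^*$, I invoke the Atiyah-Bott-Goldman framework summarized in Section~\ref{prelim}: the restriction from the traceless character variety of a $3$-manifold-with-boundary to its (smooth) boundary character variety is isotropic at smooth points by Poincar\'e-Lefschetz duality, and the dimension count $\dim\NAT_s=2=\tfrac{1}{2}\dim((P^*)^-\times P^*)$ promotes isotropic to Lagrangian. Injectivity of $d\r_s$, i.e.\ the immersion property, reduces to a first-cohomology vanishing statement that can be verified directly from the Wirtinger presentation of Proposition~\ref{wirtpato}. The main obstacle throughout is the transversality verification in the smoothness step, since the points that approach the singular corner-collapses of $\NAT_0$ as $s\to 0$ require delicate analysis to ensure the perturbation genuinely cuts the $3$-dimensional $\V_0$ down to a smooth $2$-dimensional $\V_s$ without any residual degenerate strata.
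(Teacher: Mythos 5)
Your plan closely mirrors the paper's proof: factoring $F_2\circ\mapVR = s\cdot G_s$ and treating $(G_s, F_3\circ\mapVR)$ as an analytic family is exactly what the paper packages as the function $H$ in Proposition~\ref{analytics}; the genus-$3$ identification via doubling a four-holed sphere is Lemma~\ref{xlem}; and the Lagrangian immersion property is outsourced to general theory in both accounts.

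One point warrants correction. You appeal to Theorem~\ref{nogotheorem} to ``confine us away from the corners of $T$'' and thereby dodge the singularities of Example~\ref{ex:Rzero} in the transversality step. But Theorem~\ref{nogotheorem} concerns $\Vupstairs_s$ only for $s\ne 0$, whereas your continuity argument requires verifying transversality at $s=0$, on the $s=0$ zero set $\genusFiveSurface$ of the modified equations. That surface, unlike $\Vupstairs_s$ for $s\ne 0$, passes straight through the four circles $C\times\iperpcirc$ over the corners; indeed, those circles are precisely the limits, as $s\to 0$, of the parts of $\Vupstairs_s$ approaching $C$, so they cannot be excluded from the check. The actual verification is short (see Proposition~\ref{analytics}: the $\nu$-derivative of $\sin\gamma\cos\nu+\sin\theta\sin\nu$ vanishes on $\genusFiveSurface$ only where $\sin\gamma=\sin\theta=0$, and there one of $\cos\gamma\cos\nu$, $\cos\theta\sin\nu$ is nonzero), but it must be carried out \emph{at} the corner circles, not around them.
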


Denote the circle of imaginary unit quaternions orthogonal to $\bbi$ by 
$$S^1 _{\bbi ^\perp} = \tracelessTwoSphere \cap \bbi ^\perp = \{ h\in \tracelessTwoSphere \mid \Real(h \bbi)=0 \}.$$  Write elements of this circle in the form $e^{\nu \bbi} \bbk$.   The conjugation action of  $\bbi$ on $\tracelessTwoSphere$ restricts to a free action on  circle $S^1 _{\bbi ^\perp}  $, given by
 $$\bbi e^{\nu\bbi}\bbk(-\bbi)=-e^{\nu\bbi}\bbk=e^{(\nu+\pi)\bbi}\bbk.$$

 \subsection{Outline of the proof of Theorem~\ref{thm1}.}

 The first step in the proof is to replace the family of $s$-dependent functions $(F_2\circ \mapVR, F_3\circ \mapVR):T\times \tracelessTwoSphere \to \RR^2$ cutting out $\Vupstairs_s$ with a different family of $s$-dependent functions with the same zero set when $s$ is near but not equal to zero.  In doing so, we  obtain a real-analytic manifold in $T\times \tracelessTwoSphere \times (-\epsilon, \epsilon) $ for small $\epsilon>0$, which is a surface bundle over $(-\epsilon, \epsilon)$.  The fiber above $s=0$ is not the same as $\Vupstairs_0$, but rather an analytic manifold $\genusFiveSurface$ that is contained in $\Vupstairs_0$.  
 
 This first step is accomplished in Section \ref{step1} by replacing the first of the two functions by its quotient by $s$, extended suitably to $s=0$.   The new equations cut out $\genusFiveSurface$ when $s=0$, and hence they cut out diffeomorphic copies of $\genusFiveSurface$ for $s$ near zero.   (For our argument about the smoothness of the quotient $\frac{F_2}{s}$ and its zero set, we only need the functions $F_i$ to be smooth, but we note throughout that the functions $F_i$ are actually real-analytic.)  
 
In Section \ref{step2},  we identify $\genusFiveSurface$ as a closed, orientable surface of genus five, which is invariant under $\hat \iota$.  Furthermore, we identify the $\hat \iota $ action on it, and show $\genusFiveSurface/\hat \iota$ is orientable with genus three.  
 The proof of  Theorem~\ref{thm1} is then finished by establishing the necessary analytic and Lagrangian properties.

 \subsection{\texorpdfstring{Better behaved equations cutting out $\Vupstairs_s$ for $s\neq 0$}{Better behaved equations cutting out V-hat-s for non-zero s}} \label{step1} In this section, we replace the equations cutting out $\Vupstairs_s$ by different equations that behave better near $s=0$.  To describe the new equations, and to highlight the role of $s$ as a variable in the present argument, we define the function 
 \begin{align} \label{defH}
H &:T\times \tracelessTwoSphere\times \RR\to \RR^2\\
H ( \gamma,\theta,h,s)&=\left\{ \begin{array}{ccc} ((\tfrac 1 s\Real(\bp a \bff qf),\Real(h(\bp a \bff qf)))) &  \mbox{ for } & s\neq 0\\
(\Real(h(ba+f)), \Real(ha)) & \mbox{ for }& s=0. \end{array} \right.  
\nonumber
\end{align}
with $(b,f,a,h)=\mapVR(\gamma,\theta,h)$ (Equation~\eqref{defofPhi}) and $p,q$ given by the perturbation condition.

\begin{proposition}\label{analytics} When $s\neq 0$, 
$$\Vupstairs_s =\{ (\gamma, \theta , h) \mid H(\gamma, \theta, h, s) = (0,0) \}.$$  The function $H$ defined in equation (\ref{defH}) is analytic and invariant under the action of $\hat \iota$.  

When $s=0$, the derivative of $H$ maps the tangent space of $T\times \tracelessTwoSphere$ onto $\RR^2$ along the zero set.
Hence, the zero set of $H$ near $T\times \tracelessTwoSphere \times \{0\}$ is a 3-manifold.  The zero set meets $T\times \tracelessTwoSphere \times \{0\}$ transversely in  a compact, oriented surface $\genusFiveSurface\times \{0\}$, where  
$$ \genusFiveSurface=\{ (\gamma, \theta, e^{\nu \bbi} \bbk) \in T\times S^1 _{\bbi ^\perp} \mid \sin \theta \sin \nu + \sin \gamma \cos \nu = 0\}\subset T\times \tracelessTwoSphere.$$   The action of  $\hat \iota$ is free on $\genusFiveSurface$.  
 \end{proposition}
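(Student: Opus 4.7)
For $s\ne 0$, the equality $\Vupstairs_s=\{H=0\}$ is immediate from Lemma~\ref{4vars}: the first component of $H$ is $(1/s)\,F_2\circ \mapVR$ (same zero set for $s\ne 0$), the second is $F_3\circ \mapVR$, and $F_1\circ\mapVR=0$ is automatic since $ba\bff=\bbi e^{-(\gamma+\theta)\bbk}$ is purely imaginary. Analyticity of $H$ across $s=0$ rests on the observation that at $s=0$ the perturbation condition gives $\bp=q=1$, so $\bp a\bff q f = a \bff f = \bbi$ has zero real part; hence the real-analytic function $(\gamma,\theta,h,s)\mapsto \Real(\bp a\bff q f)$ vanishes identically on $\{s=0\}$, and its quotient by $s$ extends analytically. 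The value at $s=0$ is the $s$-derivative, which I would compute using $\dot{\bp}|_0=-\Ima(bh)$, $\dot q|_0=\Ima(f\ba h)$, $\bff f=1$, and $a=\bbi$, obtaining $\Real(h(ba+f))$ as the formula requires. The $\hat\iota$-invariance of $H$ then follows from the $SU(2)$-conjugation invariance of $F$ on $\tracelessTwoSphere^4$ together with the fact that $\mapVR$ intertwines $\hat\iota$ with conjugation by $\bbi$.

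Next I would compute $H|_{s=0}$ explicitly. Writing $h=h_1\bbi+h_2\bbj+h_3\bbk$ gives $H_2=\Real(h\bbi)=-h_1$, whose zero locus is the codimension-1 submanifold $T\times S^1_{\bbi^\perp}\subset T\times\tracelessTwoSphere$. On that 3-manifold I would use the parametrization $h=e^{\nu\bbi}\bbk=-\sin\nu\,\bbj+\cos\nu\,\bbk$ and a short quaternion calculation of $h(ba+f)$ to rewrite $H_1=\Real(h(ba+f))$ as $\sin\nu\sin\theta+\cos\nu\sin\gamma$, which is the defining equation for $\genusFiveSurface$.

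The crux of the argument, and the step I expect to be the main obstacle, is surjectivity of $dH$ onto $\RR^2$ at $s=0$ along the zero set. Since $\partial H_2/\partial h_1=-1$, $dH_2$ is already nonzero in the normal direction to $T\times S^1_{\bbi^\perp}$, so it suffices to show that the gradient of $H_1=\sin\nu\sin\theta+\cos\nu\sin\gamma$ restricted to $T\times S^1_{\bbi^\perp}$ is nonvanishing on $\genusFiveSurface$. The three partial derivatives are
\[
\partial_\gamma H_1=\cos\nu\cos\gamma, \qquad \partial_\theta H_1=\sin\nu\cos\theta, \qquad \partial_\nu H_1=\cos\nu\sin\theta-\sin\nu\sin\gamma.
\]
A short case analysis rules out simultaneous vanishing together with $H_1=0$: if $\cos\nu=0$ then $\cos\theta=0$, and $\partial_\nu H_1=0$ forces $\sin\gamma=0$, making $H_1=\pm\sin\theta\ne 0$; if instead $\cos\gamma=0$ with $\cos\nu\ne 0$, the equations $\partial_\nu H_1=0$ and $H_1=0$ yield $\tan\nu=\sin\theta/\sin\gamma=-\sin\gamma/\sin\theta$, i.e., $(\sin\theta)^2=-(\sin\gamma)^2$, which is impossible. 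By the implicit function theorem, $\{H=0\}$ is then a smooth 3-manifold in $T\times\tracelessTwoSphere\times\RR$ near $\genusFiveSurface\times\{0\}$, meeting $\{s=0\}$ transversely in $\genusFiveSurface\times\{0\}$.

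Finally I would establish the topological properties of $\genusFiveSurface$. Compactness is immediate from closedness in the compact manifold $T\times S^1_{\bbi^\perp}$; orientability is standard for a regular level set of a smooth function on an orientable 3-manifold. For freeness of $\hat\iota$ on $\genusFiveSurface$, I would use the identity $\bbi\bbk\bbi=\bbk$ to compute $\hat\iota(e^{\nu\bbi}\bbk)=-\bbi e^{\nu\bbi}\bbk\bbi=-e^{\nu\bbi}\bbk$, so $\hat\iota$ acts on $S^1_{\bbi^\perp}$ by the fixed-point-free involution $\nu\mapsto\nu+\pi$; this precludes fixed points anywhere on $T\times S^1_{\bbi^\perp}$, in particular on $\genusFiveSurface$.
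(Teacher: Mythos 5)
Your proposal is correct and follows essentially the same route as the paper's proof: the $s\ne 0$ identification via Lemma~\ref{4vars}, the division-by-$s$ trick for analyticity at $s=0$ (matching Lemma~\ref{dividebys}), the $\hat\iota$-invariance via conjugation by $\bbi$, the computation of $H|_{s=0}$ leading to the defining equation $\sin\theta\sin\nu+\sin\gamma\cos\nu=0$, the surjectivity of $dH$, and freeness of $\hat\iota$ from $h\perp\bbi$. The paper's transversality check is slightly slicker — it notes that $H_1=0$ together with $\partial_\nu H_1=0$ say that $(\sin\gamma,\sin\theta)$ is orthogonal to both $(\cos\nu,\sin\nu)$ and $(-\sin\nu,\cos\nu)$, forcing $\sin\gamma=\sin\theta=0$, after which one of $\cos\gamma\cos\nu$, $\cos\theta\sin\nu$ is visibly nonzero — but your explicit case analysis on $\cos\nu$ and $\cos\gamma$ reaches the same conclusion correctly.
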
  
 
 The first statement in the proposition is obvious.  Note that the second component of $H$ is simply $F_3\circ \mapVR$, with the $s$-dependence made explicit, and simplified when $s=0$ so that $p=q=1$.  This component is clearly analytic.
 
The analyticity of the first component of $H$ is immediate from the next lemma, once $b,f,$ and $a$ are replaced by $e^{\gamma \bbk} \bbi, e^{\theta \bbk} \bbi,$ and $\bbi$.  
\begin{lemma}
 \label{dividebys}
  Consider the analytic quaternion-valued function $\tracelessTwoSphere^4 \times \RR \to \HH$ given by $(b,f,a,h,s)\mapsto \bp a \bff q f,$ where 
  $p=e^{s\Ima(bh)}$ and $q=e^{s\Ima(f\ba h)}$.  Then the first order expansion of this function is 
  $$ a+s(-\Ima(bh)a +a(\Ima(\ba f h))) + O(s^2),$$
  
  and hence  
  $$  \Real(h\bp a \bff q f)=\Real(ha)+O(s)   \quad
  \text{ and } \quad       \Real(\bp a \bff q f)=s\big( \Real(h(ba+f))\big)+O(s^2).$$  \qedhere
\end{lemma}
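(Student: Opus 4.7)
The plan is a direct Taylor expansion of $\bar p\,a\,\bar f\,q\,f$ about $s=0$. From the defining formulas, $\bar p = 1 - s\,\Ima(bh) + O(s^2)$ and $q = 1 + s\,\Ima(f\bar a h) + O(s^2)$. I would substitute these and simplify using $\bar f f = 1$, which holds because $f \in \tracelessTwoSphere$ satisfies $f^2 = -1$, so $\bar f = -f$ and hence $\bar f f = -f^2 = 1$. Keeping terms up to first order yields
\[
\bar p\,a\,\bar f\,q\,f \;=\; a \;-\; s\,\Ima(bh)\,a \;+\; s\,a\,\bar f\,\Ima(f\bar a h)\,f \;+\; O(s^2).
\]
The conjugation identity $\bar f\,\Ima(x)\,f = \Ima(\bar f x f)$, valid because $x \mapsto \bar f x f$ is an $\RR$-linear ring automorphism of $\HH$ preserving real and imaginary parts, converts the last term to $s\,a\,\Ima(\bar a h f)$, which is the claimed first-order expansion.

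For the two $\Real$-part consequences, the first is essentially the zeroth-order limit: $\Real(h\bar p\,a\,\bar f\,q\,f) = \Real(ha) + O(s)$. For the second, $\Real(a) = 0$ since $a \in \tracelessTwoSphere$, so the Taylor expansion of $\Real(\bar p\,a\,\bar f\,q\,f)$ begins at order $s$ with coefficient $\Real\bigl(-\Ima(bh)\,a + a\,\Ima(\bar a h f)\bigr)$. I would then use the elementary identity $\Real(\Ima(x)y) = \Real(xy) - \Real(x)\Real(y) = \Real(xy)$ whenever $\Real(y) = 0$, together with the cyclic property $\Real(xy) = \Real(yx)$ and $a\bar a = 1$, to simplify this coefficient to $-\Real(bha) + \Real(hf)$.

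The main (minor) obstacle is the final rearrangement: one needs $-\Real(bha) = \Real(hba)$ to obtain the coefficient $\Real(h(ba+f))$. This is the one nontrivial quaternionic manipulation. I would prove it by observing that, since $b,h \in \tracelessTwoSphere$ are both purely imaginary, the symmetric combination $bh + hb = -2\langle b,h\rangle$ is real; therefore $\Real((bh+hb)a) = (bh+hb)\Real(a) = 0$, which yields $\Real(bha) + \Real(hba) = 0$. With this, the coefficient becomes $\Real(hba) + \Real(hf) = \Real(h(ba+f))$, completing the proof. Apart from this anticommutator observation for traceless elements, the argument is routine bookkeeping with quaternionic identities, so I anticipate no serious difficulty.
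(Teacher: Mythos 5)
Your proof is correct and follows essentially the same route as the paper's: Taylor-expand the two exponentials, conjugate the imaginary part through $\bff\,(\cdot)\,f$ to obtain $a\,\Ima(\ba h f)$ (so the $\Ima(\ba f h)$ in the lemma's displayed formula appears to be a typo; the paper's own proof also has $\Ima(\ba h f)$), and then use $\Real(a)=0$ to identify the first-order coefficient of the real part. The one cosmetic difference is that the paper absorbs the anticommutator observation at the level of imaginary parts (effectively using $\Ima(hb)=-\Ima(bh)$ for purely imaginary $b,h$ before taking $\Real$), whereas you invoke the equivalent $\Real$-level identity $\Real(hba)=-\Real(bha)$ at the very end.
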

\begin{proof}
We compute
\begin{align*}
\bp a \bff q f&=e^{-s\Ima(bh)} a \bff e^{s\Ima(f\ba h)}f=e^{-s\Ima(bh)} a  e^{s\Ima( \ba h f)} \\
&=(1-s\Ima(bh) +O(s^2))a(1+s\Ima(\ba h f)+O(s^2))\\
&=a +s(\Ima(hb)a +a\Ima(\ba h f)) +O(s^2).
\end{align*}
Therefore $\Real(h\bp a \bff q f)=\Real(ha)+O(s)$. Moreover, since $\Real(a)=\Real(h)=0$, 
\begin{align*}
\Real(\bp a \bff q f)&= s\Real(\Ima(hb)a +a\Ima(\ba h f)) +O(s^2)\\
&=s\Real(hba +a \ba h f) +O(s^2)\\
&=s\Real(h(ba+f))+O(s^2).
\end{align*}
\end{proof}
 
 \noindent{\em Proof of Proposition \ref{analytics}.}  With the analyticity of $H$ established, we complete the proof of the proposition as follows.  The action of $\hat \iota$ on $(\gamma, \theta, h)$ is the same as conjugation by $\bbi$ on $(b,f,a,h)$, and $p$ and $q$ also change by conjugation by $\bbi$,  so the real parts of any products of these quaternions and their inverses are unaffected. The final claim that $\hat \iota $ acts freely on $\genusFiveSurface$ is immediate from the fact that $h\neq \pm \bbi$ in $\genusFiveSurface$.  
 
 When $s=0$, the second component $\Real(h \bbi)$ of $H$ cuts out the 3-torus $T\times S^1 _{\bbi ^\perp} \subset T\times \tracelessTwoSphere$ transversely.    Plugging $a=\bbi$, $b=e^{\gamma \bbk} \bbi$, $f=e^{\theta \bbk} \bbi$, and $h=e^{\nu \bbi} \bbk$ into the formula for the first component leads to the condition $\sin \theta \sin \nu + \sin \gamma \cos \nu=0$ defining $\genusFiveSurface$.  Our next step is to show that $\genusFiveSurface$ is cut out transversely from the 3-torus by the first component of $H$.  
 
 The only solutions to $\frac{\partial}{\partial \nu} (\sin \gamma \cos \nu + \sin \theta \sin \nu) =0$ on $\genusFiveSurface$ are points where $\sin \gamma =\sin \theta =0$, but it is easy to see that at least one of the other partial derivatives, $\cos \gamma \cos \nu$ and $\cos \theta \sin \nu$,  is non-zero at these points.  This implies the transversality claim in the proposition.  Orientability follows from the fact that $\genusFiveSurface$ is realized  as a level set of a map to $\RR^2$.  This completes the proof.\qed
 
 Proposition \ref{analytics} has the following immediate consequence. 
 \begin{corollary} \label{small s cor} For some $\epsilon>0$, $\Vupstairs_s$ is an analytic manifold  diffeomorphic to $\genusFiveSurface$ whenever $0<|s|<\epsilon.$   \end{corollary}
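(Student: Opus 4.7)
The approach is to view $Z := H^{-1}(0) \subset T \times \tracelessTwoSphere \times \RR$ as the total space of an analytic family parametrized by $s$, to identify $\Vupstairs_s$ with the fiber of $Z$ over $s$ (for $s \neq 0$), and to apply the Ehresmann fibration theorem near $s=0$. The main task is to show that $Z$ is a smooth submanifold that fibers properly over an interval around $s=0$ after restricting to a tube around $\genusFiveSurface$, and separately that $\Vupstairs_s$ stays inside that tube once $|s|$ is small enough.

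First I would invoke the transversality claim at the end of Proposition~\ref{analytics}. Because the differential of $H$ restricted to the $T \times \tracelessTwoSphere$ directions is already surjective at every point of $\genusFiveSurface \times \{0\}$, the full differential $DH$ on $T \times \tracelessTwoSphere \times \RR$ has rank $2$ at those points as well. Since full rank is an open condition and $\genusFiveSurface$ is compact, there is an open neighborhood $\mathcal{U}$ of $\genusFiveSurface \times \{0\}$ on which both $DH$ and its fiberwise (fixed-$s$) restriction are surjective. The analytic implicit function theorem then shows that $Z \cap \mathcal{U}$ is an analytic $3$-manifold and the projection $\pi \colon Z \cap \mathcal{U} \to \RR$ is an analytic submersion.

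Next I would pick a compact tubular neighborhood $K$ of $\genusFiveSurface$ in $T \times \tracelessTwoSphere$ and a small $\epsilon_1 > 0$ such that $K \times [-\epsilon_1, \epsilon_1] \subset \mathcal{U}$. The restriction of $\pi$ to $Z \cap \bigl(K \times (-\epsilon_1, \epsilon_1)\bigr)$ is then a proper analytic submersion over $(-\epsilon_1, \epsilon_1)$ whose fiber over $s=0$ equals $\genusFiveSurface$. By the Ehresmann fibration theorem this total space is diffeomorphic to $\genusFiveSurface \times (-\epsilon_1, \epsilon_1)$ compatibly with the projection, so every fiber is diffeomorphic to $\genusFiveSurface$.

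The remaining step, and the only real obstacle I anticipate, is to show $\Vupstairs_s \subset K$ for all small enough $|s|$, so that the fiber $Z \cap (K \times \{s\})$ from the previous step coincides with all of $\Vupstairs_s$. I would argue by contradiction: if there were sequences $s_n \to 0$ and $x_n \in \Vupstairs_{s_n}$ with $x_n \notin K$, then by compactness of $T \times \tracelessTwoSphere$ a subsequence converges to some $x^*$, and the continuity of $H$ together with $H(x_n, s_n) = 0$ forces $H(x^*, 0) = 0$, i.e., $x^* \in \genusFiveSurface \subset \mathrm{int}(K)$, contradicting $x_n \notin K$ for large $n$. Combining these three steps, for some $\epsilon \in (0, \epsilon_1]$ and every $s$ with $0 < |s| < \epsilon$, the set $\Vupstairs_s$ coincides with $Z \cap (K \times \{s\})$ and is therefore an analytic manifold diffeomorphic to $\genusFiveSurface$.
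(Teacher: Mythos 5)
Your proof follows essentially the same route as the paper's---transversality from Proposition~\ref{analytics}, then a properness argument to fiber the zero locus of $H$ over a small interval---with the welcome addition of the containment argument $\Vupstairs_s\subset K$, which the paper leaves implicit when it asserts that the zero set fibers over $(-\epsilon,\epsilon)$. The only logical quibble is that the containment step should come \emph{before} invoking Ehresmann: as you have ordered things, the restricted total space $Z\cap\bigl(K\times(-\epsilon_1,\epsilon_1)\bigr)$ could a priori meet $\partial K\times(-\epsilon_1,\epsilon_1)$, making it a manifold with boundary whose fibers over some $s$ exceed $\Vupstairs_s$; establishing containment first (on a possibly smaller interval) ensures the restricted zero locus is boundaryless and each fiber really is $\Vupstairs_s$, after which Ehresmann applies cleanly.
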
 
 \begin{proof}  Analyticity follows from the analyticity of $H$.  Since the zero set of $H$ is a manifold near $T\times \tracelessTwoSphere \times \{0\}$ intersecting this copy of $T\times \tracelessTwoSphere$ transversely, projection from the zero set to $\RR$ is a submersion, and the zero set fibers over $(-\epsilon, \epsilon)$ for $\epsilon$ sufficiently small.   \end{proof}

\subsection{\texorpdfstring{Identifying  the surfaces $\genusFiveSurface$  and $\genusFiveSurface/\hat\iota$}{Identifying  the surfaces Sigma-hat   and Sigma-hat/iota}}\label{step2}
We   next show  that  $\genusFiveSurface$ is an orientable, connected genus five surface and and that $\genusFiveSurface/\hat\iota$ is a genus three surface.

 \medskip

Let $T^*=T\setminus C$, the torus with the four fixed points of  $\iota $ removed.  Define a function   
\begin{equation}
\begin{split}
\label{defnofh} 
h:T^* &\to  S^1 _{\bbi ^\perp}, \\
 (\gamma,\theta) &   \mapsto
 \frac{  \sin\gamma\bbj  +\sin\theta\bbk}{\sqrt{
\sin^2\gamma +\sin^2\theta}  }.
\end{split}
\end{equation}
Use this function to define two maps:
\begin{equation}
\label{defnofsig} 
\sigma_\pm:T^*\to T^*\times \tracelessTwoSphere,~
\sigma_{\pm}(\gamma,\theta)
 = \big(\gamma,\theta, \pm h(\gamma,\theta)\big).
\end{equation}

\begin{lemma} \label{xlem} \hspace{1cm}

\begin{enumerate}
\item 
The images of $\sigma_+$ and $\sigma_-$ lie in $\genusFiveSurface$.
\item The maps $\sigma_\pm:T^*\to \genusFiveSurface$ are open embeddings which induce opposite orientations on $\genusFiveSurface$.
\item The complement of $\genusFiveSurface^*\coloneqq \sigma_+(T^*)\cup \sigma_-(T^*)$ in $\genusFiveSurface$  is $C\times S^1_{\bbi^\perp}$, a disjoint union of four circles.
\item The images of $\sigma_+$ and $\sigma_-$ are disjoint.
\item The maps $\sigma_+$ and $\sigma_-$ are equivariant with respect to the involutions $\iota,\hat\iota$, and hence define open embeddings (we use the same notation)
$$\sigma_\pm:P^*\to \genusFiveSurface/\hat\iota$$ 
with disjoint image and complement a disjoint union of four circles.  
\item  The surface $\genusFiveSurface$ is connected. In particular, $\genusFiveSurface$ has genus five, and  $\NAT_s\cong \genusFiveSurface/\hat\iota$ is an orientable closed surface of genus three for small non-zero $s$.
\end{enumerate}
\end{lemma}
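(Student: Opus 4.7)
The plan is to verify the six statements in order, treating (1)--(5) as mostly routine computational checks and reserving the real work for the Euler characteristic argument in (6). For (1), I will substitute $b = e^{\gamma\bbk}\bbi$, $f = e^{\theta\bbk}\bbi$, $a = \bbi$, and $h = e^{\nu\bbi}\bbk = -\sin\nu\,\bbj + \cos\nu\,\bbk$ into the defining equation $\sin\theta\sin\nu + \sin\gamma\cos\nu = 0$ of $\genusFiveSurface$; since $\pm h(\gamma,\theta)$ by construction picks out the (two) values of $\nu$ mod $\pi$ solving this equation, both $\sigma_+$ and $\sigma_-$ land in $\genusFiveSurface$. For (2), $\sigma_\pm$ have an obvious left inverse (projection to $(\gamma,\theta)$), so they are injective; their differentials are injective because the first two components are the identity, and since both domain and target are $2$-dimensional smooth manifolds, they are open embeddings. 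The claim about opposite orientations will follow by computing the $\nu$-component of $\nabla G$, where $G(\gamma,\theta,\nu) = \sin\theta\sin\nu + \sin\gamma\cos\nu$ is the defining function: on $\sigma_+$ this derivative equals $+\sqrt{\sin^2\gamma + \sin^2\theta}$ and on $\sigma_-$ it equals $-\sqrt{\sin^2\gamma + \sin^2\theta}$, so the two sheets inherit opposite transverse orientations from the ambient $T^*\times S^1_{\bbi^\perp}$.

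For (3), I will observe that a point $(\gamma,\theta,e^{\nu\bbi}\bbk) \in \genusFiveSurface$ with $(\sin\gamma,\sin\theta)\neq (0,0)$ determines $\nu$ uniquely modulo $\pi$, and the two resulting solutions are exactly $\pm h(\gamma,\theta)$, whereas for $(\gamma,\theta)\in C$ the equation becomes vacuous, giving the full fiber $S^1_{\bbi^\perp}$. Part (4) is the one-line argument that $\sigma_+(\gamma,\theta) = \sigma_-(\gamma',\theta')$ forces $h(\gamma,\theta) = 0$, contradicting $\|h\|=1$. For (5), I will verify directly that $h(-\gamma,-\theta) = -h(\gamma,\theta)$ and that $-\bbi h\bbi = -h$ for any $h\in S^1_{\bbi^\perp}$ (conjugation by $\bbi$ is a $\pi$-rotation fixing the $\bbi$-axis and negating $\bbi^\perp$), so both $\sigma_\pm$ intertwine $\iota$ with $\hat\iota$; quotienting yields open embeddings $\sigma_\pm\co P^*\hookrightarrow\genusFiveSurface/\hat\iota$ with the asserted properties.

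The main content lies in (6). Connectedness will follow from the calculation that as $(\gamma,\theta) \to (\gamma_0,\theta_0) \in C$ along a ray of slope $\tan\alpha$, the limit $h(\gamma,\theta) = \frac{\sin\gamma\,\bbj + \sin\theta\,\bbk}{\sqrt{\sin^2\gamma + \sin^2\theta}}$ sweeps out every point of $S^1_{\bbi^\perp}$ as $\alpha$ varies, so the closures of both $\sigma_+(T^*)$ and $\sigma_-(T^*)$ contain the entire circle $\{(\gamma_0,\theta_0)\}\times S^1_{\bbi^\perp}$; since each $\sigma_\pm(T^*)$ is connected (image of the connected surface $T^*$), and the four complementary circles are each approached from both sheets, $\genusFiveSurface$ is connected. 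The genus then comes from the additive Euler characteristic decomposition $\chi(\genusFiveSurface) = 2\chi(T^*) + 4\chi(S^1) = 2(-4) + 0 = -8$, giving $g(\genusFiveSurface) = 5$.

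Finally, to identify $\NAT_s$, I will chain together Corollaries~\ref{RequalsS} and~\ref{small s cor}: for $0<|s|<\epsilon$, $\NAT_s \cong \V_s = \Vupstairs_s/\hat\iota \cong \genusFiveSurface/\hat\iota$. Since $\hat\iota$ acts freely on $\genusFiveSurface$ by Proposition~\ref{analytics} and preserves orientation (because $H$ is $\hat\iota$-invariant, so $\hat\iota$ preserves the transverse orientation of $\genusFiveSurface$; equivalently, restricted to $\sigma_\pm(T^*)$ the action corresponds to the orientation-preserving involution $\iota$ on $T^*$), the quotient is a closed orientable surface with $\chi = -4$, hence of genus $3$. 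The step I expect to need the most care is the connectedness/Euler characteristic part of (6), specifically articulating cleanly how the four circles in $C\times S^1_{\bbi^\perp}$ are limits of both $\sigma_\pm(T^*)$ simultaneously, as this is what glues the two punctured tori into a single closed surface rather than a disjoint union.
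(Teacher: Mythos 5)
Your proof is correct and follows the same overall decomposition strategy as the paper (realize $\genusFiveSurface$ as $\sigma_+(T^*)\sqcup\sigma_-(T^*)\sqcup(C\times S^1_{\bbi^\perp})$, Euler characteristic count, then pass to the $\hat\iota$-quotient). The one place you take a genuinely different route is the opposite-orientations claim in part (2): you compute $\partial G/\partial\nu = \pm\sqrt{\sin^2\gamma+\sin^2\theta}$ along the two sections and read off that the co-orientations of the level set $\{G=0\}$ are opposite on the two sheets, whereas the paper argues topologically that $\genusFiveSurface$ separates the ambient $3$-torus $T\times S^1_{\bbi^\perp}$, hence is null-homologous, hence has zero algebraic intersection with each fiber circle $\{(\gamma,\theta)\}\times S^1_{\bbi^\perp}$, forcing the two intersection points $\sigma_\pm(\gamma,\theta)$ to have opposite signs. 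Your version is more elementary and self-contained (a one-line derivative check), while the paper's version is more conceptual and does not require choosing the explicit form of the defining function. Two small things to tighten: in part (2), state explicitly that you are also using that $\hat\iota$ preserves the ambient orientation of $T\times S^1_{\bbi^\perp}$ (Jacobian $\mathrm{diag}(-1,-1,+1)$, determinant $+1$) together with the invariance of $H$; preservation of only the transverse co-orientation would not by itself suffice to conclude the quotient is orientable. And in part (1) the phrase ``two values of $\nu$ mod $\pi$'' should read ``mod $2\pi$''; the two solutions differ by $\pi$.
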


\begin{figure}[ht]
\labellist 
\pinlabel $\sigma_+(T^*)$ at 40 90
\pinlabel $\sigma_-(T^*)$ at 114 90
\pinlabel $\sigma_+(P^*)$ at 276 90
\pinlabel $\sigma_-(P^*)$ at 354 90
\pinlabel $/\hat \iota$ at 193 80
\pinlabel $\genusFiveSurface/\hat\iota=\NAT_s$ at 317 8
\pinlabel $\genusFiveSurface$ at 80 8
\endlabellist
\centering
\includegraphics[width=0.8\textwidth]{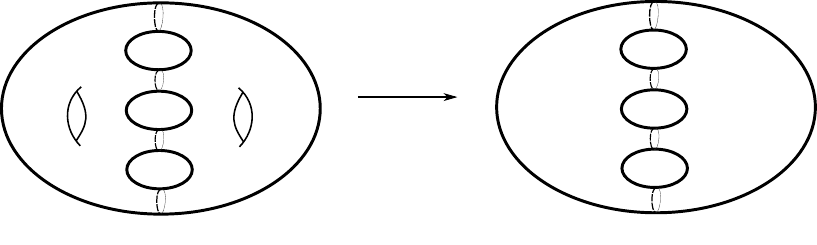}
\caption{}\label{fig:illustrating_lemma} 
\end{figure}

\begin{proof} 
Fix $(\gamma,\theta)\in T$. For any $h\in \tracelessTwoSphere$, $(\gamma,\theta,h)$ lies in $\genusFiveSurface$ if and only if $h$ has the form 
$h=e^{\nu\bbi}\bbk$, where $\nu$ which satisfies the equation  \begin{equation} \label{defining eq for g5S} \sin \theta \sin \nu + \sin \gamma \cos \nu=0.\end{equation} 
If $(\gamma,\theta)\in T^*$, there are precisely two possible choices for $h$, namely the one given in   Equation~\eqref{defnofh} and its opposite. On the other hand, if $(\gamma,\theta)\in C$, then Equation (\ref{defining eq for g5S}) holds for all choices of $\nu$.  This establishes the first and third assertions.

  It is straightforward to check that the closure of each image $\sigma_\pm (T^*)$ contains $C\times S^1_{\bbi ^\perp}$, which implies that $\genusFiveSurface$ is connected.  It follows that each of $\sigma_\pm (T^*) \cup (C\times S^1 _{\bbi ^\perp})$ is a compact genus one orientable surface with four boundary components.

 The fourth assertion --- that the images of $\sigma_+$ and $\sigma_-$ are disjoint --- follows from  $\pm h(\gamma,\theta)$ being two unit vectors pointing in opposite directions.

Referring to Equations~\eqref{ei} and~\eqref{eei} for the formulae for  $\iota,\hat\iota$, we have  
\begin{align*}
\sigma_+(\iota(\gamma,\theta))&=\sigma_+(-\gamma,-\theta)\\
&=(-\gamma,-\theta, h(-\gamma,-\theta))\\&=
(-\gamma,-\theta, -h(\gamma,\theta))\\&=
\hat\iota(\sigma_+( \gamma,\theta )).
\end{align*}
Similarly for $\sigma_-$.

 Since $\genusFiveSurface$ is a closed, orientable and connected surface formed by taking a union of two tori with four open disks removed by identifying their boundaries, $\genusFiveSurface$  has  genus five.

 The maps $\sigma_\pm:T^*\to \genusFiveSurface$
are smooth open embeddings, with disjoint image.  Since $\genusFiveSurface\subset T\times \tracelessTwoSphere$ disconnects the 3-torus, it is null homologous. This implies that the algebraic intersection number of $\genusFiveSurface$  with a fiber $\{(\gamma,\theta)\}\times   \tracelessTwoSphere$ is zero.  For $(\gamma,\theta)\in T^*$, the two embeddings $\sigma_\pm$ send $(\gamma,\theta)$ to the two intersection points of $\genusFiveSurface$ with the fiber $\{(\gamma,\theta)\}\times   \tracelessTwoSphere$, and hence these two points are transverse intersection points and so must have opposite sign.   It follows that $\sigma_\pm$ induce opposite orientations on $\genusFiveSurface$.

 The involution $\hat\iota$ acts freely on $\genusFiveSurface$ since $h\in\bbi^\perp$.   It   leaves each of the four circles invariant. Since the maps $\sigma_+$, $\sigma_-$ are equivariant, it follows that cutting $\genusFiveSurface/\hat\iota$ along the disjoint images of these four circles results in two disjoint copies of the closure of $P^*$ a 2-sphere with four open disks removed.  
  Hence $\genusFiveSurface/\hat\iota$ is orientable, connected, and has Euler characteristic $-4$, and thus has genus three.
\end{proof}

\subsection{Proof of Theorem~\ref{thm1}}
 The proof of Theorem~\ref{thm1} now follows quickly.  Corollary~\ref{small s cor} identifies $\NAT_s$ with $\genusFiveSurface$ when $0<|s|<\ep$.   Lemma~\ref{xlem}  then shows that $\NAT_s$ is a smooth genus 3 surface. That the restriction map has image in $P^*\times P^*\subset P\times P$ is the content of Theorem~\ref{nogotheorem}. That it  is a Lagrangian  immersion into the symplectic manifold $(P^*)^- \times P^*$ is a general feature of restriction  maps on perturbed flat moduli spaces,  as originally shown in~\cite{Herald}.  A complete proof can   be found in \cite{CHK}.
 \qed

\section{Restriction to the boundary}\label{sec:restriction}
In this section we introduce some auxiliary notation, and then collect some basic facts about the restriction-to-the-boundary map $\r_s:\NAT_s\to P^*\times P^*$,  in preparation for the completion of the proof of Theorem \ref{thmA} in Section \ref{homotopyclass}.   

\subsection{Notation}
Set
\begin{equation}\label{defofE}
\genusThreeSurface=\genusFiveSurface/\hat\iota.\end{equation}
Thus $\genusThreeSurface$ is a genus three surface diffeomorphic to $\NAT_s$ for small non-zero $s$.  

Proposition~\ref{analytics} implies that the parameterized solution set
$$\genusFiveSurfaceFamily_\ep=H^{-1}(0)\cap \left( T\times \tracelessTwoSphere\times (-\ep, \ep) \right) $$
 is diffeomorphic to a product $\genusFiveSurface \times (-\ep, \ep)$.  
Define 
\begin{equation}\label{EE}\genusThreeSurfaceFamily_\ep=\genusFiveSurfaceFamily_\ep/\hat\iota\subset  (T\times \tracelessTwoSphere)/ \hat\iota \times(-\ep, \ep).\end{equation}
Corollary~\ref{small s cor} and Theorem~\ref{thm1} imply that $\genusThreeSurfaceFamily_\ep$ is a surface bundle over $(-\ep, \ep)$ whose genus three fiber over $s$ is 
$$\genusThreeSurface_s=\begin{cases} \genusThreeSurface &\text{ if } s=0,\\
 \NAT_s & \text{ if } 0<|s|<\ep.\end{cases}$$
  Fix a bundle trivialization 
  $$\psi:\genusThreeSurface\times(-\ep,\ep)\xrightarrow{\cong} \genusThreeSurfaceFamily_\ep$$
  satisfying $$\psi(-,0)={\rm Id}_\genusThreeSurface,$$ and write $\psi_s=\psi(-,s)$.

\medskip

Recall that the   restriction map 
$$\rrout:( T\times  \tracelessTwoSphere)/ \hat\iota  \to P$$
is the map induced by the projection $T\times \tracelessTwoSphere \to T$. 
Define 
\begin{align*}
\rrin:\genusThreeSurfaceFamily_\ep &\to P=\tP/SU(2)\\
 ([(\gamma,\theta),h],s) &\mapsto [c,f,d]
\end{align*}
using $c=\bq\bp b p q, ~ d=\bh a w h,$ with $a=\bbi$, $b=e^{\gamma\bbk}\bbi$, $f=e^{\theta\bbk}\bbi$,  $h\in \tracelessTwoSphere$, etc. (a precise formula is not needed).
Hence $$\rrout\times\rrin:\genusThreeSurfaceFamily_\ep\to P^-\times P.$$
By design, 
the identification $\genusThreeSurface_s=\NAT_s$ for $s\ne 0$ identifies  the maps
  $$\r_s=\rrout_s\times \rrin_s:\NAT_s\to P\times P \quad \text{ and } \quad(\rrout \times \rrin)(-,s):\genusThreeSurface_s\to P \times P,$$ 

We now mark  the domain using $\psi$ to define a homotopy of the genus three surface $\genusThreeSurface$:\begin{equation}
\label{Lambda}
 \coreMap=\coreMap^\rout\times \coreMap^\rin\coloneqq (\rrout\times \rrin)\circ \psi: \genusThreeSurface\times(-\ep, \ep)\to P\times P.  \end{equation}
and set $ \coreMap_s= \coreMap(-,s)$.

Lemma~\ref{xlem} states that the maps $
\sigma_\pm:P^*\to \genusThreeSurface$ are open embeddings with disjoint images.  The complement of the union of their images is  a union of four circles, one corresponding to each corner.  By construction,
$\sigma_\pm$ are two sections over $P^*$ of the map $\coreMap^\rout(-,0):\genusThreeSurface\to P$.

Fix some $\delta>0$.  Let $U_\delta(C)\subset P$ denote a $\delta$ neighborhood  of the set of corners $C$.    For convenience, we introduce the notation 
\begin{equation}\label{deltas} P^\delta=P\setminus U_\delta(C) \quad \text{  and  } \quad \genusThreeSurface^\delta=\sigma_+(P^\delta)\sqcup \sigma_-(P^\delta).\end{equation}
The  sections $\sigma_\pm$ exhibit the restriction 
$$\coreMap^\rout(-,0)|_{\genusThreeSurface^\delta}:\genusThreeSurface^\delta\to P^\delta$$
as a trivial 2-fold covering space.

\subsection{Basic facts about restriction-to-the-boundary map}

The  following lemma summarizes the notation and the basic facts established above.
 
\begin{lemma}\label{notat} The homotopy $\coreMap:\genusThreeSurface\times(-\ep,\ep)\to P^-\times P$ satisfies:
\begin{enumerate}
\item 
 For non-zero $s$, the maps $\r_s:\NAT_s\to (P^*)^-\times P^*$ and $\coreMap_s:\genusThreeSurface\to P^-\times P$ differ  by an analytic reparameterization of their domain. 

 \item When $s=0$, the map $\coreMap_0:\genusThreeSurface\to P^-\times P$ is given by $$\coreMap_0[(\gamma,\theta),h]= ([\gamma,\theta],[\gamma,\theta]).$$
 This map restricts to a trivial 2-fold covering over the diagonal $\triangle_{P^*} \subset (P^*)^-\times P^*$, and maps the four remaining circles to each of the four corners of $\triangle_P \simeq P$.
 \item  For any $s$ small enough the restriction of 
 $
\coreMap_s^\rout: \genusThreeSurface \to P
$
to the preimage   $(\coreMap_s^\rout)^{-1}(P^{\delta})$,
$$
\coreMap_s^\rout:(\coreMap_s^\rout)^{-1}(P^{\delta})\to P^\delta,
$$  is a trivial 2-fold cover.
\end{enumerate}
 \qed
\end{lemma}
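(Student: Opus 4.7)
\textbf{Proof proposal for Lemma~\ref{notat}.}

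Part (1) is essentially tautological from the construction. By Corollary~\ref{small s cor} together with the identification $\mapVRmodInvolution : \V_s \cong \NAT_s$ from Corollary~\ref{RequalsS}, the restriction of the bundle trivialization $\psi$ to $\genusThreeSurface \times \{s\}$ is an analytic diffeomorphism $\psi_s : \genusThreeSurface \xrightarrow{\cong} \genusThreeSurface_s = \NAT_s$ whenever $s \ne 0$. Unpacking the definition~\eqref{Lambda} of $\coreMap$ and the restriction map~\eqref{correspondence} then gives $\coreMap_s = (\rrout \times \rrin)|_{\NAT_s} \circ \psi_s = \r_s \circ \psi_s$, which is the claimed reparameterization.

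Part (2) reduces to a direct quaternion computation. When $s = 0$, the perturbation condition forces $p = q = 1$, so by Proposition~\ref{wirtpato} we have $c = \bq\bp b p q = b$ and $g = \bq e q = e$. The $w_2$ condition $\rho(w) = -1$ gives $d = \bh a w h = -\bh \bbi h$. A point of $\genusThreeSurface = \genusFiveSurface/\hat\iota$ lifts to $[(\gamma,\theta), h]$ with $h = e^{\nu\bbi}\bbk \in S^1_{\bbi^\perp}$, and since $h \in \tracelessTwoSphere$ satisfies $\bh = -h$, one computes
\[
d \;=\; h\,\bbi\, h \;=\; e^{\nu\bbi}\bbk \cdot \bbi \cdot e^{\nu\bbi}\bbk \;=\; e^{\nu\bbi}\bbj\, e^{\nu\bbi}\bbk \;=\; e^{\nu\bbi} e^{-\nu\bbi}\,\bbj\bbk \;=\; \bbi \;=\; a,
\]
using $\bbj e^{\nu\bbi} = e^{-\nu\bbi}\bbj$. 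Hence the ordered triples $(c,f,d)$ and $(b,f,a)$ coincide, so both components of $\coreMap_0$ return the pillowcase coordinate $[\gamma,\theta]$. The fiberwise description of the restriction is then immediate from Lemma~\ref{xlem}: for $(\gamma,\theta) \in T^*$ the defining equation $\sin\theta \sin\nu + \sin\gamma \cos\nu = 0$ has exactly two solutions in $S^1_{\bbi^\perp}$, producing the two disjoint sections $\sigma_\pm : P^* \to \genusThreeSurface^* = \coreMap_0^{-1}(\triangle_{P^*})$; while over each corner the defining equation is vacuous, collapsing the $S^1_{\bbi^\perp}/\hat\iota$ fiber onto a single point.

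Part (3) upgrades the $s=0$ picture using the smooth dependence on $s$. At $s = 0$, the restrictions $\coreMap_0^\rout|_{\sigma_\pm(P^\delta)} : \sigma_\pm(P^\delta) \to P^\delta$ are diffeomorphisms (each factor is the inverse of $\sigma_\pm$), and $\sigma_+(P^\delta)$ and $\sigma_-(P^\delta)$ are compact and disjoint. Since $\coreMap$ is smooth on $\genusThreeSurface \times (-\ep,\ep)$, the condition ``$d\coreMap_s^\rout$ has rank $2$'' is open, and by compactness, $\coreMap_s^\rout$ remains a local diffeomorphism on a fixed open thickening of $\sigma_+(P^\delta) \sqcup \sigma_-(P^\delta)$ for all sufficiently small $s$; the inverse function theorem then furnishes two disjoint sections of $\coreMap_s^\rout$ over $P^\delta$. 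To conclude the preimage $(\coreMap_s^\rout)^{-1}(P^\delta)$ contains nothing else, observe that the four ``corner circles'' $\genusThreeSurface \setminus \genusThreeSurface^*$ are mapped into $C \subset U_\delta(C)$ at $s = 0$; by compactness and continuity of $\coreMap^\rout$ on $\genusThreeSurface \times [-\ep/2,\ep/2]$, their $\coreMap_s^\rout$-images remain in $U_\delta(C) = P \setminus P^\delta$ for $s$ close to $0$. The remaining buffer region between these circles and $\sigma_\pm(P^\delta)$ likewise maps into $U_{\delta/2}(C)$ for $s$ small enough, preventing any new components of the preimage. This ``no new sheets'' step is the main technical point; the $\delta$-buffer around the corners is what makes it go through, and it is the reason the statement is restricted to $P^\delta$ rather than all of $P$.
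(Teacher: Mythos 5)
Parts (1) and (2) are correct and match what the paper leaves implicit. In particular, your quaternion check for (2) — that $d = -\bh a h = h\bbi h = e^{\nu\bbi}\bbj e^{\nu\bbi}\bbk = \bbj\bbk = \bbi$ using $\bbj e^{\nu\bbi} = e^{-\nu\bbi}\bbj$ — is exactly the computation behind the paper's remark (in Example~\ref{ex:Rzero}) that $(b,f,a,h)=(c,f,d,h)$ when $s=0$.

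Part (3) has a genuine error in the ``no new sheets'' step. You assert that the buffer region $\sigma_\pm(U_\delta(C))$ ``maps into $U_{\delta/2}(C)$ for $s$ small enough.'' This is false already at $s=0$: there, $\coreMap_0^\rout$ sends $\sigma_\pm(U_\delta(C))$ onto all of $U_\delta(C)$, which strictly contains $U_{\delta/2}(C)$. For $s\ne 0$ the image can in fact overflow slightly past $\partial U_\delta(C)$ into $P^\delta$, which is exactly the behavior your claim was supposed to rule out. So as written the argument does not exclude extra preimage points.

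The fix is to nest the neighborhoods the other way. Let $K = \sigma_+\bigl(\overline{U_{\delta/2}(C)}\bigr)\sqcup\sigma_-\bigl(\overline{U_{\delta/2}(C)}\bigr)\sqcup(\text{corner circles})$, a compact subset of $\genusThreeSurface$. At $s=0$, $\coreMap_0^\rout(K)=\overline{U_{\delta/2}(C)}\cup C = \overline{U_{\delta/2}(C)}$, which is compactly contained in the open set $U_\delta(C)=P\setminus P^\delta$. By continuity of $\coreMap^\rout$ and compactness of $K$, $\coreMap_s^\rout(K)$ remains inside $U_\delta(C)$ for all small $s$, so $K$ contributes nothing to $(\coreMap_s^\rout)^{-1}(P^\delta)$. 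Hence $(\coreMap_s^\rout)^{-1}(P^\delta)\subset \sigma_+(P^{\delta/2})\sqcup\sigma_-(P^{\delta/2})$. Each $\sigma_\pm(P^{\delta/2})$ is compact and $\coreMap_0^\rout$ restricts to it as a diffeomorphism onto $P^{\delta/2}$; a $C^1$-small perturbation of a diffeomorphism on a compact manifold is again a diffeomorphism onto an open set that still contains $P^\delta$ (since $P^\delta$ is compactly contained in the interior of $P^{\delta/2}$). These two disjoint sheets, together with the containment above, give the trivial double cover. Note also that your closing remark about \emph{why} the statement is restricted to $P^\delta$ misattributes the reason: the deeper obstruction is that over the corners the fiber of $\coreMap_0^\rout$ is a circle (not two points), and for $s\neq 0$ the corners aren't hit at all by Theorem~\ref{nogotheorem}, so no covering statement can hold there.
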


\section{\texorpdfstring{Calculation of the homotopy class of $\coreMap_s:\genusThreeSurface\to   (P^*)^-\times P^*$}{Calculation of the homotopy class of u-s}}
\label{homotopyclass}

In this section we determine the homotopy class of the map $\coreMap_s:\genusThreeSurface\to (P^*)^-\times P^*$,  and hence of the Lagrangian immersion $\r_s:\NAT_s\to (P^*)^-\times P^*$, for small positive $s$,  to complete the proof of Theorem \ref{thmA}.

\subsection{\texorpdfstring{Homology of $P^*\times P^*$}{Homology of P* x P*}}

\begin{wrapfigure}{r}{0.3\textwidth}
\vspace{-0.4cm}
\labellist 
\pinlabel $\color{red}\l_0$ at 300 90
\pinlabel $\color{red}\l_3$ at 62 90
\pinlabel $\color{red}\l_2$ at 62 270
\pinlabel $\color{red}\l_1$ at 280 300
\pinlabel $\color{blue}A_1$ at 155 190
\pinlabel $\color{blue}A_2$ at 50 200
\pinlabel $\color{blue}A_0$ at 185 45
\endlabellist
\centering
\includegraphics[width=0.23\textwidth]{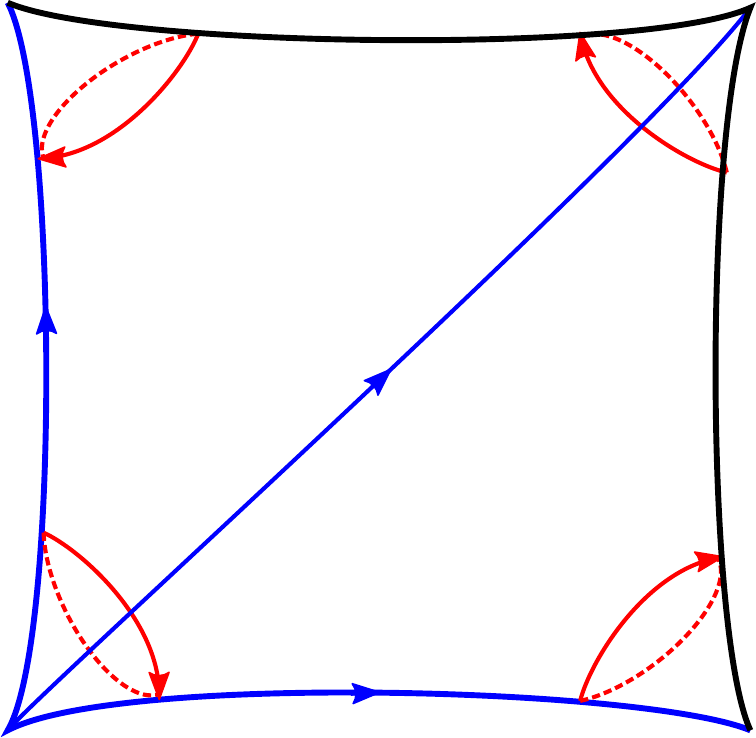}
\caption{ }
\label{pillowgensfig}
\end{wrapfigure} 

 The pillowcase $P$ is a 2-sphere, and $P^*$ is the complement of the four corner points $C$.  Hence $P^*$ is homotopy equivalent to a wedge of three circles, so that $H_1(P^*)\cong\ZZ^3\cong H_1(P,C)$.  Figure~\ref{pillowgensfig} illustrates a collection of oriented cycles $\l_0, \l_1, \l_2, \l_3\in H_1(P^*)$. The subset $\{\l_0, \l_1,\l_2\}$ forms a basis of $H_1(P^*)$, and $\l_3=-(\l_0+\l_1+\l_2)$.  Also shown   are three relative cycles $A_0,A_1,A_2$, representing a basis of $H_1(P,C)$.

The K\"unneth theorem gives
$$H_2(P^*\times P^*)\cong H_1(P^*)\otimes H_1(P^*)\cong \ZZ^9.$$
 Furthermore, Poincar\'e duality implies
 $$H^2(P^*\times P^*)=H_2((P,C)\times (P,C))=H_1(P,C)\otimes H_1(P,C),$$
where $(P,C)\times (P,C)=(P\times P, C\times P \cup P\times C)$. The next proposition follows easily from these calculations and standard algebraic topology.
 
\begin{proposition}\label{homology} \hspace{1cm}
\begin{enumerate}
\item  
The nine homology cross products $\displaystyle \l_{i}\times \l_j, ~i,j\in \{0,1,2\},$ \\form a basis of $H_2(P^*\times P^*)\cong \ZZ^9$.

\item
The nine relative homology cross products $\displaystyle A_{i}\times A_j, ~i,j\in \{0,1,2\},$ \\
form    a basis for $H_2((P,C)\times (P,C))$. 

\item
The bases above are dual with respect to the intersection pairing $H_2((P,C)\times (P,C))\times H_2(P^*\times P^*)\to \ZZ$, namely
 $$(A_i\times A_j)\cdot (\l_\ell\times \l_m)=\delta_{(ij),(\ell m)},~ i,j,l,m\in\{0,1,2\}.$$  
Moreover, since $\l_3=-(\l_0+\l_1+\l_2)$,
 $$
 (A_i\times A_j)\cdot(\l_3\times \l_3)=1\text{ for all } i,j\in\{0,1,2\}.
 $$
\end{enumerate}
\end{proposition}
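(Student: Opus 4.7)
The plan is to reduce everything to a single geometric input, namely $A_i\cdot \ell_j=\delta_{ij}$ for $i,j\in\{0,1,2\}$, which can be read off directly from Figure~\ref{pillowgensfig}. Once that is in hand, parts (1), (2), and (3) all follow from the K\"unneth theorem, Poincar\'e–Lefschetz duality on the 4-manifold $P\times P$ relative to $C\times P\cup P\times C$, and bilinearity.

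First I would establish the one-variable statements. The space $P^*$ is homotopy equivalent to a wedge of three circles, so $H_1(P^*)\cong \ZZ^3$, and the three loops $\{\ell_0,\ell_1,\ell_2\}$ form a basis because the remaining small loop $\ell_3$ is homologous to $-(\ell_0+\ell_1+\ell_2)$ (it bounds in $P$ with the other three). The long exact sequence of the pair $(P,C)$ together with $H_1(P)=0$ and $\widetilde H_0(C)=\ZZ^3$ gives $H_1(P,C)\cong\ZZ^3$, and the three arcs $\{A_0,A_1,A_2\}$ are visibly three independent relative 1-cycles, hence a basis. Applying K\"unneth then gives parts (1) and (2) immediately:
\[
H_2(P^*\times P^*)\cong H_1(P^*)\otimes H_1(P^*),\qquad H_2((P,C)\times(P,C))\cong H_1(P,C)\otimes H_1(P,C).
\]

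For part (3), I would use the fact that on products the intersection pairing satisfies
\[
(\alpha\times\beta)\cdot(\gamma\times\delta)=\pm(\alpha\cdot\gamma)(\beta\cdot\delta),
\]
applied to $\alpha=A_i$, $\beta=A_j$, $\gamma=\ell_\ell$, $\delta=\ell_m$ in the middle-dimensional pairing $H_2((P,C)^{\times 2})\otimes H_2((P^*)^{\times 2})\to\ZZ$ coming from Poincar\'e–Lefschetz duality. The key geometric verification is that each arc $A_i$ crosses the corresponding small loop $\ell_i$ once transversely and misses $\ell_j$ for $j\in\{0,1,2\}\setminus\{i\}$, which is immediate from Figure~\ref{pillowgensfig}; the orientation conventions on the $A_i$ and $\ell_j$ are chosen so that each such crossing contributes $+1$. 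Substituting yields $(A_i\times A_j)\cdot(\ell_\ell\times \ell_m)=\delta_{i\ell}\,\delta_{jm}=\delta_{(ij),(\ell m)}$, which also confirms duality of the bases. The last statement is then a one-line computation: since $\ell_3=-(\ell_0+\ell_1+\ell_2)$, we have $A_i\cdot\ell_3=-\sum_{k=0}^{2}A_i\cdot\ell_k=-1$ for each $i\in\{0,1,2\}$, and hence
\[
(A_i\times A_j)\cdot(\ell_3\times \ell_3)=(A_i\cdot\ell_3)(A_j\cdot\ell_3)=(-1)(-1)=1.
\]

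The only mild obstacle is pinning down the sign in the cross-product formula for the intersection pairing and the chosen orientations of the $A_i$ and $\ell_j$, but since both bases are specified geometrically and the authors have the freedom to orient the arcs so that the local intersection numbers are $+1$, this is just a matter of fixing conventions rather than a genuine difficulty. Everything else is routine K\"unneth/duality bookkeeping.
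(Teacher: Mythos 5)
Your proof is correct and takes essentially the same approach the paper has in mind: the paper simply asserts that the proposition ``follows easily from these calculations and standard algebraic topology'' after recording the K\"unneth and Poincar\'e duality isomorphisms, and your argument is a faithful filling in of those routine details, with the one nontrivial geometric input being $A_i\cdot\l_j=\delta_{ij}$ read off from Figure~\ref{pillowgensfig}. The only caveat, which you already flag, is the overall sign in $(\alpha\times\beta)\cdot(\gamma\times\delta)=\pm(\alpha\cdot\gamma)(\beta\cdot\delta)$: this sign is determined by the ambient orientation of $P\times P$ rather than by orienting the $A_i$ and $\l_j$ alone, so strictly speaking it is the choice of orientation on $P\times P$ (together with the orientations of the $A_i$ and $\l_j$) that makes the pairing come out as $+\delta_{(ij),(\ell m)}$ rather than $-\delta_{(ij),(\ell m)}$.
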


The cross products $\l_i\times \l_j$ for $i,j\in \{0,1,2,3\}$ are represented 
by the sixteen smoothly embedded (Lagrangian) linking tori in $P\times P\setminus (C\times P\cup P\times C)$.  More precisely, for any $\delta>0$ the product of   two circles of radius $\delta$ centered at the $i$th and $j$th corners is the embedded {\em linking torus} representing $\l_i\times \l_j$.   Taking $i,j\in \{0,1,2\}$ yields a basis of $H_2(P^*\times P^*)$.  Similarly, the nine smoothly embedded squares $A_i\times A_j, ~ j=0,1,2$ form a basis for $H_2((P,C)\times (P,C))$.

When $s\ne 0$, $\coreMap_s(\genusThreeSurface)\subset P^*\times P^*$.  We now restrict further and assume $s>0$.  Then, for all $0<s<\ep$, the homotopy class of $\coreMap_s$, and hence the homology class $\coreMap_s([\genusThreeSurface])\in H_2(P^*\times P^*)$ is independent of $s$ and hence well-defined (up to sign, which can be fixed by orienting $\genusThreeSurface$).

\begin{lemma}
 \label{nis}  There exist four integers $n_0, n_1, n_2, n_3$ so that for all $0<s<\ep$,
 $$[\coreMap_s(\genusThreeSurface)]=n_0(\l_0\times \l_0)+n_1(\l_1\times \l_1)+n_2(\l_2\times \l_2)+n_3(\l_3\times \l_3) \text{ in } H_ 2(P^*\times P^*). $$
\end{lemma}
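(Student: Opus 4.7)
The plan is to compute $[\coreMap_s(\Sigma)]$ by dually pairing with the relative cycles $A_i\times A_j$ via Proposition~\ref{homology}(3), and to localize the resulting intersection count near the four \emph{diagonal} corner points $(c_k,c_k)\in C\times C$. Note that the lemma is equivalent to showing that in the basis $\{\l_i\times\l_j\}_{i,j\in\{0,1,2\}}$ of $H_2(P^*\times P^*)$, the off-diagonal coefficients of $[\coreMap_s(\Sigma)]$ are all equal to a common integer, which we then name $n_3$, setting $n_i:=n_{ii}-n_3$; the relation $\l_3\times\l_3=\sum_{i,j=0}^{2}\l_i\times\l_j$ then recovers the claimed expression.

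The first step is localization. By Lemma~\ref{notat}(2) and continuity of $\coreMap$ at $s=0$, for $s>0$ sufficiently small the image $\coreMap_s(\Sigma)$ is pointwise close to $\coreMap_0(\Sigma)=\triangle_P$. Since the twelve off-diagonal corner pairs $(c_i,c_j)$ with $i\neq j$ are at positive distance from $\triangle_P$, they have fixed open neighborhoods disjoint from $\coreMap_s(\Sigma)$ for all sufficiently small $s$. Choose representatives of $A_0,A_1,A_2$ that are radial near each corner and pairwise disjoint away from their shared corner endpoints; after making $\coreMap_s$ transverse to $A_i\times A_j$, the intersection $\coreMap_s(\Sigma)\cap(A_i\times A_j)$ concentrates near the diagonal corner points $(c_k,c_k)$ with $c_k\in A_i\cap A_j$.

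The second step is the local analysis. For each corner $c_k\in C$, let $D_k\subset P$ be a small disk around $c_k$ and $N_k\subset\Sigma$ the annular neighborhood of the $k$-th corner circle. The ``corner cap'' $\coreMap_s(N_k)\subset D_k^*\times D_k^*$ is a closed 2-cycle: its boundary consists of two circles both mapping close to the diagonal circle $\{(p,p):p\in\partial D_k\}$ but with opposite orientations (as $\partial N_k$ is the boundary of an oriented annulus), hence summing to zero in $H_1(D_k^*\times D_k^*)$. By K\"unneth, $H_2(D_k^*\times D_k^*)\cong H_1(D_k^*)\otimes H_1(D_k^*)\cong\ZZ$, with the generator mapping under the inclusion $D_k^*\times D_k^*\hookrightarrow P^*\times P^*$ to $\l_k\times\l_k$. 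Thus $\coreMap_s(N_k)$ represents $m_k(\l_k\times\l_k)$ for a unique integer $m_k$.

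The remaining pieces $\coreMap_s\circ\sigma_\pm(P^\delta)$ contribute nothing to $n_{ij}$ for $i\neq j$ because they stay $C^0$-close to the diagonal embedding of $P^\delta$ and $A_i\cap A_j\cap P^\delta=\emptyset$ for the chosen arcs. Therefore
\[
n_{ij}=\sum_{k=0}^{3}m_k(\l_k\cdot A_i)(\l_k\cdot A_j)=m_i\delta_{ij}+m_3,
\]
using $\l_k\cdot A_i=\delta_{ki}$ for $k\in\{0,1,2\}$ and $\l_3\cdot A_i=-1$. In particular $n_{ij}=m_3$ for all $i\neq j$, proving the lemma with $n_k=m_k$. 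The main obstacle in executing this plan rigorously is justifying that the diagonal sheets produce no additional homology class beyond what is encoded in the four corner caps---most cleanly handled by a Mayer--Vietoris decomposition of $P^*\times P^*$ around the four diagonal corners, or by direct transverse-intersection analysis as $s\to 0$.
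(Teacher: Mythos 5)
Your argument is correct, but it takes a genuinely different route from the paper's. The paper's proof is a direct homological factoring argument: since $\coreMap_s(\genusThreeSurface)$ lies in a small deleted neighborhood $U(\triangle_P)\cap(P^*\times P^*)$ of the diagonal for small $s$, and that neighborhood deformation retracts onto the union of the holed diagonal $\triangle_P^\delta$ (a four-punctured sphere, carrying no $H_2$) with the four linking tori $\l_i\times\l_i$ glued in along $(1,1)$-curves, the class $[\coreMap_s(\genusThreeSurface)]$ lies in the image in $H_2(P^*\times P^*)$ of the $H_2$ of this spine, which is already contained in the span of $\{\l_i\times\l_i\}_{i=0}^3$. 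This bypasses all intersection counting. Your proof instead dualizes: you pair with the relative cycles $A_i\times A_j$ and localize the intersections near the diagonal corner points, analyzing the contribution of each corner annulus $N_k$ separately. Both arguments hinge on the same geometric input, namely that the image of $\coreMap_s$ stays close to $\triangle_P$, but the paper's factoring argument is shorter and avoids the relative-cycle and capping bookkeeping that you correctly flag at the end as the remaining technical obstacle.

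One overreach in the write-up: the displayed formula $n_{ij}=m_i\delta_{ij}+m_3$ is justified by your localization only for $i\neq j$. For $i=j$, the diagonal sheets $\coreMap_s(\sigma_\pm(P^\delta))$ do meet $A_i\times A_i$, since $A_i\times A_i$ touches the full diagonal along $\{(p,p):p\in A_i\}$ rather than only at corner points, and it is not immediate that their contributions to $n_{ii}$ vanish. Thus the identification $n_k=m_k$ for $k\in\{0,1,2\}$ is not established by your argument. Fortunately the lemma only requires that the off-diagonal coefficients $n_{ij}$ with $i\neq j$ all agree, and your localization does establish this (each such intersection concentrates at the single shared corner $c_3$, giving the common value $m_3$); then setting $n_3:=m_3$ and $n_i:=n_{ii}-n_3$ for $i\in\{0,1,2\}$ yields the required expression without any claim about the individual $m_i$.
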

\begin{proof}
Given a small  neighborhood $U(\triangle_P )$ of the diagonal $\triangle_P  \subset P\times P$,  $$\coreMap_s(\genusThreeSurface)\subset U(\triangle_P )\cap (P^*\times P^*).$$
Such a neighborhood deformation retracts to the union of the diagonal with the corners cut off and the four linking tori 
$$ U(\triangle_P ) \simeq \triangle^\delta_P \cup \{\l_i\times \l_i ~|~ i=0,1,2,3\},$$ 
such that $\triangle^\delta_P \cap (\l_i\times \l_i) $ is the $(1,1)$ curve in the linking torus $\l_i\times \l_i$.  The lemma follows.  
\end{proof}

We introduce the streamlined notation:
\begin{equation}\label{dotE}\cdot_\genusThreeSurface:H_1(P,C)\times H_1(P,C)\to \ZZ, ~A\cdot_\genusThreeSurface B\coloneqq  (A\times B)\cdot \coreMap_s(\genusThreeSurface).\end{equation}
Since $\{A_i\times A_j\}$ forms a dual basis to $\{\l_i\times \l_j\}$,  Proposition~\ref{homology} and Lemma~\ref{nis} imply that
 \begin{equation}
\label{bilinear}
A_i\cdot_\genusThreeSurface A_j=\begin{cases}   n_3&\text{ if } i\ne j,\\
                                                  n_i+ n_3&\text{ if } i= j.\end{cases}
\end{equation}
In particular, the bilinear form $\cdot_\genusThreeSurface$ is symmetric.
\subsection{Intersections and perturbed character varieties}

Given a point $(\rho^\rout,\rho^\rin)\in P\times P$, the preimage $\coreMap_s^{-1}(\rho^\rout,\rho^\rin)\subset \genusThreeSurface$ is analytically isomorphic to the subset of conjugacy classes of  traceless perturbed representations $\rho\in \NAT_s$ whose restrictions to the two boundaries equal $ \rho^\rout$ and $\rho^\rin$.

\begin{lemma}\label{lem8.2} The surface $\genusThreeSurface$ may be oriented so that 
 $n_3=A_i\cdot_\genusThreeSurface A_j=1$ for $i\ne j$.
\end{lemma}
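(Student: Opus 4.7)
The plan is to compute $n_3 = A_i \cdot_\Sigma A_j$ for $i\neq j$ via geometric composition of Lagrangians. Each arc $A_k \in H_1(P,C)$ is (a generic representative homologous to) the linear image of the traceless character variety of one of the three trivial 2-tangles $T_k$ in the 3-ball, each of which pairs up the four punctures of $(S^2,4)$ in one of the three distinct ways; see Example~\ref{example3} and~\cite[Section~10]{HHK1}. By the definition in~\eqref{dotE} and Proposition~\ref{homology}, the intersection number $A_i \cdot_\Sigma A_j = (A_i \times A_j) \cdot \coreMap_s(\genusThreeSurface)$ equals the signed count of conjugacy classes $\rho \in \NAT_s$ whose outer restriction $\rrout_s(\rho)$ lies in $A_i$ and inner restriction $\rrin_s(\rho)$ lies in $A_j$. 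Since $\coreMap_s(\genusThreeSurface) \subset P^* \times P^*$ by Theorem~\ref{nogotheorem} and the $A_k$ meet only at corners (which $\coreMap_s$ avoids), the intersection takes place transversely in the interior.

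By Seifert--van Kampen, this count coincides with the signed count of points in $R^\natural_\pi(S^3, L_{ij})$, where $L_{ij}$ is the link in $S^3$ obtained by capping the outer and inner boundaries of the earring cobordism with $(D^3, T_i)$ and $(D^3, T_j)$ respectively, carrying the earring $E$ and its $w_2$-arc $W$; this identification is a direct instance of the composition-of-correspondences principle (see Appendix~\ref{Appendix}). For $i \neq j$, two \emph{distinct} trivial pair-ups of the four strands combine to give a single closed cycle traversing all four strands, so $L_{ij}$ is a knot rather than a 2-component link, and a straightforward isotopy shows $L_{ij}$ is the unknot. The earring, being a small meridian, does not affect the isotopy class.

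By the foundational computation of Kronheimer--Mrowka~\cite[Proposition~4.1]{KM} (recalled in Section~\ref{w2}), $R^\natural(S^3, \text{unknot})$ is a single regular point, and this property is preserved under sufficiently small holonomy perturbations. Hence $|n_3| = 1$, and reversing the orientation of $\Sigma$ if necessary forces $n_3 = +1$. The main obstacle will be the rigorous justification of the composition identity: one must confirm that the signed algebraic intersection number in $(P^*)^- \times P^*$ agrees with the signed count in $R^\natural_\pi(S^3, L_{ij})$, which requires transversality at the unique representation. This follows from combining the non-degeneracy of the Kronheimer--Mrowka point with the fact that $\coreMap_s$ lands inside $P^* \times P^*$, so no boundary corner phenomena can obstruct the count.
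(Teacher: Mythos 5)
Your proof is correct and takes essentially the same route as the paper: realize $A_i$ and $A_j$ by trivial $2$-tangles capping off the earring cobordism, observe that for $i\ne j$ the closure is the unknot with an earring, and invoke Kronheimer--Mrowka's computation that $R^\nat$ of the unknot is a single regular point (persistent under small holonomy perturbation) to conclude $n_3=\pm1$. The only cosmetic difference is that the paper carries out the explicit case $A_0\cdot_\Sigma A_1$ (via Figure~\ref{A0XA1fig}) and then deduces the remaining off-diagonal entries from Equation~\eqref{bilinear}, whereas you argue uniformly for all $i\ne j$; both reductions are valid, since Equation~\eqref{bilinear} already forces all off-diagonal products to coincide with $n_3$.
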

\begin{proof}
Figure~\ref{A0XA1fig} depicts a tangle decomposition  of the unknot equipped with an earring  along a pair of parallel 4-punctured 2-spheres. The outer tangle $(D^3, \link^\rout)$, as well as the inner tangle $(D^3,\link^\rin)$ are trivial.   The earring tangle lies between them.  

\begin{figure}[ht] 
\begin{center}
\includegraphics[angle=0,origin=c,width=2in]{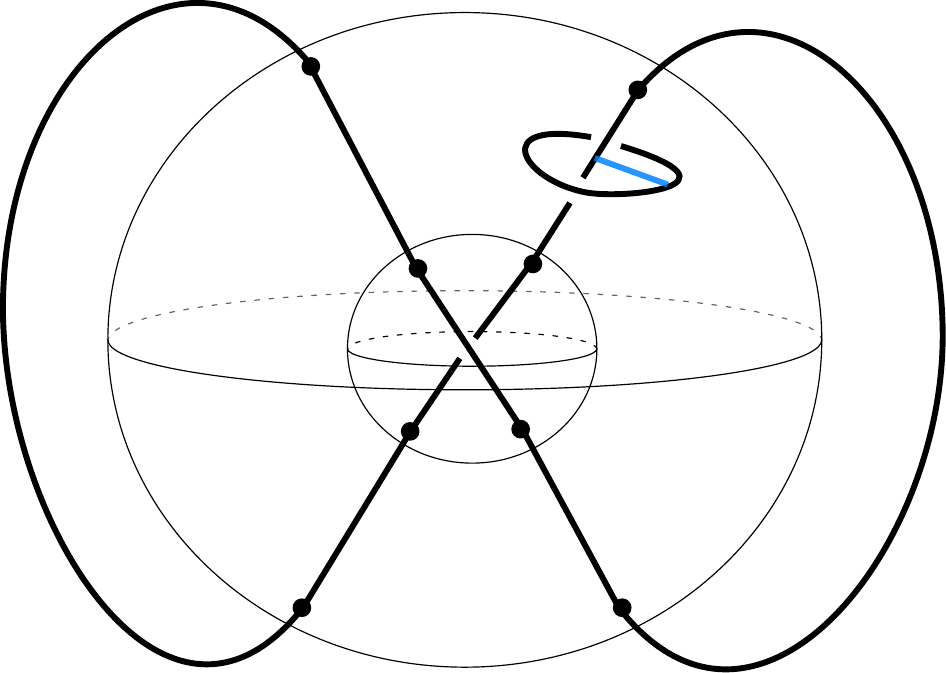}
\caption{ \label{A0XA1fig}}
\end{center}
\end{figure} 

The outer tangle is trivial, and its traceless character variety is an arc (see~\cite{HHK1}), whose restriction to the pillowcase is given (in the coordinates used in the present article) by 
\begin{align*}
\alpha^\rout:[0,\pi]& \to P\\ 
t &\mapsto [t,0]
\end{align*}
Indeed, one sees from Figure~\ref{A0XA1fig} that in the outer tangle, $a=f$, so that $\gamma=0$. Similarly the inner tangle is trivial, and from Figure~\ref{A0XA1fig} one sees that $b=f$ and hence $\gamma=\theta$.  Therefore its traceless character variety is the arc
\begin{align*}
\alpha^\rin:[0,\pi]& \to P\\ 
t &\mapsto [t,t]
\end{align*}
Clearly $[\alpha^\rout]=A_0$ and $[\alpha^\rin]=A_1$ in $H_1(P,C)$.  Hence
$$A_0\cdot_\genusThreeSurface A_1=
\alpha^\rout\cdot_\genusThreeSurface \alpha^\rin.$$

As noted by Kronheimer-Mrowka~\cite[Proposition~4.1]{KM},  the {\em unperturbed}  traceless character variety of the unknot with an earring is a single, regular point (which generates the reduced instanton homology of the unknot $I^\nat(U)\cong\ZZ$). It follows that the same is true  for small holonomy perturbations.  Thus for all small enough $s$,  the perturbed traceless variety of the unknot with an earring, perturbed along the two curves $Q_p$ and $Q_q$,  is again a single, regular point.  One concludes that for small $s>0$, $\coreMap_s^{-1}(\alpha^\rout\times \alpha^\rin)$ is one regular point and, in particular,  
$\coreMap_s$ is transverse to $\alpha^\rout\times \alpha^\rin$.

 Hence 
$$A_0\cdot_\genusThreeSurface A_1 = (A_0\times A_1)\cdot \coreMap_s([\genusThreeSurface])=\pm1.$$
Since $\genusThreeSurface$ is connected,   one of the two orientations satisfies $A_0\cdot_\genusThreeSurface A_1=1$ and hence $n_3=1$. Equation~\eqref{bilinear} now shows that $A_i\cdot_\genusThreeSurface A_j=1$ for all $i\ne j$.
\end{proof}

The following lemma (at least when $i=0$ or $1$) is a consequence of the fact that for the 2-component unlink $U_2$, the reduced singular instanton homology $I^\nat(U_2)\cong\ZZ^2$ has two generators with even relative grading.  There is a slight wrinkle in the case of $A_2\cdot_\genusThreeSurface A_2$, and so we present a more elementary argument, which leverages the fact that the Hopf link, in contrast to $U_2$, has a regular unperturbed traceless moduli space consisting of two points, distinguished by whether the two meridians are equal or negatives.  These two points define generators with even relative grading in instanton homology.  This permits us to argue as in the proof of Lemma~\ref{lem8.2} for the unknot.
  
\begin{lemma}\label{lem8.3} With the orientation of $\genusThreeSurface$ as above,
 $A_i\cdot_\genusThreeSurface A_i= 2$ for $i=0,1,2$. Equivalently, we have $n_0=n_1=n_2=1$.
\end{lemma}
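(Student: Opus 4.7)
The plan is to adapt the argument of Lemma~\ref{lem8.2} uniformly for each $i \in \{0,1,2\}$, replacing the unknot by the Hopf link $H$. The advantage of the Hopf link is that its unperturbed traceless character variety consists of two regular, isolated conjugacy classes, distinguished by whether the two meridians $\mu_1, \mu_2 \in \tracelessTwoSphere$ satisfy $\mu_1 = \mu_2$ or $\mu_1 = -\mu_2$ (the only way two elements of $\tracelessTwoSphere$ can commute). Placing an earring on one component and imposing the $w_2$ condition preserves regularity: the commutator relation $[\bar\lambda,h] = -1$ forces $h \in \tracelessTwoSphere$ to lie in the circle perpendicular to $\lambda$, and residual conjugation by the stabilizer of $\mu_1$ then collapses this circle to a single class above each of the two unperturbed points. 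Hence for all sufficiently small $s>0$, the holonomy-perturbed traceless character variety $R^{\nat}_{\pi}(H)$ consists of exactly two transversely-cut regular points.

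For each $i$, I would exhibit a tangle decomposition
\[
(S^3, H \cup E) = (D^3, \link^{\rout}_i) \cup_{(S^2,4)^{\rout}} (\text{earring tangle}) \cup_{(S^2,4)^{\rin}} (D^3, \link^{\rin}_i)
\]
of the Hopf link equipped with the earring, in which the trivial tangles $(D^3, \link^{\rout/\rin}_i)$ have traceless character varieties restricting to arcs $\alpha^{\rout/\rin}_i$ in $P$ that represent the class $A_i \in H_1(P,C)$. Such a decomposition can be arranged by pairing the four punctures according to the picture of $A_i$ in Figure~\ref{pillowgensfig} on each side, combined with an appropriate relative rotation of one of the two trivial tangles so that the composite link (away from the earring) is the Hopf link rather than the $2$-component unlink $U_2$. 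Exactly as in the proof of Lemma~\ref{lem8.2}, it then follows that
\[
A_i \cdot_\genusThreeSurface A_i \;=\; (\alpha^{\rout}_i \times \alpha^{\rin}_i) \cdot \coreMap_s([\genusThreeSurface]) \;=\; \pm 2.
\]

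The main obstacle is the sign: I must verify both intersection points contribute with the same sign, and that this common sign is $+1$ relative to the orientation of $\genusThreeSurface$ fixed in Lemma~\ref{lem8.2}. To establish sign agreement, I would use that the two points of $R^{\nat}_{\pi}(H)$ correspond to generators of $I^{\nat}(H)$ with even relative $\ZZ/4$-grading, which forces their local contributions to agree in sign; this is precisely the grading-parity fact invoked for $I^{\nat}(U_2)$, and is the reason the Hopf link works as a replacement. Compatibility with the orientation of Lemma~\ref{lem8.2} reduces to a local model computation along the diagonal component $\coreMap_0(\genusThreeSurface^\delta)$, where one checks that the two preimages of the transverse intersection point both land in the same local sheet $\sigma_\pm(P^*)$ from Lemma~\ref{xlem}. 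Granting these sign verifications, $A_i \cdot_\genusThreeSurface A_i = 2$, and Equation~\eqref{bilinear} combined with $n_3 = 1$ from Lemma~\ref{lem8.2} yields $n_i = 1$ for each $i \in \{0,1,2\}$.
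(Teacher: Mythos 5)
Your core strategy matches the paper's: replace the unknot by the Hopf link because its unperturbed traceless moduli space consists of \emph{two} regular points, place an earring so regularity persists, and conclude $A_i\cdot_\genusThreeSurface A_i=\pm 2$ by invoking the evenness of the relative $\ZZ/2$ grading (this is Taubes' identification of the relative grading with the relative sign of intersection, which the paper cites explicitly). So the hard part --- ruling out $0$ by parity --- you have right.

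Two gaps remain. First, you assert that for each $i$ one can pair the punctures so both trivial tangles restrict to arcs in class $A_i$ and the closed-up link is the Hopf link. This works without issue for $i=0,1$, but for $i=2$ the picture forces two extra $w_2$-arcs into one of the trivial tangles, so you are no longer literally computing with $R^\nat_\pi(\text{Hopf})$. The paper flags exactly this ``wrinkle'' and handles it by an isotopy that moves the two parallel $w_2$-arcs into a single inner trivial tangle where they cancel in the character variety, reducing to the honest Hopf-link-with-earring count. Your proposal silently assumes all three cases behave identically, so this needs an extra argument.

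Second, your overall sign determination is not convincing as stated. You propose a ``local model computation'' in which the two preimages of a transverse intersection point ``both land in the same local sheet $\sigma_\pm(P^*)$.'' But $\sigma_+$ and $\sigma_-$ are disjoint sections of a 2-fold cover, and Lemma~\ref{xlem} shows they induce \emph{opposite} orientations on $\genusFiveSurface$; generically the two preimages of a boundary point under $\coreMap_s^\rout$ lie one in each sheet, not both in one. Sorting out the signs locally therefore requires tracking the $(P^*)^-\times P^*$ orientation conventions through both $\rrout$ and $\rrin$, and the claim that this ``reduces to a local model computation'' needs to be actually carried out. The paper sidesteps this entirely by a homological argument: it computes $(A_i-A_j)\cdot_\genusThreeSurface A_i=\pm1$ by exhibiting a third family of decompositions of the \emph{unknot} with an earring, and then solves $\pm 1 = \pm 2 - 1$ using bilinearity and the normalization $A_1\cdot_\genusThreeSurface A_0=1$ from Lemma~\ref{lem8.2}. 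This is cleaner and robust against the orientation bookkeeping your route would require.
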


 \begin{proof} 
 Figure~\ref{lem82fig} illustrates several tangle decompositions of the Hopf link into three  pieces: the earring in the middle, and trivial tangles  outside and inside.  The rightmost link also has two additional $w_2$-arcs.
 
 \begin{figure}[ht] 
\begin{center}
\includegraphics[angle=0,origin=c,width=6in]{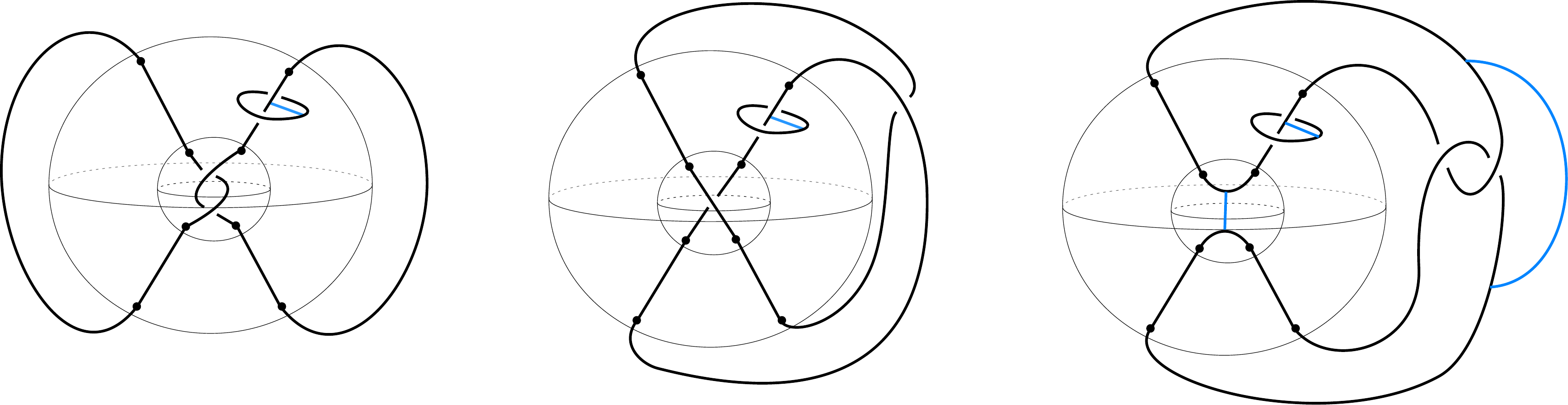}
\caption{ \label{lem82fig}}
\end{center}
\end{figure} 
\noindent In the leftmost decomposition, the outside tangle has traceless character variety the embedded arc corresponding to $\theta=0$, since $a=f$.  In other words, its traceless character variety is the arc
\begin{align*}
\alpha^\rout:[0,\pi]& \to P\\ 
t &\mapsto [t,0]
\end{align*}
Clearly $[\alpha^\rout]=A_0\in H_2(P,C)$. 

The inner tangle has traceless character variety corresponding to the arc
\begin{align*}
\alpha^\rin:[0,\pi]& \to P\\ 
t &\mapsto  [t,-2t]
\end{align*}
 This is an arc in the pillowcase of slope $-2$ which joins the bottom corners $[0,0]$ and $[\pi,0]$.

Hence 
 $$[\alpha^\rin]=A_0\in H_2(P,C).$$
 Since the unperturbed traceless character variety of the Hopf link consists of precisely two regular points,
 $$ A_0\cdot_\genusThreeSurface A_0  =\alpha^\rout\cdot_\genusThreeSurface \alpha^\rin=\pm 2 \text{ or } 0.$$

 Taubes~\cite{Taubes} showed that the relative $\ZZ/2$ grading in instanton homology coincides with the relative intersection number of character varieties (for further details in the present context see~\cite{MR2063666, HHK2}.)  For the Hopf link, this relative grading is even, as one can easily compute, e.g., using Kronheimer-Mrowka's skein exact sequences~\cite{KM}, or by more elementary means (e.g., ~\cite{HHK1}).
 Hence $$A_0\cdot_\genusThreeSurface A_0 =\pm 2.$$

 Similarly, the decomposition of the Hopf link illustrated in the center of Figure~\ref{lem82fig} gives rise to the two embedded arcs
 \begin{align*}
\alpha^\rout:[0,\pi]& \to P & \alpha^\rin:[0,\pi]& \to P \\ 
t &\mapsto [t, -t] & t &\mapsto [t,t]
\end{align*}
 Then $\alpha^\rin=A_1$ and $\alpha^\rout$ is homologous to $A_1$. Thus, just as above,  
 $$A_1\cdot_\genusThreeSurface A_1 =\pm 2.$$

 One argues similarly for the rightmost tangle, obtaining two regular generators for the intersection $A_2\cdot_\genusThreeSurface A_2$, and thus
 $$A_2\cdot_\genusThreeSurface A_2 = \pm 2 \text{ or } 0.$$
However, the presence of the two extra $w_2$-arcs makes this case different from the Hopf link with an earring. We may nevertheless reduce to the previous case as follows. In Figure~\ref{lem82bfig}, on the left we redraw the link with three $w_2$-arcs from Figure~\ref{lem82fig}. In the middle of Figure~\ref{lem82bfig} a different decomposition of this link is shown, and redrawn on the right, after an isotopy. Now 
  \begin{figure}[ht] 
\begin{center}
\labellist 
\pinlabel $\substack{\text{different} \\ \text{decomposition}}$ at 700 193
\endlabellist
\includegraphics[width=6in]{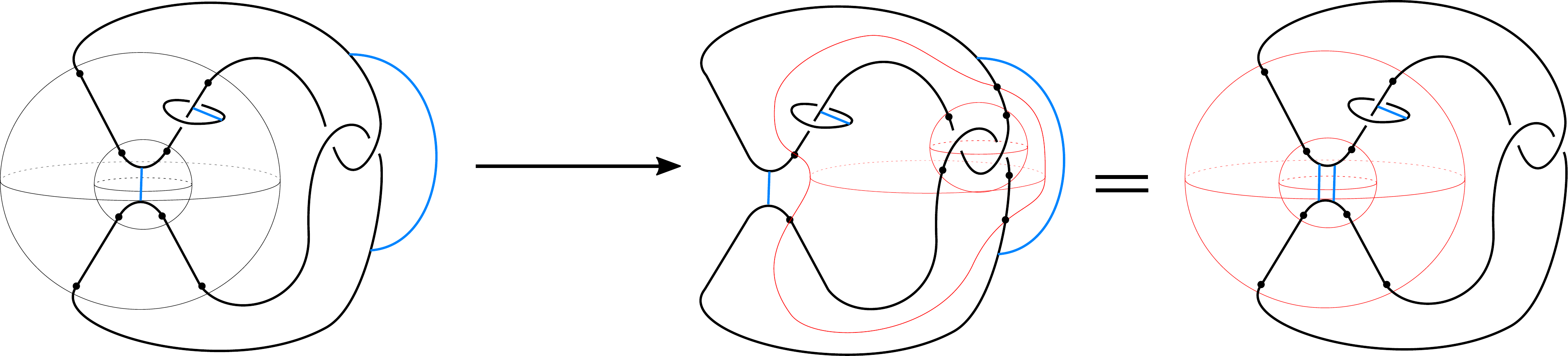}
\caption{ \label{lem82bfig}}
\end{center}
\end{figure} 
the inner tangle contains two parallel $w_2$-arcs, and its traceless character variety is identical to the corresponding tangle with no $w_2$-arcs.     Hence the intersection number is unchanged by  removing the two arcs, which is again a tangle decomposition of the Hopf link with one earring. Therefore, 
$$A_2\cdot_\genusThreeSurface A_2=\pm 2.$$  
 
   \begin{figure}[ht] 
\begin{center}
\includegraphics[angle=0,origin=c,width=6in]{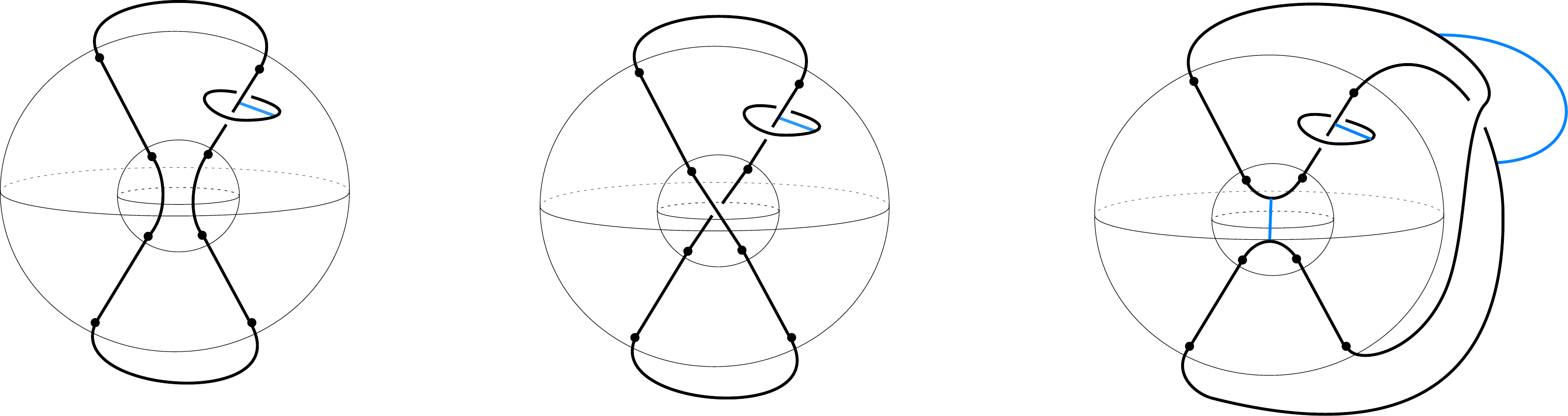}
\caption{} \label{lem82cfig}
\end{center}
\end{figure} 

At last, we prove that the intersection number is $\mathbf{+}2$. The three links illustrated in Figure~\ref{lem82cfig} represent (from left to right, the first factor always corresponding to the outer tangle) $ (A_0-A_1)\cdot_\genusThreeSurface A_0$, $A_1 \cdot_\genusThreeSurface (A_1-A_0)$, and $(A_2-A_0)\cdot_\genusThreeSurface A_2$. These all represent the unknot with an earring (after canceling two of the arcs in the rightmost diagram, as before) and hence  these products all equal $\pm 1$.

Orient $\genusThreeSurface$ as in Lemma~\ref{lem8.2}  so that $A_1\cdot_\genusThreeSurface A_0=1$.   Then
$$\pm 1=(A_0-A_1)\cdot_\genusThreeSurface A_0=A_0\cdot_\genusThreeSurface A_0 -1=\pm 2-1,$$ and hence $A_0\cdot_\genusThreeSurface A_0=2$.   Similarly $A_1\cdot_\genusThreeSurface A_1=2$ and $A_2\cdot_\genusThreeSurface A_2=2$, completing the proof. 
\end{proof}

\begin{corollary} The genus 3 surface $\genusThreeSurface$ may be oriented so that 
 \begin{align*}[\coreMap_s(\genusThreeSurface)]&=\l_0\times \l_0+\l_1\times \l_1+\l_2\times \l_2+\l_3\times \l_3\\
 &=\l_0\times \l_0+\l_1\times \l_1+\l_2\times \l_2+ (\l_0+\l_1+\l_2)\times(\l_0+\l_1+\l_2)\end{align*}
 in $H_2(P^*\times P^*)$.\qed
\end{corollary}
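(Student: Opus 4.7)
The plan is to combine the three preceding results into the desired homology identity. By Lemma~\ref{nis}, there exist integers $n_0, n_1, n_2, n_3$ with
\[
[\coreMap_s(\genusThreeSurface)] = n_0(\l_0\times \l_0) + n_1(\l_1\times \l_1) + n_2(\l_2\times \l_2) + n_3(\l_3\times \l_3),
\]
so the only task is to pin down the four coefficients and then rewrite the fourth term.

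First I would fix the orientation of $\genusThreeSurface$ as in Lemma~\ref{lem8.2}, which directly gives $n_3 = A_0\cdot_\genusThreeSurface A_1 = 1$. Next, Lemma~\ref{lem8.3} states that $A_i\cdot_\genusThreeSurface A_i = 2$ for $i = 0, 1, 2$, while Equation~\eqref{bilinear} asserts $A_i\cdot_\genusThreeSurface A_i = n_i + n_3$. Combining these yields $n_i = 2 - n_3 = 1$ for each $i \in \{0,1,2\}$. Substituting back into the expression from Lemma~\ref{nis} gives the first claimed equality.

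For the second equality, I would simply invoke $\l_3 = -(\l_0 + \l_1 + \l_2)$, which was recorded when the basis of $H_1(P^*)$ was introduced. Bilinearity of the cross product then yields
\[
\l_3\times \l_3 = \bigl(-(\l_0+\l_1+\l_2)\bigr)\times\bigl(-(\l_0+\l_1+\l_2)\bigr) = (\l_0+\l_1+\l_2)\times(\l_0+\l_1+\l_2),
\]
which rewrites the sum into the second form stated in the corollary. There is no real obstacle here; the work was done in Lemmas~\ref{nis}, \ref{lem8.2}, and~\ref{lem8.3}, and this corollary is simply the bookkeeping that assembles those inputs into the final homology class.
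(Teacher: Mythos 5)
Your proposal is correct and is precisely the argument the paper has in mind (the corollary carries only a \qed, as it is considered immediate from Lemma~\ref{nis}, Equation~\eqref{bilinear}, and Lemmas~\ref{lem8.2} and~\ref{lem8.3}, together with $\l_3=-(\l_0+\l_1+\l_2)$). Your bookkeeping of the coefficients $n_i$ and the rewriting of $\l_3\times\l_3$ match the paper's intended derivation exactly.
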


From now on we orient $\genusThreeSurface$ as in the corollary above.

\subsection{\texorpdfstring{The homotopy class of $\coreMap_s$}{The homotopy class of u-s}}

We next construct explicitly a family of continuous maps
$$\modelMap{\delta}:\genusThreeSurface   \to (P^*)^-\times P^*, ~0<\delta<\tfrac{\pi}{4}$$
which satisfy: 
\begin{enumerate}
\item \label{cond1} $\modelMap{\delta}$ agrees with $\coreMap_0$ on $\genusThreeSurface^\delta$, 
\item \label{cond2} the image of $\modelMap{\delta}$ lies in $P^{\delta/2}\times P^{\delta/2}$ and
\item \label{cond3} There exists an $\epsilon_1>0$ depending on $\delta$,  so that  if    $0<s<\ep_1$, then $\modelMap{\delta}$ and $\coreMap_s$ are  homotopic by a homotopy whose restriction to $\Sigma^\delta$  moves points a distance less than $\delta/100$.
\end{enumerate}
with $\genusThreeSurface^\delta, P^\delta$ as in Equation (\ref{deltas}).  Doing so proves the final statement in Theorem \ref{thmA}.  The map  $\modelMap{\delta}$ induces a correspondence from a neighborhood of each corner of the first pillowcase to a neighborhood of that corner in the second pillowcase, illustrated in Figure~\ref{correspondtwfig}; see also Figure~\ref{fig:description_of_the_map}.

\subsubsection{The local construction} 
Let $h:[-1,1]\to [0,2\pi]$ be a smooth non-decreasing function satisfying
$h(t)=0$ for $t\in[-1,-\tfrac 1 2]$, $h(t)=2\pi$ for  $t\in[ \tfrac 1 2, 1]$, $h(0)=\pi$ and $h'(0)>0$.  Let $g:[-1,1]\to [\tfrac 1 2,1]$ be a smooth function  so that 
$g(t)=|t|$ when $|t|\ge \tfrac 2 3  $, $g(0)=\frac 1 2$, and $g$ has one critical point, a minimum, at $0$.  The function $h$ induces a model Dehn twist 
\[
\tau_{S^1}\colon [-1,1]\times S^1 \to [-1,1]\times S^1
\]
given by $\tau_{S^1} (t, e^{\nu\bbi}) = (t, e^{(\nu+h(t) )\bbi})$.  Combined with $g$, one gets a ``folded twist correspondence''  
\[
\DehnTwistMap:[-1,1]\times S^1 \to \CC^*\times \CC^*,~\DehnTwistMap(t, e^{\nu\bbi})=g(t)(e^{\nu\bbi}, e^{(\nu+h(t))\bbi}).
 \] 
  A direct calculation shows $\DehnTwistMap$ is a Lagrangian immersion.   
 
A homotopy rel endpoints of $g$ to the constant map (at $1$) determines a homotopy  rel boundary  of  $\DehnTwistMap$ to $(t, e^{\nu\bbi})\mapsto (e^{\nu\bbi}, e^{(\nu+h(t))\bbi})$. Since $h(1)=h(0)+ 2\pi$, this latter map factors through a homeomorphism with the linking torus:
 $$S^1\times S^1\cong [-1,1]\times S^1/_\sim\to S^1\times S^1\subset \CC^2, ~ (-1,e^{\nu\bbi})\sim  (1,e^{\nu\bbi}).
$$

\subsubsection{\texorpdfstring{The map $\modelMap{\delta}$}{The map v-delta}}\label{sec:v_d}

Fix $\delta>0$.
Define $\modelMap{\delta}$ on $\genusThreeSurface^\delta$ to equal $\coreMap_0$.  In symbols:
$$\modelMap{\delta}(\sigma_\pm[\gamma,\theta])=([\gamma,\theta], [\gamma,\theta]) \in \triangle_P^\delta \subset P^\delta \times P^\delta.$$
It remains to define $\modelMap{\delta}$ on $\genusThreeSurface\setminus \genusThreeSurface^\delta$, a disjoint union of four annuli forming the tubular neighborhoods of the four circles.

 Orient $\genusThreeSurface$ and, identify each of these annuli    with $[-1,1]\times S^1$ in such a way 
that $\sigma_\pm(\delta\cos\nu, \delta\sin\nu)$ is identified with 
$(\pm 1, e^{\nu\bbi})$.  Now define $\modelMap{\delta}$ in each annulus to be the map $\DehnTwistMap$ defined above, scaled down by $\delta$  so that its image lies in
 $\{(z,w)~|~ \tfrac\delta 2\leq |z|, |w|\leq \delta\}$.

 
The family $\modelMap{\delta}:\genusThreeSurface\to P^*\times P^*$ varies smoothly in $\delta$, and is   $\delta$ close to $\coreMap_0$ (as a map to $P\times P$) in the $C^0$ norm. It satisfies conditions~\eqref{cond1} and~\eqref{cond2} above, and clearly 
$$\lim_{\delta\to 0} \modelMap{\delta}=\coreMap_0$$
in the $C^0$ norm.
By appropriately choosing a  positive or negative Dehn twist at each corner, one can ensure that 
  $$\modelMap{\delta} ([\genusThreeSurface])= \l_0\times \l_0+\l_1\times \l_1+\l_2\times \l_2+\l_3\times \l_3 \text { in  }H_2(P^*\times P^*).$$

\begin{figure}[ht] 
\labellist 
\pinlabel $\DehnTwistMap$ at 450 603
\pinlabel $\text{\small vertical fold}$ at 280 403
\pinlabel $\text{\small vertical fold}$ at 855 403
\endlabellist
\centering
\includegraphics[scale=0.25]{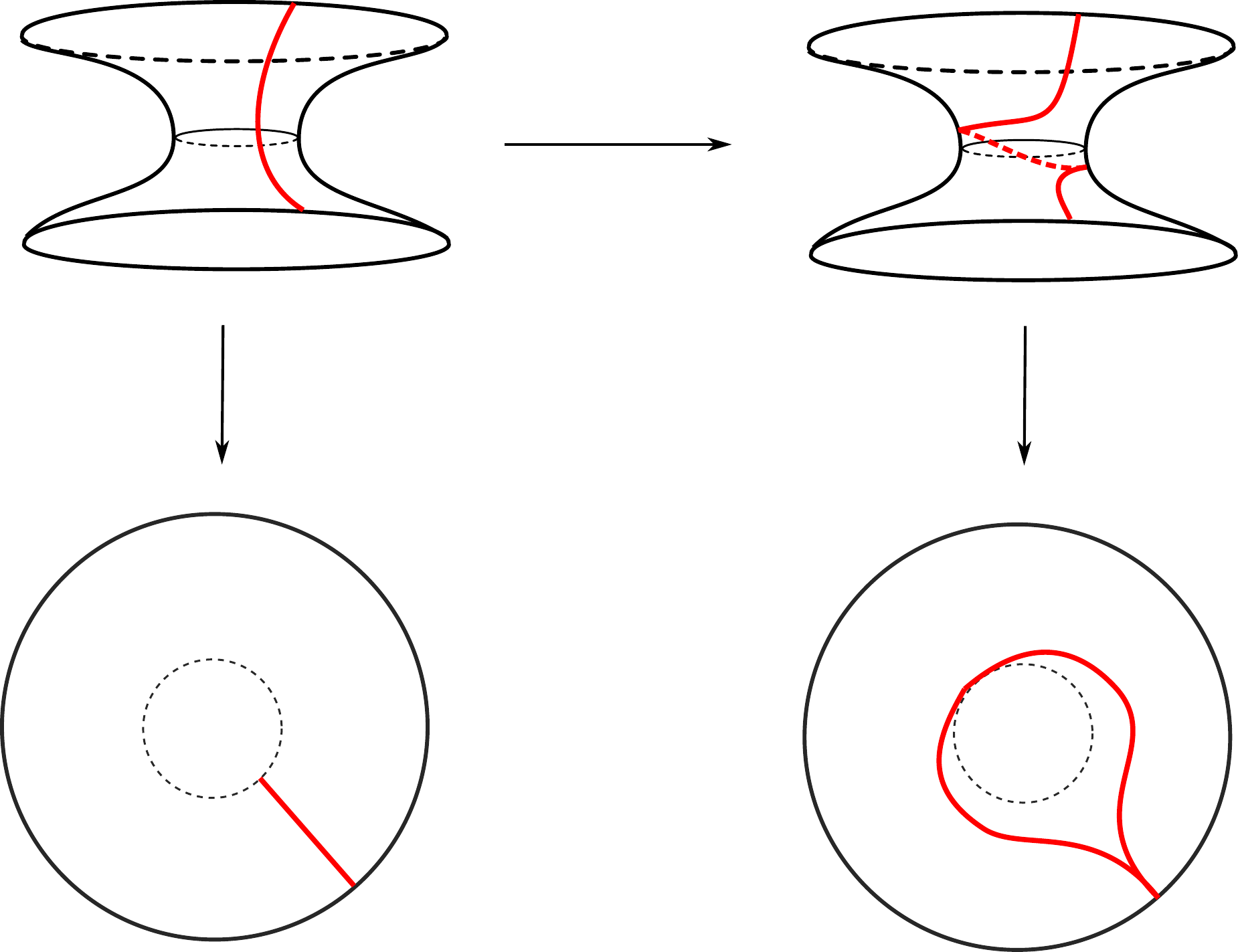}
\caption{The correspondence induced by $v_\delta$. }\label{correspondtwfig}
\end{figure}

 We  address the condition~\eqref{cond3} in the following theorem.
\begin{theorem}\label{thm:close}
 \label{main3} For any $0<\delta<\pi/4$ there exists $\epsilon>0$ such that for all $0<s<\epsilon$  
 $$ \text{the maps }
 \coreMap_s, \modelMap{\delta}:\genusThreeSurface\to P^*\times P^*$$
 are  homotopic by a homotopy whose restriction to $\Sigma^\delta$  moves points at distance less than $\delta/100$.
\end{theorem}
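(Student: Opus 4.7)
The plan is to construct the homotopy in two pieces: a small homotopy on the ``thick'' part $\genusThreeSurface^\delta$ where both maps closely trace the doubled diagonal, followed by an arbitrary homotopy on the four annular neighborhoods of the collapsing circles $\genusThreeSurface\setminus \genusThreeSurface^\delta$. The key enabling fact for the second piece is the asphericity $\pi_2(P^*\times P^*)=0$.

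First I would establish uniform convergence on $\genusThreeSurface^\delta$. By Proposition~\ref{analytics} the family $\genusThreeSurfaceFamily_\ep$ is a smooth surface bundle over $(-\ep,\ep)$ and the trivialization $\psi$ makes $\coreMap:\genusThreeSurface\times (-\ep,\ep)\to P\times P$ continuous. By Lemma~\ref{notat}(2), $\coreMap_0|_{\genusThreeSurface^\delta}=\modelMap{\delta}|_{\genusThreeSurface^\delta}$ is the trivial doubled inclusion onto $\triangle_{P^\delta}\subset P^\delta\times P^\delta$. Compactness of $\genusThreeSurface^\delta$ and uniform continuity then yield $\ep_1\in (0,\ep)$ so that for $0<s<\ep_1$,
\[
\sup_{x\in \genusThreeSurface^\delta} d\bigl(\coreMap_s(x),\modelMap{\delta}(x)\bigr)<\tfrac{\delta}{200},
\]
and in particular $\coreMap_s(\genusThreeSurface^\delta)\subset P^{\delta/2}\times P^{\delta/2}$. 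Since the diagonal admits a convex tubular neighborhood inside this region (in local product charts on $P\times P$), the straight-line homotopy produces $H^\delta:\genusThreeSurface^\delta\times [0,1]\to P^*\times P^*$ from $\coreMap_s|_{\genusThreeSurface^\delta}$ to $\modelMap{\delta}|_{\genusThreeSurface^\delta}$ with pointwise displacement staying below $\delta/100$.

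Next I would extend $H^\delta$ across a bicollar of $\partial \genusThreeSurface^\delta$ in $\genusThreeSurface$ by tapering, producing a global homotopy $G:\genusThreeSurface\times[0,1]\to P^*\times P^*$ that equals $H^\delta$ on $\genusThreeSurface^\delta$ and equals $\coreMap_s$ outside a thin collar. Setting $\coreMap_s':=G_1$, we obtain $\coreMap_s'|_{\genusThreeSurface^\delta}=\modelMap{\delta}|_{\genusThreeSurface^\delta}$, so $\coreMap_s'$ and $\modelMap{\delta}$ agree on $\partial A_i$ for each of the four annular components $A_i$ of $\genusThreeSurface\setminus \genusThreeSurface^\delta$. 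Thanks to Theorem~\ref{nogotheorem} (applied to $\coreMap_s$) and the construction of $\modelMap{\delta}$ in Section~\ref{sec:v_d} (where $g(t)\geq \tfrac12$), both maps land in $P^*\times P^*$ throughout, so the collar interpolation remains in the target.

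Finally, on each annulus $A_i$ the two maps $\coreMap_s'|_{A_i},\modelMap{\delta}|_{A_i}:A_i\to P^*\times P^*$ agree on $\partial A_i$, so their concatenation along $\partial A_i$ defines a map from the sphere $A_i\cup_{\partial A_i} A_i\cong S^2$ into $P^*\times P^*$. Since $P^*$ is a four-punctured $2$-sphere, it is homotopy equivalent to a wedge of three circles and hence is aspherical, so $\pi_2(P^*\times P^*)=0$. This null-homotopy gives a homotopy $\coreMap_s'|_{A_i}\simeq \modelMap{\delta}|_{A_i}$ rel $\partial A_i$. Performing these four homotopies in parallel and extending by the constant homotopy on $\genusThreeSurface^\delta$, then concatenating with $G$, yields the required homotopy from $\coreMap_s$ to $\modelMap{\delta}$.

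The \emph{main obstacle} is the compatibility between the first two steps: one must ensure that $\coreMap_s(\genusThreeSurface^\delta)$ stays inside a tubular neighborhood of $\triangle_{P^\delta}$ on which straight-line homotopies remain in $P^*\times P^*$, and that the collar taper does not introduce additional displacement on $\genusThreeSurface^\delta$ beyond the $\delta/100$ budget. Both are controlled by choosing $\ep_1$ and the collar thickness small relative to $\delta$. Once this is arranged, the filling of the annuli is a standard application of obstruction theory using the asphericity of $P^*\times P^*$, and no finer analysis of the individual homology classes contributed by each $A_i$ is required.
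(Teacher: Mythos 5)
There is a genuine gap in the last step of your argument, and it invalidates the claim that no homological analysis of the annuli is needed. After arranging that $\coreMap_s'$ and $\modelMap{\delta}$ agree on $\partial A_i$, you assert that gluing the two maps along $\partial A_i$ produces a map from the \emph{sphere} $A_i\cup_{\partial A_i}A_i\cong S^2$. But $A_i$ is an annulus, and the double of an annulus along its two-component boundary is a \emph{torus}, not a sphere. Since $P^*\times P^*$ is aspherical, a map $T^2\to P^*\times P^*$ is nullhomotopic precisely when the induced map on $\pi_1$ is trivial, and that fails here: by construction (Lemma~\ref{notat} and Section~\ref{sec:v_d}), both $\coreMap_s'$ and $\modelMap{\delta}$ carry each boundary circle of $A_i$ to the essential $(1,1)$-curve on the linking torus $\l_i\times\l_i$. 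So the vanishing of $\pi_2(P^*\times P^*)$ alone does not produce the rel-$\partial A_i$ homotopy, and your final step as stated breaks down.

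What is actually required is to extend the boundary-torus map over the solid torus $A_i\times I$; because the target is aspherical, this amounts to showing that the meridian of the solid torus has nullhomotopic image. The paper handles this by observing that the boundary-torus map lands in the deleted neighborhood $W_{2\delta}^*$, which deformation retracts onto the linking torus $\l_i\times\l_i$, and then checking that the resulting torus-to-torus map has degree zero. That degree-zero check reduces to comparing the intersection numbers $\coreMap_\ep\cdot(A_i\times A_i)$ and $\modelMap{\delta}\cdot(A_i\times A_i)$, which coincide because both $[\coreMap_\ep(\genusThreeSurface)]$ and $[\modelMap{\delta}(\genusThreeSurface)]$ equal $\l_0\times\l_0+\l_1\times\l_1+\l_2\times\l_2+\l_3\times\l_3$ in $H_2(P^*\times P^*)$ --- the computation carried out in Lemmas~\ref{lem8.2} and~\ref{lem8.3} and in Section~\ref{sec:v_d}. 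This is precisely the ``finer analysis of the individual homology classes'' that your closing paragraph declares unnecessary; it is in fact the crux of the proof, and without it there is no reason for the two maps to be homotopic at all.
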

\begin{proof}  Fix $0<\delta<\tfrac \pi 4$.  Choose $\ep >0$ so that each $0<s\leq \ep $ satisfies the  conclusions  of Theorem~\ref{thm1}.

Write  $\genusThreeSurface=\genusThreeSurface^\delta\cup Z^\delta$, where $Z^\delta$ is the disjoint union of four annuli, each a tubular neighborhood of the preimage of a corner by $\coreMap_0$.  Since $ \coreMap_s\to \coreMap_0$ as maps $\genusThreeSurface\to P\times P$ and  the restrictions of $\coreMap_0$ and $\modelMap{\delta}$ to (the compact surface) $\genusThreeSurface^\delta$ coincide, it follows that
the restriction of the family $\coreMap_s, s\in [0,\epsilon]$ to $\genusThreeSurface^\delta$ gives a homotopy  
$$H_s:\coreMap_\ep|_{\genusThreeSurface^\delta}\sim \modelMap{\delta}|_{\genusThreeSurface^\delta}:\genusThreeSurface^\delta\to P^*\times P^*, ~s\in[0,\ep].
$$
By making $\ep$ smaller if needed, we may assume this homotopy moves points in $\genusThreeSurface^\delta$ at most at distance $\delta/101$.

 It remains to show that 
$H_s$ extends to a homotopy of  $\genusThreeSurface$, that is, over the four annuli
$$Z^\delta=\sqcup_{i=0}^3 Z^\delta_i.$$

Lemma \ref{notat} implies that at the $i$th corner, 
 the two boundary components of $Z^\delta_i$ are sent by $\coreMap_0$ (and hence also $\modelMap{\delta}$) to the diagonal  in the linking torus $$\triangle_{\l_i}\subset \l_i\times \l_i\subset P^\delta\times P^\delta,$$
 with $\l_i$ as illustrated in Figure~\ref{pillowgensfig}.
Use restrictions to the three annuli to construct a map with domain a torus:
 $$k_i: \partial(Z^\delta_i\times [0,\ep])=\big(Z^\delta_i\times\{0\} \big)\cup \big(\partial(Z^\delta_i)\times [0,\ep]\big)\cup \big( Z^\delta_i\times\{\epsilon\}\big)\xrightarrow{\modelMap{\delta}\cup H_s\cup 
 \coreMap_s} P^*\times P^*.$$

 The restriction of the  homotopy $H_s$ to the boundary of $Z_i^\delta$  has image in a $\delta/50$ tubular neighborhood of $\l_i\times \l_i$.  Since $\coreMap_0$ takes $Z_i^\delta$ into a $\delta$ neighborhood of $(\l_i,\l_i)$, by making $\ep>0$ smaller if needed, we may assume that $\coreMap_\ep$ sends $Z_i^\delta$ into the $2\delta$ deleted neighborhood:
 $$W_{2\delta}^*\coloneqq  \big(U_{2\delta}(\l_i)\setminus\{\l_i\}\big)\times \big(U_{2\delta}(\l_i)\setminus\{\l_i\}\big) .$$

 The homotopy $H_s$ can be extended over $\genusThreeSurface$ if, for each $i$,  the map $k_i$ is nullhomotopic.    The inclusion of the linking torus
 $\l_i\times \l_i\subset W_{2\delta}^*$ is a deformation retract, since $\delta<\tfrac\pi 4$.  By construction, the map $k_i$ takes the two circles $\partial Z_i^\delta\times\{0\}$ to an essential curve, namely the $(1,1)$ curve on the linking torus $\l_i\times \l_i$. Thus $k_i$ is nullhomotopic if it  has degree 0, which holds if and only if the intersection numbers    $\coreMap_\ep\cdot (A_i\times A_i)$ and $\modelMap{\delta}\cdot (A_i\times A_i)$ coincide (with $A_i$ illustrated in  Figure~\ref{pillowgensfig}.)
 
 By construction,  $[\modelMap{\delta}]$ and $ [\coreMap_\ep]$ both equal $\l_0\times \l_0 +\l_1\times \l_1+\l_2\times \l_2+\l_3\times \l_3$ in $H_2(P^*\times P^*)$, and so the intersection numbers coincide, and hence $k_i$ is nullhomotopic, proving the theorem when $s=\ep.$  This implies the statement for any $0<s<\ep$,  completing the argument.

 Lastly, to complete the  proof of Theorem~\ref{thm:close} and Theorem~\ref{thmA},   consider arcs $A_i$ from Figure~\ref{pillowgensfig}, and their images, figure eight Lagrangians $L^8_i$, under the correspondence $\modelMap{\delta}$. Intersections of those arcs and figure eights are 
 $$A_i \cdot L^8_j = \begin{cases} \pm 1&\text{ if } i \ne j,\\
 \pm 2 & \text{ if } i= j.\end{cases}$$
 depending on the sign of the Dehn twist in the definition of $\modelMap{\delta}$. We now choose the sign of the Dehn twists so that the intersections are $+1$ and $+2$. This proves that the homology class $[\modelMap{\delta}(\genusThreeSurface)]$ satisfies $n_0=n_1=n_2=n_3=1$ (in the terminology of Lemma~\ref{nis}).

 \end{proof}
 
 Theorem~\ref{thm:close} does {\em not} assert that $\coreMap_s$ and $\modelMap{\delta}$ are {\em Lagrangian} (or Hamiltonian) regular homotopic.   Smale-Hirsch theory \cite{MR119214}   permits us to alter  $\modelMap{\delta}$ near some point in $\genusThreeSurface$ so that these two Lagrangian immersions become regularly homotopic, but more subtle methods, such as the Gromov-Lees theorem \cite{MR410764},  may be  required to determine if they are Lagrangian regular homotopic.   
 
 However,  neither $\coreMap_s$ nor $\modelMap{\delta}$ are Lagrangian regular homotopic to an embedding.  This is because the restriction map $H^2(P\times P)\to H^2(P^*\times P^*)$ is zero and hence the Euler classes of the normal bundles of these immersions are both zero.  But the Darboux-Weinstein theorem \cite{MR1698616} implies that the normal bundle of an embedded Lagrangian genus three surface in a 4-manifold has Euler class $-4$.

\section{\texorpdfstring{The action by composition of the Lagrangian correspondences $\coreMap_s$ and $\modelMap{\delta}$ on immersed curves in the pillowcase}{The action by composition of the Lagrangian correspondences u-s and v-delta on immersed curves in the pillowcase}
}\label{sec:corr}

Composition with the Lagrangian immersion  $\coreMap_s:\Sigma\to P^*\times P^*$ induces  functions $\Lag(P^*)^{\pitchfork {u_s}}  \to \Lag(P^*)$, where $\Lag(P^*)$ denotes the set of 
Lagrangian immersions in $P$ and $\Lag(P^*)^{\pitchfork {u_s}} \subset \Lag(P^*)$ denotes the subset of those immersions which are composable with $\coreMap_s$. The latter immersions are of the form $i:S\to P$ such that $i\times \id_P:S\times P\to P\times P$ is transverse to 
$\coreMap_s$ (see Definition \ref{def:compos} and Equation   (\ref{corronlag})).
A similar comment applies to $\modelMap{\delta}:\Sigma\to P^*\times P^*$.  In this section we describe these functions when restricted to loop-type and arc-type Lagrangians for small $s,\delta$.

\subsection{The action on compact Lagrangians}

Part  (3)    of Lemma~\ref{notat} implies that for small enough $s\ne0$ any immersed Lagrangian circle  $L:S^1\to P^{\delta}$ has two disjoint cross sections $L_\pm$ which are each mapped by $\coreMap_s^\rin$ close to   $L$: 
 $$\begin{tikzcd}
&  \genusThreeSurface \arrow[d,"\coreMap_s^{\rout}"]\arrow[r,"\coreMap^\rin_s"]&P^{\frac{1}{2}\delta}\\
S^1\arrow[r,"L"] \arrow[ru, "L_\pm"]                                & P^{\delta}& \end{tikzcd}$$
The range of $\coreMap_s^\rin$ is taken to be $P^{\frac{1}{2}\delta}$ in this diagram because  applying the correspondence $\genusThreeSurface_s$ might move $L$ out of $P^\delta$, but only slightly, and hence its image is inside $P^{\frac{1}{2}\delta}$ for small enough non-zero $s$.  
 
In particular, every $\delta$-loop-type Lagrangian (see Definition~\ref{loops/arcs}) is composable with $u_s$ for all small enough positive $s$. 
This determines the {\em Lagrangian  correspondence on loops}  induced by the earring cobordism,  via its traceless character variety  $\genusThreeSurface_s=\NAT_s$ and the restriction $\NAT_s\to P^*\times P^*$:

\begin{equation}
\begin{split}
\label{loops} (-\circ \NAT_s) : \allLoopImmersions{\delta}&\to\allLoopImmersions{\frac{1}{2}\delta}
, \\
L &\mapsto L\circ \NAT_s   
\end{split}
\end{equation}

We summarize this in the following theorem which refers to the $\delta$-loop-type Lagrangians of Definition~\ref{loops/arcs}.

\begin{theorem} \label{looptype} For small enough $s \ne 0$, the correspondence $\NAT_s \cong \genusThreeSurface_s\to P^\rout\times P^\rin$ associated to the earring cobordism induces the function 
$$ (- \circ \NAT_s ) : \allLoopImmersionsout{\delta}\to\allLoopImmersionsin{\frac{1}{2}\delta}$$
which sends a loop-type immersed Lagrangian to two nearby  isotopic copies of $L$.
  \qed
\end{theorem}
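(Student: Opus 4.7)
The plan is to use the description of $\coreMap_s^\rout$ as a trivial $2$-fold cover over $P^\delta$ given by Lemma~\ref{notat}(3). Fix a $\delta$-loop-type immersion $L\co S^1\to P^\delta$. For every sufficiently small $s$, the restriction $\coreMap_s^\rout\co (\coreMap_s^\rout)^{-1}(P^\delta)\to P^\delta$ is a trivial double cover with two smooth sections $\sigma_\pm^s\co P^\delta\to \genusThreeSurface$. These can be chosen to depend smoothly on $s$ by means of the analytic bundle trivialization $\psi$ of $\genusThreeSurfaceFamily_\epsilon$ fixed in Section~\ref{sec:restriction}, since at $s=0$ they coincide with the sections $\sigma_\pm$ produced in Lemma~\ref{xlem}.

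Next I would verify composability. Because $\coreMap_s^\rout$ is a local diffeomorphism on the image of each $\sigma_\pm^s$, the map $L\times\id_P\co S^1\times P\to (P^*)^-\times P^*$ is automatically transverse to $\coreMap_s$, so $L$ is composable with $\NAT_s$ in the sense of Definition~\ref{def:compos}. The defining fiber product splits cleanly as
\[
\{(x,y)\in S^1\times \genusThreeSurface \mid \coreMap_s^\rout(y)=L(x)\}\;=\;(\sigma_+^s\circ L)(S^1)\;\sqcup\;(\sigma_-^s\circ L)(S^1),
\]
and projecting through $\coreMap_s^\rin$ yields two smooth immersions
\[
L_\pm^s \coloneqq \coreMap_s^\rin\circ\sigma_\pm^s\circ L\co S^1\to P,\qquad L\circ \NAT_s=L_+^s\sqcup L_-^s.
\]

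To finish I would identify $L_\pm^s$ with small perturbations of $L$. By Lemma~\ref{notat}(2), at $s=0$ one has $\coreMap_0^\rin=\coreMap_0^\rout$ on $\genusThreeSurface^\delta$, hence $\coreMap_0^\rin\circ\sigma_\pm^0=\id_{P^\delta}$ and $L_\pm^0=L$ identically. By continuity of the analytic family $\{\coreMap_s\}$ arising from the surface-bundle structure of Corollary~\ref{small s cor}, after further shrinking the range of $s$ each $L_\pm^s$ is $C^1$-close to $L$; in particular each is an immersion, takes values in $P^{\frac{1}{2}\delta}$, and is isotopic to $L$ through loop-type Lagrangians. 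Disjointness of the images of $\sigma_+^s$ and $\sigma_-^s$ forces $L_+^s$ and $L_-^s$ to be disjoint for small $s\ne 0$, giving the two nearby isotopic copies of $L$ claimed in the theorem.

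The only technical point that requires any care is the smooth dependence of $\sigma_\pm^s$ on $s$ near $s=0$, since at $s=0$ the section construction of Lemma~\ref{xlem} uses the explicit formula~\eqref{defnofh} rather than coming directly from an implicit function argument. This is not a serious obstacle: the surface-bundle trivialization $\psi$ extends the $s=0$ sections analytically to small $s$, and this is the only place in the argument where more than soft continuity is needed.
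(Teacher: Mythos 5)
Your proof is correct and follows essentially the same route as the paper: invoke the trivial $2$-fold cover structure of $\coreMap_s^\rout$ over $P^\delta$ from Lemma~\ref{notat}(3), take the two sections, compose with $\coreMap_s^\rin$, and use continuity at $s=0$ (where $\coreMap_0^\rin=\coreMap_0^\rout$ on $\genusThreeSurface^\delta$, so the composites are literally $L$) to conclude the two resulting circles are disjoint, immersed, land in $P^{\delta/2}$, and are close to $L$. You supply a bit more detail than the paper does on two points the paper leaves implicit: the verification that $L\times\id$ is automatically transverse to $\coreMap_s$ because the covering map is a local diffeomorphism along the relevant preimages, and the remark that the sections can be made to depend smoothly on $s$ via the bundle trivialization $\psi$; both observations are correct and the second is indeed the only place where more than soft continuity is needed.
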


Theorem~\ref{looptype}  implies the first assertion  of Theorem~\ref{thmB}, part (4), namely the doubling action of the Lagrangian correspondence $\coreMap_s(\genusThreeSurface)$ on loop-type Lagrangians. By letting $\delta$  and $s$     approach zero, the earring cobordism induces the trivial doubling operation on the compact objects of the Fukaya category  of the pillowcase.   Notice that Theorem~\ref{looptype} applies equally to $\modelMap{\delta}$, which, away from the corners, is also a trivial 2-fold covering space.

\subsection{The action on  arc-type Lagrangians}  \label{arc}

Here we address the second half of part (4) of Theorem~\ref{thmB}. Suppose $A:[-\tfrac 1 2, \tfrac 1 2]\to  P$ is a $\delta$-arc-type Lagrangian.  The same argument as for the  $\delta$-loop-types shows that outside of the neighborhood $U_\delta(C)$, the correspondences $\coreMap_s$  and $\modelMap{\delta}$ double the part of $A$ that lies in $P^\delta$.  Hence, to complete the description of the action of $\coreMap_s$ on $\allArcImmersions{\delta}$, it remains to describe  what happens in each neighborhood of a corner.

\medskip

 In the following definition, $\mathbb D$ denotes the unit disk in the complex plane, $\alpha_+\subset \mathbb D$ denotes the oriented arc on the real axis  from $ \tfrac 1 2 $ to $ 1$, $\alpha_-\subset \mathbb D$ denotes the oriented arc on the real axis  from $ -\tfrac 1 2 $ to $ -1$, and $\beta$ denotes the intersection of the imaginary axis with $\mathbb D$.

\begin{definition}\label{HF8} A {\em homology figure eight curve} in the twice-punctured 2-disk
 is an immersion $F:L\to  \mathbb D\setminus \{-\tfrac 1 2,\tfrac 1 2\}$ of an oriented compact 1-manifold without boundary which intersects each arc $\alpha_\pm$ transversely  in one point with  intersection number $\pm 1$, and intersects $\beta$ transversely in two points (and hence with algebraic intersection number $0$). See Figure~\ref{HF8fig} for an illustration.

Given   an arc-type Lagrangian in the pillowcase $A:[-\tfrac 1 2,\tfrac 1 2]\to P$ joining corners $c_1,c_2$, a {\em homology figure eight curve supported near $A$} is a map $F:L\to P^*$ obtained as a composite of a homology figure eight curve in $\mathbb D\setminus\{-\tfrac 1 2,\tfrac 1 2\}$ with a continuous map
$(\mathbb D,-\tfrac 1 2,\tfrac 1 2)\to  (P, c_1,c_2)$ which satisfies:
\begin{enumerate}
\item immerses $\mathbb D\setminus \{-\tfrac 1 2,\tfrac 1 2\}$    into $P^*$;
\item its restriction to the line segment   $[-\tfrac 1 2,\tfrac 1 2]$ equals $A$.
\end{enumerate}
When $L$ is  connected, we call $F$ a {\em figure eight curve}.
\end{definition}

\begin{figure}[t] 
\labellist 
\pinlabel $\alpha_+$ at 315 76
\pinlabel $\alpha_-$ at 035 80
\pinlabel $\beta$ at 163 130
\endlabellist
\centering
\includegraphics[width=0.4\textwidth]{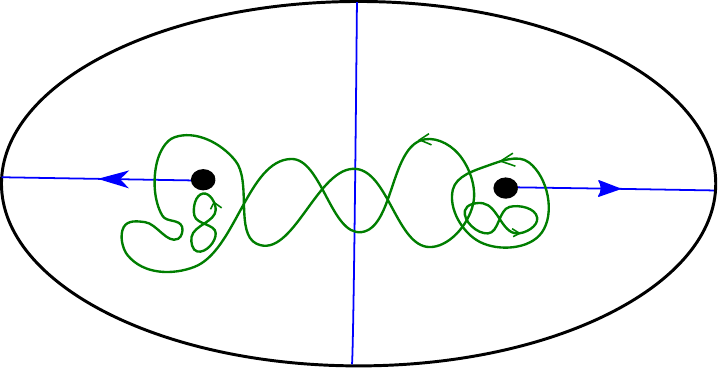}
\caption{A homology figure eight curve in the twice-punctured disk. }\label{HF8fig}
\end{figure} 
Illustrations  of two arc-type Lagrangians  and figure eight curves supported near them are given in Figure~\ref{example1fig}.
Figure~\ref{corrfig} is a computer plot of the action of the correspondence $\coreMap_s$ for $s=0.19$  on the three arcs corresponding to the three trivial tangles which appear in the unoriented skein triple.  Each of these arcs is transformed into a nearby figure eight curve.
 In Figure~\ref{corrfig}, the pillowcase is considered as a subset of $\RR^3$ via the embedding
 $i:P\subset \RR^3$ given by $[\gamma,\theta]\mapsto(\cos(\gamma), \cos(\theta), \cos(\gamma-
 \theta))$, with image $i(P)=\{(x,y,z)~|~ x^2+y^2+z^2-2xyz=1\}$.  The  red arc corresponds to the bottom edge $A_0$ in Figure~\ref{pillowgensfig}. The green arc corresponds to the diagonal edge $A_1$, and the blue arc corresponds to the right edge. This is the same triple illustrated in Figure  \ref{fig:basic_lagrangians}.
\begin{figure}[ht] 
\begin{center}
\includegraphics[angle=0,origin=c,width=3in]{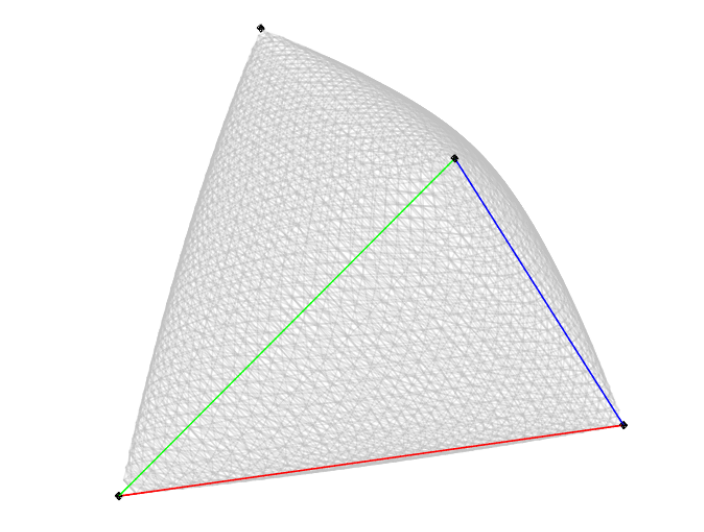}
\includegraphics[angle=0,origin=c,width=3.2in]{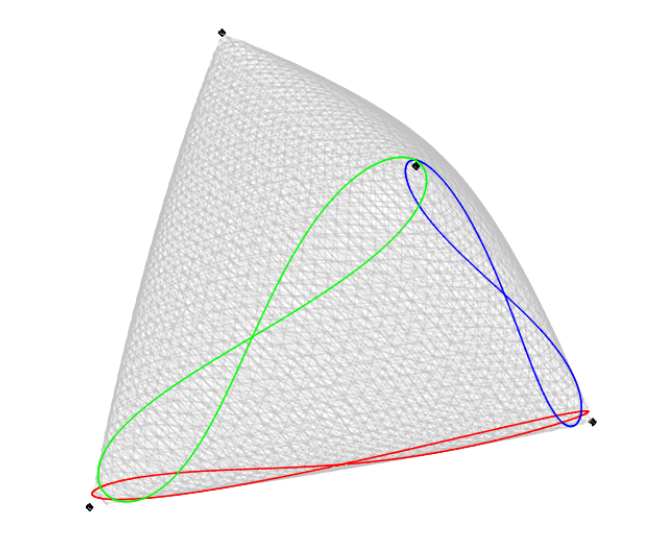}
\caption{} \label{corrfig}
\end{center}
\end{figure}

 \medskip

The correspondence induced by $\modelMap{\delta}$  near each corner is explicitly described by  Figure~\ref{correspondtwfig}.  
 Indeed, from that figure  one sees that a radial arc $A(t)$ with $A(0)\in\partial U_\delta(c)$ and $A(1)$ the  center (corner) $c$ is sent by the correspondence to a loop which wraps once around the corner $c$, both of whose endpoints are  sent to $A(0)$.   
Hence every $\delta$-arc-type Lagrangian $A$ is sent by the correspondence to a figure eight curve supported near $A$, composed of   four pieces: the two arcs which wrap around the corners, and the trivial double of the part of the Lagrangian that lies outside $U_\delta(C)$. This (connected) curve is uniquely oriented by requiring the algebraic intersection numbers with $\alpha_\pm$ to be $\pm 1$. Figure~\ref{vdeltacorfig} illustrates an example.
 
 \begin{figure}[hb] 
\labellist 
\pinlabel $(\modelMap{\delta})_*$ at 330 125
\endlabellist
\centering
\includegraphics[scale=0.5]{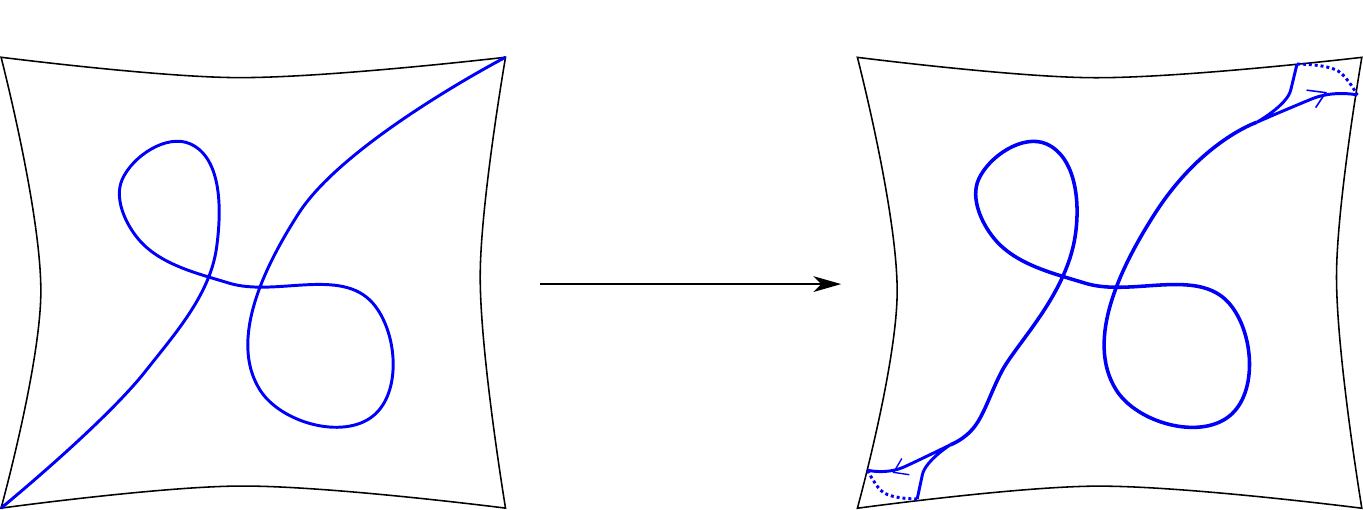}
\caption{The action of the correspondence $\modelMap{\delta}$. }\label{vdeltacorfig}
\end{figure}

  A precise description of the action of  $\coreMap_s$ near each corner is difficult to write down, and, in particular, we do not know if it sends every $\delta$-arc-type curve to a connected curve. Since $\coreMap_s$ is homotopic in $P^*\times P^*$ to $\modelMap{\delta}$,  a  homotopy transversality argument shows that if  $\coreMap_s$ is transverse to $A \times \id:[0,1]\times P^\rin \to P^\rout \times P^\rin$ for a a $\delta$-arc-type curve $A$, then   the image of    $A$ 
  by the action of $\coreMap_s$ is oriented.  The argument of Lemma \ref{lem8.2} shows that this image meets $\alpha_\pm$ transversely once,  and the fact that $r_s^\rout$ is a trivial 2-fold cover away from the corners (Lemma \ref{notat}) shows that it meets $\beta$ transversely in two points.  Hence $\coreMap_s$ sends  a $\delta$-arc-type curve to a homology figure eight curve.

Given any arc-type Lagrangian $A$ in the pillowcase, there exists arbitrarily small  holonomy perturbations supported in a collar of $(S^2,4)$ so that after perturbation, $A\times \id$ is transverse to  $\coreMap_s$ \cite{Herald, HK}.  This is summarized in the following proposition.

\begin{proposition} For $\delta>0$ small enough, the correspondence induced by $\modelMap{\delta}$ sends every $\delta$-arc-type Lagrangian $A$ to a figure eight curve supported near $A$.  For $s>0$ small enough, the correspondence induced by $\coreMap_s$ sends a $\delta$-arc-type Lagrangian $A$, with $A\times\id$ transverse  to $\coreMap_s$, to a homology figure eight curve supported near $A$. \qed
\end{proposition}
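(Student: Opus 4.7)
\medskip\noindent\textbf{Proof plan.} The plan is to handle the two statements separately: the $\modelMap{\delta}$ case will follow directly from the explicit local model, while the $\coreMap_s$ case will be deduced from it via the homotopy of Theorem~\ref{thm:close} together with the intersection computations of Section~\ref{homotopyclass}.

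For the first claim, fix a $\delta$-arc-type Lagrangian $A$ connecting corners $c_0,c_1$ of $P$ and decompose $\genusThreeSurface=\genusThreeSurface^\delta\cup Z^\delta$ as in Section~\ref{sec:restriction}, where $Z^\delta$ is the disjoint union of four annuli over the corner preimages. On $\genusThreeSurface^\delta$, where $\modelMap{\delta}$ agrees with $\coreMap_0$ and restricts to a trivial $2$-fold cover of $P^\delta$ via the sections $\sigma_\pm$, the preimage $(\modelMap{\delta}^\rout)^{-1}(A\cap P^\delta)$ consists of two arcs which $\modelMap{\delta}^\rin$ maps to two parallel copies of $A\cap P^\delta$. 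On each corner annulus $Z^\delta_i$ over an endpoint of $A$, I would use the explicit scaled model $\DehnTwistMap$ from Section~\ref{sec:v_d}: because $A$ is radial inside $U_\delta(c_i)$, the coordinate $\nu$ is forced to be constant along the preimage while $t$ sweeps through $[-1,1]$, and the second component $g(t)e^{(\nu+h(t))\bbi}$ then traces out a loop encircling the corner exactly once, since $h(1)-h(-1)=2\pi$. Gluing this local loop to the two outer parallel copies along $\partial Z^\delta_i$ produces a single connected immersed curve, supported in a tubular neighborhood of $A$, which by direct inspection meets each of $\alpha_\pm$ in exactly one transverse point and $\beta$ in two transverse points---a figure eight curve supported near $A$.

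For the second claim, assume transversality of $A\times\id$ with $\coreMap_s$. Then the geometric composition $F=\coreMap_s^\rin\bigl((\coreMap_s^\rout)^{-1}(A)\bigr)$ is the image of a compact closed oriented $1$-manifold immersed into $P^*$ (compactness uses that $\coreMap_s$ avoids the corners while the endpoints of $A$ lie in $C$). I would verify the three defining conditions for a homology figure eight in turn. Outside $U_\delta(C)$, Lemma~\ref{notat}(3) gives that $\coreMap_s^\rout$ is a trivial $2$-fold cover over $P^\delta$, so $F$ consists there of two parallel copies of $A$ and in particular meets $\beta\subset P^\delta$ transversely in exactly two points. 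For the intersection with $\alpha_\pm$, I would invoke the homotopy of Theorem~\ref{thm:close}: since $\coreMap_s$ and $\modelMap{\delta}$ are homotopic in $P^*\times P^*$ through a homotopy moving points of $\genusThreeSurface^\delta$ by at most $\delta/100$, homotopy invariance of intersection numbers combined with the transversality assumption shows that the signed count $F\cdot\alpha_\pm$ equals the corresponding count for $\modelMap{\delta}$, which is $\pm 1$ by the first claim (and matches the cross-check provided by the unknot-plus-earring calculation in the proof of Lemma~\ref{lem8.2}). Transversality then promotes this algebraic count to a single transverse intersection point of the correct sign.

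The main obstacle is the absence of an explicit local model for $\coreMap_s$ on the corner annuli $Z^\delta_i$. We know only that $\coreMap_s$ is homotopic to $\modelMap{\delta}$ in $P^*\times P^*$; the fine structure of $\coreMap_s$ within each corner annulus is not controlled by the analysis of Section~\ref{eliminating}. Consequently, we cannot exclude extra null-homologous components in $F$ beyond the expected figure eight shape, which is precisely why the conclusion is weakened to a \emph{homology} figure eight. Upgrading to the genuine figure eight predicted by Conjecture~\ref{truefig8} would require strengthening Theorem~\ref{thm:close} to a regular homotopy through immersions whose local model on each corner annulus is combinatorially of Dehn-twist type, and this appears to need finer analytic control on the perturbation data than is currently available.
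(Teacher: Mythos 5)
Your treatment of the first claim, that $\modelMap{\delta}$ sends every $\delta$-arc-type Lagrangian to a genuine figure eight curve, matches the paper's approach: you decompose $\genusThreeSurface$ into $\genusThreeSurface^\delta\cup Z^\delta$, use the trivial double cover over $P^\delta$ on the outside, and use the explicit Dehn-twist local model $\DehnTwistMap$ on each corner annulus to produce the loop wrapping the corner once. This is exactly what the paper does (phrased via Figure~\ref{correspondtwfig}).

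For the second claim there is a genuine gap in your final step. After establishing via homotopy invariance that the \emph{algebraic} count $F\cdot\alpha_\pm$ equals $\pm 1$, you conclude that ``transversality then promotes this algebraic count to a single transverse intersection point.'' That inference is invalid: transversality of $A\times\id$ with $\coreMap_s$ guarantees isolated intersection points with well-defined signs, but says nothing about their number --- a signed count of $+1$ is equally consistent with three transverse intersections of signs $+,+,-$. The definition of a homology figure eight curve (Definition~\ref{HF8}) demands a \emph{geometric} count of one transverse point on each of $\alpha_\pm$, so the argument as written does not establish the required conclusion. The paper closes this gap differently: rather than homotopy invariance, it invokes the regularity statement underlying Lemma~\ref{lem8.2} --- that the perturbed traceless character variety of the unknot with an earring is a \emph{single regular point} --- applied locally near each corner (where $A$ is linear), which directly produces one transverse intersection with $\alpha_\pm$ rather than merely an algebraic count of $\pm 1$. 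You actually cite this unknot-plus-earring calculation, but only as a ``cross-check''; it needs to be the primary mechanism. The remainder of your argument (compactness and orientability of $F$, the two points on $\beta$ from the trivial double cover via Lemma~\ref{notat}(3), and the diagnosis that possible disconnectedness is the reason the conclusion is only a \emph{homology} figure eight) aligns with the paper.
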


   For completeness, we state the following conjecture, which says that $\NAT_s$ takes arcs to figure eight curves, rather than (the weaker notion of) homology figure eight curves.

\begin{conjectureABC}  \label{truefig8} For $\delta$ small enough, the Lagrangian immersions $\coreMap_s$ and $\modelMap{\delta}$ are Hamiltonian regular homotopic in $(P^*)^-\times P^*$ for all $s>0$ small enough.  Moreover,  $\coreMap_s$ is transverse to every $\delta$-arc-type curve and sends each such curve to a figure eight curve.
\end{conjectureABC}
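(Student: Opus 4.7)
The plan attacks the two claims separately; the Hamiltonian regular homotopy is the more delicate part, and the transversality-plus-connectedness statement for arcs largely follows from it, combined with an explicit local analysis at the four corner preimages.

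For the second claim, the strategy is to refine Lemma~\ref{notat}(3) to a $C^1$-convergence statement: as $s\to 0$, the restriction of $\coreMap_s^{\rout}$ to a small $\hat\iota$-invariant collar of each of the four circles $C\times\iperpcirc\subset\genusFiveSurface$ converges, in the $C^1$ topology, to the two-fold branched cover induced by $\sigma_\pm$, while $\coreMap_s^{\rin}$ converges to the same cover post-composed with a small twist around the corner. The proof would use Equation~\eqref{defH} with Lemma~\ref{dividebys} to linearize $\coreMap_s$ in $s$ along the preimage circles, and then compare to the model $\DehnTwistMap$ of Section~\ref{sec:v_d}. Given this convergence, for any fixed $\delta$-arc-type $A$ in generic position and all small $s>0$, $A\times\id_P$ is transverse to $\coreMap_s$; the two lifts produced by $\sigma_\pm$ on $P^\delta$ are then joined across each corner neighborhood into a single connected loop, giving an honest figure eight rather than merely a homology figure eight.

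For the Hamiltonian regular homotopy, I would proceed in three stages. First, upgrade the continuous homotopy of Theorem~\ref{thm:close} to a Lagrangian regular homotopy by an h-principle argument in the style of Gromov--Lees. The formal obstruction lives in the homotopy of $U(2)/O(2)$, and both immersions produce the same formal Lagrangian data: they have the same Maslov class (computable directly from $\DehnTwistMap$ and from $\mapVR$) and the same normal Euler number, both forced to vanish by the fact that the restriction $H^2(P\times P)\to H^2(P^*\times P^*)$ is zero, as noted at the end of Section~\ref{homotopyclass}. Second, reduce Hamiltonian regular homotopy to the vanishing of the flux class of the path, an element of $H^1(\genusThreeSurface;\RR)$. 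Third, establish this vanishing by representing $\coreMap_s$ as the graph of a closed $1$-form $\phi_{s,\delta}$ in a Weinstein neighborhood of $\modelMap{\delta}(\genusThreeSurface)$ and arguing that $\phi_{s,\delta}$ is exact; the latter should follow from the gauge-theoretic origin of $\coreMap_s$, which realizes restriction-to-boundary of a perturbed flat moduli space and so produces an exact Lagrangian in any Weinstein chart coming from Goldman's symplectic reduction~\cite{CHKK}.

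The main obstacle is precisely this flux vanishing. The model $\modelMap{\delta}$ is built combinatorially, from the explicit Dehn twists of Section~\ref{sec:v_d} and an interpolation, and has no intrinsic gauge-theoretic interpretation, so comparing it with $\coreMap_s$ inside a common Weinstein chart requires an auxiliary interpolating Lagrangian whose flux one must control directly. A possibly more tractable variant would be to redefine $\modelMap{\delta}$ near each corner so that it agrees with $\coreMap_s$ to high order along the preimage circles $\coreMap_0^{-1}(c)$, trading the global flux problem for a germ-matching problem that reduces to the same $C^1$-convergence analysis underlying the second claim. In either approach, the heart of the matter is showing that the discrepancy between $\coreMap_s$ and its combinatorial model has trivial period class on $\genusThreeSurface$, a statement that is plausible on gauge-theoretic grounds but does not follow automatically from the Lagrangian property of the two maps, and which would likely require a careful Moser-type argument using the analytic bundle $\genusThreeSurfaceFamily_\ep$ from Equation~\eqref{EE} as an interpolation device.
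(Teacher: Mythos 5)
This statement is explicitly a \emph{conjecture} in the paper (Conjecture~\ref{truefig8}), not a theorem: the authors state plainly in the introduction that ``we were unable to establish the stronger conclusion that arcs are taken to (honest) figure eight curves,'' and after Theorem~\ref{thm:close} they remark that the theorem does \emph{not} assert Lagrangian or Hamiltonian regular homotopy, adding only that Smale--Hirsch would allow a regular homotopy after a local modification and that Gromov--Lees machinery ``may be required'' for the Lagrangian question. So there is no paper proof to compare your proposal against; the only supporting evidence in the paper is a machine computation for one specific arc (the remark in Section~\ref{sec:fig8bubble}).

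Your proposal is a reasonable high-level strategy, and it correctly identifies the two available tools (Gromov--Lees for Lagrangian regular homotopy, flux for upgrading to Hamiltonian) that the paper itself gestures at. But it is an outline, not a proof, and you yourself name the gap honestly: the flux-vanishing step ``does not follow automatically from the Lagrangian property of the two maps.'' There are two further issues worth flagging. First, the suggested gauge-theoretic exactness argument for $\coreMap_s$ does not obviously transfer to a comparison with $\modelMap{\delta}$, since $\modelMap{\delta}$ is a hand-built model with no flat-connection interpretation, and the exactness of $\coreMap_s$ in a Weinstein chart around $\modelMap{\delta}(\genusThreeSurface)$ is precisely the kind of relative statement that needs proof rather than invocation. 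Second, the claimed $C^1$-convergence of $\coreMap_s^{\rout}$ and $\coreMap_s^{\rin}$ near the preimage circles to the branched-cover-plus-twist model is stated but not derived; Lemma~\ref{dividebys} gives only a first-order expansion of the defining equations in $s$, not a $C^1$ estimate on the induced restriction maps, and extracting the latter near the circles $C\times \iperpcirc$ (where the coordinates $(\gamma,\theta)$ degenerate) requires a genuine computation that is not sketched. In short: your strategy is consistent with what the authors envision, but both of your ``stages'' contain unproved analytic inputs, and the paper leaves the statement open for exactly these reasons.
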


\section{Context and Implications} \label{sec:PillHom_n_quilts}

In the remainder of this article we  outline a few consequences of casting  Theorem~\ref{thmB} 
as a  calculation of an endomorphism in a Floer field theory.

We use freely the language of symplectic geometry and its related algebra, including the Lagrangian Floer complex,  Fukaya $A_\infty$ categories, quilted Floer homology, and bounding cochains. We refer the reader to the original sources~\cite{Flo-lagr,FOOO1,FOOO2,Seidel,WW}  and to the expository articles~\cite{Aur3, Smith}. 
For the benefit of the reader, we review these topics in Appendix~\ref{Appendix} at a level sufficient to follow the remainder of this article.

\subsection{Pillowcase homology}\label{sec:PilHom}

The articles~\cite{HHK1, HHK2,HHHK} lay out a program aimed at investigating the Atiyah-Floer  conjecture for reduced instanton homology $I^\natural(\link)$ for   links $\link$ in $S^3$, starting with a decomposition along a Conway sphere (to begin with, without an earring):
\begin{equation*} (S^3,\link)=(D^3,T_1) \cup_{(S^2,4)} (D^3,T_2) 
\quad \text{(See Figure~\ref{arc_svk})}
\end{equation*}
Such a decomposition gives rise  to the pair of Lagrangian immersions (omitting $D^3$ from the notation):
\begin{equation}\label{pillowlag}R_\pi(T_1) \looparrowright P^*\looparrowleft R_\pi(T_2).\end{equation}
  It is natural to consider the associated Lagrangian Floer theory of these two Lagrangians, in particular because $P^*$ is a surface and therefore some of the subtle arguments  about moduli spaces of $J$-holomorphic disks in its Lagrangian Floer theory  can be replaced by simpler differential topology arguments  \cite{Abouzaid_Fuk_surface,SRS}.

The presence of at least one arc-type  component in $R_\pi(T)$  for any 2-tangle $T$ in the 3-ball motivates the use of wrapped Floer homology $HW(R_\pi(T_1), R_\pi(T_2))$ inside $P^*$. 
See Section~\ref{sec:future_directions} for a further discussion about the wrapped Fukaya category of the pillowcase.

In \cite{HHK1, HHK2} the constructions were designed to match Kronheimer-Mrowka's reduced singular instanton homology $I^\natural(\link)$, and thus the (proper, exact) Lagrangian $R_\pi(T_2)$ was replaced by its compact image under the correspondence $\NAT_s$:
\begin{equation}
R^\nat_\pi(T_2)\coloneqq \NAT_s \circ R_\pi(T_2)\end{equation}
In other words, the tangle $T_2$ is modified by placing an earring on it, resulting in a traceless character variety consisting entirely of loop-type Lagrangians.
The associated Lagrangian Floer homology of the pair \begin{equation}\label{pillowlagnat}R_\pi(T_1) \looparrowright P^*\looparrowleft R^\nat_\pi(T_2)\end{equation}
in $P^*$ was given the moniker \emph{pillowcase homology} by M.~Hedden.

The Lagrangians $R_\pi(T_1)$ and $R^\nat_\pi(T_2)$ can  be made transverse by a small holonomy perturbation  (see e.g., ~\cite{Herald,HK}), resulting in a finite set of intersection points which corresponds bijectively to the generators of Kronheimer-Mrowka's instanton chain complex defining $I^\nat(\link)$ (see ~\cite{HHK1, HHK2}).  As such, this framework provides a concrete way to perturb the Chern-Simons functional used in the definition of $I^\nat(\link)$,  and thus single out transverse critical points. For nontrivial knots such perturbations are unavoidable, because the critical set of the relevant unperturbed Chern-Simons functional is not regular~\cite{KM}.

 \medskip

The articles~\cite{HHK1,HHK2} give much supporting evidence that pillowcase homology
is isomorphic to $I^\natural(\link)$. 
The $\ZZ/4$ gradings agree, and in many examples, including when $\link$ is a 2-bridge knot,   all differentials vanish in both complexes.  More complicated calculations for tangle decompositions of torus knots provide additional evidence \cite{FKP}.  The following discussion, centered on the example of the $(4,5)$ torus knot, brings bounding cochains into the picture. Conjecture~\ref{conj:bounding_cochains} below   below describes our  guess, using bounding cochains, on  how to construct a pillowcase homology equivalent to  $I^\natural(\link)$. 

\subsection{Dependence on the earring location}

To determine  what can be captured by pillowcase homology  requires an understanding of its dependence on a choice of decompositions of links along Conway spheres, or more generally,  surfaces with any number of marked points. In particular one wishes the Floer homology groups \begin{equation}\label{eq:earring_two_sides}\HF(R_\pi(T_1),R^\natural_\pi(T_2)) \quad \text{and} \quad \HF(R^\natural_\pi(T_1),R_\pi(T_2))\end{equation}
to be isomorphic.

We  turn our attention to the $(4,5)$ torus knot,    henceforth denoted by $K_{4,5}$. This is a knot for which Kronheimer-Mrowka's spectral sequence from Khovanov to instanton homology has a nontrivial higher differential~\cite{KM-higher-diff}. We highlight below another remarkable phenomenon.  For a particular tangle decomposition of $K_{4,5}$, the Floer homology groups~\eqref{eq:earring_two_sides} are not isomorphic; a certain differential, present in one of the chain complexes, is absent in the other. Following Bottman and Wehrheim~\cite{BW}, we then argue that figure eight bubbling (in the context of immersed Lagrangian correspondence $\NAT_s$) must occur to account for the lost differential, and we give an approximate description of the figure eight bubble and the corresponding bounding cochain.  

The following  informal exposition is meant to bolster the narrative and support the Bottman-Wehrheim  approach to immersed quilted Floer theory with an explicit flat moduli space example; a more formal treatment (with attention paid to analytical and $A_\infty$ aspects) is beyond the scope of the present article.

\subsection{
\texorpdfstring{Character varieties for a tangle decomposition of $K_{4,5}$.}
{Character varieties for a tangle decomposition of K(4,5).}
}\label{sec:fig8bubble} 

In \cite[Section~11.8]{HHK2}, a particular tangle decomposition $$(S^3,K_{4,5})=(D^3,T_1)\cup_{(S^2,4)}(D^3,T_2)$$   is described, with $T_2$ the trivial tangle. 
  This tangle decomposition is similar to the one illustrated for the trefoil knot in Figure~\ref{arc_svk}. The trivial tangle $(D^3,T_2)$ is the neighborhood of an arc on the (unknotted) torus containing $K_{4,5}$.

 For an appropriate holonomy perturbation $\pi$, the Lagrangian curve $R_\pi(T_1)$ is calculated    to be a disjoint union of a (red) arc-type Lagrangian and a (green) loop-type Lagrangian,  illustrated on the left of Figure~\ref{45torus2fig}. The curve $R(T_2)$ is the linear (blue) arc illustrated on the right. 
\footnote{In Figure~\ref{45torus2fig}, the pillowcase is stereographically projected to the plane, whereas in~\cite[Figure 23]{HHK2}, the pillowcase is depicted by its fundamental domain 
$0\leq \gamma\leq \pi, ~0\leq \theta\leq 2\pi$.  The two perspectives are identified in  Figure~\ref{fig:pillowcase_coordinates}.  }

\begin{figure}[t]
\centering
 \includegraphics[width=.7\textwidth]{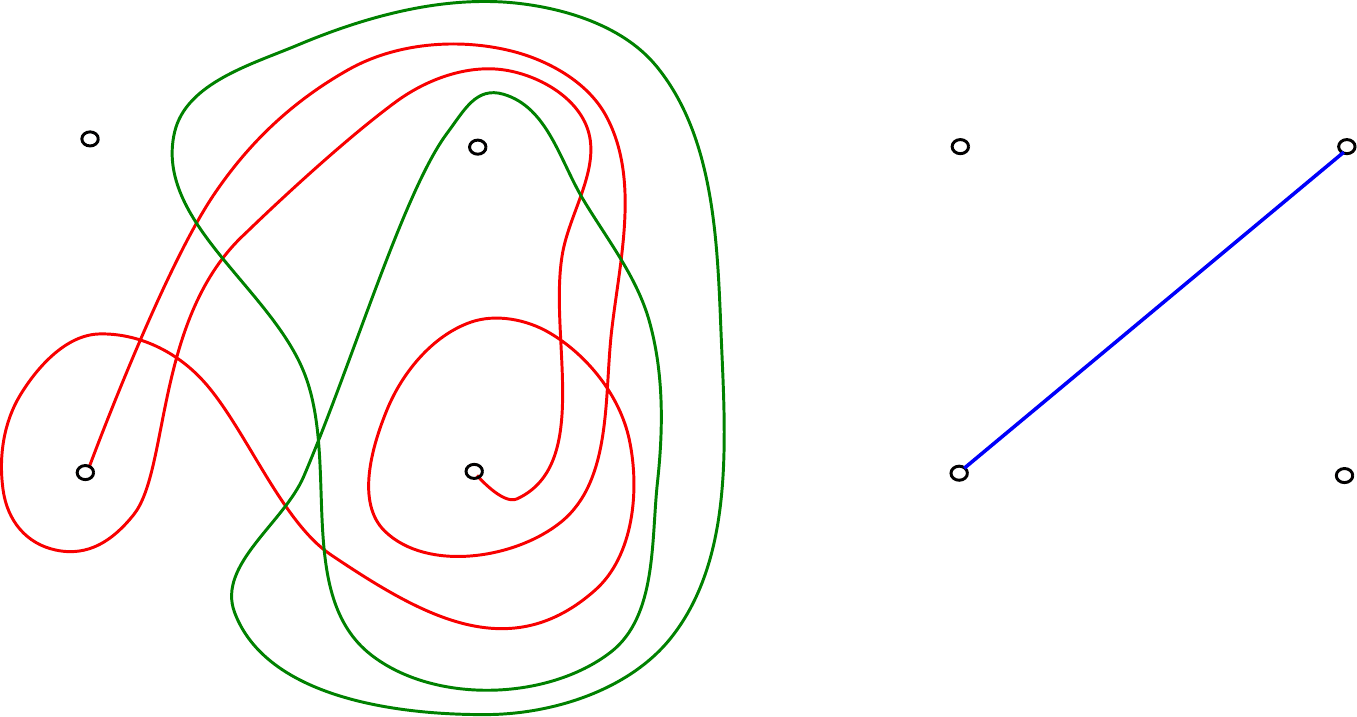}
 \caption{$R_\pi(T_1)$ (on the left) and $R(T_2)$ (on the right) for a 2-tangle decomposition of $K_{4,5}$.}\label{45torus2fig}
\end{figure}

Inserting an earring between the two tangles $T_1$ and $T_2$ yields the decomposition into three tangles, along a disjoint union of two $(S^2,4)$'s:
\begin{equation}
\label{tripledecomp}
(S^3, K_{4,5} \cup E, W)=(D^3, T_1)\cup_{(S^2,4)^\rout}(S^2\times I, 4\text{pt} \times I \cup E, W)\cup_{(S^2,4)^\rin}(D^3, T_2)
\end{equation}
where $E$ and $W$ are the circle and the arc forming the earring. 
Passing to holonomy perturbed traceless character varieties and applying Theorem~\ref{thmA} the topological decomposition~\eqref{tripledecomp}  produces the generalized immersed Lagrangian correspondence (see Definition~\ref{def:glag}) of length three:
\begin{equation}\label{triple_maps}
(R_\pi(T_1),\NAT_s, R(T_2))\coloneqq \begin{tikzcd}[row sep=.8cm, column sep=.8cm]
& {R_\pi(T_1)} \arrow[rd,"r^\rout"left] \arrow[ld]  & & \NAT_s \arrow[ld,"r_s^\rout" left]\arrow[rd,"r_s^\rin" right] & & {R(T_2)}\arrow[ld,"r^\rin"right] \arrow[rd]   &\\
\text{1pt} & & P^\rout & & P^\rin & & \text{1pt}
\end{tikzcd}
\end{equation}

\begin{remark*} The computer calculation which produces Figure~\ref{corrfig} shows that the action of the correspondence  $\coreMap_s$ on the linear arc $R(T_2)$ is a true Figure eight curve, rather than just a homology Figure eight curve, as predicted by Conjecture \ref{truefig8}.     In addition,  the identification of $R_\pi(T_1)$ given in Figure \ref{45torus2fig} is also the result of a computer calculation.     
Hence the following discussion  implicitly relies on two machine computations. 
\end{remark*} 

\medskip

Taking $\HF(R_\pi(T_1),R_\pi^\nat(T_2))$ is one way to compute the pillowcase homology,  and this is shown to be isomorphic to $I^\nat(K_{4,5})\cong \FF^7$ in \cite{HHK2}. We now outline why
\begin{equation}\label{badboy}
\HF(R_\pi(T_1),\NAT_s, R(T_2))\cong \HF(R_\pi(T_1),R^\nat_\pi(T_2))\cong \FF^7\not\cong \HF(R_\pi^\nat(T_1),R(T_2))\cong \FF^9,
\end{equation}
where the left-most group is the quilted Floer homology of the triple (see Section~\ref{sec:appendix_quilts}).
In words, considering three Lagrangian correspondences~\eqref{triple_maps}, composition of the second pair preserves the quilted Floer homology, but composition of the first pair does not. 

In particular, the conclusion of the Wehrheim-Woodward embedded composition theorem~\eqref{embeddedcomp} does not extend, without modifications,   to composition of  immersed Lagrangians {\em in the context  of perturbed flat moduli spaces.}

Implicit in Equation~\eqref{badboy} is the choice of the zero bounding cochain (see Section~\ref{sec:appendix_quilts}) for each Lagrangian. We argue that the discrepancy~\eqref{badboy} is rectified by the introduction of the bounding cochain $b\in \CF(R^\nat_\pi(T_1))$ illustrated in the middle of Figure~\ref{bigon1fig}:  
\begin{equation}\label{goodboy}
 \HF((R_\pi^\nat(T_1),b),(R(T_2),0))\cong \HF((R_\pi(T_1),0),(R^\nat(T_2),0)) 
\cong   \FF^7.
\end{equation}
Moreover, we outline why {\em strip-shrinking}, the mechanism which Wehrheim-Woodward introduced in their proof of the embedded composition theorem, and which Bottman-Wehrheim use to define the  bounding  cochain $8(-,-)$ associated to composition, produces this very bounding cochain $b$.

The rest of this section explains these three assertions.  As a quick overview,  Figures \ref{bigon1fig} explains Equations (\ref{badboy}) and (\ref{goodboy}),  Figure~\ref{fig:quilt} exhibits the differential in the quilted complex, and Figures~\ref{fig:before_bubble} and~\ref{fig:bubble}  describe the appearance of a figure eight bubble. 

\medskip

To ease eyestrain we denote
\begin{align*}
A&\coloneqq  \:\text{the arc-type  component of $R_\pi(T_1)$ (red  in Figure~\ref{45torus2fig})},\\
A^\natural&\coloneqq A\circ \NAT_s    \text{ (the corresponding figure eight, as described by Theorem~\ref{thmB})},\\
L&\coloneqq  \:\text{the loop-type  component of $R_\pi(T_1)$ (green  in Figure~\ref{45torus2fig})},\\
L^\natural&\coloneqq L\circ \NAT_s  \text{ (two copies of $L$, as described by Theorem~\ref{thmB})},\\
B& \coloneqq R (T_2)\text{ (blue in Figure~\ref{45torus2fig}),  and }\\ 
B^\nat& \coloneqq \NAT_s\circ  R (T_2) \text{ (the corresponding figure eight, as described by Theorem~\ref{thmB})}.
\end{align*}
Using Figure~\ref{45torus2fig}, we obtain:
\begin{align*}
\CF(R_\pi(T_1), R^\nat_\pi(T_2)) &\cong  \CF(A,B^\nat)\oplus \CF(L, B^\nat) \cong \FF^5 \oplus \FF^4 \\
\CF(R^\nat_\pi(T_1), R_\pi(T_2)) &\cong  \CF(A^\nat,B)\oplus \CF(L^\nat, B) \cong \FF^5 \oplus \FF^4 
\end{align*}
  The right side of Figure~\ref{bigon1fig} reveals that the chain complex $\CF(A, B^\nat)$  has one bigon $y_+ \to x_-$, while the middle picture, which by  Theorem~\ref{thmB} corresponds to $\CF(A^\nat,B)$,  has none. One can check quickly, using Figure~\ref{45torus2fig}, that there are no other bigons in the Floer chain complexes, and hence
  $$\HF(R^\nat_\pi(T_1), R(T_2))\cong \FF^9 \quad \text{ and } \quad \HF(R_\pi(T_1), R^\nat(T_2))\cong \FF^7 .$$
  We remark that a local picture similar to Figure~\ref{bigon1fig} can also be found for a tangle decomposition of the $(4,7)$-torus knot, and more torus knot examples can be obtained using the methods of~\cite{FKP,HHK2}.

 \begin{figure}[t] 
\labellist
\pinlabel $A\subset R_\pi(T_1)$ at 40 5
\pinlabel $x$ at 70 70
\pinlabel $y$ at 115 108
\pinlabel $B\subset R_\pi(T_2)$ at 200 157
\pinlabel $A^\nat$ at 190 5
\pinlabel $x_-$ at 290 80
\pinlabel $x_+$ at 247 70
\pinlabel $y_+$ at 320 103
\pinlabel $y_-$ at 320 133
\pinlabel $b$ at 270 123
\pinlabel $B$ at 365 150
\pinlabel $B^\nat$ at 590 150
\pinlabel $y_-$ at 578 116
\pinlabel $y_+$ at 518 116
\pinlabel $x_-$ at 505 83
\pinlabel $x_+$ at 505 40
\pinlabel $A$ at 480 5
\endlabellist
\centering
\includegraphics[width=1\textwidth]{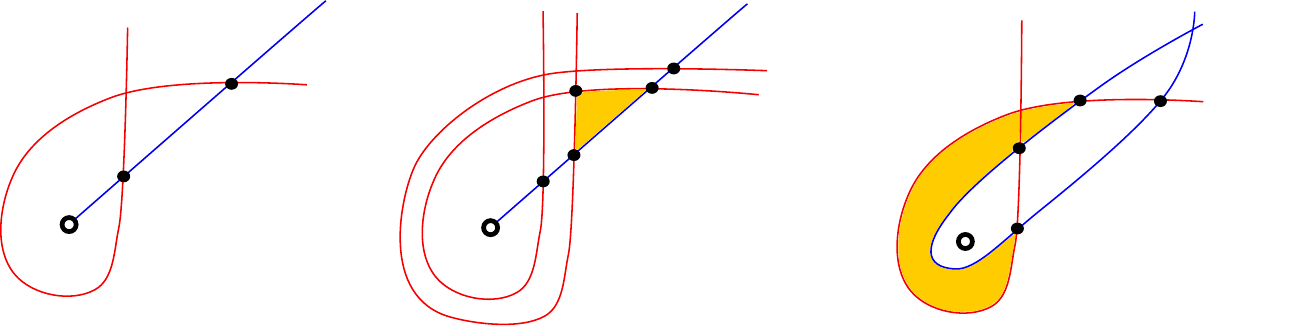}
\caption{ The pairs $(A,B)$, $(A^\nat,B)$ and $(A, B^\nat)$ }\label{bigon1fig}
\end{figure}

Because $I^\nat(K_{4,5})\cong \FF^7$, any correct construction of pillowcase homology   \emph{must}   count the differential $y_+ \to x_-$, if it expects to produce  singular instanton homology. Thus  an enhancement of the middle picture of Figure~\ref{bigon1fig} is needed to revive the differential $y_+ \to x_-$.
This is precisely what equipping the immersed Lagrangian $A^\nat$ with the indicated bounding cochain $b$ accomplishes: the shaded triangle contributes to $\mu^2(b,y_+)=x_-$. 
We expect that $b$ satisfies the $A_\infty$ Maurer-Cartan equation~\eqref{eq:MC}, and that other terms $\mu^k(b,\ldots , b,-)$ contributing to the differential $\partial_{b,0}$ (see Equation~\eqref{eq:deformed_differential})  vanish. In this case the homology of the deformed Floer complex $\CF((A^\nat,b), (B,0))=(\CF(A^\nat,B), \partial+\mu^2(b,-))$ (with $\partial=0$) is isomorphic to that of $\CF(A, (B^\nat,0))$, 
 which results in
$$\HF((R^\nat_\pi(T_1),b), (R(T_2),0))\cong \FF^7.$$

\begin{figure}[t] 
\labellist
\pinlabel $\epsilon_1$ at 650 1010
\pinlabel $\epsilon_2$ at 650 868
\pinlabel $A$ at 935 786
\pinlabel $B$ at 935 1091
\pinlabel $\NAT_s$ at 935 935
\pinlabel $P^\rin$ at 1097 1017
\pinlabel $P^\rout$ at 1080 872
\pinlabel $\NAT_s$ at 1060 690
\pinlabel $\modelMap{\delta}^\rout$ at 760 350
\pinlabel $\modelMap{\delta}^\rin$ at 1340 350
\pinlabel $B^\nat$ at 440 570
\pinlabel $y_+$ at 370 520
\pinlabel $y_-$ at 435 460
\pinlabel $x_-$ at 400 400
\pinlabel $x_+$ at 310 400
\pinlabel $y_+$ at 1090 500
\pinlabel $x_+$ at 1140 400
\pinlabel $x_-$ at 1140 170
\pinlabel $y_-$ at 1150 70
\pinlabel $y_-$ at 1870 550 
\pinlabel $y_+$ at 1935 470
\pinlabel $x_+$ at 1870 370
\pinlabel $x_-$ at 1910 430
\pinlabel $b$ at 1700 460
\pinlabel $A$ at 270 90
\pinlabel $A^\nat$ at 1860 115
\pinlabel $P^\rin$ at 1860 670
\pinlabel $P^\rout$ at 270 670
\pinlabel $B$ at 1920 590 
\endlabellist
\centering
\includegraphics[width=1\textwidth]{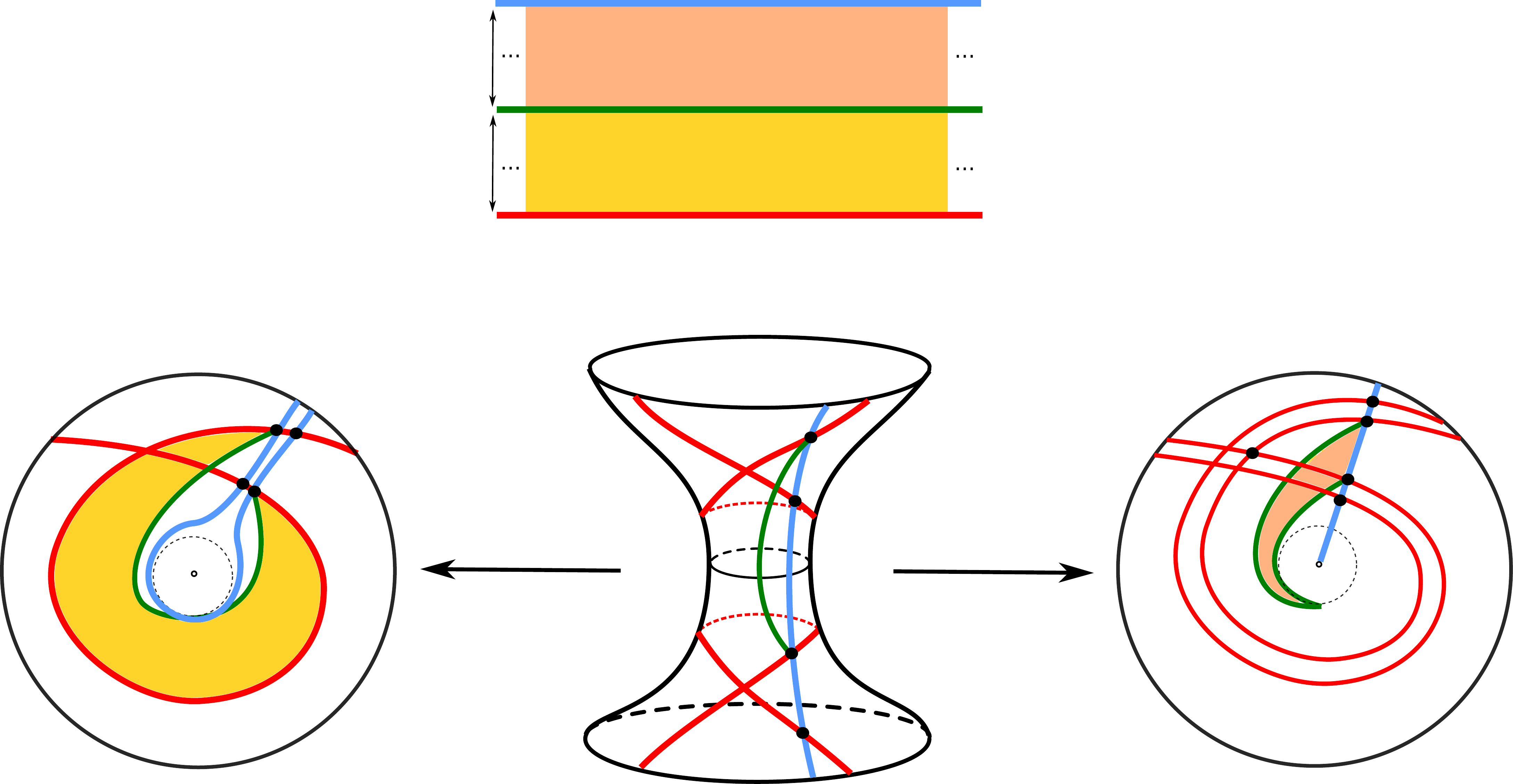}
\caption{Quilted Floer homology of $A\subset R_\pi(T_1) $, $\NAT_s$, and $B=R_\pi(T_2) $ from Figure~\ref{bigon1fig}. \label{fig:quilt} }
\end{figure}

\subsection{The quilt and the figure eight bubble}
We take these notions one step further by invoking quilts (discussed in Section~\ref{sec:appendix_quilts}) and arguing that the strip-shrinking process produces a {\em figure eight bubble with output $b$} (The term figure eight bubble should not be confused with Definition \ref{HF8} of a figure eight curve in the pillowcase; see Conjecture~\ref{conj:BW_immersed}). The loop-type component $L$ does not enter into the calculation, and hence we use the streamlined notation $A\subset R_\pi(T_1)$ and $B=R(T_2)$, which we think of as immersions of the interval $[0,1]$ into the pillowcase $P$.

The diagram~\eqref{triple_maps} determines  the quilted Floer chain complex  $\CF(A, \NAT_s,B)$
whose generators are naturally in bijection with the generators of the two composed complexes $\CF(A,B^\nat)$ and $\CF(A^\nat,B)$.  

\begin{figure}[ht!] 
\begin{subfigure}{1\textwidth}
\labellist
\pinlabel $y_+$ at 370 500
\pinlabel $x_-$ at 400 390
\pinlabel $y_+$ at 1060 430
\pinlabel $x_-$ at 1110 180
\pinlabel $y_+$ at 1920 460
\pinlabel $x_-$ at 1910 410
\pinlabel $b$ at 1710 460
\endlabellist
\centering
\includegraphics[width=1\textwidth]{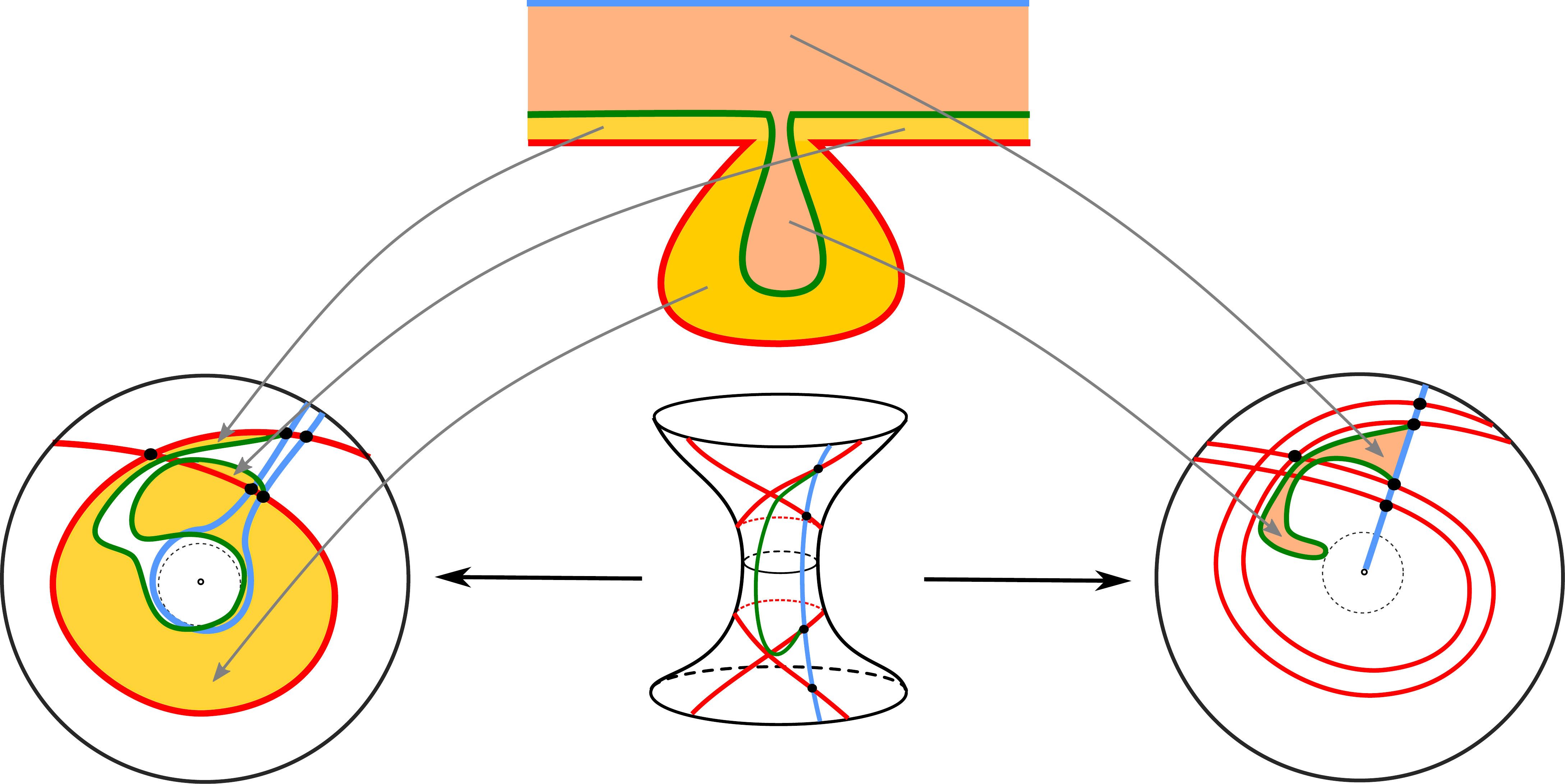}
\caption{Picture of the quilt right before the bubbling occurs. \label{fig:before_bubble} }
\end{subfigure}

\bigskip

\hrule

\bigskip

\begin{subfigure}{1\textwidth}
\labellist
\pinlabel $\text{triangle contributing to }\mu^2(b,y_+)=x_-$ at 1150 940
\pinlabel $\text{figure eight bubble}$ at 650 700
\pinlabel $b$ at 1040 880
\pinlabel $b$ at 220 480
\pinlabel $y_+$ at 370 500
\pinlabel $x_-$ at 400 390
\pinlabel $y_+$ at 1060 400
\pinlabel $x_-$ at 1110 150
\pinlabel $y_+$ at 1920 460
\pinlabel $x_-$ at 1910 410
\pinlabel $b$ at 1710 460
\endlabellist
\centering
\includegraphics[width=1\textwidth]{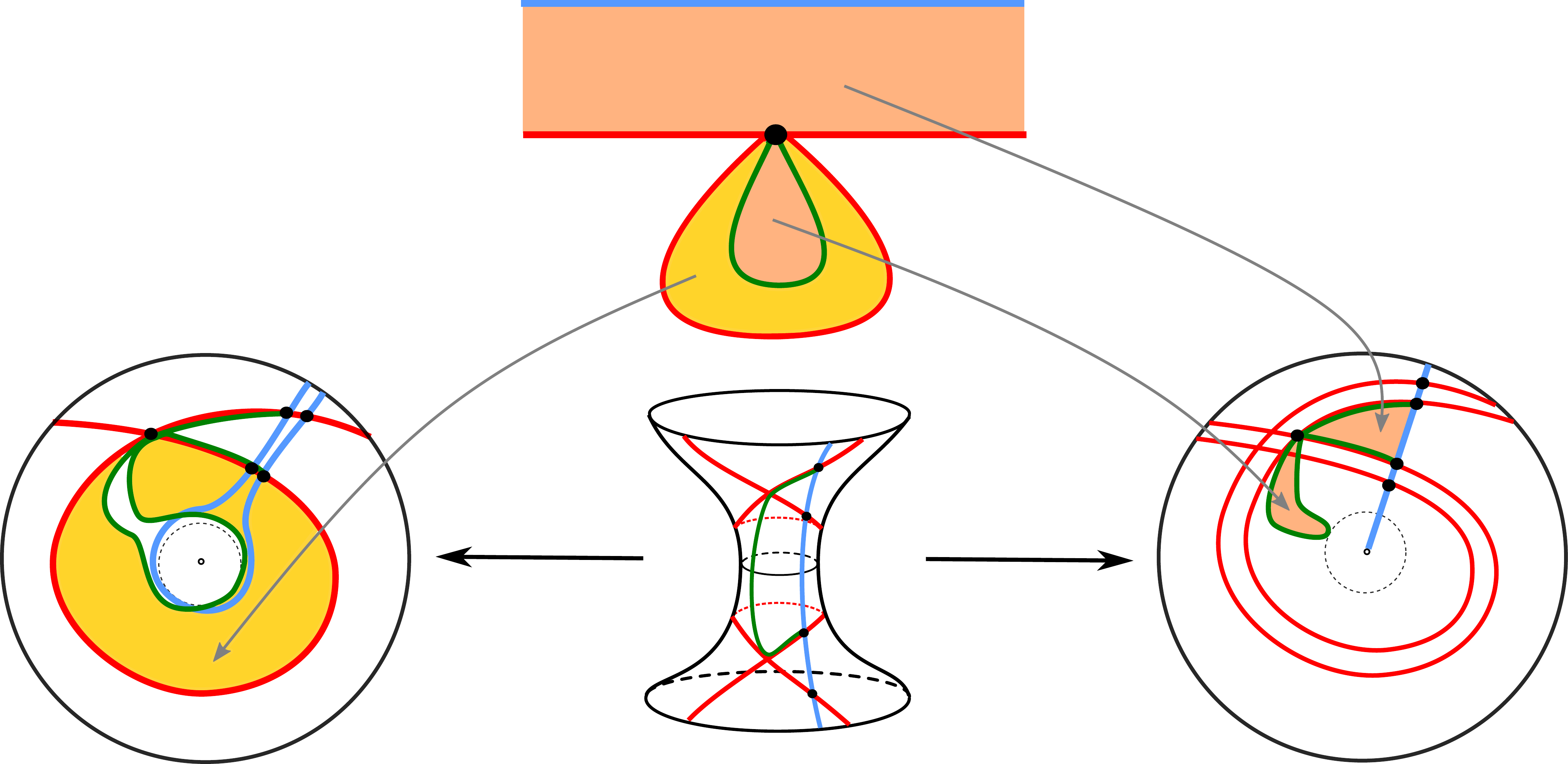}
\caption{Degeneration of the quilt into the figure eight bubble, and the triangle.\label{fig:bubble} }
\end{subfigure}
\caption{}\label{fig:b}
\end{figure} 

In the absence of bounding cochains, the differential in $\CF(A, \NAT_s,B)$ is defined by counting  $J$-holomorphic quilts of the form illustrated on the top of Figure~\ref{fig:quilt}. The labels $B,~\NAT_s,~A$ on the domain indicate the boundary conditions, and the labels $P^\rin,~P^\rout$ indicate the target spaces of the strips. Illustrated on the bottom of Figure~\ref{fig:quilt} are the images of two strips (left and right), together with the image of the seam inside $\NAT_s$  (in green, the seam is first mapped to $\NAT_s$, and by Theorem~\ref{thmA}, to both surfaces $P^0, P^1$  by $v_\delta$)\footnote{We remind the reader that the seam is allowed to vary in a 2-dimensional immersed Lagrangian of $(P^0)^-\times P^1$, as opposed to the other boundaries that are mapped to 1-dimensional Lagrangians.} , resulting in  an {\em immersed smooth quilt}  joining $y_+ \to x_-$.  

 A more careful study is required to prove that there are no further non-zero differentials.  We conjecture that this is the only one, which implies that the quilted Floer complex $\CF(R_\pi(T_1),\NAT_s, R(T_2))$ is chain isomorphic to $\CF((R_\pi(T_1), R^\nat(T_2))$.

We now outline a possible scenario of how a figure eight bubble might form in the strip-shrinking process, leading to the bounding cochain $b=8(0,0)\in \CF(A^\nat,A^\nat)$. 
The strip-shrinking argument with which Wehrheim-Woodward establish their embedded composition theorem proceeds, in our situation, as follows. 
One deforms the quilted strips (see the top of Figure~\ref{fig:quilt})
by letting the width $\epsilon_1$ (resp. $\epsilon_2$) tend to zero to obtain the differential of  $\CF(A^\nat,B)$ (resp. $\CF(A,B^\nat)$).  In the strip-shrinking process, a priori, certain degenerations called figure eight bubbles can arise, obstructing $\partial^2=0$ in the Floer complex for the composition.  This is the point investigated in analytic detail in \cite{BW}.

In our case (see Figure~\ref{fig:quilt}), in the limit $\epsilon_1 \to 0$ the seam limits to $B$, and so the indicated quilt limits to the bigon $y_+ \to x_-$ in $\CF(A,B^\nat)$ (see the right side of Figure~\ref{bigon1fig}).

 In Figures~\ref{fig:before_bubble}  (slightly before bubbling occurs)  and~\ref{fig:bubble} below, we sketch the bubbling process 
for the limit $\epsilon_2 \to 0$. The quilt  appears to tend to a triangle in $P^{\rout}$, with a figure eight bubble attached to it,  outputting the desired bounding cochain $b$ (the same as in the middle of Figure~\ref{bigon1fig}).

\medskip

There are serious technical obstacles to making the above discussion rigorous, in particular determining precisely the image of the seam,  not to mention that there may be quilts or polygons which we have overlooked.

\section{Future directions}
\label{sec:future_directions}
 
\subsection{New understanding}
 
In the light of discussion in the previous section, we propose a modification of pillowcase homology   and its relation to instanton homology.
 
\begin{conjectureABC} \label{conj:bounding_cochains}  \hspace{1cm} \\
There exists an assignment that associates to every 2-tangle $T$ and its holonomy perturbed traceless character variety $R_\pi(T) \looparrowright P^*$ a bounding cochain $b\in \CF(R_\pi(T),R_\pi(T))$, satisfying
\begin{itemize}
\item $(R_\pi(T),b)$ is a well-defined tangle invariant, as an object of the wrapped Fukaya category $\wrFuk(P^*)$;
\item This assignment of bounding cochains extends to tangles modified by the earring, which results in a tangle invariant $(R^\nat_\pi(T),b)$;
\item Given a 2-tangle decomposition $(S^3,\link)=(D^3,T_1) \cup_{(S^2,4)} (D^3,T_2)$, the corresponding Lagrangian Floer homology recovers reduced singular instanton homology:
$$\HF\big((R_\pi(T_1),b_1),(R^\nat_\pi(T_2), b_2)\big)\cong I^\nat(\link).$$ 
\end{itemize}
\end{conjectureABC}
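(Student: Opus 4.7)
The plan is to upgrade the Floer-field assignment $T \mapsto R_\pi(T)$ to a functor into the curved $A_\infty$-category of immersed Lagrangians in $P^*$ equipped with bounding cochains, and then verify that this functor is the Atiyah-Floer counterpart of Kronheimer-Mrowka's $I^\nat$. The construction should proceed by induction along a handle-like decomposition of $T$ into elementary 2-tangles (cups, caps, crossings, earrings) glued along 4-punctured spheres. Each elementary piece admits a small holonomy perturbation making its traceless character variety an embedded Lagrangian arc (see~\cite{HHK1}), for which the zero bounding cochain suffices.

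First, for a general 2-tangle $T$, the bounding cochain should be defined inductively. Given a decomposition $T = T_1 \cup T_2$ along a 4-punctured sphere and bounding cochains $b_i$ for the pieces, one takes as bounding cochain for $T$ the output of the Bottman-Wehrheim figure eight bubbling operation $8(b_1,b_2)$ of~\cite{BW}, which arises as the limit of strip-shrinking degenerations of quilted strips. The analysis of $K_{4,5}$ in Section~\ref{sec:fig8bubble} furnishes the model calculation: the bubble produces precisely the bigon-localized bounding cochain of Figure~\ref{bigon1fig} that revives the missing differential $y_+ \to x_-$. Proving that $8(b_1,b_2)$ satisfies the Maurer-Cartan equation~\eqref{eq:MC} for the geometric composition $R_\pi(T_1)\circ R_\pi(T_2)$ will require a careful enumeration of the codimension-one boundary strata of the relevant moduli spaces of quilts with two vanishing seam widths, in the spirit of the $A_\infty$-relations for $\Fuk$.

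Second, invariance of $(R_\pi(T), b)$ as an object of $\wrFuk(P^*)$ under the choice of decomposition can be approached via a Cerf-theoretic argument: any two decompositions of $T$ are related by a finite sequence of elementary moves, each of which should induce a quasi-isomorphism between the associated bounding-cochain-equipped objects. The extension to the earring-decorated case follows the same recipe: one first constructs a bounding cochain $b^{\NAT_s}$ associated to the immersed genus-three Lagrangian of Theorem~\ref{thmA} by analyzing its self-intersections (whose local model near each corner is controlled by the Dehn twists in the model map $\modelMap{\delta}$), and then sets $(R^\nat_\pi(T), b^\nat) \coloneqq (R_\pi(T) \circ \NAT_s,\, 8(b, b^{\NAT_s}))$.

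Third, the gluing isomorphism with $I^\nat(\link)$ should be established by combining the previous ingredients with a neck-stretching argument along the decomposing Conway sphere. The generators of the deformed Floer complex $\CF((R_\pi(T_1),b_1),(R^\nat_\pi(T_2),b_2))$ and of Kronheimer-Mrowka's instanton complex are already in bijection via the identification of perturbed flat moduli spaces with perturbed traceless character varieties (Section~\ref{subsubsec:perturbations_prelims}), so the content lies in matching the differentials. The main obstacle, and the source of the deepest difficulties, is precisely this last step: establishing the analytic framework for figure eight bubbling in the orbifold target $P^*$, verifying Gromov-compactness and transversality for moduli spaces of pseudo-holomorphic quilts with boundary on genuinely immersed Lagrangians like the genus-three surface of Theorem~\ref{thmA}, and matching the counts of the resulting triangles and higher $\mu^k(b,\dots,b,-)$ contributions in the deformed differential~\eqref{eq:deformed_differential} with Kronheimer-Mrowka's ASD instanton count. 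The equivariant neck-stretching approach being developed by Daemi and Fukaya~\cite{Daemi_Fukaya} should provide the bridge between the gauge-theoretic and symplectic pictures, but the combination of orbifold points in the target and immersed (rather than embedded) Lagrangian boundary conditions introduces genuinely new analytic difficulties that must be confronted head-on.
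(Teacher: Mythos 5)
The statement you are attempting to establish is labeled Conjecture~\ref{conj:bounding_cochains} in Section~\ref{sec:future_directions}: the paper does not prove it, and indeed explicitly presents it as a conjecture motivated by the calculation of $\NAT_s$. What you have written is not a proof either; it is a research programme, and one that closely parallels the paper's own speculative outline in Sections~\ref{sec:PillHom_n_quilts}--\ref{sec:future_directions} and Appendix~\ref{Appendix}: build the bounding cochain inductively along an elementary-tangle decomposition by iterating the Bottman--Wehrheim figure eight pairing $8(-,-)$, establish independence of the decomposition via Cerf moves in the Wehrheim--Woodward Floer field theory framework, and identify the deformed Floer differential with the ASD instanton count by a neck-stretching/Atiyah--Floer argument. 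In that sense your proposal is faithful to the paper's intended approach, but you should not present it as a proof of the conjecture.

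The gaps you yourself flag are exactly why the paper leaves this as a conjecture, and each of them is currently load-bearing rather than technical. First, the inductive definition $b \coloneqq 8(b_1,b_2)$ presupposes Conjecture~\ref{conj:BW_immersed}, i.e.\ the Bottman--Wehrheim immersed composition theorem including the Maurer--Cartan property of $8(b_1,b_2)$, which is not yet established (the paper cites~\cite{BW} for the proposal and~\cite{Fuk_Ymaps} for an implicit variant); without it your construction does not produce a well-defined object of $\wrFuk(P^*)$. Second, Cerf-theoretic invariance for generalized Lagrangian correspondences in the sense of~\cite{FF_coprime,FF_tangles} is only proved for embedded correspondences, and the paper's Section~\ref{sec:fft} notes that the relevant $R(F,k)$ and $R(Y_i,T_i)$ are singular rather than smooth symplectic manifolds, so the framework you invoke for invariance does not yet exist in this setting. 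Third, the identification with $I^\nat(\link)$ is a form of the Atiyah--Floer conjecture for singular instanton homology, which remains open (the paper points to the work in progress of Daemi--Fukaya~\cite{Daemi_Fukaya}); your proposed bijection of generators is established (via Section~\ref{subsubsec:perturbations_prelims} and~\cite{HHK1,HHK2}), but matching the differentials, including the higher $\mu^k(b,\dots,b,-)$ contributions, is precisely the open content of the conjecture. In short, the proposal restates the conjecture's programme faithfully but defers every essential step to results that are not yet available, so it cannot be regarded as a proof.
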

 
In the previous section we   provided evidence   that adding an earring to a tangle  can require  a non-zero bounding cochain $b$.   We shall indicate in the next section why we expect that nontrivial bounding cochains are needed for a general tangle, whether or not it contains an earring. Furthermore, we outline a possible way of constructing the desired bounding cochain $b\in \CF(R_\pi(T),R_\pi(T))$.

\subsection{Approach via Floer Field theory}
\label{sec:fft}

\begin{figure}[ht]
\begin{tikzcd}[row sep=0.7]
 & \text{\bf{Floer field theory}}&
  \\
 \boxed{\substack{(F,k) \\ \text{Punctured surface}}} \arrow[r]  
 & \boxed{\substack{R(F,k) \\\text{Symplectic manifold} }} &\\
  \boxed{\substack{(F_1,k_1)\xrightarrow{(Y,T)}(F_2,k_1) \\ \text{Tangle cobordism}} }\arrow[r]  &
  \boxed{\substack{ R(F_1,k_1)\xrightarrow{\underline{L}(Y,T)}R(F_2,k_1) \\\text{Generalized Lagrangian correspondence}}} &\\
   \boxed{\substack{\text{Tangle }(Y,T) \\ \partial (Y,T)= (F,k)}} \arrow[r]  &
  \boxed{\substack{ 1 \text{pt} \xrightarrow{\underline{L}(Y,T)}R(F,k) \\\text{Generalized Lagrangian}} } &\\
  \boxed{\substack{(Y,\link) \\ \text{Link in a 3-manifold}} }\arrow[r]  &
  \boxed{\substack{ 1 \text{pt} \xrightarrow{\underline{L}(Y,\link)} 1 \text{pt} \\\text{Generalized Lagrangian correspondence}} } \arrow[r]& \boxed{\substack{ H_* (Y,\link)= \HF(\underline{L}) \\ \text{Homology theory}} }
 \end{tikzcd}
 \caption{}
 \label{fig:fft}
\end{figure}

In~\cite{FF_coprime,FF_tangles,Wehr_exp}, Wehrheim and Woodward developed \emph{Floer Field theory}, a systematic framework to define invariants of 3-manifolds and links inside them. Given a Cerf decomposition of a 3-manifold along a union of surfaces, the main idea is to construct a generalized Lagrangian correspondence from the point to itself, and then take its quilted Floer homology. Their embedded composition theorem~\eqref{embeddedcomp}, in the context of local Cerf moves, gives a method   to establish that the resulting Floer homology is independent of   the Cerf decomposition. As we shall see in (\ref{more_bounding_cochains}), combined with a conjecture of Bottman and Wehrheim, this should also provide us with a systematic procedure to produce the bounding cochain needed to correct the definition of pillowcase homology.

In more detail, as illustrated in Figure~\ref{fig:fft}, a Floer field theory is a functor from the tangle cobordism category  to the \emph{symplectic category} $\Symp$\footnote{This is Wehrheim-Woodward's version~\cite{FF_coprime} of Weinstein's symplectic category~\cite{Weinstein}.}, where objects are symplectic manifolds (with any extra set of assumptions that makes Floer homology well-defined), and morphisms are generalized Lagrangian correspondences up to a certain equivalence. The latter equivalence identifies those Lagrangian correspondences that are related by Equation~\eqref{eq:gen_corr_comp}. Composition of morphisms in the symplectic category is defined via concatenating generalized Lagrangian correspondences.

The relevant Floer Field theory for the present article would be a functor that assigns the traceless character variety $R(F, k)$ to a surface with $k$ punctures, and to a tangle cobordism $(F_1,k_1)\xrightarrow{(Y,T)}(F_2,k_1)$ the generalized Lagrangian correspondence 
\[
\underline{L}(Y,T)=(R(Y_1,T_1), \cdots, R(Y_\ell,T_\ell)),
\]
where $(Y,T) = (Y_1,Y_1) \cup \cdots \cup (Y_\ell,T_\ell)$ is a decomposition along punctured surfaces.

Wehrheim  and Woodward construct such a  Floer field theory in \cite{FF_tangles} for a restricted class of tangles that are such that in a Cerf decomposition, all the moduli spaces associated with the splitting surfaces are smooth, and all the ones associated with the elementary pieces are smooth and  {\em embedded} inside the moduli spaces of their boundary.  This condition holds  for elementary tangles, as illustrated in Figure~\ref{fig:elem_tangle}.   A decomposition into elementary tangles is easy to obtain via a bridge presentation of the tangle. 
To show that this procedure induces a Floer field theory, they show that the morphism in $\Symp$ associated with $(Y,T)$ is independent of the decomposition into elementary tangles, i.e. that the generalized Lagrangian correspondences associated to two different decompositions are equivalent, i.e., related by a sequence of compositions and decompositions (Equation~\eqref{eq:gen_corr_comp}). This is  done via Cerf theory by checking a few elementary \emph{Cerf moves}, see~\cite[Theorem~2.2.13, Sections~3.3 and~3.4]{FF_coprime}. Together with the embedded composition theorem~\eqref{embeddedcomp}, they then show that the groups $\HF(\underline{L}(Y,T))$ are topological invariants.

However, in our setting,  neither the   $R(F, 2k)$'s nor the  $R(Y_i,T_i)$'s are smooth manifolds, due to the even number of punctures. Nevertheless, it may be possible to develop Floer theory in $R(S^2, 2k)$'s, via equivariant techniques, or by only considering the smooth stratum (see~\cite{HK} for the description of these spaces in general, and~\cite{Paul} for a detailed study of $R(S^2,6)$). The Floer field theory framework would result in a well-defined generalized Lagrangian invariant inside the pillowcase 
$$\text{1pt}\xrightarrow{\underline{L}(D^3,T)} R(S^2,4)=P.$$

 We expect that  applying the Bottman-Wehrheim  framework (Conjecture~\ref{conj:BW_immersed}) gives   a well-defined Lagrangian with a bounding cochain:
\begin{equation}\label{more_bounding_cochains}\underline{L}(D^3,T)=(R(Y_1,T_1), \cdots, R(Y_\ell,T_\ell)) \qquad \xmapsto{\text{Composing all } R(Y_i,T_i)\text{'s}} \qquad (R_\pi(T),b).
\end{equation} 
Indeed, since the tangles $(Y_i,T_i)$ are elementary, one can reasonably hope that their associated Lagrangian correspondences $R(Y_i,T_i)$, after  suitable holonomy perturbations, should be smoothly embedded in the smooth loci of the $R(S^2, 2k)$'s, and equipped with the trivial bounding cochains $b_i = 0$. Compositions of consecutive correspondences, following Bottman-Wehrheim's conjecture, should then be equipped with bounding cochains $ 8(b_i, b_{i+1})$, which might now be nontrivial. Composing them all, we should then get $R_\pi(T)$, equipped with for example

$$
b = 8( \ldots 8(8(b_1, b_2), b_3), \ldots , b_l ) ,
$$
or any other way of bracketing the $8(-,-)$'s.  
 
\begin{figure}[ht!]
  \centering
  \begin{subfigure}{0.49\textwidth}
    \centering
    \includegraphics[width=1\textwidth]{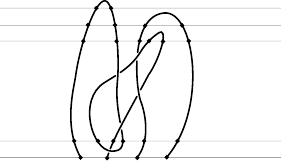} 
    \caption{Decomposition of a 2-tangle $(D^3,T)$ into elementary tangles.}\label{fig:hack}
  \end{subfigure}
  \begin{subfigure}{0.49\textwidth}
    \centering
    \includegraphics[width=0.6\textwidth]{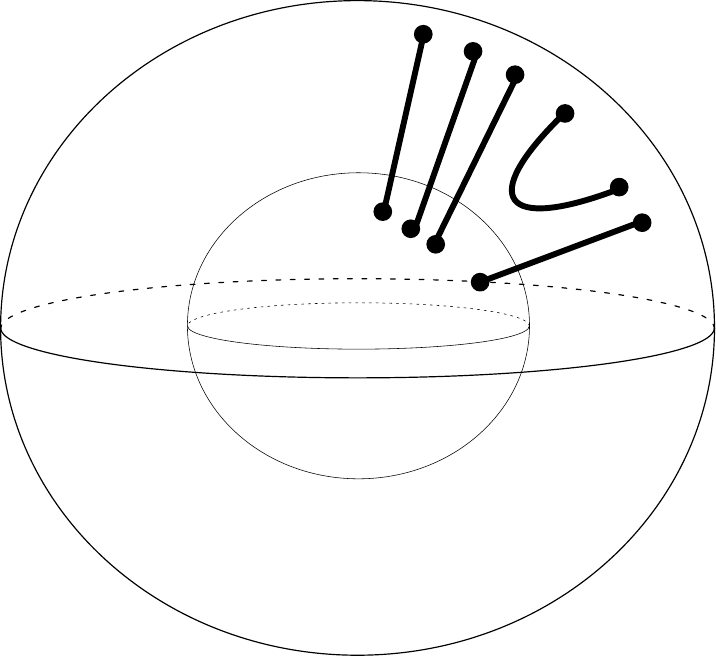}
    \caption{An example of an elementary tangle.}\label{fig:elem_tangle} 
  \end{subfigure}
  \caption{}
\end{figure}

\subsection{Approach via Bordered theory}
\emph{Bordered theory} is another effective cut-and-paste technique for studying homological invariants of links. It is a functor from the tangle cobordism category to the category of \emph{$A_\infty$ algebras} and \emph{$A_\infty$ bimodules} over them. These $A_\infty$ algebraic structures are generalizations of regular algebras and bimodules, and we refer the reader to \cite[Section~2]{LOT-bim} for the definitions and properties. Algebras are considered up to quasi-isomorphism, and bimodules up to homotopy equivalence. Composition of bimodules is defined via derived tensor product. On the left of Figure~\ref{fig:bordered+fft} we list the algebraic objects associated to the basic building blocks of tangle cobordisms category. 

The means by which the bordered theory is constructed can be different.  The first bordered theory was discovered by Khovanov~\cite{Khovanov2} in the context of Khovanov homology, and his construction is carried out in the context of algebra and combinatorics of tangle diagrams. The work of Lipshitz, Ozsv\'ath and Thurston on Heegaard Floer homology of 3-manifolds with boundary~\cite{LOT-main,LOT-bim} established the first symplectic geometric bordered theory, in which the $A_\infty$  aspects played a crucial role. They also invented the term ``bordered theory,''  the etymology of which comes from the fact that each boundary component of a 3-dimensional cobordism should be \emph{bordered}, that is, parameterized up to isotopy.  

\medskip

To an  $A_\infty$ category $\cA$ with objects $O_i$, one can define the  $A_\infty$ algebra $\oplus_{i,j}\Hom(O_i,O_j)$.  In favorable circumstances one is able to pick a small set of objects $G_\ell$ in $\cA$ such that the corresponding  subcategory $\cB$ generates $\cA$ in an appropriate sense, (roughly, $\Tw \cB\cong \Tw \cA$). The advantage is that this results in a   more tractable $A_\infty$ algebra $\oplus_{\ell,m}\Hom(G_\ell,G_m)$.

Figure~\ref{fig:bordered+fft} illustrates that given a Floer field theory, there is a way to construct a corresponding bordered theory. Namely, to a punctured surface $(F,k),$
the  $A_\infty$ category we associate is the Fukaya category of (the smooth top stratum of) $R(F,k)$.

On the level of morphisms, the fact that generalized Lagrangian correspondences induce $A_\infty$ homomorphisms (which, in turn, can be viewed as $A_\infty$ bimodules~\cite[Definition~2.2.48]{LOT-bim}) is a topic of active research~\cite{Functor_MWW,BW,Fuk_Ymaps,Gao}. This bridge between bordered theory and Floer field theory was partially realized for bordered Heegaard Floer homology~\cite{Auroux1,Auroux2,Perutz_LMI,Perutz_LMII,LP}, and realized for Khovanov bordered theory~\cite{Seidel-Smith,Abo_Smith_0,Abo_Smith}.

\begin{figure}[ht!]
\begin{tikzcd}
 \text{\bf{Bordered $A_\infty$ theory}} & & \text{\bf{Floer field theory}}
  \\
 \boxed{\substack{ \mathcal A(F,k) \\ A_\infty \text{ algebra} }}& \boxed{\substack{(F,k) \\ \text{Punctured surface}}} \arrow[r]  \arrow[l]  
 & \boxed{\substack{R(F,k) \\\text{Symplectic manifold} }} \arrow[ll,bend right=13,"\text{The Fukaya category}" above]\\
 \boxed{\substack{ _{\mathcal A(F_1,k_1)}{\mathcal X(Y,T)}_{\mathcal A(F_2,k_1)} \\ A_\infty \text{ bimodule} } }& \boxed{\substack{(F_1,k_1)\xrightarrow{(Y,T)}(F_2,k_1) \\ \text{Tangle cobordism}} }\arrow[r]  \arrow[l]  &
  \boxed{\substack{ R(F_1,k_1)\xrightarrow{\underline{L}(Y,T)}R(F_2,k_1) \\\text{Generalized Lagrangian correspondence}}} \arrow[ll,bend right=10, dashed]\\
  \boxed{\substack{ \mathcal X(Y,T)_{\mathcal A(F,k)} \\ A_\infty \text{ module over } \mathcal A(F,k) } }& \boxed{\substack{\text{Tangle }(Y,T) \\ \partial (Y,T)= (F,k)}} \arrow[r]  \arrow[l]  &
  \boxed{\substack{ 1 \text{pt} \xrightarrow{\underline{L}(Y,T)}R(F,k) \\\text{Generalized Lagrangian}} }\arrow[ll,bend left=10, dashed] \\
  \boxed{\substack{ H_* (Y,\link) \\ \text{Homology theory}} }& \boxed{\substack{(Y,\link) \\ \text{Link in a 3-manifold}} }\arrow[r]  \arrow[l]  &
  \boxed{\substack{ 1 \text{pt} \xrightarrow{\underline{L}(Y,\link)} 1 \text{pt} \\\text{Generalized Lagrangian correspondence}} }\arrow[ll,bend left=13,"\text{quilted Floer homology}" below]
 \end{tikzcd}
 \caption{}
 \label{fig:bordered+fft}
\end{figure}

\medskip 

Constructing a \emph{gauge theoretic} bordered theory for reduced singular instanton homology $I^\nat(\link)$ is an open problem. As sutured versions are being developed~\cite{KM_sut,Street,XZ}, upgrading them to the full bordered theory is conceivable, at least in the case of 2-tangles $(D^3,T)$. The resulting hypothetical $A_\infty$ module invariant $\mathcal X(D^3,T)$ would be an algebraic counter-part to the invariant $(R_\pi(T),b)$ from Conjecture~\ref{conj:bounding_cochains}. In the next section we explain how  $\mathcal X(D^3,T)$ may result in an immersed curve invariant of a 4-ended tangle.

\subsection{Instanton immersed curves}\label{sec:inst_curves} Given a 3-manifold $M$ with a parameterized boundary $F$, bordered Heegaard Floer theory outputs not only an $A_\infty$ module $\CFAhat(M)_{\mathcal A(F)}$, but also a \emph{type D structure}~$\CFDhat(M)^{\mathcal A(F)}$. There is a  duality between these two algebraic structures, making them equivalent, see~\cite[Proposition~2.3.18]{LOT-bim}. Furthermore, type D structures are equivalent to \emph{twisted complexes}~\cite{Bondal_Kapranov,Kontsevich_HMS}, if $\mathcal A(F)$ is viewed as an $A_\infty$ category and twisted complexes are not required to be bounded; see \cite[Proposition~2.13]{KWZ}. The unbounded twisted complexes, in turn, are formally equivalent to Lagrangians with bounding cochains. 

In what follows we   use the terms {\em type D structure}, {\em Lagrangian with a bounding cochain}, and {\em twisted complex} interchangeably.   These all  refer to the following algebraic structure: a collection of objects $L=\oplus L_i$ of an $A_\infty$ category (equivalently, idempotents of an $A_\infty$ algebra), together with a collection of morphisms $b_{i,j} \in \CF(L_i,L_j)$ (equivalently, elements of an $A_\infty$ algebra with left and right idempotents $i$ and $j$), such that $\sum b_{i,j}=b \in \CF(L,L)$ satisfies the Maurer-Cartan equation~\eqref{eq:MC}. We  stick to the term ``twisted complex'' for the majority of discussion, and use the superscripts to indicate the algebra, as in $\CFDhat(M)^{\mathcal A(F)}$.  See Appendix \ref{Appendix} for some discussion.

\medskip

In  the torus boundary case, Hanselman, Rasmussen and Watson interpreted the bordered Heegaard Floer type D structure invariant as an immersed curve $\HFhat(M)$ in a once-punctured torus~\cite{HRW}:
$$(M, \partial M=T^2) \quad \mapsto \quad \CFDhat(M)^{\mathcal A(T^2)}  \quad \mapsto \quad \HFhat(M) \looparrowright T^2\setminus 1\pt$$
Immersed curves in this context may contain several components, and each component comes with a local system.
By appropriately identifying the  torus $T^2 $ with the boundary $\partial M$, they show that the immersed curve $\HFhat(M)$ does not depend on the parameterization of $T=\partial M$. Thus $\HFhat(M)$ fits into the Floer field theory philosophy, that is, the invariant of $M$ should be an object of the Fukaya category of the moduli space associated to $\partial M$, and in this case the Heegaard Floer moduli space is equal to $\mathit{Sym}^1(T^2\setminus 1\pt)=T^2\setminus 1\pt$. Another key result, which also supports the Floer-field-theoretic perspective, is the gluing theorem for immersed curves:
\begin{equation}\label{hf_gluing}\HFhat(M \cup_{T^2} M')=\HF(\HFhat(M),\HFhat(M'))\end{equation}

The construction of $\HFhat(M)$ relies on the following two points. The first is  that the bordered Heegaard Floer theoretic torus algebra $\mathcal A(T^2)$ is equal to the partially wrapped Fukaya category of $T^2\setminus 1\pt$. The second is a classification result, stating that any twisted complex (up to homotopy equivalence) over the Fukaya category of a surface is equivalent to a collection of immersed curves in that surface, decorated with local systems. This was first proved by Haiden-Katzarkov-Kontsevich~\cite{HKK} by representation theoretic techniques, but a more geometric proof was given in~\cite{HRW}, which provides a practical algorithm of obtaining an immersed curve $\HFhat(M)$ from the twisted complex $\CFDhat(M)^{\mathcal A(F)}$. Generalizations of this geometric approach to the fully wrapped case and fields other than characteristic 2 were obtained in~\cite{KWZ}. 
 
\medskip

In the context of 2-tangles $(D^3,T)$, similar immersed curve invariants were recently constructed, with values in the wrapped Fukaya category of the boundary 4-punctured sphere $(S^2,4)=\partial(D^3,T)$. Heegaard Floer theoretic curves, which recover knot Floer homology, were developed by Zibrowius in~\cite{pqMod}. In symbols: 
$$(D^3,T)\quad \mapsto \quad \HFT(T)\looparrowright (S^2,4), \qquad \HFKhat(T\cup_{(S^2,4)} T')=\HF(\HFT(T),\HFT(T')).$$
A Khovanov-theoretic curve invariant of 2-tangles was developed in \cite{HHHK}, now immersed in the pillowcase $\Khr(T)\looparrowright P^*$. In \cite{KWZ}, the authors expanded this invariant, by introducing two more curve invariants $\BNr(T),~\Kh(T)$, and also noting that the pillowcase can be canonically identified with the boundary $\FPS$, as long as one of the tangle ends is distinguished (the distinguished tangle end in the context of character varieties is the one containing  the basepoint). As a result, Khovanov theory produces the following immersed curve invariants of \emph{pointed} (i.e., with one tangle end distinguished) 2-tangles:
$$
(D^3,T)\quad \mapsto  \quad\BNr(T),\Khr(T),\Kh(T)\looparrowright (S^2,4)=\partial (D^3,T)$$
satisfying the gluing formulas

\begin{equation}\label{eq:kh_gluing}
\begin{split}
 \BNr(T\cup T')&=\HF(\BNr(T),\BNr(T')) \quad  \text{(reduced Bar-Natan homology~\cite{BarNatanKhT})} \\
 \Khr(T\cup T')&=\HF(\BNr(T),\Khr(T')) \quad \text{(reduced Khovanov homology) } \\
 \Kh(T\cup T')&=\HF(\BNr(T),\Kh(T')) \quad \text{(unreduced Khovanov homology) }
\end{split}
\end{equation}

\medskip

Based on computations and similarities with Khovanov theory, we envision the existence of the following three \emph{instanton} theoretic immersed curves associated to pointed 2-tangles.
 
\begin{conjectureABC}
There exists an assignment that associates to every 2-tangle $T$ three instanton theoretic immersed curve invariants  
$$
(D^3,T)\quad \mapsto  \quad I(T),I^\nat(T),I^\sharp(T)\looparrowright P^* \cong \partial(D^3,T),$$
such that given a 2-tangle decomposition $(S^3,\link)=(D^3,T) \cup_{(S^2,4)} (D^3,T')$, the instanton invariants are recovered via wrapped Lagrangian Floer homology:
\begin{align*}
 \widehat{I}(\link)&=\HF(I(T),I(T')) \quad  \text{(``minus'' version of instanton homology~\cite{Daemi_Scaduto}) } \\
 I^\nat(\link)&=\HF(I(T),I^\nat(T')) \quad \text{(reduced instanton homology) } \\
 I^\sharp(\link)&=\HF(I(T),I^\sharp(T')) \quad \text{(unreduced instanton homology) }
\end{align*}
\end{conjectureABC}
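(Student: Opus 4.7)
The plan is to model the construction on the bordered Heegaard Floer and Khovanov templates summarized in Figure~\ref{fig:bordered+fft}. For each 2-tangle $(D^3,T)$, equipped with one of three flavors of basepoint decoration (none, earring, or distinguished strand), I would first assign an $A_\infty$ module $\mathcal{X}^\bullet(T)$ over a single $A_\infty$ algebra $\mathcal{A}$ playing the role of $\mathcal{A}(S^2,4)$, and then pass from the module to a collection of immersed curves in $P^*$ with local systems (equivalently, bounding cochains) via a classification theorem in the spirit of Haiden--Katzarkov--Kontsevich~\cite{HKK} and Hanselman--Rasmussen--Watson~\cite{HRW}. By design, gluing 2-tangles along $(S^2,4)$ should correspond to tensoring modules over $\mathcal{A}$, and the three desired gluing identities would then follow from a pairing theorem identifying this tensor product with Lagrangian Floer homology in $P^*$, in direct analogy with the Khovanov-theoretic gluing identities~\eqref{eq:kh_gluing}.

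\textbf{Bordered algebra and modules.} The algebra $\mathcal{A}$ should arise as the endomorphism $A_\infty$ algebra of a small set of ``identity-like'' objects associated to the elementary tangles generating the 2-tangle cobordism category, analogous to the elementary pieces of Figure~\ref{fig:elem_tangle}. Sutured and bordered-sutured singular instanton homology~\cite{KM_sut,Street,XZ} provide the required analytic foundation on which to erect higher $A_\infty$ products. To construct $\mathcal{X}^\bullet(T)$, I would cut $(D^3,T)$ into elementary pieces along $(S^2,4)$'s, assign a bimodule to each elementary piece, and then tensor them together. The three flavors $I(T)$, $I^\nat(T)$, $I^\sharp(T)$ should correspond to three different basepoint decorations of the outer boundary, exactly as $\BNr$, $\Khr$ and $\Kh$ do in~\cite{KWZ}. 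The present paper's Theorems~\ref{thmA} and~\ref{thmB} are the key input tying these three flavors together: composition with the earring cobordism, represented by the generalized Lagrangian correspondence $\NAT_s$, should intertwine the unreduced and $\nat$ versions at the level of bordered invariants, once the figure-eight bounding cochain of Section~\ref{sec:fig8bubble} is systematically incorporated.

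\textbf{Immersed curve classification and pairing.} Once $\mathcal{A}$ is in hand, the Abouzaid/Seidel-type surface Fukaya category techniques~\cite{Abouzaid_Fuk_surface,SRS} should identify $\mathcal{A}$ with a (partially) wrapped Fukaya category of $P^*$, making it formal enough for the HKK/HRW classification to apply. Each type D structure $\mathcal{X}^\bullet(T)$ then becomes an immersed multicurve with local systems in $P^*$, producing the desired invariants $I(T)$, $I^\nat(T)$, $I^\sharp(T)$. The gluing formulas then reduce to a pairing theorem of the form $\HF(\mathcal{X}(T),\mathcal{X}^\bullet(T')) \cong \HF(I(T), I^\bullet(T'))$, which can be verified on the geometric side using the explicit combinatorial description of the Fukaya category of surfaces.

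\textbf{Main obstacles.} The dominant obstacle is the construction of a fully-fledged $A_\infty$-bordered singular instanton theory: only sutured and partial versions currently exist, and establishing well-defined higher products with the expected $A_\infty$ relations is a substantial analytic project in its own right. A second serious difficulty is the singular nature of $R(S^2,4)=P$ at the four corners: any identification of $\mathcal{A}$ with a Fukaya-type category of the pillowcase must contend with these orbifold points, either by restricting to $P^*$ and working with the wrapped category (which then forces the $I^\sharp$ flavor to use compactly supported Lagrangians through the earring/doubling mechanism of Theorem~\ref{looptype}) or by developing an equivariant/orbifold variant. Finally, Conjecture~\ref{conj:bounding_cochains} and the figure-eight bubbling discussion of Section~\ref{sec:fig8bubble} must be made rigorous: the bounding cochains produced by strip-shrinking must be shown to satisfy the Maurer-Cartan equation~\eqref{eq:MC} and to be natural under gluing, so that the pairing theorem genuinely recovers $I^\nat(\link)$ and $I^\sharp(\link)$ rather than some uncontrolled deformation.
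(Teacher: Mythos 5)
This statement is a conjecture; the paper offers no proof of it, only a brief discussion of two heuristic strategies. Your outline follows the second of those strategies — the bordered-theory path summarized in Figure~\ref{fig:bordered+fft} and in the diagram at the end of Section~\ref{sec:inst_curves}: build a gauge-theoretic $A_\infty$ module over an algebra $\mathcal{A}$ playing the role of $\mathcal{A}(S^2,4)$, realize $\mathcal{A}$ inside $\wrFuk^\star(P^*)$ using surface Fukaya category techniques, and pass to immersed multicurves via the Haiden--Katzarkov--Kontsevich/Hanselman--Rasmussen--Watson/KWZ classification, with a pairing theorem giving the gluing formula. The paper's discussion is slightly sharper in one respect: based on the small number of tangle ends it commits to the prediction $\mathcal{A} = \cB$, the two-arc algebra of~\cite{KWZ} described explicitly in Section~\ref{sec:wrapped}, rather than an unspecified endomorphism algebra of elementary objects. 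The one genuine slip in your sketch is the parenthetical ``none, earring, or distinguished strand'' for the three flavors: the paper's intended parallel to $\BNr$, $\Khr$, $\Kh$ is that $I(T)$ comes from $(R_\pi(T),b)$, $I^\nat(T)$ from the earring modification $(R^\nat_\pi(T),b)$ (connected sum with a Hopf link), and $I^\sharp(T)$ from a hypothetical $(R^\sharp_\pi(T),b)$ built by \emph{disjoint} union with a Hopf link, while the ``pointed'' (distinguished-end) structure is common to all three and serves only to fix the identification of $P$ with $R(S^2,4)$. Your enumeration of obstacles — no existing fully-fledged $A_\infty$-bordered singular instanton theory, the orbifold corners of $P$, and the need to rigorize the figure-eight bounding cochain of Section~\ref{sec:fig8bubble} — accurately reflects the difficulties the paper itself acknowledges, and the paper additionally records the alternative path via Conjecture~\ref{conj:bounding_cochains}, in which the curves are obtained directly from perturbed traceless character varieties equipped with bounding cochains rather than from a bordered module.
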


The invariant $\widehat{I}(\link)$ above can be substituted by $I^\nat(\link,\Gamma)$~\cite{KM_deforms}, or possibly by $\underline{\mathit{KHI}}^-(\link)$~\cite{Zhenkun}. The relationship to Conjecture~\ref{conj:bounding_cochains} is as follows: under the one-to-one correspondence between immersed curves and twisted complexes~\cite{HKK}, the curve $I(T)$ should be the one associated to $(R_\pi(T),b)$, the curve $I^\nat(T)$ should be the one associated to $(R^\nat_\pi(T),b)$, and the curve $I^\sharp(T)$ should be the one associated to yet another hypothetical twisted complex $(R^\sharp_\pi(T),b)$. The latter representation variety $R^\sharp_\pi(T)$ is constructed by taking a \emph{disjoint sum} with a Hopf link, as opposed to adding an earring, which is equivalent to a \emph{connected sum} with a Hopf link.

To give a sample hypothetical computation of $\widehat{I}$ for trefoil $K_{2,3}$, we use the decomposition of the trefoil from Figure~\ref{fig:tref_dec}. Since both of the tangles are rational, perturbations are not needed, and we also guess that the bounding cochains are zero. In this case we would have $I(T)=R(T)$ and $I(T')=R(T')$, two Lagrangian arcs depicted in Figure~\ref{fig:tref_pairing}. Since we pair an arc with arc, we need to work in an appropriate setting, namely \emph{wrapped} Lagrangian Floer homology (see the next section for more details). In this setting one of the arcs should be infinitely wrapped around the punctures where it ends, resulting in the infinite number of generators for $\widehat{I}(K_{2,3})$ in Figure~\ref{fig:tref_pairing}. In terms of dimension, this picture agrees with the computation of $\widehat{I}(K_{2,3})$ given in \cite[Section~9.2.3]{Daemi_Scaduto} --- the generators $\mathbb F[x]=\langle 1,x,x^2,\ldots\rangle_{\mathbb F}$ correspond to the generators coming from the wrapping.

\begin{figure}[H]
\centering
\begin{subfigure}{0.45\textwidth}
\centering
\labellist 
\pinlabel $T$ at 150 190
\pinlabel $T'$ at 50 148
\endlabellist
\includegraphics[width=0.35\textwidth]{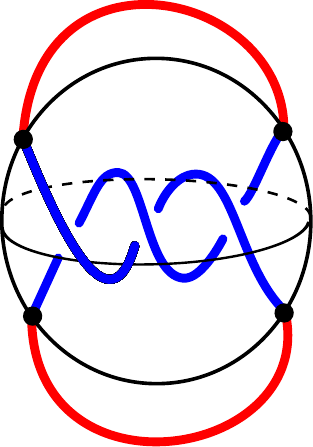}
\caption{Decomposition of the trefoil}\label{fig:tref_dec}
\end{subfigure}
\begin{subfigure}{0.45\textwidth}
\centering
\labellist 
\pinlabel $I(T)$ at 105 68
\pinlabel $I(T')$ at 10 48
\endlabellist
\includegraphics[width=0.44\textwidth]{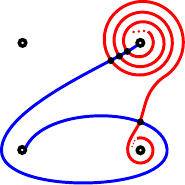}
\caption{The corresponding intersection picture computing $\widehat{I}(K_{2,3})$}\label{fig:tref_pairing}
\end{subfigure}
\caption{}\label{fig:tref}
\end{figure}

In addition to the connection of instanton curves with representation varieties and bounding cochains, another potential path towards defining $I(T)$ is via bordered theory discussed in the previous section. That is, obtaining first a twisted complex invariant $\mathcal{X}(D^3,T)$ over some algebra $\mathcal A$ via gauge theory, realizing $\mathcal A$ as a subcategory of $\wrFuk(S^2,4)$, and then getting a curve $I(T)$ by invoking the one-to-one correspondence. Due to the low number of tangle ends (four), we predict that the algebra $\mathcal A$ is the same as in the Khovanov case, i.e., $\mathcal A=\B$ the algebra from \cite{KWZ}; see Section~\ref{sec:wrapped} for more on this algebra. 

We end this section with a diagram summarizing the potential 2-tangle invariants. The dashed arrows are the ones not yet constructed.
\[
\begin{tikzcd}[row sep =1cm, column sep=0.8cm]
 \boxed{\substack{(D^3,T) \\ \text{Pointed 2-tangle}}} \arrow[rr,dashed,"\text{Floer field theory}" above] \arrow[d,dashed,"\text{Bordered theory}" right]  && \boxed{\substack{1\pt\xrightarrow{\underline{L}(T)} P^* \\ \text{Generalized Lagrangian} }} \arrow[d,dashed,"\text{Conjecture~\ref{conj:BW_immersed} (\cite{BW})}\\" left]\\
\boxed{\substack{ \X(T)^{\mathcal A} \\ \text{Type D structure} \\ \text{(Twisted complex)}}} \arrow[rd, "\text{\cite{HKK,KWZ}}" left]  && \boxed{\substack{ (L,b) \\ \text{Lagrangian with a bounding cochain}\\\text{(Twisted complex)} }} \arrow[ld, "\text{\cite{HKK,KWZ}}" right] \arrow[ll,no head,dashed,"\text{as objects in }\wrFuk(S^2\setminus 4)"below, "\simeq" above]  \\
&\boxed{\substack{I(T) \\ \text{Instanton immersed curve}}}&
 \end{tikzcd}
 \]

\subsection{The wrapped Fukaya category of the pillowcase and traceless character varieties of 2-tangles}\label{sec:wrapped}
We now tie together the general discussion above with the main result of the paper. Namely, we expand on the fact that, according to the Floer field philosophy discussed in Section~\ref{sec:fft}, the Lagrangian correspondence $\NAT_s$ is expected to induce an $A_\infty$  endofunctor on the Fukaya category of the pillowcase.

The choice of which Fukaya category to work in is not uniquely determined.  
For the character varieties of 2-tangles in a ball, one possible approach  is to leverage the fact that both loop-type and arc-type Lagrangians lift to smooth closed immersed equivariant curves in the 2-fold branched cover $T\to T/\iota=P$ given by the elliptic involution. This would suggest working in the $\ZZ/2$ equivariant Fukaya category associated to $T$.  Such a theory would typically be equipped with a $H^*(B\ZZ/2)=H^*(\mathbb R P^\infty)=\FF[x]$ action.
However, carefully developing the $\ZZ/2$ equivariant $A_\infty$ structures  is a challenge, and associating $A_\infty$ functors to Lagrangian correspondences would be even more difficult.

In the interest of working in a better understood context, we propose that the wrapped Fukaya category $\wrFuk(P^*)$ of the pillowcase (or, more precisely, the specific $A_\infty$ subcategory $\wrFuk^\star(P^*)$ defined below) provides a robust substitute for the $\ZZ/2$-equivariant Lagrangian Floer homology in the torus. 

This choice is motivated, among other reasons, by the fact that two arc-type Lagrangians $L_1,~L_2$ meeting at a corner of the pillowcase contribute a tower of generators $\{1,x,x^2,\ldots\}=\FF[x]$ in both the wrapped theory of $P^*$ and  $\ZZ/2$-equivariant theory of $T\to P$. A second important motivation for this choice comes from the calculation in Theorems~\ref{thmA} and \ref{thmB},    which identifies a Dehn twist near each corner in the action induced by the correspondence $\NAT_s$.     A last, but 
critical reason to work in the wrapped setting comes from the consequences of the theorem of Haiden-Katzarkov-Kontsevich, which  provides an identification of homotopy classes of twisted complexes over the wrapped Fukaya category of a punctured surface (such as the pillowcase) with immersed curves, as we next explain.

\begin{figure}[ht]
\centering
\begin{subfigure}{0.45\textwidth}
\centering
\labellist 
\pinlabel $S_1$ at 56 29
\pinlabel $S_2$ at 83 0
\pinlabel $\arcVer$ at 75 40
\pinlabel $\arcHor$ at 40 17
\pinlabel $D_1$ at -6 15
\pinlabel $D_2$ at 87 65
\endlabellist
\includegraphics[width=0.44\textwidth]{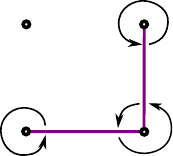}
\caption{}\label{fig:wrap_subcat}
\end{subfigure}
\begin{subfigure}{0.45\textwidth}
\centering
\labellist 
\pinlabel $\text{\tiny$D_2$}$ at 84 64
\pinlabel $\text{\tiny$S_1 S_2 S_1 S_2 S_1 S_2 S_1$}$ at -8 58  
\pinlabel $\text{\tiny$S_1 S_2 S_1 S_2 S_1$}$ at -1 48
\pinlabel $\text{\tiny$S_1 S_2 S_1 $}$ at 7 39
\pinlabel $\text{\tiny$S_1 $}$ at 15 31
\pinlabel $\text{\tiny$\id^\circ$}$ at 85 56
\pinlabel $\text{\tiny$S_1S_2$}$ at 87 44
\pinlabel $\text{\tiny$S_1S_2S_1S_2$}$ at 95 36
\pinlabel $\text{\tiny$S_1S_2S_1S_2S_1S_2$}$ at 103 28
\endlabellist
\includegraphics[width=0.44\textwidth]{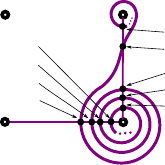}
\caption{}\label{fig:Fuk_pairing}
\end{subfigure}
\caption{}\label{fig:big_wrFuk}
\end{figure}

In Section~\ref{ssec:fukayacat} of the appendix we briefly recall the construction of the Fukaya category, and in particular, the wrapped Fukaya category of the pillowcase $\wrFuk(P^*)$. In what follows it is useful to restrict   to a particular full subcategory. 
  \begin{proposition}\label{missTLC}
Given any 2-tangle $T$ in a ($\ZZ/2$-homology) 3-ball and an appropriate holonomy perturbation $\pi$, the immersed curves  $R_\pi(T)$ and $R^\nat_\pi(T)$  miss the top left corner of the pillowcase, that is, the point with pillowcase coordinates $[\gamma,\theta]=[0,\pi]$.  
\end{proposition}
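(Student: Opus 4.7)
The plan is to show directly that no reducible (abelian) traceless representation of $\pi_1(D\setminus T)$ can restrict to the corner $[\gamma,\theta]=[0,\pi]$, and then apply Theorem~\ref{thm:immersed_curve_invariant}: the endpoints of the arc-type components of $R_\pi(T)$ are exactly the reducibles (which sit at corners of $P$) and the loop-type components miss all corners by definition. Ruling out reducibles at $[0,\pi]$ therefore rules out $[0,\pi]$ from $R_\pi(T)$ entirely. The statement for $R^\nat_\pi(T)$ is then automatic from Theorem~\ref{nogotheorem}: since the correspondence associated to the earring tangle lies in $P^*\times P^*$, the composition $R^\nat_\pi(T)=\NAT_s\circ R_\pi(T)$ lies in $P^*$ and misses every corner.

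The core computation is a short ``sign calculus'' on the four boundary meridians. By Lemma~\ref{pillowcase} the corner $[0,\pi]$ is the unique conjugacy class with $\rho(a)=\rho(b)=\bbi$ and $\rho(e)=\rho(f)=-\bbi$, i.e., the sign pattern $(+,+,-,-)$. The pillowcase relation $ba=ef$, equivalently $baf^{-1}e^{-1}=1$, forces the boundary meridians $a,b$ to be oriented with opposite orientation to $e,f$ on the boundary $2$-sphere; with these orientations fixed, a direct ``push the boundary meridian along the arc'' computation in the tangle complement yields, for each arc of $T$ joining boundary punctures $P_i$ and $P_j$, the $H_1(D\setminus T;\ZZ)$ relation
\[
[i]=[j]\text{ if exactly one of }P_i,P_j\in\{a,b\},\qquad [i]=-[j]\text{ if both are in }\{a,b\}\text{ or both in }\{e,f\}.
\]
These relations are insensitive to crossings, twists, and closed link components in $T$, since such features correspond to conjugations in $\pi_1(D\setminus T)$ which vanish in $H_1$; the $\ZZ/2$-homology ball hypothesis on $D$ is used only to guarantee that no extra $H_1$-classes arise beyond the meridians of the components of $T$. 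The holonomy perturbation $\pi$ is also invisible at this level: for an abelian $\rho$, the condition $\rho(\pertCurveInPreliminaries)=e^{s\Ima\rho(\lambda_\pertCurveInPreliminaries)}$ holds automatically in $U(1)$ once $\rho$ on $\lambda_\pertCurveInPreliminaries$ is fixed, so $\pi$ does not cut down the set of reducibles.

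The proof then reduces to a three-line case analysis on how the two arcs of $T$ pair up the four boundary punctures. For the pairing $(a,f),(b,e)$ the top-bottom relations give $\rho(a)=\rho(f)$, contradicting $\bbi\neq-\bbi$; for $(a,b),(e,f)$ the same-side relations give $\rho(a)=\rho(b)^{-1}$, contradicting $\bbi\neq\bbi^{-1}$; and for $(a,e),(b,f)$ the top-bottom relations give $\rho(a)=\rho(e)$, again a contradiction. Since every $2$-tangle realizes one of these three pairings, the sign pattern $(+,+,-,-)$ is never achieved and $[0,\pi]$ is unattainable. The main obstacle I anticipate is getting the signs right in the push-along-arc computation, which hinges delicately on the orientation conventions implicit in the pillowcase relation $ba=ef$; I would sanity-check the sign table against the three elementary trivial $2$-tangles (the vertical tangle should yield the arc $\theta=0$ from $(0,0)$ to $(\pi,0)$, matching $\alpha^\rout$ in Lemma~\ref{lem8.2}; the horizontal tangle should yield the arc $\gamma=\pi$ from $(\pi,0)$ to $(\pi,\pi)$; the diagonal tangle should yield $A_1$), all three of which visibly avoid $[0,\pi]$.
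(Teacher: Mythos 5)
Your proposal is correct and tracks the paper's own (much terser) proof: the published argument likewise reduces via Theorem~\ref{thm:immersed_curve_invariant} to reducibles and simply asserts, without spelling out the sign calculus, that the three possible pairings of boundary punctures by the arcs of $T$ never produce the sign pattern $(\bbi,\bbi,-\bbi,-\bbi)$ on $(a,b,e,f)$ characterizing the corner $[\gamma,\theta]=[0,\pi]$. You have made explicit the $H_1$ computation and the three-case check that the paper leaves implicit, and your appeal to Theorem~\ref{nogotheorem} to dispose of $R^\nat_\pi(T)$ is a valid route; the paper instead covers that case silently, since the earring bundle is admissible and so $R^\nat_\pi(T)$ contains no reducibles at all.
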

\begin{proof}Theorem~\ref{thm:immersed_curve_invariant} 
states that only abelian traceless representations are sent to corners for generic perturbations. An abelian representation is determined by its value on meridians. As such,  it is limited to  three possibilities  according to how the endpoints are joined by the tangle, and none of these possibilities correspond to $[\gamma,\theta]=[\pi,0]$. 
\end{proof}

Motivated by this proposition, we consider the full $A_\infty$ subcategory $\wrFuk^\star(P^*)\subset \wrFuk(P^*)$ consisting of unobstructed (see~\cite[Definition 2.1]{Abouzaid_Fuk_surface}) immersed curves that \emph{do not have endpoints at  the top left  corner}. In addition, it is convenient to include in $\wrFuk^\star(P^*)$ immersed curves equipped with nontrivial \emph{local systems}, that is, flat vector bundles over the field of two elements (see~\cite[Remark~2.11]{Aur3}).

We now describe the subcategory $\wrFuk^\star(P^*)$ algebraically, using~\cite[Theorem~7.6]{Bocklandt} and constructions from~\cite[Section~3]{HKK} (ignoring gradings). Let $\B$ be the full $A_\infty$ subcategory of the wrapped Fukaya category of the pillowcase \(\wrFuk(P^*)\) with objects the two arcs $\arcVer$ and $\arcHor$ in Figure~\ref{fig:wrap_subcat}. As an $A_\infty$ algebra,
\begin{equation}\label{eq:FukB}\cB=\CF({\arcHor}, {\arcHor}) \oplus \CF({\arcVer}, {\arcVer}) \oplus  \CF({\arcVer}, {\arcHor})\oplus \CF({\arcHor}, {\arcVer})\end{equation}
As explained in loc.cit., there is a particular set of Hamiltonian perturbations of $\arcVer$ and $\arcHor$, resulting in $\mu^1$ and $\{\mu^k\}^{k\geq 3}$ all being zero. Hence the product $\mu^2$ is associative and gives $\cB$ the structure of an associative algebra.

We now explain how to view $\cB$ as a quiver algebra. The chords $S_i$ and $D_j$ from Figure~\ref{fig:wrap_subcat}  are used to label the elements in $\cB$. Looking at Figure~\ref{fig:Fuk_pairing}, notice that the basis of $\CF(\arcVer,\arcHor)$, which by definition is the set of intersection points of the infinitely wrapped version of $\arcVer$ with $\arcHor$,
is  in one-to-one correspondence with the set of counter-clockwise chords between $\arcVer$ and $\arcHor$ around the bottom right puncture.    Any such chord can be written as a composite of $S_1$ and $S_2$, and hence \begin{equation}\label{eq:chords1}
\CF(\arcVer,\arcHor)=\langle S_1,~S_1S_2S_1,~S_1S_2S_1S_2S_1,\ldots\rangle_\FF\end{equation}
Similarly,  the other generators can also be labeled by chords, yielding
\begin{equation}\label{eq:chords2}
\begin{split}
\CF(\arcVer,\arcVer)&=\langle \ldots,S_1 S_2 S_1 S_2,~S_1 S_2,~\id^\circ,~D_2,~D_2 D_2,\ldots\rangle_\FF\\
\CF(\arcHor,\arcVer)&=\langle S_2,S_2S_1S_2,~S_2S_1S_2S_1S_2,\ldots\rangle_\FF\\
\CF(\arcVer,\arcVer)&=\langle \ldots,S_2 S_1 S_2 S_1,~S_2 S_1,~\id^\bullet,~D_1,~D_1 D_1,\ldots\rangle_\FF
\end{split}
\end{equation}
where $\id^\circ,~\id^\bullet$ denote the length zero chords. Note that $\id^\circ+\id^\bullet=1$ is a unit for $\cB$.

With this description, the product $\mu^2(x,y)$ is equal to concatenation of the chord descriptions of $x$ and $y$ from~\eqref{eq:chords1} and~\eqref{eq:chords2}; if concatenation does not result in another basis element in ~\eqref{eq:chords1} and~\eqref{eq:chords2}, $\mu^2(x,y)=0$. In other words, the category $\B$, viewed as an $A_\infty$ algebra, is a unital associative algebra  and has the following quiver description: 
\[\qquad \qquad \qquad \qquad 
\B =
\mathbb{F}_2\Big[
\begin{tikzcd}[row sep=2cm, column sep=1.5cm]
{\arcHor}
\arrow[leftarrow,in=145, out=-145,looseness=5]{rl}[description]{D_1}
\arrow[leftarrow,bend left]{r}[description]{S_2}
&
{\arcVer}
\arrow[leftarrow,bend left]{l}[description]{S_1}
\arrow[leftarrow,in=35, out=-35,looseness=5]{rl}[description]{D_2}
\end{tikzcd}
\Big]\Big/ (D_j S_i=0=S_i D_j)
\hspace{0.22\textwidth}
\]

Next we consider the category of twisted complexes $\Tw \cB$ (see Section~\ref{ssec:fukayacat} for a brief recollection of this notion).
The already mentioned classification result~\cite{HKK} (reproved geometrically in \cite{HRW,KWZ}) implies that the homotopy equivalence classes of twisted complexes over $\cB$ are in one-to-one correspondence with the regular homotopy classes of unobstructed, decorated with local systems  immersed curves  not touching the top left corner of the pillowcase. In other words, the arcs $\arcVer$ and $\arcHor$ not only generate $\wrFuk^\star(P^*)$, which  means that there is a quasi-equivalence of $A_\infty$ categories $\Tw \wrFuk^\star(P^*) \simeq\Tw \B $, but furthermore, the category of twisted complexes over $\wrFuk^\star(P^*)$ does not have any new objects compared to $\wrFuk^\star(P^*)$, that is, $\Tw \wrFuk^\star(P^*) \simeq \Tw \B \simeq \wrFuk^\star(P^*)$.

As such, immersed curves coming from the traceless character varieties $R_\pi(T)$ and $ R^\natural_\pi(T)$ define\footnote{We assume that $R_\pi(T)$ and $ R^\natural_\pi(T)$  are unobstructed. Alternatively, we consider  $R_\pi(T)$ and $ R^\natural_\pi(T)$ as homotopy classes. The latter is possible because the set of unobstructed immersed curves up to regular homotopy is the same as the set of all immersed curves up to continuous homotopy.} objects in $\wrFuk^\star(P^*)\simeq \Tw \B$, and we denote the corresponding twisted complexes by $\N(R_\pi(T))$ and $\N(R^\natural_\pi(T))$. We now give an example of a twisted complex associated to a curve.

\begin{example} Consider the three-twist tangle $T_3$ depicted in Figure~\ref{fig:three-twists}. In our pillowcase coordinates, the traceless character variety $R(T_3)$ is a linear arc of slope $\tfrac 1 3$ depicted in Figure~\ref{fig:quiverFuk_and_curve}.
The corresponding twisted complex is equal to 
\begin{align*}\N(R(T_3))=~&(\arcVer_1 \oplus \arcHor_2 \oplus \arcHor_3 \oplus \arcHor_4, ~ b_{12}+b_{23}+b_{34}),\\
&\text{where } b_{12}=S_1,~ b_{23}=D_1,~ b_{34}=S_2 S_1
\end{align*}
This twisted complex can be represented compactly as in Figure~\ref{fig:3-twist-tangle-D-str}, and we use this abbreviated graph notation subsequently. To see why  $R(T_3)$ corresponds to the twisted complex $\N(R(T_3))$, first notice that $R(T_3)$ is isotopic to the grey curve depicted  in Figure~\ref{fig:quiverFuk_and_curve}.    This grey curve can be straightforwardly reconstructed from the complex $\N(R(T_3))$, as explained in  see~\cite[Section~4.1]{HKK}.
\begin{figure}[ht!]
  \centering
  \begin{subfigure}[b]{0.25\textwidth}
  \centering
  \includegraphics[scale=0.5]{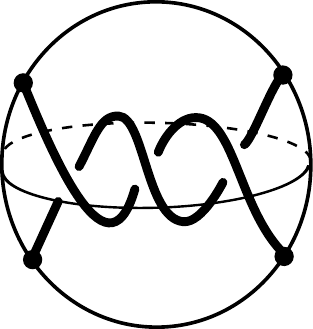}
  \caption{Tangle $T_3$.}
  \label{fig:three-twists}
  \end{subfigure}
  \begin{subfigure}[b]{0.25\textwidth}
  \centering
  \includegraphics[scale=0.9]{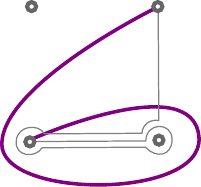}
  \caption{$R(T_3)$}
  \label{fig:quiverFuk_and_curve}
  \end{subfigure}
  \begin{subfigure}[b]{0.2\textwidth}
    \centering
    $$
    \begin{tikzcd}[row sep=0.4cm]
    \arcVer_1
    \arrow{d}{S_1}
    \\
    \arcHor_2
    \arrow{d}{D_1}
    \\
    \arcHor_3
    \arrow{d}{S_2 S_1}
    \\
    \arcHor_4
    \end{tikzcd} 
    $$
    \caption{$\N(R(T_3))$}
    \label{fig:3-twist-tangle-D-str}
  \end{subfigure}
  \begin{subfigure}[b]{0.25\textwidth}
  \centering
  \includegraphics[scale=0.8]{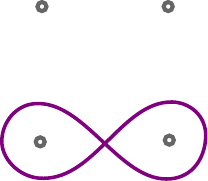}
  \caption{$R_\pi^\nat(||)$}
  \label{fig:fig_eight}
  \end{subfigure}
  \caption{}
\end{figure}

\end{example}
  Conjecture~\ref{conj:action} below proposes a  candidate for an $A_\infty$ functor induced by the Lagrangian correspondence $\NAT_s$ on $\wrFuk^\star(P^*) \simeq \Tw \cB$. To explain its statement, we use the following technical construction. 
  
  Any twisted complex $\N \in \Tw\cB$ possesses a closed ($\Leftrightarrow \mu^1_{\Tw\cB}(\id)=0$) identity morphism $\oplus_i \id_i=\id  \in \CF(\N,\N)$ consisting of $\id^{\bullet(\circ)}_i\in \CF (\mathbf{a}^{\bullet(\circ)}_i,\mathbf{a}^{\bullet(\circ)}_i)$ for every generator $\mathbf{a}^{\bullet(\circ)}_i \in \N$. Denote the central element 
$$H\coloneqq D_1 + D_2 + S_1S_2 + S_2S_1 \in \cB.$$ 
This defines a different  closed morphism 
$$H\cdot \id  \in \CF(\N,\N),$$ 
by
$$H  \cdot\id^\circ= D_2+S_2 S_1 \in \CF(\arcVer_i,\arcVer_i) \quad \text{ and }  \quad  H \cdot\id^\bullet= D_1+S_1 S_2 \in \CF(\arcHor_j,\arcHor_j)$$
for all generators $\arcVer_i, \arcHor_j \in \N$.

\subsection{\texorpdfstring{The conjectural $A_\infty$ functor}{The conjectural A-infinity functor}}\label{sec:functor}
The algebra $\B$ plays a central role in~\cite{KWZ}. Using the fact that the algebra $\B$ has both cobordism theoretic and symplectic geometric origins, the authors developed Khovanov-theoretic analogues of $R_\pi(T)$ and $R^\natural_\pi(T)$, which we already touched on in Section~\ref{sec:inst_curves}. Namely, using the cube-of-resolution approach, they constructed immersed curves $\BNr(T)$ and $\Khr(T)$, satisfying the gluing formulas~\eqref{eq:kh_gluing}. These new curves are also objects of the category $\wrFuk^\star(P^*) \simeq \Tw\B$, i.e., they do not meet the top left puncture of $P^*$. Moreover, $\BNr(T),~\Khr(T)$  coincide with $R_\pi(T), ~ R^\natural_\pi(T)$ for rational tangles, but differ from $R_\pi(T), ~ R^\natural_\pi(T)$ for more complicated tangles.

Just as the curve $R^\natural_\pi(T)$ is obtained from the curve $R_\pi(T)$ by applying the correspondence $\NAT_s$, the invariant $\widetilde{\text{Kh}}(T)$ is obtained from $\widetilde{\text{BN}}(T)$ by applying a certain functor to $\Tw\B$. This functor was studied in~\cite[Section~3 and Lemma~20]{comparison}, where it was denoted by $\mcH$. Essentially, it takes any object $\N\in \text{Tw}\:\B$, and outputs a mapping cone:
$$\mcH(\N)=[\N\xrightarrow{H\cdot \id} \N],$$
where $H\cdot \id$ is the closed morphism defined above. For the readers acquainted with~\cite{Daemi_Scaduto}, one should think of the element $H$ as a cousin of an element $x$ in the context of the $\ZZ[x]$-module instanton invariant $\widehat{I}(\link)$.

The quasi-equivalence $\wrFuk^\star(P^*) \simeq \Tw\B$ implies that one can consider $\mcH$ acting not only on twisted complexes, but also on curves. Let us inspect this latter action. Taking as an input the bottom arc $\arcHor$ (equal to $R(||)$, the traceless character variety of the crossingless vertical trivial tangle), its corresponding twisted complex consists of a single generator $[\arcHor]$. As such, 
$$\mcH([\arcHor])=[\arcHor_1\xrightarrow{H}\arcHor_2]=[\arcHor_1\xrightarrow{D_1 + S_2 S_1}\arcHor_2],$$
and this twisted complex corresponds (in the same way Figure~\ref{fig:3-twist-tangle-D-str} is related to Figure~\ref{fig:quiverFuk_and_curve}) to the figure eight in Figure~\ref{fig:fig_eight}, which is equal to the traceless character variety modified by the earring  $R_\pi^\nat(||)$. In other words, the two bottom right twisted complexes in the diagram below are homotopy equivalent if $L=R(||)=\arcHor$:
\begin{equation}\label{diago}
\begin{tikzcd}[column sep=3cm, row sep =1.2cm]
L 
\arrow[d,"\text{\cite{HKK,KWZ}}" left] 
\arrow[rr,"\NAT_s"above,"\text{Conjecture~\ref{conj:BW_immersed}}" below]
&  &
(L^\nat,b) \arrow[d,"\text{\cite{HKK,KWZ}}"right]
\\
\N(L)
\arrow[r,"\mcH" above] 
& 
\mcH(\N(L)) 
\arrow[r,phantom, "\underset{\hspace{1cm}}{\simeq}" description,"\text{\scriptsize Conjecture~\ref{conj:action}}"  below]
&
\N(L^\nat,b)
\end{tikzcd}
\end{equation}

A more complicated example arises if we take $L=R(T_3)$ from Figure~\ref{fig:quiverFuk_and_curve}. The corresponding twisted complex is $[\arcVer \xrightarrow{S_1} \arcHor \xrightarrow{D_1} \arcHor \xrightarrow{S_2 S_1} \arcHor]$, and the image under $\mcH$ is depicted in Figure~\ref{fig:fig_eight_D_str}. The dashed arrows there indicate a certain chain homotopy (in fact, a change of basis over $\B$), which results in a twisted complex in Figure~\ref{fig:change_of_basis}; see \cite[Lemma~2.17]{KWZ}. This model of $\mcH(\N(R(T_3)))$ can now be represented by an immersed curve (in the same way Figure~\ref{fig:3-twist-tangle-D-str} is related to Figure~\ref{fig:quiverFuk_and_curve}). The resulting curve in Figure~\ref{fig:quiverFuk_and_8curve} is equal to $R^\nat_\pi(T_3)$, which suggests  that the two bottom right twisted complexes in diagram~\eqref{diago} are homotopy equivalent if $L=R(T_3)$. 

\begin{figure}[H]
  \centering
  \begin{subfigure}[b]{0.3\textwidth}
    \centering
    $$
    \begin{tikzcd}[row sep=0.9cm, column sep=1.5cm]
    \arcVer_{1}
    \arrow{d}{S_1}
    \arrow{r}{D_2 + S_1 S_2}
    &
    \arcVer_{5}
    \arrow{d}{S_1}
    \\
    \arcHor_{2}
    \arrow[ru,dashed,"S_2" description]
    \arrow{d}{D_1}
    \arrow{r}{D_1 + S_2 S_1}
    &
    \arcHor_{6}
    \arrow{d}{D_1}
    \\
    \arcHor_{3}
    \arrow[ru,dashed,"\id" description]
    \arrow{d}{S_2 S_1}
    \arrow{r}{D_1 + S_2 S_1}
    &
    \arcHor_{7}
    \arrow{d}{S_2 S_1}
    \\
    \arcHor_4
    \arrow[ru,dashed,"\id" description]
    \arrow{r}{D_1 + S_2 S_1}
    &
    \arcHor_{8}
    \end{tikzcd} 
    $$
    \caption{$\mcH(\N(R(T_3)))$}
    \label{fig:fig_eight_D_str}
  \end{subfigure}
  \begin{subfigure}[b]{0.3\textwidth}
    \centering
    $$
    \begin{tikzcd}[row sep=0.9cm, column sep=0.4cm]
    \arcVer_{1}
    \arrow{d}{S_1}
    \arrow{r}{D_2}
    &
    \arcVer_{5}
    \arrow{d}{S_1}
    \\
    \arcHor_{2} + S_2\otimes \arcVer_{5}
    \arrow{d}{D_1}
    &
    \arcHor_{6}
    \arrow{d}{D_1}
    \\
    \arcHor_{3} + \arcHor_{6}
    \arrow{d}{S_2 S_1}
    &
    \arcHor_{7}
    \arrow{d}{S_2 S_1}
    \\
    \arcHor_4  + \arcHor_{7}
    \arrow{r}{D_1}
    &
    \arcHor_{8}
    \end{tikzcd} 
    $$
    \caption{With the basis changed}
    \label{fig:change_of_basis}
  \end{subfigure}
  \begin{subfigure}[b]{0.37\textwidth}
  \centering
  \includegraphics[scale=0.9]{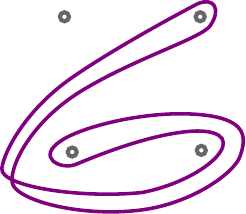}
  \caption{$R_\pi^\nat(T_3)$}
  \label{fig:quiverFuk_and_8curve}
  \end{subfigure}
  \caption{}
\end{figure}

We conclude with an even more complicated example, which shows 
how  the subtle way $\mcH$ acts on curves appears to precisely correspond 
to the bounding cochain appearing in Section~\ref{sec:fig8bubble}. We take $L=A$ to be the arc from Figure~\ref{45torus2fig} in our $K_{4,5}$ example.   The corresponding twisted complex $\N(A)$ is described in Figure~\ref{fig:45curve_typeD}, and its image under $\mcH$ is depicted in Figure~\ref{fig:45curve_typeD_mch}. The dashed arrows there indicate a certain chain homotopy (see \cite[Lemma~2.17]{KWZ}), which results in a homotopy equivalent twisted complex $\mcH(\N(A))'$ in Figure~\ref{fig:45curve_typeD_mch_homotoped}. This representative of the homotopy equivalence class of $\mcH(\N(A))$ is extremely interesting. While it is not in a form that immediately yields a curve on the pillowcase, if we denote by $A^\natural$ the figure eight corresponding to the arc $A$ (see Figure~\ref{fig:45curve_cochains}), the twisted complex $\mcH(\N(A))'$ can be thought of as $\N(A^\natural)$ together with two additional closed self-morphisms, $(\arcHor_3\xrightarrow{D_1} \arcHor_{10})+ (\arcHor_4\xrightarrow{D_1} \arcHor_{11}) $ and $(\arcHor_4\xrightarrow{S_1S_2} \arcHor_{11})+ (\arcVer_5\xrightarrow{S_2S_1} \arcVer_{12})$. It turns out that, under the correspondence of closed morphisms and intersection points in \cite[Theorems~1.5 and~5.22]{KWZ}, these two morphisms correspond to the bounding cochains $b_1$ and $b_2$ respectively, illustrated in Figure~\ref{fig:45curve_cochains}. The composition $A \circ \NAT_s$ is expected to be\footnote{We write ‘‘is expected to be'' because in Theorem~\ref{thmB} we only obtain \emph{homology} figure eights} precisely the figure eight curve $A^\natural$, while the bounding cochain $b_1$ (and $b_2$ too, analogously) is precisely the bounding cochain that arises from the figure eight bubbling, as we argue in Section~\ref{sec:fig8bubble}. Therefore, the functor $\mcH$ on curves appears to be accounting for the bounding cochains arising from the figure eight bubbling, suggesting that the two bottom right twisted complexes in diagram~\eqref{diago} are homotopy equivalent if $L=A$.

For completeness, in Figure~\ref{fig:45curve_typeD_mch_homotoped_further} we describe another representative of the homotopy equivalence class of $\mcH(\N(A))$, which is in the appropriate form to be directly translated into a collection of curves, and we illustrate the latter in Figure~\ref{fig:45curves}. The resulting multicurve can be obtained geometrically from $A^\natural$ by smoothing at bounding cochains.

\begin{figure}[H]
  \centering
  \begin{subfigure}[b]{0.70\textwidth}
    \centering
    $$
    \begin{tikzcd}[row sep=0.7cm, column sep=0.9cm]
    \arcVer_1
    \arrow{r}{D_2}
    &
    \arcVer_2
    \arrow{r}{S_2}
    &
    \arcHor_3
    &
    \arcHor_4
    \arrow[l,"D_1" above]
    \arrow{r}{S_1S_2S_1}
    &
    \arcVer_5
    \arrow{r}{D_2}
    &
    \arcVer_6
    \arrow{r}{S_2}
    &
    \arcHor_7
    \end{tikzcd} 
    $$
    \caption{$\N(A)$.}
    \label{fig:45curve_typeD}
  \end{subfigure}
  \begin{subfigure}[b]{0.25\textwidth}
  \centering
  \includegraphics[scale=0.25]{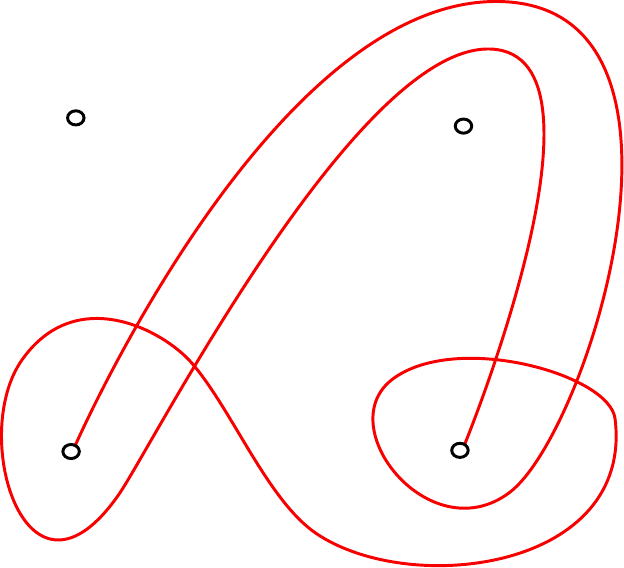}
  \caption{Curve $A$ from Figure~\ref{45torus2fig}.}
  \label{fig:45curve}
  \end{subfigure}
  \\

  \begin{subfigure}[b]{0.70\textwidth}
    \centering
    $$
    \begin{tikzcd}[row sep=0.7cm, column sep=0.9cm]
    \arcVer_1
    \arrow{r}{D_2}
    \arrow{d}{H}
    &
    \arcVer_2
    \arrow{r}{S_2}
    \arrow{d}{H}
    \arrow[dl,"\id" description, dashed]
    &
    \arcHor_3
    \arrow{d}{H}
    \arrow[dl,"S_1" description, dashed]
    &
    \arcHor_4
    \arrow{d}{H}
    \arrow[l,"D_1" above]
    \arrow{r}{S_1S_2S_1}
    &
    \arcVer_5
    \arrow{d}{H}
    \arrow{r}{D_2}
    &
    \arcVer_6
    \arrow{d}{H}
    \arrow[dl,"\id" description, dashed]
    \arrow{r}{S_2}
    &
    \arcHor_7
    \arrow[dl,"S_1" description, dashed]
    \arrow{d}{H}
    \\
    \arcVer_8
    \arrow{r}{D_2}
    &
    \arcVer_9
    \arrow{r}{S_2}
    &
    \arcHor_{10}
    &
    \arcHor_{11}
    \arrow[l,"D_1" above]
    \arrow{r}{S_1S_2S_1}
    &
    \arcVer_{12}
    \arrow{r}{D_2}
    &
    \arcVer_{13}
    \arrow{r}{S_2}
    &
    \arcHor_{14}
    \end{tikzcd} 
    $$
    \caption{$\mcH(\N(A))$.}
    \label{fig:45curve_typeD_mch}
  \end{subfigure}
  \text{\color{white}aaaaaaaaaaaaaaaaaaaaa}
  \\

  \begin{subfigure}[b]{0.70\textwidth}
    \centering
    $$
    \begin{tikzcd}[row sep=1.2cm, column sep=0.9cm]
    \arcVer_1
    \arrow{r}{D_2}
    \arrow{d}{S_2S_1}
    &
    \arcVer_2
    \arrow{r}{S_2}
    &
    \arcHor_3
    \arrow{d}{D_1}
    &
    \arcHor_4
    \arrow[d,bend right, "D_1" left, near start]
    \arrow[d, "S_1S_2" right, near start]
    \arrow[ll, "S_1" description, bend right, dashed]
    \arrow[l,"D_1" above]
    \arrow{r}{S_1S_2S_1}
    &
    \arcVer_5
    \arrow[d, "S_2S_1" right]
    \arrow{r}{D_2}
    &
    \arcVer_6
    \arrow{r}{S_2}
    &
    \arcHor_7
    \arrow{d}{D_1}
    \\
    \arcVer_8
    \arrow{r}{D_2}
    &
    \arcVer_9
    \arrow{r}{S_2}
    &
    \arcHor_{10}
    &
    \arcHor_{11}
    \arrow[lu,"\id" description, dashed, near end]
    \arrow[ru,"S_1" description, dashed, near end]
    \arrow[l,"D_1" above]
    \arrow{r}{S_1S_2S_1}
    &
    \arcVer_{12}
    \arrow{r}{D_2}
    &
    \arcVer_{13}
    \arrow{r}{S_2}
    &
    \arcHor_{14}
    \end{tikzcd} 
    $$
    \caption{$\mcH(\N(A))'$, a homotopy equivalent twisted complex obtained by chain homotopies indicated by dashed arrows in subfigure (c). The twisted complex without the four vertical arrows in the middle corresponds to the curve $A^\natural$ in subfigure (e) and thus can be denoted by $\N(A^\natural)$. Its closed self-morphisms $(\arcHor_3\xrightarrow{D_1} \arcHor_{10})+ (\arcHor_4\xrightarrow{D_1} \arcHor_{11}) $ and  $(\arcHor_4\xrightarrow{S_1S_2} \arcHor_{11})+ (\arcVer_5\xrightarrow{S_2S_1} \arcVer_{12}) $ correspond  to the bounding cochains $b_1$ and $b_2$ in subfigure (e), respectively. 
    Therefore, $\mcH(\N(A))'= \N(A^\natural, b_1+b_2)$.}
    \label{fig:45curve_typeD_mch_homotoped}
  \end{subfigure}
  \begin{subfigure}[b]{0.25\textwidth}
  \centering
  \labellist 
  \pinlabel $b_1$ at 100 60
  \pinlabel $b_2$ at 320 120
  \endlabellist
  \includegraphics[scale=0.25]{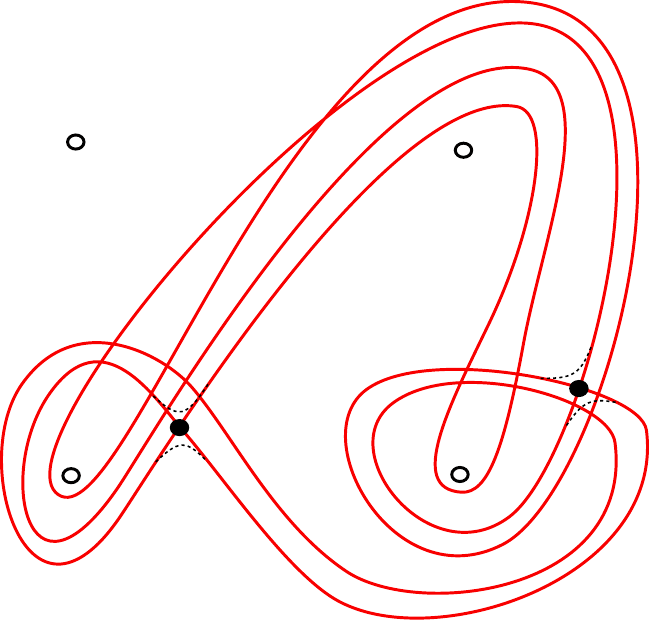}
  \caption{$(A^\natural,b_1+b_2)$.}
  \vspace{2.5cm}
  \label{fig:45curve_cochains}
  \end{subfigure}
  \\

  \begin{subfigure}[b]{0.70\textwidth}
    \centering
    $$
    \begin{tikzcd}[row sep=1.2cm, column sep=0.9cm]
    \arcVer_1
    \arrow{r}{D_2}
    \arrow{d}{S_2S_1}
    &
    \arcVer_2
    \arrow{r}{S_2}
    &
    \arcHor_3
    \arrow{d}{D_1}
    &
    \arcHor_4
    \arrow[d,bend right, "D_1" left, near start]
    \arrow[d, "S_1S_2" right, near start]
    &
    \arcVer_5
    \arrow[d, "S_2S_1" right]
    \arrow{r}{D_2}
    &
    \arcVer_6
    \arrow{r}{S_2}
    &
    \arcHor_7
    \arrow{d}{D_1}
    \\
    \arcVer_8
    \arrow{r}{D_2}
    &
    \arcVer_9
    \arrow{r}{S_2}
    &
    \arcHor_{10}
    &
    \arcHor_{11}
    &
    \arcVer_{12}
    \arrow{r}{D_2}
    &
    \arcVer_{13}
    \arrow{r}{S_2}
    &
    \arcHor_{14}
    \end{tikzcd} 
    $$
    \caption{$\mcH(\N(A))''$, one more homotopy equivalent twisted complex obtained by further chain homotopies indicated by dashed arrows in subfigure (d). The associated multicurve on the right can be obtained by smoothing $A^\natural$ at the bounding cochain.}
    \label{fig:45curve_typeD_mch_homotoped_further}
  \end{subfigure}
  \begin{subfigure}[b]{0.25\textwidth}
  \centering
  \includegraphics[scale=0.25]{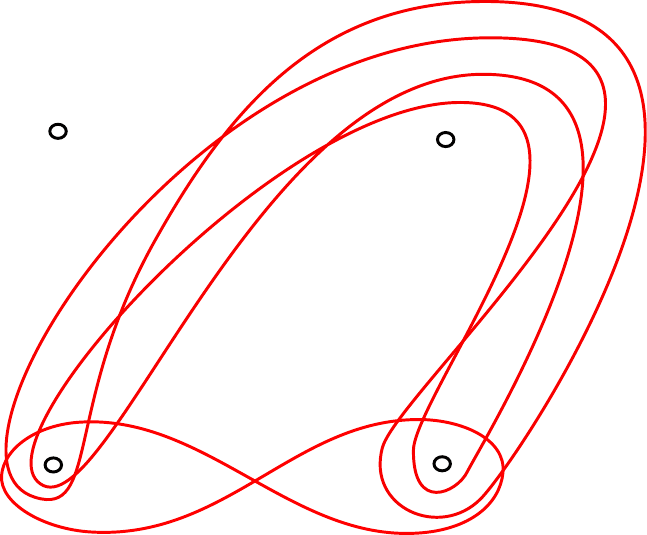}
  \caption{Multicurve associated to $\mcH(\N(A))''$.}
  \vspace{0.8cm}
  \label{fig:45curves}
  \end{subfigure}

  \caption{}
\end{figure}

With the above examples in mind, and given that the Floer field theory predicts the existence of the  $A_\infty$ functor induced by $\NAT_s$, we end with:

\begin{conjectureABC}\label{conj:action} The two bottom right twisted complexes in the diagram~\eqref{diago} are homotopy equivalent for any curve $L$ not touching the top left corner of the pillowcase.  In other words, the hypothetical $A_\infty$ endofunctor induced by immersed Lagrangian correspondence $\NAT_s$
$$\X (\NAT_s) \colon \wrFuk(P^*) \rightarrow \wrFuk(P^*)$$
restricted to the subcategory $\wrFuk^\star(P^*) \simeq \Tw \cB$ is equal to $\mcH$.
\end{conjectureABC}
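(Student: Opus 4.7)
The plan is to reduce the conjecture, via the generation result $\wrFuk^\star(P^*)\simeq \Tw\B$, to a check on the two generating arcs $\arcHor$ and $\arcVer$ together with the four morphism generators $S_1, S_2, D_1, D_2$ of $\B$. Since both $\X(\NAT_s)|_{\wrFuk^\star(P^*)}$ and $\mcH$ are supposed to be $A_\infty$-endofunctors of $\Tw\B$, if they agree on this generating data then they agree on all of $\Tw\B$ up to $A_\infty$-natural equivalence, and hence on all immersed curves missing the top-left corner via the Haiden–Katzarkov–Kontsevich correspondence combined with the explicit geometric dictionary of \cite{HRW,KWZ}.

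First I would verify the object-level identity. Applying Theorem~\ref{thmB} to $L=\arcHor=R(||)$ produces the figure-eight curve $R^\nat_\pi(||)$ in Figure~\ref{fig:fig_eight}, supported in a small tubular neighborhood of $\arcHor$ and picking up one full turn around each of the two corner punctures lying on $\arcHor$. Reading the resulting curve off as a twisted complex via the algorithm of~\cite{HKK,KWZ} yields precisely the two-term complex $[\arcHor_1\xrightarrow{D_1+S_2S_1}\arcHor_2] = \mcH(\N(\arcHor))$, because the two loops of the figure eight are exactly the morphisms $D_1$ and $S_2S_1$ in $\CF(\arcHor,\arcHor)$. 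An identical computation at the other puncture gives $\X(\NAT_s)(\arcVer)\simeq[\arcVer\xrightarrow{D_2+S_1S_2}\arcVer] = \mcH(\N(\arcVer))$, where the bounding cochain $b$ is, under the dictionary, the mapping cone differential $H\cdot\id$. This matches the description of $b$ extracted from Section~\ref{sec:fig8bubble} in the $(4,5)$ torus knot example, and the central element $H=D_1+D_2+S_1S_2+S_2S_1$ is engineered to collect the four corner contributions uniformly.

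Next I would check the morphism-level identity. The four chords $S_1,S_2,D_1,D_2$ are represented geometrically by short arcs in $P^*$ between $\arcHor$ and $\arcVer$ (respectively, short loops around the corners not on the curve). Under $\X(\NAT_s)$ each such chord pulls back under $r_s^\rout\times r_s^\rin$ to an intersection between the corresponding doubled/figure-eight images, and one must identify the resulting chord in $\Hom$-complexes of the figure-eight model with the chord $\mcH(S_i)$ or $\mcH(D_j)$ in the mapping cone. Because $\NAT_s$ is, up to $\epsilon$-homotopy, the explicit model $\modelMap{\delta}$ of Theorem~\ref{thmA} — a trivial double away from the corners composed with four local Dehn twists — this reduces to a local computation in each small corner chart, using the explicit formula $\DehnTwistMap$ from Section~\ref{sec:v_d}. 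The coherence of higher $A_\infty$ terms should follow from the fact that the generators carry the standard perturbation data for which all $\mu^{\geq 3}$ in $\B$ vanish, so there is no room for higher obstructions.

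The main obstacle, and what keeps this a conjecture, is that the functor $\X(\NAT_s)$ itself has not been rigorously constructed: doing so requires the immersed/figure-eight-bubbling machinery of Bottman–Wehrheim (Conjecture~\ref{conj:BW_immersed}), including the proof that strip-shrinking for the quilt with seam $\NAT_s$ produces exactly the Maurer–Cartan element $b=8(0,0)$ modeling the figure-eight bubble. A secondary technical point is verifying that the $b$ produced by strip-shrinking agrees on the nose with $H\cdot \id$ rather than merely with a cohomologous cocycle; this amounts to a count of the local holomorphic figure-eight bubbles at each of the four corners of $P$ and is closely analogous to the identification of $\mcH$ with the connected-sum-with-Hopf-link cobordism map carried out in~\cite{comparison}. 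Once these two analytic inputs are in place, the reduction above completes the argument.
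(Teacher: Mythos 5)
This statement is labeled as a Conjecture, and the paper does not prove it; it only offers evidence by working out two examples ($L=R(||)=\arcHor$, yielding $\mcH(\N(\arcHor))=[\arcHor\xrightarrow{D_1+S_2S_1}\arcHor]$ matching the figure eight $R^\nat_\pi(||)$, and $L=R(T_3)$, yielding the curve in Figure~\ref{fig:quiverFuk_and_8curve} matching $R^\nat_\pi(T_3)$) together with unspecified ``further computations.'' So there is no ``paper proof'' to compare against, and your proposal should be judged as a strategy rather than matched line-by-line. You are honest about this, correctly flagging that the conjecture cannot currently be proved because the functor $\X(\NAT_s)$ has not been rigorously constructed — this is indeed the central point, and is precisely why the paper states Conjecture~\ref{conj:action} conditionally on Conjecture~\ref{conj:BW_immersed}.

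Your reduction-to-generators strategy (check agreement on $\arcHor,\arcVer$ and on the morphism generators $S_1,S_2,D_1,D_2$, then propagate to all of $\Tw\B$) goes beyond the paper's example-based evidence and, in spirit, is the right plan for a genuine proof. However, there is a gap in the step ``The coherence of higher $A_\infty$ terms should follow from the fact that \ldots all $\mu^{\geq3}$ in $\B$ vanish, so there is no room for higher obstructions.'' Formality of the source category $\B$ does \emph{not} force the functor components $\X^{\geq 2}$ to vanish: an $A_\infty$ functor out of a formal $A_\infty$ algebra can still have nontrivial higher maps, and $\mcH$, being a strict mapping-cone endofunctor, has all $\mcH^{\geq2}=0$. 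To conclude $\X(\NAT_s)|_{\Tw\B}=\mcH$ you would have to establish that the higher components of $\X(\NAT_s)$ vanish on $\B$ (e.g.\ by a geometric argument that the relevant moduli of quilted polygons with seam in $\NAT_s$ and boundary on $\arcHor,\arcVer$ are rigid only in the lowest degree), and this is an additional analytic input not covered by the observation about $\mu^{\geq3}_\B$. With that caveat, and modulo the construction of the functor itself, the object-level and $\mu^1$-level checks you sketch do track the paper's computed examples.
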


\appendix
\numberwithin{theorem}{subsection}
\renewcommand{\theequation}{A\arabic{equation}}
\section{Notions from symplectic geometry}\label{Appendix}

\subsection{Fukaya categories, bounding cochains and twisted complexes.}\label{ssec:fukayacat}

We review the notions of the (wrapped) Fukaya  category associated to a symplectic manifold, bounding cochains and twisted complexes. We omit discussion of the many  technicalities (analytic aspects, gradings, perturbations, Novikov rings, etc); these important points are discussed in \cite{FOOO1,FOOO2,Seidel,AJ} (foundations) and \cite{Abo-Sei,Gao} (the origin of wrapped Floer theory, and its adaptation to the immersed setting).

To begin with, we describe the {\em exact}  setting which, is reasonably well-understood.  Assume that $M$ is a \emph{Liouville manifold}, which means that $M$ is non-compact, has cylindrical ends, and satisfies certain convexity assumption at infinity. The objects of the {\em Fukaya category $\Fuk(M)$} are compact embedded exact Lagrangian submanifolds $L\subset M$, perhaps satisfying some extra conditions  (oriented, equipped with a local coefficient system, etc.).

The morphism set between  objects $L_1,L_2$ is  the \emph{Lagrangian Floer complex} $\CF(L_1,L_2)$.  This consists of  the free module over the base ring (in this article, always the field of 2 elements) generated by the transverse intersection points of $L_1'$ and $L_2$ where $L_1'$ is a suitable Hamiltonian deformation of $L_1$. In addition, the morphism sets are equipped with a collection of multilinear \emph{structure maps}:
$$\mu^k \co \CF(L_1,L_2) \otimes \CF(L_2,L_3) \otimes \cdots \CF(L_{k},L_{k+1}) \to \CF(L_1,L_{k+1}),  \ k=1,2,3\ldots $$ defined by counting points in  moduli spaces of rigid holomorphic $(k+1)$-gons in $M$. These satisfy the $A_\infty$ relations 
\begin{equation}\label{eq:A_inf_relations}
\sum_{\substack{1\leq j \leq n \\ 0\leq i \leq n-1}}  \mu^{n-j+1}(a_1,\ldots,a_i,\mu^j(a_{i+1},\ldots, a_{i+j}),a_{i+j+1}\ldots,a_n)
\end{equation}
for each $n\geq 1$. When $n=1$ this simply reads $\mu^1 \circ \mu^1=0$, i.e., $(\CF(L_1,L_2),\mu^1)$ is a chain complex. The map $\mu^2 \co \CF(L_1,L_2)\times \CF(L_2,L_3)\to \CF(L_1,L_3)$ is called composition, and when $n=2$, Equation (\ref{eq:A_inf_relations}) says that associativity for composition fails in general, but is controlled by the further $\mu^k$. In this manner, $\Fuk(M)$ is an \emph{$A_\infty$ category}.
 
The {\em wrapped} Fukaya category $ \wrFuk(M)$  of a Liouville manifold $M$ has objects the set  of  embedded exact Lagrangians $L\to M$, which, if non-compact, are  required to be cylindrical at infinity. The morphism space $\CF(L_1,L_2)$ is generated by the transverse intersection points of $L_1'$ and $L_2$, where $L_1'$ is a deformation of $L_1$ by a specific \emph{quadratic} Hamiltonian. This  ensures that in each end of $M$ the perturbation $L_1'$ wraps infinitely many times; see Figure~\ref{fig:Fuk_pairing} for an example of the wrapping behavior.

There are many Lagrangians in a fixed symplectic manifold $M$, and it is sometimes convenient to study  the full $A_\infty$ subcategory 
corresponding to a finite set of Lagrangians.

\medskip

The $A_\infty$ structure must be weakened, and hence becomes more complicated, in    non-exact settings~\cite{FOOO1,FOOO2},  and when one considers immersed rather than embedded Lagrangians~\cite{AJ,Gao}.    Disk bubbling produces a {\em curvature term} $\mu^0 \in \CF(L,L)$ (its precise definition is involved and may require some completion process). The $A_\infty$ relations~\eqref{eq:A_inf_relations} are modified to include $\mu^0$, which makes the Fukaya category a \emph{curved} $A_\infty$ category. The first relation is
$\mu^1\circ \mu^1 = \mu^2(\mu^0,-) + \mu^2(-,\mu^0)$, so that $\mu^1$ need not be a differential. 

We outline an algebraic procedure used to mitigate the $\mu^1\circ \mu^1\neq 0$ issue. Consider an object $\mathscr L=\oplus_i L_i$ in an additive enlargement of a curved $A_\infty$ category $\Fuk(M)$ (the wrapped case is considered in~\cite{Gao}). Following~\cite[Definition~3.6.4]{FOOO1}, a bounding cochain for $\mathscr L$ is an element in the self-Floer chain complex $b\in \CF(\mathscr L,\mathscr L)$ (consisting of $b_{ij}\in \CF(L_i,L_j)$) which satisfyies the $A_\infty$ Maurer-Cartan equation  
\begin{equation}\label{eq:MC}\sum_{k \geq 0}\mu^k(b,\ldots,b)=0.
\end{equation}
Such a pair   $(\mathscr L, b)$ is  called a \emph{Lagrangian with bounding cochain} or a \emph{twisted complex}\footnote{There are technical differences between these notions in general, but they do not arise in this article.}.

Twisted complexes form an  $A_\infty$ category, denoted by $\Tw \Fuk(M)$. In particular, given two twisted complexes $(\mathscr L,b)$ and $(\mathscr L',b')$, the differential on $\CF((\mathscr L,b),(\mathscr L',b'))$ is defined by  
\begin{equation}\label{eq:deformed_differential}
\begin{split}\partial_{b,b'}=&\mu^1 + \mu^2(b,-)+\mu^2(-,b') + \mu^3(b,b,-)+ \mu^3(b,-,b') + \mu^3(-,b',b') + \\
+&\mu^4(b,b,b,-)+ \mu^4(b,b,-,b') + \mu^4(b,-,b',b') + \mu^4(-,b',b',b') +\cdots
\end{split}
\end{equation}
This differential squares to zero, thanks to Equations~\eqref{eq:MC} and the curved extension of ~\eqref{eq:A_inf_relations}, which includes the $\mu^0$ term.

Even if $\Fuk(M)$ is not curved, the passage $\Fuk(M) \mapsto \Tw \Fuk(M)$ is   useful, because $\Tw \Fuk(M)$ is the smallest $A_\infty$ extension of $\Fuk(M)$ which is closed under the process of taking mapping cones. In addition, if one can find a finite set of objects $\{G_i\}$ (called \emph{generators}) of $\Fuk(M)$ such that the inclusion of the full subcategory on these objects  induces a quasi-equivalence on  twisted complexes,  $\Tw \Fuk(\{G_i\}) \simeq \Tw \Fuk(M)$, then   $\Fuk(\{G_i\})$, and hence $ \Tw \Fuk(\{G_i\})$, is easier to describe than $\Fuk(M)$.

For example, in Section~\ref{sec:wrapped} we explain how two arcs $\arcVer$ and $\arcHor$ generate a particular subcategory of the wrapped Fukaya category of the pillowcase.

 \subsubsection{The pillowcase}
In Section \ref{sec:wrapped} we consider the special case where $M$ is a cylindrical completion of the pillowcase with corners cut:
$$\hat{P}^\delta=P\setminus U_\delta(C)\cup_{\partial U_\delta(C)}\{\text{four copies of }S^1\times[0,+\infty)\}.$$ 
We   use notation $\wrFuk(P^*)$ in place of $\wrFuk(\hat{P}^\delta)$. An example of an object in $\wrFuk(P^*)$ is given in Figure~\ref{fig:big_wrFuk}: the Lagrangian $\arcVer$, illustrated  on the left, defines its wrapped deformation $(\arcVer)'$, illustrated on the right. A basis of $\CF(\arcVer,\arcHor)$ is given by the infinite set of transverse intersection points $\{ S_1, S_1S_2S_1,\ldots \}$ of  $({\arcVer})'$ and $\arcHor$.

Figure~\ref{fig:big_wrFuk} and the surrounding discussion  describes the $A_\infty$ subcategory $\cB \subset \wrFuk(P^*)$ with just two objects, $\arcVer$ and $\arcHor$.

\subsection{Immersed Lagrangian correspondences}\label{sec:appendix_corresp}

We next review some notions of  Wehrheim-Woodward's theory, extrapolated to the immersed case, following Akaho-Joyce \cite{AJ} and Bottman-Wehrheim \cite{BW}. We omit any technical details, as our goal is to review language to illustrate how our calculations can be viewed (philosophically) in the context of the active and still developing subject of immersed Floer theory.  We refer to Wehrheim and Woodward's original papers \cite{WW} for precise constructions in the embedded case.

\begin{definition}\label{def:lagr_corr} 
Given two symplectic manifolds $M$ and $N$, 
an \emph{immersed Lagrangian correspondence  from  $M$ to $N$} is an immersion   $L\looparrowright M^- \times N$ of a smooth manifold $L$  
whose differential takes the tangent spaces   of $L$ onto Lagrangian subspaces of the tangent spaces to $M^-\times N$. 
 We denote it $M\xrightarrow{L} N$. 
If $L$ is embedded it is called an {\em embedded Lagrangian correspondence}. 
\end{definition}

\begin{definition}\label{def:compos}
A {\em composition} of Lagrangian correspondences 
$$M \xrightarrow{L} N \xrightarrow{L'}K \qquad \mapsto \qquad M \xrightarrow{L \circ L'} K  $$
is partially defined in the following way: given the immersions
$$i=i_M\times i_N:L\looparrowright M^-\times N\text{ and } i'=i'_N\times i'_K:L'\looparrowright N^-\times K, $$
  the composition is defined by the {\em fiber product}
$$L\circ L' \coloneqq L\times_N L'  = \{(\ell, \ell')\in L\times L'~|~i_N(\ell)=i_N'(\ell')\}=(i\times i')^{-1}(M\times \triangle_N\times K),
$$
together with the map
\begin{align*}
L\circ L' &\rightarrow M^- \times  K \\
(\ell,\ell') &\mapsto (i_M(\ell), i'_K(\ell')).
\end{align*}
\end{definition} 

In order for $L\circ L'$ to be a smooth manifold of the correct dimension, one must assume that $i\times i'$ is transverse to $M\times \triangle_N\times K$ .  With this assumption, $L\circ L'$ is again an immersed Lagrangian correspondence, and one says {\em the composition is immersed}.  

As a further special case, if $L$ and $L'$ are embedded, $i\times i'$ is transverse to $M\times \triangle_N\times K$, {\em and} the composite $L\circ L':M\to K$ is embedded, we say {\em the composition is embedded}.

Composition with a fixed Lagrangian correspondence $M\xrightarrow{L}N$ induces a function
\begin{equation}\label{corronlag}\Lag^{\pitchfork L}(M)\to \Lag(N) , ~ K\mapsto L\circ K
\end{equation}
where $\Lag(M), \Lag(N)$ denotes the sets of immersed Lagrangians in $M,N$ respectively, and 
$\Lag^{\pitchfork L}(M)\subset \Lag(M)$ denotes the subset of those immersed Lagrangians
 $K\to M$ for which   the composition $L\circ K$ is defined.

The function~\eqref{corronlag} is expected to extend to an $A_\infty$ functor $\Fuk(M)\xrightarrow{L}\Fuk(N)$~\cite{Functor_MWW,BW,Fuk_Ymaps}, where, depending on the setup, $\Fuk(M)$ could be the wrapped Fukaya category~\cite{Gao}.

\begin{definition}
\label{def:glag}
A \emph{generalized immersed Lagrangian correspondence}  
$$ M_0\xrightarrow{ \underline{L}=(L_{01}, L_{12}, \ldots, L_{(k-1)k})} M_k$$
is a sequence of immersed Lagrangian correspondences
$$ M_0 \xrightarrow{ L_{01}} M_1 \xrightarrow{ L_{12}} M_2 \xrightarrow{ L_{23}} \cdots \xrightarrow{ L_{(k-1)k}}M_k$$
\end{definition}
If, for some $i$, the composition of two consecutive correspondences $L_{(i-1)i}$ and $L_{i(i+1)}$  is immersed, 
 then $\underline{L}$ determines the shorter sequence 
\begin{equation}\label{eq:gen_corr_comp}\underline{L}'=(L_{01},\cdots, L_{(i-1)i} \circ L_{i(i+1)} , \cdots, L_{(k-1)k}).\end{equation}

\medskip

In this article (Sections~\ref{sec:PillHom_n_quilts} and~\ref{sec:future_directions} to be precise), the number $k-1$ corresponds the number of surfaces in a decomposition of a link,  $M_0=M_k=\text{pt}$, and we only explicitly consider the cases $k=2,3$:
$$\text{pt} \xrightarrow{ L_{0}} P \xrightarrow{ L_{1}} \text{pt}  \qquad \text{and} \qquad \text{pt} \xrightarrow{ L_{0}} P \xrightarrow{ L_{12}}  P \xrightarrow{ L_{2}}  \text{pt} $$

\subsection{Quilted Floer homology} \label{sec:appendix_quilts}

Wehrheim and Woodward \cite{WW} define the {\em Quilted Floer complex $(\CF(\underline{L}),\partial)$ of a generalized Lagrangian correspondence }
 $\underline{L}\colon M_0\to M_k = M_0$ from $M_0$ to itself.
The  chain group $\CF(\underline{L})$ is generated, under suitable transversality assumptions,  by the set of \emph{generalized intersection points}
 $$\genInt(\underline{L})=
 {i}^{-1}(\triangle)
\subset L_{01}\times\cdots\times L_{(k-1)k} $$
where
$$
i:L_{01}\times L_{12}\times \cdots \times L_{(k-1)k}\to (M_0 \times M_0^-)\times\cdots\times (M_{k-1} \times M_{k-1}^-)$$
 is the immersion induced by assembling the immersions 
 $$L_{01}\looparrowright M_0^-\times M_1, ~ L_{12}\looparrowright M_1^-\times M_2, \ldots , ~ L_{{k-1}k}\looparrowright M_{k-1}^-\times M_0 \qquad (\text{recall }M_0=M_k)$$ and 
 $$\triangle \coloneqq \triangle_{M_0}\times \cdots\times \triangle_{M_{k-1}}.$$
For a generalized intersection point $ x=(x_{01}, x_{12},\ldots, x_{(k-1)k}) \in L_{01}\times\cdots\times L_{(k-1)k}$, define $x_i\in M_i$ to be the image of $x_{i(i+1)}$ under the first coordinate  map $L_{i(i+1)} \to M_i$. Note that it is equal to the image of $x_{(i-1)i}$ under the second coordinate map $L_{(i-1)i} \to M_i$. 
  
Given $x, y \in \cap (\underline{L})$, the coefficient $c_{xy}$ from $\partial(x)=\sum_y c_{xy} \cdot y$ is defined by counting quilted cylinders $\underline{u}\colon \RR \times \RR/ k\ZZ \to (\underline{M},\underline{L})$, i.e., tuples of $J$-holomorphic maps $(u_0, u_1, \cdots, u_k = u_0)$ 
\[
u_i\colon \RR \times [i,i+1]\to M_i
\]
satisfying 
\begin{itemize}
\item[-] the seam condition, that is, an existence of a lift    
$$
\begin{tikzcd}[column sep=5cm]
&
L_{i(i+1)}
\arrow{d}
\\
\RR 
\arrow{r}{(u_{i}(-,i+1),u_{i+1}(-,i+1))}
\arrow[ur,bend left=10, dashed, "\tilde{u}_{i,i+1}"above]
&
M^-_i \times M_{i+1} 
\end{tikzcd}
$$
\item[-] the limiting conditions on the seams
$$\lim_{t\to +\infty} \tilde{u}_{i,i+1}(t) = y_{i (i+1)},~ \lim_{t\to -\infty} \tilde{u}_{i,i+1}(t) = x_{i (i+1)}
$$
\item[-] the limiting conditions on the strips
$$\mbox{ for any $s\in(i,i+1)$, } \lim_{t\to +\infty} u(t,s) = y_i ,~ \lim_{t\to -\infty} u(t,s) = x_i
$$
\end{itemize}

When  $k=2$ and $M_0 = M_2=pt$, $(\CF(\underline{L}),\partial)$ is the usual Lagrangian Floer complex of the pair $L_{01},L_{12}\looparrowright M_1$, as defined in \cite{MR2155230}. Typically,  additional hypotheses are imposed in order to ensure that $\partial^2=0$.

Wehrheim and Woodward prove that if all correspondences in $\underline{L}$ are embedded, suitably transverse, and satisfy extra conditions, such as exactness or monotonicity, then indeed $\partial^2=0$ and the resulting {\em quilted Floer homology}, $\HF(\underline{L})$,  is independent of the choice of almost complex structure,   and is unchanged under Hamiltonian isotopies of the correspondences.

\medskip

Notice that if $\underline{L}'$ is obtained from $\underline{L}$ by composing $L_{(i-1)i}$ with $L_{i(i+1)}$ (as in Equation~\eqref{eq:gen_corr_comp}), then one has a canonical identification
$$\genInt(\underline{L})\simeq \genInt(\underline{L}').$$ 
In particular, the chain groups are preserved by composition.   Wehrheim and Woodward
prove an embedded composition theorem (see \cite[Theorem~5.4.1]{WW} for a precise statement), 
which says that if  the correspondences are embedded, and for some $i$ the composition of $ L_{(i-1)i}$  and $L_{i(i+1)}$ is embedded,  then (with suitable exactness or monotonicity hypotheses)  the obvious bijection on the set of generators induces an isomorphism on homology:
\begin{equation}\label{embeddedcomp}
 \HF(\underline{L})=\HF(\ldots,  L_{(i-1)i} , L_{i(i+1)} ,\ldots )\cong \HF(\ldots,  L_{(i-1)i} \circ L_{i(i+1)} ,\ldots)=\HF(\underline{L}').
\end{equation}

In more general situations, in particular for immersed correspondences and immersed composition as studied in the present article, certain bubbling can occur at ends of the moduli spaces of $J$-holomorphic quilts, which can prevent both $\partial^2=0$ and $\HF(\underline{L}) \cong \HF(\underline{L}')$.

\medskip

Extending  quilted Floer theory to the immersed setting is a topic of intense study. The critical difficulty for upgrading the composition theorem~\eqref{embeddedcomp} lies in the analysis of degeneration in the strip-shrinking process, and in particular {\em  figure eight bubbling}~\cite{BW}. The notion of bounding cochain provides a theoretical means to address the problem. We now briefly describe the geometric approach taken by Bottman-Wehrheim \cite{BW} for producing the appropriate bounding cochains. 

First, generalized immersed Lagrangian correspondences can be enhanced by equipping each Lagrangian  with a bounding cochain:
$$(\underline{L}, {\bf b})=\big((L_{01},b_{01}),\cdots,   (L_{(k-1)k},b_{(k-1)k})\big).$$
An appropriate modification of the quilted Floer differential results in a chain complex $\CF(\underline{L}, {\bf b})$, provided all $b_{i(i+1)}$ satisfy the Maurer-Cartan equation; see~\cite[Section~4.4]{BW}.

With the goal of extending the composition theorem~\eqref{embeddedcomp} to the immersed setting, Bottman and Wehrheim introduce, in terms of counts of Morsified figure eight bubble trees, an extension of composition to a pairing 
$$(L_{01},b_{01})\circ (L_{12},b_{12})=(L_{01}\circ L_{12}, 8(b_{01},b_{12})) \qquad \text{(see \cite[Section~4.4]{BW})}$$
They note that the bounding cochain $8(b_{01},b_{12})$ which they assign to the composition  may be non-zero even when $b_{01}$ and $b_{12}$ are both zero.

\begin{conjecture}[\text{Bottman-Wehrheim \cite{BW}}]
\label{conj:BW_immersed} {\em Let $(\underline{L},{\bf b})$ be a generalized immersed Lagrangian correspondence from $M_0$ to $M_k=M_0$, equipped with bounding cochains. Assume   that for some $i$,  $L_{(i-1)i}$ and $L_{i(i+1)}$ can be composed (Definition \ref{def:lagr_corr}), and let $(\underline{L},'{\bf b}')$  denote the composition
$(\ldots,( L_{(i-1)i} \circ L_{i(i+1)},8(b_{(i-1)i}, b_{i(i+1)})) , \ldots )$. Then,
\[
 \HF(\underline{L},{\bf b})\cong \HF(\underline{L}',{\bf b}').
\]
}
\end{conjecture}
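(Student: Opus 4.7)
The plan is to establish the isomorphism via a parametrized strip-shrinking argument, in the spirit of Wehrheim--Woodward's proof of the embedded composition theorem, but allowing figure eight bubbling at the shrinking seam. Fix a width parameter $\epsilon \in (0,1]$ and consider the family of quilted cylinders $\underline{u}_\epsilon$ in which the strip mapping to $M_i$ has width $\epsilon$ while all other strips retain unit width. For $\epsilon = 1$ one recovers the moduli spaces defining $\partial$ on $\CF(\underline{L},\mathbf{b})$, while for $\epsilon = 0$ one recovers the moduli spaces defining $\partial$ on $\CF(\underline{L}',\mathbf{b}')$ with composed correspondence $L_{(i-1)i}\circ L_{i(i+1)}$. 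Count rigid elements of the parametrized moduli space $\bigsqcup_\epsilon \cM(\underline{x},\underline{y};\epsilon)$ to define a map
\[
\Phi \co \CF(\underline{L},\mathbf{b}) \to \CF(\underline{L}',\mathbf{b}').
\]
The bijection between generators induced by the identification of $\genInt(\underline L)$ with $\genInt(\underline L')$ provides the leading-order ($\epsilon \to 0$ limit) term of $\Phi$; higher order corrections come from interior figure eight bubble trees formed in the limit.

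The core analytic work is to compactify $\bigsqcup_\epsilon \cM(\underline{x},\underline{y};\epsilon)$ and classify the codimension-one boundary strata. Standard breaking accounts for the composition $\partial_{\underline{L}'} \circ \Phi + \Phi \circ \partial_{\underline{L}}$. The remaining strata, occurring only at $\epsilon = 0$, are precisely figure eight bubble trees with insertions of $b_{(i-1)i}$ and $b_{i(i+1)}$ along their two boundary arcs joined at the nodal singularity, together with an exterior disk or polygon carrying the remaining asymptotics. By the very definition of the operation $8(-,-)$ of Bottman--Wehrheim as a count of Morsified figure eight bubble trees, the total contribution of these strata equals the $\mu^2$-style term through which $8(b_{(i-1)i}, b_{i(i+1)})$ enters the deformed differential on $\CF(\underline{L}',\mathbf{b}')$. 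Consequently $\Phi$ is a chain map once one establishes: (i) the Maurer--Cartan equation for $8(b_{(i-1)i}, b_{i(i+1)})$ (itself a boundary analysis on moduli of figure eight bubble trees without parameter), and (ii) the matching of signs/orientations and coherence between the parametrized compactification and the definition of $\partial_{\underline L', \mathbf{b}'}$.

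To promote $\Phi$ to a quasi-isomorphism, I would construct a homotopy inverse $\Psi$ by running the parameter $\epsilon$ from $0$ to $1$ in reverse, with $\Psi \circ \Phi$ and $\Phi \circ \Psi$ chain homotopic to the identity via a two-parameter family of quilts whose degenerations account for all bubble configurations. Alternatively, one can organize the parametrized counts into a filtered structure, show $\Phi$ respects the filtration by energy (or by action), and that it induces an isomorphism on the associated graded, where it reduces to the canonical bijection on generators.

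The hard part will be the analytic foundations for figure eight bubbling: establishing compactness (ruling out unexpected degenerations), transversality for the parametrized moduli spaces including at $\epsilon = 0$, and a gluing theorem that identifies nearby elements of the parametrized moduli space with configurations of an exterior quilt glued to a figure eight bubble tree. This is precisely the content of the ongoing program of Bottman--Wehrheim and is why the statement is posed as a conjecture. A secondary obstacle, which must be handled in tandem, is verifying that $8(b_{(i-1)i}, b_{i(i+1)})$ satisfies the $A_\infty$ Maurer--Cartan equation \eqref{eq:MC} on $L_{(i-1)i}\circ L_{i(i+1)}$, which requires understanding how figure eight bubble trees degenerate in one-parameter families: disk bubbles on either sheet of the composed Lagrangian should produce exactly the $\mu^k((8)^{\otimes k})$ terms, while breaking into two bubble trees should cancel against $\mu^0$ contributions coming from the Maurer--Cartan equations satisfied by $b_{(i-1)i}$ and $b_{i(i+1)}$ individually.
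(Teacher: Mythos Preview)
The paper does not prove this statement; it is explicitly labeled a \emph{conjecture}, attributed to Bottman--Wehrheim, and the surrounding text in the appendix makes clear that the requisite analysis of figure eight bubbling under strip-shrinking is an open program. There is therefore no proof in the paper to compare your proposal against.

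That said, your sketch is faithful to the strategy the paper gestures at in Section~\ref{sec:fig8bubble} and Appendix~\ref{sec:appendix_quilts}: the parametrized strip-shrinking family, the emergence of figure eight bubbles at $\epsilon=0$, and the identification of their counts with the bounding cochain $8(b_{(i-1)i},b_{i(i+1)})$ are exactly the ingredients the authors describe informally when discussing the $(4,5)$ torus knot example. You also correctly flag the genuine obstacles---compactness, transversality, and gluing for figure eight bubble trees, plus verifying the Maurer--Cartan equation for $8(-,-)$---and you rightly note that these are why the statement remains conjectural. So your proposal is not so much a proof as an accurate outline of the conjectural argument, which is the most one can honestly offer here.
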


In \cite{Fuk_Ymaps}, Fukaya proves a similar statement using
constructions from \cite{LekiliLipyanskiy}, except that the bounding cochain  associated with $L_{(i-1)i} \circ L_{i(i+1)}$
is defined implicitly.

\newcommand*{\arxivPreprint}[1]{ArXiv preprint \href{http://arxiv.org/abs/#1}{#1}}
\newcommand*{\arxiv}[1]{ArXiv:\ \href{http://arxiv.org/abs/#1}{#1}}
\bibliographystyle{alpha} 
\bibliography{correspondence5}

 \end{document}